\documentclass[12pt,reqno]{amsart}
\usepackage[margin=1in]{geometry}
\makeatletter
\def\section{\@startsection{section}{1}%
\z@{.7\linespacing\@plus\linespacing}{.5\linespacing}%
{\bfseries\normalfont\scshape
	\centering
}}
\def\@secnumfont{\bfseries}
\makeatother
\usepackage{enumitem}
\usepackage{hyperref}
\usepackage{mathtools}
\usepackage{amsmath,amsfonts,amsthm,txfonts}
\usepackage{dsfont}
\usepackage{mathrsfs}
\usepackage{enumitem}
\usepackage{empheq}
\usepackage{amssymb,amsmath}
\usepackage{graphicx}
\usepackage{breqn}
\usepackage{chngcntr}
\counterwithout{equation}{section}
\usepackage{accents}
\usepackage{trimclip}
\newtheorem{Lem}{Lemma}[section]
\newtheorem{Pro}{Proposition}[section]
\newtheorem{Rem}{Remark}[section]
\newtheorem{Ass}{Assumptions}[section]
\newtheorem{Def}{Definition}[section]
\newtheorem{Thm}{Theorem}[section]

\usepackage{graphicx,amssymb}
\usepackage{amsmath,amssymb,amsfonts}
\usepackage{amscd}
\usepackage{amsthm}
\usepackage{xcolor}

\begin{document}
\title[Stochastic Cahn-Hilliard Convective Brinkman-Forchheimer Model]{Strong Solutions For The Stochastic \vspace{.05in}\\ Cahn-Hilliard Convective Brinkman-Forchheimer Model For \vspace{.05in}\\Tumor Growth}
\author[  Kalpana Rawat and  Kumarasamy Sakthivel]{Kalpana Rawat and  Kumarasamy Sakthivel}
\address{Department of Mathematics \\
Indian Institute of Space Science and Technology (IIST) \\
Trivandrum- 695547, INDIA}
\email{kalpanarawat5hd@gmail.com; sakthivel.k@iist.ac.in}
\curraddr{}
\thanks{}
\date{}
\dedicatory{}

\begin{abstract}
             In this work, we analyze a diffuse-interface model 	for tumor growth, subject to multiplicative white noises, posed on a bounded domain $\mathcal{O} \subset \mathbb{R}^d$, $d=2,3$. The model couples a stochastic incompressible convective Brinkman-Forchheimer (CBF) equation or Navier-Stokes equation with damping $\eta|v|^{r-1}v $ for the averaged velocity field \(v\), to a Cahn-Hilliard (CH) equation for the phase field variable \(\phi\) and to a stochastic reaction-diffusion equation governing the nutrient concentration \(\sigma\). The system is endowed with homogeneous Neumann boundary conditions for the phase field, chemical potential \(\mu\), and nutrient, while a homogeneous Dirichlet boundary condition is prescribed for the velocity. We establish the existence of local strong solutions corresponding to initial data \((v_0, \phi_0, \sigma_0) \in\) \(\mathbb{H}^1\times H^2 \times H^1\), for $ r \geq 1 $ in $d=2$ and $ r \in [1,3] $ in $d=3$. We prove the weak-strong uniqueness holds in both $d = 2$ and $d = 3$. In addition, for $d = 2$, the uniqueness of weak solutions	is obtained for all $\eta,\nu > 0$, and $r \geq 1$, while it holds in $d = 3$ for $r \geq 3$ (with $\eta, \nu >0$ when $r > 3$, and $\eta \nu \geq 1$ when $r = 3$) under the assumption $\sigma \in L^4(\Omega;L^4([0,T];H^1))$. Moreover, for $d=2$ and $r \in [1,3]$, we obtain that the strong solution exists globally in time. The analysis is carried out for a broad class of smooth potentials (apart from the classical double-well potential), relying on the Galerkin scheme, projection onto low and high modes, stochastic energy inequalities derived from It\^o’s formula, and pairwise comparison argument.
\end{abstract}
\subjclass[2020]{35R60, 35Q92, 76M35, 35K35, 92C50}
\keywords{stochastic tumor growth model; diffuse-interface model; Forchheimer equation; Cahn–Hilliard equation; reaction-diffusion equation; strong solutions; uniqueness}
\maketitle	
\section{Introduction}
			Many mathematical models of tumor growth are derived from continuum mixture theory, which treats the tumor as a system of interacting phases-typically representing tumor cells and healthy cells. Using conservation laws for mass and momentum, various studies formulate models governed by partial differential equations that describe how these components evolve (\cite{Wise2008, HawkinsDaarud2012,Garcke2016,EbenbeckGarckeNurnberg2021}). These models often couple equations describing tumor expansion with advection-reaction-diffusion systems representing the evolution of nutrients such as oxygen and glucose (\cite{Garcke2017, Colli2015}). Initially, most of the tumor growth models considered tissue as a porous medium and used Darcy's law (\cite{Greenspan1976, Wise2008,Garcke2016, Garcke2017, GarckeLam2016}) to model flow velocity by treating the cells as a slow-moving viscous fluid where pressure gradients drive cell motion, rather than inertial forces. Subsequent enhancements involved adopting Stokes or Brinkman flow (\cite{Ebenbeck2020, EbenbeckGarckeNurnberg2021, Ebenbeck2019b}) without modeling tissues as a porous medium (\cite{Franks2003,Friedman2006}).  However, for many cases when the inertial effects are significant (\cite{Agnaou2017}), the Forchheimer term is used to account for a nonlinear behavior in pressure difference versus flow rate, which shows Darcy's law is no longer valid and the nonlinear inertial corrections (\cite{Irmay1958,Lenci2022}) need to be added to Darcy's law. Recently, in \cite{Fritz2019}, Forchheimer corrections have been suggested for non-Darcy flow regimes in tumor growth models by incorporating nonlinear terms $|v|v$ and $|v|^{2}v$ into a time-dependent Darcy-Brinkman's law. Besides, the Navier–Stokes (NS) system (\cite{Lam2018}) was also applied to incorporate inertia effects by introducing acceleration terms ($\frac{\partial v}{\partial t}$ and $(v \cdot \nabla)v$) into the model.

			In this work, we study a stochastic version of a tumor growth model \eqref{e411}, where we have considered a nonlinear characterization of mixture velocity obeying the CBF law, a CH-type equation with additional source terms and regular potential for phase field variable, and an equation of advection-reaction-diffusion type for nutrient. This deterministic model itself is understood as an extension of the analyses of models explored in \cite{Ebenbeck2019b, GarckeLam2016,Fritz2019} for tumor growth by considering a Brinkman-Forchheimer equation modeling mixture velocity with a convective term $(v\cdot \nabla) v$ and a generalized Forchheimer term $\eta|v|^{r-1}v $ for $ r \geq1$.
		
			  For a bounded domain $ \mathcal{O} \subset  \mathbb{R}^d$, $d=2,3$ with smooth boundary $\partial\mathcal{O}$ and a fixed time $T > 0$, we consider the following Cahn-Hilliard convective Brinkman-Forchheimer (CH-CBF) reaction diffusion system for tumor growth:
\begin{equation}
	\begin{aligned}	\label{e411}
		\begin{cases}
			\frac{\partial v}{\partial t} +  (v \cdot \nabla)v + \eta \,|v|^{r-1}v   - \nu \,\Delta v  + \nabla p - \mu \nabla \phi -z    = 0 \quad &\text{in } \mathcal{O}  \times (0, T),\\
			\nabla \cdot v = 0 \quad &\text{in }\: \mathcal{O}  \times (0, T),\\
			 \frac{\partial \phi}{\partial t}+ (v \cdot \nabla)\phi - \Delta \mu  - (P\sigma - A - \alpha u)h(\phi)  = 0 \quad &\text{in }\: \mathcal{O}  \times (0, T),\\
			\frac{\partial \sigma}{\partial t} + (v \cdot \nabla)\sigma- \Delta \sigma  + c \sigma h(\phi)  + b(\sigma - w) \, dt = 0 \quad &\text{in }\: \mathcal{O}  \times (0, T),\\
			\mu = - \epsilon\,\Delta \phi + \epsilon^{-1}\psi'(\phi)\quad &\text{in }\: \mathcal{O}  \times (0, T),
			\end{cases}
	\end{aligned}
\end{equation}
		with initial and boundary data given as $(v, \phi, \sigma)(0) = (v_0, \phi_0, \sigma_0) \: \text{in }\: \mathcal{O}$, and  $v = \partial_{\mathbf{n}} \mu = \partial_{\mathbf{n}} \phi = 
		\partial_{\mathbf{n}} \sigma =  0\:\text{on }\: \partial \mathcal{O}  \times (0, T)$, respectively, where \textbf{n} denotes the outer unit normal to the boundary $\partial \mathcal{O} ,$ and $\partial_{\mathbf{n}} f := \nabla f \cdot \mathbf{n}$. 
		 
		The unknown functions are the volume-averaged velocity $v$ of the cell mixture, pressure $p$, order (phase) parameter $\phi$, the nutrient concentration for the tumor growth denoted by $\sigma$, and the quantity $\mu$, which is the chemical potential (for \( \phi\)) of the binary mixture. Motivated by \cite{Lam2018}, a general energy density is defined as follows:
\begin{equation}\label{e2}
		\mathcal{E}(v, \phi, \nabla \phi, \sigma) = \int_{\mathcal{O}}  \left( \frac{1}{2} |v|^2 + F(\phi, \nabla \phi) + \frac{1}{2} |\sigma|^2 \right) \,dx,
\end{equation}
			where  \( F(\phi, \nabla \phi) \) is a free energy functional of Ginzburg-Landau type defined by
			\(F(\phi, \nabla \phi) :=  \frac {\epsilon}{2}|\nabla \phi|^2 +  \epsilon^{-1} \psi(\phi)\) with $\psi(s)$ being a regular potential  having equal minima at \( s = \pm 1 \). It is worth noting that for any regular potential $ \psi \in C^2(\mathbb{R})$, the boundary condition \( \partial_{\mathbf{n}} \phi =\partial_{\mathbf{n}} \mu = 0\)  is equivalent to  \(\partial_{\mathbf{n}} \phi = \partial_{\mathbf{n}} \Delta \phi = 0\).
			  
            We assume that the parameters $ \eta, \nu, \epsilon, P, A, \alpha, c$, and $b$ are strictly positive constants. More precisely, $\eta$ denotes the Forchheimer influence of the fluid, $\nu$ the kinematic viscosity of the fluid, $\epsilon$ defines the interface thickness separating healthy and tumor cell species, $P$ the tumor proliferation rate, $A$ the apoptosis rate, $\alpha$ the effectiveness rate of the cytotoxic drugs, $c$ the nutrient consumption rate, and $b$ the nutrient supply rate. The parameter $r \in [1, \infty)$ represents the absorption or damping exponent, and $r=3$ is known as the critical exponent. The function \( h \) is typically assumed to be nonnegative, interpolating between $h(-1) = 0$ and $h(1) = 1,$ while $z$ is a given external force. The inclusion of additional source terms in the CH equation introduces biologically relevant mechanisms; the proliferation of tumor cells (tumor growth) is modeled by $P \sigma h(\phi)$, whereas the process of apoptosis is represented by $Ah(\phi)$, analogous to \cite{Orrieri2020}, we have included the effect of cytotoxic drugs via \( \alpha u h(\phi) \), where \( u \) acts as the source. For practical use, \( u: [0,T] \to [0,1] \) is spatially constant with $ u = 1 $ for full dosage and \( u = 0 \) for no dosage. Meanwhile, in the nutrient advection-reaction-diffusion type equation, consumption of the nutrient only in the presence of the tumor cells is represented by $c \sigma h(\phi)$. In view of \cite{BY}, we also consider that the tumor has undergone angiogenesis and $w$ denotes the nutrient concentration in a pre-existing vasculature and \( b(w-\sigma) \) models the supply of nutrient from the blood vessels if \( w > \sigma \), and the transport of nutrient away from the domain \( \mathcal{O}  \) if \( w < \sigma \).

			The deterministic tumor growth models have attracted significant analytical attention, including the well-posedness of the models. In \cite{ Garcke2017}, the authors have analyzed the CH system with chemotaxis and active transport mechanisms in bounded domains with a Lipschitz boundary in  
			$\mathbb{R}^d$, $d \in \mathbb{N}$. The existence of global weak solutions was shown for all $d$, and continuous dependence on initial and boundary data was established for $d \leq4$. For the Navier–Stokes-Cahn–Hilliard (NS-CH) system with chemotaxis and singular potentials (e.g., logarithmic type), \cite{He2021} proves the existence of a global weak solution in both dimensions $d=2,3$ and continuous dependence and uniqueness for $d=2$. The Darcy–Forchheimer–Brinkman equation extended by incorporating Forchheimer nonlinearities $F_1|v|v$, and $F_2|v|^{2}v$ together with local and nonlocal effects in tumor growth has been studied in \cite{Fritz2019} within a bounded Lipschitz domain in $\mathbb{R}^d$, $d \leq 3$. The work establishes the existence of a global weak solution and includes parameter-sensitivity analyses. Recently, \cite{brunk2025analysis} proves the existence of a weak solution within bounded 2D and 3D domains for a coupled system with non-constant mobility and convective transport consisting of a CH equation for phase separation and a generalized quasi-incompressible Forchheimer equation for the velocity field with Forchheimer nonlinearity $\eta(\phi)|v|^{r-1}v $ for $ r >1,$ and obtains the numerical solutions. 

			While deterministic models have provided valuable insights into the general dynamics of tumor growth, they fail to capture the inherent randomness of cellular processes. Tumor proliferation and differentiation are inherently stochastic, and experimental uncertainties in proliferation and apoptosis rates suggest that carcinogenesis should be treated as a stochastic process (\cite{Tan1998,Lo2007}). Recent work has incorporated random fluctuations at the cellular and tissue levels (\cite{Lima2014a}) and developed stochastic angiogenesis models to better represent capillary network formation (\cite{Niemisto2005, Capasso2009,Orrieri2020}), thereby addressing key limitations of purely deterministic approaches.
			
			In what follows, we consider a stochastic counterpart of the system \eqref{e411} by incorporating two independent random perturbations: one acting on the nutrient reaction diffusion equation with the aim of modeling the effects of angiogenesis, and another on the fluid velocity equation as an unknown internal microscopic thermal agitation, or a random source. Specifically, by adding the two independent cylindrical Wiener processes $W_1$ and $W_2$ to the deterministic system \eqref{e411}, we obtain the following stochastic CH-CBF reaction diffusion model for tumor growth:
\begin{subequations}
	\begin{align}
		&dv +  (v \cdot \nabla)v\, dt + \eta \,|v|^{r-1}v   - \nu \,\Delta v\, dt  + \nabla p \, dt- \mu \nabla \phi \, dt- z \, dt = G_1(v) \, dW_1 \quad &\text{in }\: \mathcal{O}  \times (0, T)\label{e01},\\
		&\nabla \cdot v = 0 \quad &\text{in }\: \mathcal{O}  \times (0, T)\label{e11},\\
		&d\phi + (v \cdot \nabla)\phi\, dt - \Delta \mu \, dt - (P\sigma - A - \alpha u)h(\phi) \, dt = 0\quad &\text{in }\: \mathcal{O}  \times (0, T)\label{e21},\\
		&d\sigma + (v \cdot \nabla)\sigma\, dt- \Delta \sigma \, dt + c \sigma h(\phi) \, dt + b(\sigma - w) \, dt = G_2(\sigma) \, dW_2\quad &\text{in }\: \mathcal{O}  \times (0, T)\label{e31},\\
		&\mu = - \epsilon\,\Delta \phi + \epsilon^{-1}\psi'(\phi) \quad &\text{in }\: \mathcal{O}  \times (0, T)\label{e41},
	\end{align}
\end{subequations}
	with the following boundary and initial conditions:
\begin{equation}\label{e5}
	\begin{cases}
		\begin{aligned}
			\partial_{\mathbf{n}} \mu = \partial_{\mathbf{n}} \phi = \partial_{\mathbf{n}} \sigma =  0 \quad &\text{on } \partial \mathcal{O}  \times (0, T), \\
			v = 0 \quad &\text{on } \partial \mathcal{O}  \times (0, T), \\
			(v, \phi, \sigma)(0) = (v_0, \phi_0, \sigma_0) \quad &\text{in } \mathcal{O} .
		\end{aligned}
	\end{cases}
\end{equation}  
			 	Here, $G_1$  and $G_2$ are suitable stochastically integrable processes with respect to $W_1$ and $W_2$, respectively. The terms $G_1(v) \, dW_1$ and $G_2(\sigma) \, dW_2$ represent random external forces depending on $v$ and $\sigma$, respectively.
			 
				Let us briefly review some of the literature on damped NS (CBF) equations in both deterministic and stochastic scenarios.  The global solvability of deterministic CBF equations in various settings, including the whole space, periodic domains, and bounded domains, has been studied in the literature. The paper \cite{Cai2008} studied the  NS equations with damping $\eta|v|^{r-1}v$ in $\mathbb{R}^3$, proving global weak solutions for $r \geq 1$, global strong solutions for $r \geq 7/2$ (see \cite{Zhou2012} for $r \geq 3$), and uniqueness of strong solutions for $7/2 \leq r \leq 5$. The authors in \cite{Kalantarov2012} established the global well-posedness of the 3D CBF  equations in bounded domains, proving the existence and uniqueness of weak solutions with energy estimates, as well as higher regularity of strong solutions for all positive times. The work \cite{Gautam2025} established the global well-posedness of weak solutions with energy inequality for the damped Navier-Stokes equations (CBF) in bounded/periodic domains in $\mathbb{R}^d\, (d \in \{2,3,4\}$) when $r \in [3, \infty)$ (with $r = 3$ provided $2\eta\nu \geq 1$), and the existence of global strong solutions in periodic domains. The optimal control of the damped Navier-Stokes-Voigt equations has been explored in \cite{Sakthivel2023}. For the case of the stochastic counterpart,	\cite{Bessaih2018} investigates the 3D NS equations with Brinkman-Forchheimer damping $\eta|v|^{2r}v$, anisotropic viscosity, and multiplicative stochastic forcing. They proved the global existence and uniqueness of weak solutions by exploiting the enhanced $L^{2r+2}$ regularity provided by the damping term with $r > 1 $. The stochastic CBF equations with multiplicative Gaussian noise on domains in $\mathbb{R}^d$, $d \in \{2,3,4\}$ are analyzed in the paper \cite{Kinra2025}. The authors established the energy estimates for $r \geq 1$, obtained the pathwise unique strong solutions (probabilistically) on general unbounded domains ($r > 3$ for any $\nu$ and $\eta$, $r = 3$ with $2\eta\nu \geq 1$), and further demonstrated the existence of a global strong solution (analytically) on the torus.

				 Although several studies have incorporated stochastic terms into the CH equation (\cite{Elezovic1991, Debussche2011}), in the present work, this component is modeled deterministically. This choice simplifies the analysis caused by the potential term and focuses on the stochastic effects on the other coupled variables. Beyond these individual analyses, a broader literature has considered pairwise or coupled interactions among the system variables, emphasizing how random perturbations contribute to the dynamics of binary fluid mixtures within the diffuse-interface (or phase-field) approach.
			
				  In \cite{Feireisl2019}, a stochastic Navier-Stokes-Allen-Cahn two-phase flow model is studied in a bounded domain in $\mathbb{R}^3$ in which both equations incorporate white noise. The authors established a stochastic relative energy inequality for dissipative martingale solutions and proved weak-strong uniqueness both pathwise and in law. The paper \cite{Deugoue2021} explored unique local strong solutions ($H^1 \times H^2$ data) for the stochastic NS-CH system with multiplicative noise (white noise) on a bounded domain in $\mathbb{R}^d$, $d=2,3$, and obtained the global existence in $d=2$. A stochastic phase-field tumor model in a smooth bounded domain in $\mathbb{R}^3$ with additive noise in the CH equation and multiplicative noise in the reaction-diffusion equation has been considered in \cite{Orrieri2020}. They discussed the well-posedness (existence of probabilistically strong solutions and continuous dependence) and the optimal control problem with cytotoxic drug concentrations (u, w) as control variables. More recently, \cite{FritzScarpa2023} considered a stochastic variant in a bounded 3D domain with multiplicative Wiener noise in both the CH and reaction-diffusion equations. The existence of martingale solutions is proved for variable mobility and non-increasing growth (logistic, Gompertzian) functions.  Moreover, numerical approximations are proposed with simulations illustrating the stochastic effects on tumor growth.
				  
				  To the best of our knowledge, this is the first study to address a stochastic tumor growth model that incorporates random perturbations in the CH-CBF reaction-diffusion model. Moreover, this study advances beyond existing works on the well-posedness theory for the system modeling tumor growth by establishing the local existence of a unique strong solution for $d=3$ and the global existence for $d=2$ in the subcritical range $r \in [1, 3]$.

				 The study of the proposed model poses several challenges and limitations due to its nonlinear and coupled structure. A major difficulty stems from the fact that the variable  $\sigma$ is strongly coupled with the variables $v$ and $\phi$.  More precisely, the coupling between $\sigma$ and $v$ given by the advection term $(v \cdot \nabla \sigma)$ in the stochastic reaction-diffusion equation  \eqref{e31} behaves similarly to the NS nonlinearity, making it more difficult to derive uniform estimates and establish existence and uniqueness results compared to the convective (advection) term $(v \cdot \nabla \phi)$  in \eqref{e21}, where this term can be handled due to the higher regularity of $\phi$. It is worth noting that due to the presence of $v \cdot \nabla \sigma$ term, the global existence of strong solutions (in $d = 3$) of the CH-CBF reaction-diffusion model seems difficult to establish for any $r > 1$, even in a periodic domain (or in the whole space). In fact, by virtue of this nonlinear term, the \emph{uniqueness of weak solutions} in $d =3$ is proven (see Proposition \ref{weakuniq}) only with an additional regularity assumption for $\sigma \in L^{4}(\Omega; L^{4}([0,T];H^1)).$ Besides, owing to the interaction of $\sigma$ with $\phi$ (as a proliferation term) in the CH equation \eqref{e21}, the model does not have the property of mass conservation for the phase field \(\phi\), which drives us to adopt the full norm for $\phi$ instead of the seminorm, and the use of the full norm makes the analysis significantly challenging. A further technicality arises when handling the potential $\psi$ with prescribed growth conditions (see [\ref{[A1]}]), particularly in proving the  uniqueness (see Propositions \ref{weak-strong} and \ref{weakuniq}), which is handled with an additional $L^4(\Omega;L^4((0,T) ; H^2))$  regularity for $\phi.$ The inclusion of an additional source term (proliferation) in \eqref{e21} also necessitates a more delicate analysis for several estimates.
                 
                 Finally, it is worth noting that in the deterministic/stochastic scenario, the Forchheimer term ($|v|^{r-1}v,\, r \geq 3$) often acts as a regularizing mechanism to get the strong solution of the CBF equation in a periodic domain/torus in $d =2,3$  or in the whole space (see \cite{Cai2008}, \cite{Kinra2025}), but it does not seem to work when it comes to a stochastic case in a bounded domain. The commutativity of the Leray-Helmholtz projector with the Laplace operator, which is used to get the regularity of the solutions, does not hold in a bounded domain (see \cite[Chapter 2]{Robinson2016}); consequently, we use the embedding theorems and restrict the absorption exponent $r$ based on the spatial dimensions to get various results.
				                   
                The main contributions of this work are the following:
				\begin{enumerate}[label=(\arabic*)]
					\item The $p$th-order energy estimates (Lemma \ref{Ener_est}) have been derived for the stochastic CH-CBF reaction diffusion system for all $r \geq1$ in both dimensions $d=2$ and $d=3$.
                 \item Weak-strong uniqueness (Proposition \ref{weak-strong}) for all  $ r \geq 1 $ in  $d=2$ and for $ r \in [1,3] $ in  $d=3$ cases is established. In addition to that, the uniqueness of weak solutions (Proposition \ref{weakuniq}) is obtained for all $\eta,\nu > 0$ and $r \geq 1$ in $d = 2,$ and for $r \geq 3$  in $d = 3$ (with $\eta, \nu > 0$ when $r > 3$, and $\eta\nu \geq 1$ when $r = 3$) under the assumption that $\sigma \in L^{4}(\Omega; L^{4}([0,T];H^1)).$ It is evident that in the absence of nutrient concentration $\sigma,$ that is, for the stochastic damped NS-CH system in $d=3$, the global uniqueness of weak solutions holds for any $r \geq 3.$
                  \item The existence of local maximal strong solutions (Theorem \ref{maxsol}) of the stochastic CH-CBF reaction diffusion system defined on a bounded domain $ \mathcal{O} \subset \mathbb{R}^d$, $d=2,3$ with initial data $(v_0, \phi_0, \sigma_0)$ in $\mathbb{H}^1 \times H^2 \times H^1$, for $ r \geq 1 $ in  $d=2$ and for $ r \in [1,3] $ in  $d=3$, is obtained.
				\item In the case $\mathcal{O} \subset \mathbb{R}^2$, with initial data $(v_0, \phi_0, \sigma_0)$ in $\mathbb{H}^1 \times H^2 \times H^1$, the global existence of unique maximal strong solutions (Theorem \ref{global2d}) has been established for $ r \in [1,3]$.
				\end{enumerate}
				
				The structure of the paper is organized as follows: Section \ref{Section 2} outlines the properties of nonlinear operators and the stochastic setup used in the paper. Section \ref{Section 3} configures the key assumptions and  develops the Galerkin approximation using high–low mode decomposition and a comparison technique. Section \ref{Section 4}  proves the existence and uniqueness of local strong  solutions and the uniqueness of weak solutions. Section \ref{Section 5} discusses global existence in the 2D case. Finally, Section \ref{Section 6} contains the appendix, which includes additional analytical details and auxiliary results that support the main theorems.
\section{Preliminaries}\label{Section 2}
			      We first introduce the function spaces used throughout the paper, and our analysis will focus on the physically relevant spatial dimensions $d = 2, 3$. Unless otherwise stated, the boundary of $\mathcal{O}  \subset \mathbb{R}^d$ is assumed to be sufficiently smooth.
          
            Let \( X \) be a real Banach space. We denote its norm by \( \|\cdot\|_{X} \), its dual space by \( X' \), and the duality pairing between \( X \) and \( X' \) by \( \langle \cdot, \cdot \rangle_{X', X} \).
            If \( X \) is further assumed to be a real Hilbert space, we denote the inner product by \( (\cdot, \cdot)_X \) with \(\|\cdot\|_X^2  =  (\cdot, \cdot)_X \). Finally, we define $\mathbb{X} := X^d = X \times \cdots \times X$ ($d$ times). 
            For the standard Sobolev spaces, we use the notation 
            \( W^{s,p} := W^{s,p}(\mathcal{O}) \) for \( s \geq 0, \: p \geq 1 \), equipped with the norm \( \| \cdot \|_{W^{s,p}} \) defined in the usual way by $
             \|u\|_{W^{s,p}} := \left( \sum_{|\alpha| \le s} \| D^{\alpha} u \|_{L^{p}}^p \right)^{1/p}$. When \( s = 0 \), we write \( W^{0,p} := L^{p} \). For \( p = 2 \), we denote \( W^{s,2} \) by \( H^{s} \), equipped with the inner product \( (u, v)_{H^{s}} := \sum_{|\alpha| \le s} (D^{\alpha}u, D^{\alpha}v)_{L^{2}} \), and we use the notation \( | \cdot |_{L^2} \) for the \( L^2 \) norm. Next, define the divergence-free spaces as follows:
     \begin{equation*}
             	\begin{aligned}
             		\begin{cases}
             	  	L^p_{\mathrm{div}}
             	  &:= {\{ v \in {\mathbb{L}}^p : \operatorname{div} v = 0,\, v \cdot \mathbf{n} \big|_{\partial \mathcal{O}} = 0 \}}, \, \text{for} \, \, p\geq2, \\[5pt]
             		V &:= {\{ v \in \mathbb{H}^1 : \operatorname{div} v = 0,\, v \big|_{\partial \mathcal{O}} = 0 \}}.
             		\end{cases}
             		\end{aligned}
       \end{equation*}
                    Let the Hilbert space $L^2_{\mathrm{div}}$ be equipped with $L^2$ scalar product and norm denoted by $(y, v) := \int_{\mathcal{O}} y \cdot v \, dx$ and $ \|v\|^2_{L^2_{\mathrm{div}}} := (v, v),$ respectively. Similarly, we characterize the space $V$ endowed with $H^1_0$ inner product and norm defined as  $((y,v)) := \sum_{i=1}^{d}(\partial_{x_i}y,\partial_{x_i}v), $ and $  \|v\|^2 := ((v,v))$, respectively. For $p \in (2, \infty)$, we define the  \(L^p_{\mathrm{div}}\) norm by \(\|v\|_{L^p_{\mathrm{div}}}^p :=  \int_{\mathcal{O}} |v(x)|^p \, dx\), while for $p = \infty$, we set $\|v\|_{L^\infty_{\mathrm{div}}} := \operatorname*{ess\,sup}_{x \in \mathcal{O}} |v(x)|$.
                     Next, we denote the dual of the Banach space \( V \cap L^p_{\mathrm{div}}\) as \(V' + 	L^{p'}_{\mathrm{div}}\), where \(\frac{1}{p}+ \frac{1}{p'} = 1\). We have a continuous embedding of  \( V \cap L^{p}_{\mathrm{div}} \hookrightarrow V \hookrightarrow L^2_{\mathrm{div}} \cong (L^{2}_{\mathrm{div}})^{'} \hookrightarrow V' \hookrightarrow V' + L^{p'}_{\mathrm{div}}.\) 
        
                    For  $y \in L^1,$ we define the (generalized) mean value by $ \bar{y} := \frac{1}{|\mathcal{O}|} \int_{\mathcal{O}} y \, dx,$ where $|\mathcal{O}|$ denotes the Lebesgue measure of the domain $\mathcal{O}$. We shall use the Poincar\'e–Wirtinger inequality,
  		            $|y - \bar{y}|_{L^2} \leq C_{\mathcal{O}} |\nabla y|_{L^2}, \, \forall y \in H^1,$ where \(C_{\mathcal{O}}\) is a constant depending only on \(\mathcal{O}\).

    \subsection{Linear Operators}\
 
   	        The Stokes operator \( A_0 \) is defined by $A_0 v := - \mathcal{P} \Delta v,$ with domain $D(A_0) := \mathbb{H}^2 \cap V$, where \(\mathcal{P}\) denotes the Leray-Helmholtz projector from $\mathbb{L}^2$ onto $L^2_{\mathrm{div}}$. 
            By the classical spectral theory for the self-adjoint, compact and bounded operator $A_0^{-1}$, there exists a sequence \( \{\lambda_j \}_{j \in \mathbb{N}}\) consisting of eigenvalues of $A_0$, satisfying $
            0 < \lambda_1 < \lambda_2 \leq \dots \leq \lambda_n \leq \lambda_{n+1} \leq \dots$ and corresponding eigenfunctions \( \{ w_j \}_{j \in \mathbb{N}} \) forming an orthonormal basis of $L^2_{\mathrm{div}}$ and an orthogonal basis of \( V \) such that
  \begin{equation*} 
	\begin{cases}
		\begin{aligned}
			A_0 w_j &= \lambda_j w_j && \text{in } \mathcal{O}, \\
			\nabla \cdot w_j &= 0 && \text{in } \mathcal{O}, \ \ \ \
			w_j = 0  \ \ \text{on } \ \ \partial \mathcal{O}.
		\end{aligned}
	\end{cases}
\end{equation*}
                 For each $\alpha \geq 0$ and $v = \sum_j v_j w_j \in L^2_{\mathrm{div}}$, where $v_j = (v,w_j)$, take $ D(A_0^\alpha) = \{v \in L^2_{\mathrm{div}}: \sum_j \lambda_j^{2\alpha} v_j^2  < \infty\},$ 
                and for any \(v \in D(A_0^\alpha)\) define the fractional powers of $A_0$, by $A_0^{\alpha}v = \sum_j \lambda_j^{\alpha}v_jw_j$ . The space \( D(A_0^\alpha) \) is equipped with the norm given by  $|v|_\alpha^2 := \|A_0^\alpha v\|^2_{ L^2_{\mathrm{div}}} = \sum_j \lambda_j^{2\alpha} v_j^2.$ From the standard results, the norms  $\|A_0^{1/2} \cdot\|_{ L^2_{\mathrm{div}}}$ and $\|A_0 \cdot\|_{ L^2_{\mathrm{div}}}$ are equivalent to the norms in $\mathbb{H}^1$ and $\mathbb{H}^2 $, respectively (see \cite[Chapter 3]{Temam2001}).

             We define a linear nonnegative unbounded operator \( A_1 \) on \( L^2 \) by $A_1 \varphi := -\Delta \varphi$ with domain $D(A_1) := \{\varphi  \in H^2: \partial_{\textbf{n}} \varphi = 0 \text{ on } \partial \mathcal{O}\}$. 
             From classical theory, there exists a sequence of eigenvalues \(\{ \beta_j \}_{j \in \mathbb{N}}\) that satisfy $0 = \beta_1 < \beta_2 \leq \dots \leq \beta_n \leq \beta_{n+1} \leq \dots$, an orthonormal basis \( \{ \psi_j \}_{j \in \mathbb{N}} \) for \( L^2 \) consisting of eigenfunctions of $A_1$, which is also an orthogonal basis for \( H^1\), such that
        \begin{equation*}
        	\begin{cases}
        		\begin{aligned}
        			A_1 \psi_j &= \beta_j \psi_j && \text{in } \:\mathcal{O}, \\
        			\partial_{\textbf{n}}\psi_j &= 0 && \text{on}\: \partial \mathcal{O}.
        		\end{aligned}
        	\end{cases}
        \end{equation*}
        		 For $\alpha \geq 0$, and $\varphi = \sum_j \varphi_j \psi_j \in L^2$, where $\varphi_j = (\varphi,\psi_j)$, we define the family of Hilbert spaces, as $D(A_1^\alpha) = \{\varphi \in L^2: \sum_j (1 + \beta_j^{2\alpha})\varphi_j^2 < \infty\}$, and introduce the fractional powers of $A_1$, as the operators from $D(A_1^\alpha)$ into \(L^2\), that is, \(A_1^\alpha : D(A_1^\alpha) \rightarrow L^2 \, \text{such that} \,
        	   A_1^{\alpha}\varphi = \sum_j \beta_j^{\alpha}\varphi_j\psi_j \). The space \( D(A_0^\alpha) \) is equipped with the norm $
        	  |\varphi|_{D(A_1^{\alpha})}^2:= |\varphi|^2_{ L^2} +|A_1^\alpha \varphi|^2_{ L^2} = \sum_j (1+ \beta_j^{2\alpha}) \varphi_j^2.$\\
          	 Moreover, using Lemma \ref{turiq0} and  Lemma \ref{turiq}, we deduce that for $y \in 	H^{2}$ with
    		 $\partial_{\mathbf{n}} y = 0$: 
    \begin{equation}\label{norm1}
      		\|y\|^2_{H^2} \leq C (\:|y|^2_{ L^2} + |A_1 y|^2_{ L^2}).
    \end{equation}
    		 Besides, for every $y \in H^{4}$ obeying the boundary conditions 
   		     $\partial_{\mathbf{n}} y = \partial_{\mathbf{n}}\Delta y = 0$ on $\partial \mathcal{O}$, we have 
    \begin{align}
      		\|y\|^2_{H^3} &\leq C(\:|y|^2_{ L^2} + |A_1^{3/2} y|^2_{ L^2}),\\ 
     		 \|y\|^2_{H^4} &\leq C(\:|y|^2_{ L^2} + |A_1^2 y|^2_{ L^2}). \label{norm3}
     \end{align}
			The positive constant \(C\) in inequalities \eqref{norm1}-\eqref{norm3} depends solely on \(\mathcal{O}\).
			\begin{Rem}\label{emer}
			 For any \(y \in H^3\)such that \(\partial_{\textbf{n}} y = 0 \) in \(\partial \mathcal{O}\), we have \(|A_1 y|_{ L^2} \leq |A_1^{1/2} y|^{1/2}_{ L^2}|A^{3/2}_1 y|^{1/2}_{ L^2}\). Moreover, if \(y \in H^4\) with \(\partial_{\textbf{n}} \Delta y = 0 \) in \(\partial \mathcal{O}\), then we have \(|A^{3/2}_1 y|_{ L^2} \leq |A_1 y|^{1/2}_{ L^2}|A^{2}_1 y|^{1/2}_{ L^2}\).
        \end{Rem}
        
      	  \begin{Rem}\label{egv} It is well known that for some suitable constant C, the Stokes eigenvalues $\lambda_n,$ satisfy $ \lambda_n\sim C\, n^{2/d }$ as $ n \to \infty$ (for large n). where d is the space dimension (see \cite{Metivier1978}). Moreover, this behavior is the same as that of the eigenvalues $\beta_n$ of the Neumann Laplace operator $A_1.$  
       \end{Rem}
          Suppose \(\mathcal{X}_n := \) span\(\{w_1, w_2,...,w_n\}\) and \(\mathcal{Y}_n:= \) span\(\{\psi_1, \psi_2,...,\psi_n\}\). We define \(\mathcal{P}_1^n\) as the projection of $L^2_{\mathrm{div}}$ onto \( \mathcal{X}_n\) together with its complement \(\mathcal{Q}_1^n = \mathcal{I}- \mathcal{P}_1^n\) and 
          \(\mathcal{P}_2^n\) as the projection operator from \(L^2\) onto \(\mathcal{Y}_n \) with its complement \(\mathcal{Q}_2^n = \mathcal{I}- \mathcal{P}_2^n\).
    \begin{Lem} \label{norm_esti}
         Suppose that \( \alpha_1 < \alpha_2 \). For any $v \in D(A^{\alpha_2}_0) $ and $ \varphi \in D(A_1^{\alpha_2})$
         we have the following generalized Poincar\'e and inverse Poincar\'e inequalities:
  \begin{equation}\label{est1}
          \text{(i)} \quad 	\left\|A^{\alpha_2}_0 \mathcal{P}_1^n(v) \right\|_{ L^2_{\mathrm{div}}} \leq \lambda_n^{(\alpha_2 - \alpha_1)} \left\|A^{\alpha_1}_0 \mathcal{P}_1^n(v) \right\|_{ L^2_{\mathrm{div}}}, \quad 
           	\left\|A^{\alpha_1}_0 \mathcal{Q}_1^n(v) \right\|_{ L^2_{\mathrm{div}}} \leq \lambda_n^{(\alpha_1 - \alpha_2)} \left\|A^{\alpha_2}_0 \mathcal{Q}_1^n(v) \right\|_{ L^2_{\mathrm{div}}},
   \end{equation}
     \begin{equation}\label{est2}
            		\text{(ii)} \quad \left|A^{\alpha_2}_1 \mathcal{P}_2^n(\varphi) \right|_{ L^2} \leq \beta_n^{(\alpha_2 - \alpha_1)} \left|A^{\alpha_1}_1 \mathcal{P}_2^n(\varphi) \right|_{ L^2}, \quad 
            		\left|A^{\alpha_1}_1 \mathcal{Q}_2^n(\varphi) \right|_{ L^2} \leq \beta_n^{(\alpha_1 - \alpha_2)} \left|A^{\alpha_2}_1 \mathcal{Q}_2^n(\varphi) \right|_{ L^2}.
        \end{equation}
                    Moreover, for \(\mathcal{P}_2^n\) and  	\(\mathcal{Q}_2^n\) defined as above, we have the following rate of approximation of the norms: $|\mathcal{Q}_2^n(\varphi)|_{ L^2} \leq \frac{C}{\sqrt{\beta_n}}\|\mathcal{Q}_2^n(\varphi)\|_{H^1},$ $\|\mathcal{Q}_2^n(\varphi)\|_{H^1} \leq \frac{C}{\sqrt{\beta_n}}\|\mathcal{Q}_2^n(\varphi)\|_{H^2},$ $\|\mathcal{Q}_2^n(\varphi)\|_{H^2} \leq \frac{C}{\sqrt{\beta_n}}\|\mathcal{Q}_2^n(\varphi)\|_{H^3},$
         		and $\|\mathcal{Q}_2^n(\varphi)\|_{H^3} \leq \frac{C}{\sqrt{\beta_n}}\|\mathcal{Q}_2^n(\varphi)\|_{H^4},$
         	    where \( \lambda_n\) and \( \beta_n\) denote the \(n^{th}\) eigenvalues of operators \(A_0\) and \(A_1\), respectively.
        \end{Lem}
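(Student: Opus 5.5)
The plan is to work entirely in the eigenfunction expansions of $A_0$ and $A_1$ and compare the weights $\lambda_j^{2\alpha}$, $\beta_j^{2\alpha}$ term by term. Write $v=\sum_j v_j w_j$. Then $\mathcal{P}_1^n v=\sum_{j\le n} v_j w_j$ and $\mathcal{Q}_1^n v=\sum_{j>n} v_j w_j$. Consequently
\[
\|A_0^{\alpha}\mathcal{P}_1^n v\|^2_{L^2_{\mathrm{div}}}=\sum_{j\le n}\lambda_j^{2\alpha}v_j^2,
\qquad
\|A_0^{\alpha}\mathcal{Q}_1^n v\|^2_{L^2_{\mathrm{div}}}=\sum_{j> n}\lambda_j^{2\alpha}v_j^2 .
\]
For $j\le n$ we have $0<\lambda_j\le\lambda_n$ and $2(\alpha_2-\alpha_1)>0$. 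Hence $\lambda_j^{2\alpha_2}=\lambda_j^{2(\alpha_2-\alpha_1)}\lambda_j^{2\alpha_1}\le \lambda_n^{2(\alpha_2-\alpha_1)}\lambda_j^{2\alpha_1}$. Summing over $j\le n$ and taking square roots gives the first inequality in \eqref{est1}.

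For $j>n$ we have $\lambda_j\ge\lambda_n>0$, so $\lambda_j^{2\alpha_1}=\lambda_j^{-2(\alpha_2-\alpha_1)}\lambda_j^{2\alpha_2}\le\lambda_n^{2(\alpha_1-\alpha_2)}\lambda_j^{2\alpha_2}$. Summing gives the second inequality. The assumption $v\in D(A_0^{\alpha_2})$ makes all series finite; for $\mathcal{Q}_1^n v$ the finiteness of the $\alpha_1$-series then follows from the comparison itself.

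Part (ii) is proved identically with $\beta_j$ and $\psi_j$. The only point to watch is $\beta_1=0$. In the low-mode estimate, $\beta_j^{2\alpha_2}\le\beta_n^{2(\alpha_2-\alpha_1)}\beta_j^{2\alpha_1}$ still holds for $j=1$, since both sides vanish when $\alpha_1>0$. When $\alpha_1=0$ we read $\beta_1^0=1$, or note that the $j=1$ term contributes $0$ on the left. In the high-mode estimate only $j>n\ge1$ appears, and since $\beta_2>0$ the relevant eigenvalues are strictly positive, assuming $n\ge2$. For $n=1$ the statement is understood with $\beta_n$ replaced by $\beta_2$, or as trivially true when $\beta_n=0$ is interpreted appropriately. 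I would note this convention explicitly.

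For the norm-rate estimates, the plan is to combine the high-mode inequality in \eqref{est2} with the equivalence of $|\cdot|_{D(A_1^{k/2})}$ and $\|\cdot\|_{H^k}$ for $k=1,2,3,4$. Here we use $\|y\|^2_{H^1}\simeq|y|^2_{L^2}+|A_1^{1/2}y|^2_{L^2}$, together with \eqref{norm1}--\eqref{norm3} and their reverse inequalities, which are trivial.

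Take $\varphi$ in the appropriate domain, so that $\mathcal{Q}_2^n\varphi$ satisfies the needed Neumann conditions; each $\psi_j$ does, and the series converge in $D(A_1^{k/2})$. Then
\[
|A_1^{(k-1)/2}\mathcal{Q}_2^n\varphi|_{L^2}\le\beta_n^{-1/2}|A_1^{k/2}\mathcal{Q}_2^n\varphi|_{L^2}.
\]
We also need the lower-order part, which is bounded the same way: $|\mathcal{Q}_2^n\varphi|_{L^2}\le\beta_n^{-k/2}|A_1^{k/2}\mathcal{Q}_2^n\varphi|_{L^2}\le C\beta_n^{-1/2}|A_1^{k/2}\mathcal{Q}_2^n\varphi|_{L^2}$ for $n$ large, since $\beta_n\ge\beta_2>0$ and $\beta_n\to\infty$. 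This yields
\[
\|\mathcal{Q}_2^n\varphi\|_{H^{k-1}}\le C\big(|\mathcal{Q}_2^n\varphi|_{L^2}+|A_1^{(k-1)/2}\mathcal{Q}_2^n\varphi|_{L^2}\big)\le \frac{C}{\sqrt{\beta_n}}|A_1^{k/2}\mathcal{Q}_2^n\varphi|_{L^2}\le\frac{C}{\sqrt{\beta_n}}\|\mathcal{Q}_2^n\varphi\|_{H^k}.
\]
The main obstacle is exactly this norm-equivalence step. It needs the boundary conditions of $\mathcal{Q}_2^n\varphi$, namely $\partial_{\mathbf n}=0$, and $\partial_{\mathbf n}\Delta=0$ for $k=3,4$, in order to apply \eqref{norm1}--\eqref{norm3}. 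It also needs the $\beta_1=0$ mode to be excluded, which holds because $\mathcal{Q}_2^n$ removes it. Both facts follow from the spectral definitions, and constants are absorbed using $\beta_n\ge\beta_2$.
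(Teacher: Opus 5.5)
Your proposal is correct and follows essentially the paper's route. The eigenvalue-weight comparison you carry out for (i)--(ii) is exactly the content of Lemma 2.1 of \cite{Glatt-Holtz2009}, which the paper cites. As in the paper, you obtain the norm rates by combining the high-mode inequality in \eqref{est2} with the elliptic equivalences \eqref{norm1}--\eqref{norm3}. Your explicit treatment of $\beta_1=0$ and your bound on the lower-order term via $\beta_n\ge\beta_2$ are slightly more careful than the paper's sketch; note that the latter bound holds for every $n\ge 2$, not just for $n$ large.
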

       \begin{proof} The proofs of $(i)$ and $(ii)$ follow from Lemma 2.1 of \cite{Glatt-Holtz2009}. Next, we use Lemmas \ref{turiq0} and  \ref{turiq} together with the inequality \eqref{est2} to get
 \begin{equation*}
 	\begin{aligned}
 			&\quad |\mathcal{Q}_2^n(\varphi)|^2_{ L^2} = |A^{(0)}_1\mathcal{Q}_2^n(\varphi)|^2_{ L^2}\, \leq \, \beta_n^{2(0-1/2)}|A^{1/2}_1\mathcal{Q}_2^n(\varphi)|^2_{ L^2},\\
       		& \quad \|\mathcal{Q}_2^n(\varphi)\|^2_{H^1} = 	|\mathcal{Q}_2^n(\varphi)|^2_{ L^2} + |A^{1/2}_1\mathcal{Q}_2^n(\varphi)|^2_{L^2}\leq \frac{C}{\beta_n}\|\mathcal{Q}_2^n(\varphi)\|^2_{H^1} + \beta_n^{2(1/2-1)}|A_1\mathcal{Q}_2^n(\varphi)|^2_{ L^2}.
          \end{aligned} 
   \end{equation*}
            Similarly, we can derive the last two approximations of the norms.
   \end{proof}
		Hereafter, we use the following product Hilbert spaces with their respective norms: 
\begin{align*}
        \mathcal{H} &:= L^2_{\mathrm{div}} \times H^1\times L^2;\;\|(v, \phi, \sigma)\|_{\mathcal{H}}^2 :=  \|v\|^2_{L^2_{\mathrm{div}}} + \|\phi\|_{H^1}^2 + |\sigma|^2_{ L^2},\\
        \mathcal{V} &:= V \times H^2\times H^1;\; 	\|(v, \phi, \sigma)\|_{\mathcal{V}}^2  := \|A_0^{1/2} v\|^2_{ L^2_{\mathrm{div}}} + \| \phi\|_{H^2}^2 +\|\sigma\|_{H^1}^2,\\
         \mathcal{Z} &:= D(A_0) \times H^4\times H^2;\; \|(v, \phi, \sigma)\|_{\mathcal{Z}}^2  := \|A_0 v\|^2_{ L^2_{\mathrm{div}}} + \| \phi\|_{H^4}^2 +\|\sigma\|_{H^2}^2.   
      \end{align*}
\subsection{Nonlinear Operators}\ 

\textbf{Forchheimer Term.}
		Let us define an operator associated to the nonlinear term \(|v|^{r-1}v\) as \( \mathcal{A}_r(v):=\)  \(\mathcal{P}(|v|^{r-1}v)\), where \(v \in V\cap L^{r+1}_{\mathrm{div}}\). Moreover, for any \(v \in  L^{r+1}_{\mathrm{div}}, r\geq 1\) we have
		\begin{eqnarray}
		\langle \mathcal{A}_r(v), v \rangle = \int_{\mathcal{O}}|v|^{r-1}v \cdot v\,dx = \|v\|_{L^{r+1}_{\mathrm{div}}}^{r+1}. \label{dam}
		\end{eqnarray}
	  The following are some of the technical calculations we encounter with the term \(|v|^{r-1}v\):
			\begin{itemize}
				\item[(i)] For any \(v, y \in \mathbb{R}^d\) we have
	 \begin{equation}\label{for1}
		 		\big|~|v|^{r-1}v - |y|^{r-1}y~\big| \leq r\left(~|v|+ |y|~\right)^{r-1}|v-y|.
		\end{equation}
		
				\item [(ii)] For \( v(x,t) = (v_1(x,t), \ldots, v_d(x,t) ), x = (x_1, \ldots, x_d) \in \mathbb{R}^d \), we have $\nabla |v(x)|= \frac{1}{|v(x)|}(\nabla v(x))^{'}v(x)$,
		 where \('\) denotes the transpose operator, and hence we deduce
     	\begin{equation}\label{for2}
			\begin{aligned}
				\nabla\big(|v(x)|^{r-1}\,v(x)\big)&=(r-1)|v(x)|^{r-2}\,(\nabla|v(x)|) \otimes v(x) + |v(x)|^{r-1}\,\big(\nabla v(x)\big)\\
				&= (r-1)|v(x)|^{r-2}\,\left(\frac{1}{|v(x)|}\big(\nabla v(x)\big)^{'}v(x)\right) \otimes v(x) + |v(x)|^{r-1}\,\big(\nabla v(x)\big)\\
				&= (r-1)|v(x)|^{r-3}\,\Big(\big(\nabla v(x)\big)^{'}v(x) \otimes v(x)\Big) + |v(x)|^{r-1}\,\big(\nabla v(x)\big),\\
			\end{aligned}
		\end{equation}
			where \(a\otimes e := ae' \in \mathbb{R}^{d \times d} \quad \forall a, e \in \mathbb{R}^d.\) Further, the above expression can be rewritten as:
		\begin{equation}
			\nabla(|v|^{r-1}\,v ) :=\begin{cases}
			\begin{aligned}
				\nabla v \quad :& \quad r =1,\\
				|v|^{r-1}\nabla v + (r-1)\frac{1}{|v|^{3-r}}\big(((\nabla v)'v) \otimes v\big)\quad :&\quad 1<r<3,\\
				|v|^{r-1}\nabla v + (r-1)|v|^{r-3}\big(((\nabla v)'v) \otimes v\big)\quad :&\quad r \geq 3.
				\end{aligned}
			\end{cases}
		\end{equation} 
		
\end{itemize}
	    \textbf{Trilinear Operator I.}	For $y, v, \xi \in V$, the  continuous trilinear form \( b_0 : V \times V \times V \to \mathbb{R} \) is defined by:
		$
		b_0(y, v, \xi) = \int_{\mathcal{O}} ((y \cdot \nabla) v) \cdot \xi \, dx = \sum_{i,j=1}^d 	\int_{\mathcal{O}} y_i \frac{\partial v_j}{\partial x_i} \xi_j \, dx.
		$
		For  $v,y \in V$, we denote by \( B_0(y, v) 	\in V' \) the linear functional such that \( \langle B_0(v, y), \xi \rangle = b_0(v, y, \xi)\),  \( \forall\: \xi\in V\). We also denote \( B_0(v) = B_0(v, v) = \mathcal{P}((v \cdot \nabla)v) \). Using integration by parts, \( b_0(\cdot, \cdot, \cdot) \) satisfies (\cite{Temam2001}):
  \begin{equation}\label{b0_1}
    \begin{aligned}
    	\begin{cases}
		  b_0(y, v, v) = 0 \quad &\forall y, v \in V ,\\
		  b_0(y, v, \xi) = -b_0(y, \xi, v) \quad &\forall y, v, \xi \in V .
		 \end{cases}
     \end{aligned}
  \end{equation}
       Furthermore, we collect the following inequalities satisfied by \(b_0(\cdot, \cdot, \cdot)\)\,:
\begin{enumerate}[label=(\roman*)]
    \item  Applying the Gagliardo-Nirenberg inequality (see Lemma \ref{l1}) and embedding $H^1 \hookrightarrow L^6$, we get
        \begin{eqnarray}\label{b_03}
		     |b_0(y,v,\xi)| &\leq& 
	\begin{cases}
			c \|y\|_{L_{\mathrm{div}}^4} \|\nabla v\|_{L_{\mathrm{div}}^4} \|\xi\|_{L^2_{\mathrm{div}}} & \text{in } d = 2, \nonumber\\
			c \|y\|_{L_{\mathrm{div}}^6} \|\nabla v\|_{L_{\mathrm{div}}^3}\|\xi\|_{L^2_{\mathrm{div}}} & \text{in } d = 3,\nonumber\\
    \end{cases}\\
            &\leq& \begin{cases}
		      c \|y\|^{1/2}_{ L^2_{\mathrm{div}}} \|y\|^{1/2} \|v\|^{1/2} \|A_0 v\|^{1/2}_{ L^2_{\mathrm{div}}} \|\xi\|_{ L^2_{\mathrm{div}}} & \text{in } d = 2, \\
		      c \|y\|\, \|v\|^{1/2} \|A_0 v\|^{1/2}_{L^2_{\mathrm{div}}} \|\xi\|_{L^2_{\mathrm{div}}} & \text{in } d = 3,\\
	\end{cases}
	\end{eqnarray}
            for all $y \in V$, $v \in D(A_0)$, $\xi \in L^2_{\mathrm{div}}$.
        \item Using embedding $H^1 \hookrightarrow L^p$ for $ p= 4,6$, we obtain
    \begin{eqnarray}\label{b0_4}
		      |b_0(y,v,\xi)| &\leq& 
		\begin{cases}
			c \|y\|_{L_{\mathrm{div}}^4} \|\nabla v\|_{L^2_{\mathrm{div}}} \|\xi\|_{L_{\mathrm{div}}^4} & \text{in } d = 2, \nonumber\\
			c \|y\|_{L_{\mathrm{div}}^6} \|\nabla v\|_{L^2_{\mathrm{div}}} \|\xi\|_{L_{\mathrm{div}}^3} & \text{in } d = 3,\nonumber\\
        \end{cases}\\
            &\leq& c \|y\|\, \|v\|\, \|\xi\|_{L^2_{\mathrm{div}}}^{1/2}\|\xi\|^{1/2}  \qquad \text{in } d = 2,3, \:  \forall \,y,v,\xi \in V.
	\end{eqnarray}
	   \end{enumerate}	
      \begin{Rem}  
      For the convective (advection) nonlinearity appearing in \eqref{e21} and \eqref{e31}, we use the same trilinear operator form. Moreover, the properties and inequalities for this operator stated below are valid whenever the involved variables possess the required regularity.
      \end{Rem}
	\textbf{Trilinear Operator II.}	For $ v \in V$ and $\varphi, \theta \in H^1$, the continuous trilinear form \( b_1 : V \times H^1 \times H^1    \to \mathbb{R} \) is defined by: $
		b_1(v, \varphi, \theta) = \int_{\mathcal{O}} ((v \cdot \nabla) \varphi)  \theta \, dx 	= 	\sum_{i,j=1}^d \int_{\mathcal{O}} v_i \frac{\partial \varphi_j}{\partial x_i} \theta_j \, dx.$ For   \( v \in V, \varphi \in H^1\), the bilinear form \( B_1(v, \varphi) \in (H^1)'  \) is the linear functional given by \( \langle B_1(v, \varphi), \theta \rangle = b_1(v, \varphi, \theta)\), \( \forall \,\theta \in H^1\). We also write \( B_1(v, \varphi) = ((v \cdot \nabla)\varphi) \). Similarly, integration by parts yields the following properties for \( b_1(\cdot, \cdot, \cdot) \):
 \begin{equation}\label{b1_1}
	\begin{aligned}
		\begin{cases}
		 b_1(v, \varphi, \varphi) = 0 \quad &\forall  v \in V, \varphi \in H^1,\\
		 b_1(v, \varphi, \theta) = -b_1(v, \theta, \varphi) \quad &\forall v \in V, \varphi, \theta \in H^1.
     \end{cases}
   \end{aligned}
   \end{equation}
			 Moreover, we obtain the following inequalities for \( b_1(\cdot, \cdot, \cdot) \)\,:
  \begin{enumerate}[label=(\roman*)]
  			\item $|b_1(v,\varphi, \theta)| \leq 
  			c \|v\|_{L^{\infty}_{\mathrm{div}}} |\nabla \varphi|_{L^2} |\theta|_{L^2}$ 
  			for all $v \in L^{\infty}_{\mathrm{div}}, \nabla\varphi \in L^2, \,\theta \in L^2.$ From Agmon's inequality (see Lemma \ref{l2}), we further derive
	\begin{equation}\label{b1_2}
 			|b_1(v, \phi, \theta)| \leq 
 		\begin{cases}
 			c \|v\|^{1/2}_{ L^2_{\mathrm{div}}} \|A_0 v\|^{1/2}_{ L^2_{\mathrm{div}}} \|\varphi\|_{H^1} |\theta|_{ L^2} & \text{in } d = 2, \\
 			c \|v\|^{1/2} \|A_0 v\|^{1/2}_{ L^2_{\mathrm{div}}} \|\varphi\|_{H^1} |\theta|_{ L^2} & \text{in } d = 3,
 	\end{cases}
 \end{equation}
 			for all $v \in D(A_0)$, $\varphi \in H^1$, $\theta \in L^2$. 
			\item  $|b_1(v,\varphi, \theta)| \leq 
			c \|v\|_{L^4_{\mathrm{div}}} \|\nabla \varphi\|_{L^4} |\theta|_{L^2}$ 
			for all $v \in L^4_{\mathrm{div}}, \nabla\varphi \in L^4, \,\theta \in L^2$. Hence, by the Gagliardo-Nirenberg inequality  (Lemma \ref{l1}), we get
\begin{equation}\label{b1_4}
	|b_1(v,\varphi,\theta)| \leq 
	\begin{cases}
		c \|v\|^{1/2}_{L^2_{\mathrm{div}}} \|v\|^{1/2} |A_1^{1/2}\varphi|_{L^2}^{1/2} \|\nabla\varphi\|_{H^1}^{1/2} |\theta|_{ L^2} & \text{in } d = 2, \\
		c \|v\|^{1/4}_{L^2_{\mathrm{div}}} \|v\|^{3/4} |A_1^{1/2}\varphi|_{L^2}^{1/4} \|\nabla\varphi\|_{H^1}^{3/4} |\theta|_{ L^2} & \text{in } d = 3,
	\end{cases}
\end{equation}
		for all $v \in V$, $\varphi \in H^2$, $\theta\in L^2$.
	\item  By the embedding of \(H^1 \hookrightarrow  L^p\), for            $p =4,6$, we obtain	
    \begin{eqnarray} \label{b1_5}
			|b_1(v,\varphi,\theta)| &\leq& 
			\begin{cases}
				c \|v\|_{L^4_{\mathrm{div}}} |A_1^{1/2} \varphi|_{L^2} \|\theta\|_{L^4} & \text{in } d = 2, \nonumber\\
				c \|v\|_{L^6_{\mathrm{div}}} |A_1^{1/2} \varphi|_{L^2} \|\theta\|_{L^3} & \text{in } d = 3, 
               \end{cases} \\
               &\leq& c \|v\|\, |A_1^{1/2}\varphi|_{L^2} |\theta|_{L^2}^{1/2}\|\theta\|_{H^1}^{1/2}  \  \text{in } d = 2,3, \  \forall v \in V, \ \varphi, \theta \in H^1.
  \end{eqnarray}
 	\end{enumerate}
        \textbf{Coupling Term.}
	  For $ \mu \in H^1$, $\phi \in H^2$ and $y \in L^2_{\mathrm{div}}$, we define a map \( r_0: H^1 \times H^2 \times L^2_{\mathrm{div}} \to \mathbb{R}\), corresponding to the coupling term $\mu  \nabla \phi$ by: $
		r_0(\mu, \phi, y) = \int_{\mathcal{O}} (\mu  \nabla \phi) \cdot y \, dx 	= 		\sum_{i,j=1}^d \int_{\mathcal{O}} \mu \frac{\partial \phi}{\partial x_i} y_j \, dx.$ We 
			denote \(R_0(\mu, \phi) = \mathcal{P}(\mu \nabla \phi) \).
\begin{Rem}\label{pre}
        We can replace the term \(\mu \nabla \phi\) by \(\epsilon A_1 (\phi) \nabla \phi\) after reformulation of the pressure in the \(v\) equation (for more details, see \cite{Garcke2016}).
 \end{Rem}
 		We derive the following inequalities for  $R_0(\cdot, \cdot)$:
 		\begin{enumerate}[label=(\roman*)]
 		\item $|r_0(A_1 \phi, \phi, v)|\: \leq 
 		c |A_1 \phi|_{L^2} \|\nabla \phi\|_{L^{\infty}} \|v\|_{L^2_{\mathrm{div}}}$, 
 		for all $v \in L^2_{\mathrm{div}}, \phi \in H^3$. Using Agmon's inequality (see Lemma \ref{l2}), we obtain
 		\begin{equation}\label{r_01}
 			|(R_0(A_1 \phi, \phi), v)| \leq 
 			\begin{cases}
 				c \|v\|_{ L^2_{\mathrm{div}}} |A_1 \phi|_{ L^2} |A_1 ^{1/2}\phi|^{1/2}_{L^2} \|\nabla \phi\|_{H^2}^{1/2} & \text{in } d = 2, \\
 				c \|v\|_{ L^2_{\mathrm{div}}} |A_1 \phi|_{ L^2} \|\nabla \phi\|_{H^1}^{1/2} \|\nabla \phi\|_{H^2}^{1/2} & \text{in } d = 3.
 			\end{cases}
 		\end{equation}
	\item 	Using Lemma \ref{l1} and the Sobolev embedding $H^1 \hookrightarrow L^p$, for $p =4,6$, we derive\begin{eqnarray}\label{r_04}
		|r_0(\mu, \phi, v)| &\leq &
		\begin{cases}
			c \|\mu\|_{L^4} |A_1^{1/2}\varphi|_{L^2} \|v\|_{L^4_{\mathrm{div}}} & \text{in } d = 2, \\
			c \|\mu\|_{L^6} |A_1^{1/2} \varphi|_{L^2} \|v\|_{L^3_{\mathrm{div}}} & \text{in } d = 3,\nonumber
		\end{cases}\\
& \leq & c \|\mu\|_{H^1}\, |A_1^{1/2}\phi|_{L^2} \|v\|_{ L^2_{\mathrm{div}}}^{1/2} \|v\|^{1/2}, \ \ \text{in} \ \ d=2,3,
		\end{eqnarray}
for all $v \in V$, $\phi \in H^1$ and $\mu \in H^1$.
\end{enumerate}
	\begin{Rem}\label{R0B1}
		 One can also note that $\bigl( R_{0}(A_{1}\phi, \phi), v \bigr) 
		= \bigl( B_{1}(v, \phi), \mu \bigr) 
		= \bigl( B_{1}(v, \phi), A_{1}\phi \bigr) 
		\; \forall (v, \phi) \in V \times H^2.$
\end{Rem}
\subsection{Stochastic Framework}
			
			Suppose  \( W_1 \) and \( W_2 \) are independent cylindrical Wiener processes defined on a complete filtered probability space \( (\Omega, \mathcal{F}, \mathbb{P}, \{ \mathcal{F}_t \}_{t \geq 0}) \), satisfying the usual hypotheses, i.e., \( \{ \mathcal{F}_t \}_{t \geq 0} \) is a right-continuous filtration such that \( \mathcal{F}_0 \) contains all the \( \mathbb{P} \)-null subsets of \( (\Omega, \mathcal{F}) \). These processes take values in separable Hilbert spaces \( U_1 \) and \( U_2 \), respectively. Assume each \( U_i \) is equipped with an orthonormal basis (ONB) \( \{e^i_n\}_{n \in \mathbb{N}} \), allowing the representation for a \( U_i \) valued Wiener process \( W_i \), by $W_i(t) = \sum_{n=1}^{\infty} e^i_n\beta^i_n(t), \: i \in \{1,2\}$, where for \(i = 1,2,\, n \in \mathbb{N}\), \( \beta^i_n \) denotes one-dimensional independent and identically distributed (i.i.d.) \( \mathcal{F}_t\)- adapted Brownian motions. Moreover, it is well-known that the above series does not converge in  \( U_i \) but rather in some Hilbert-Schmidt extension \( \tilde{U}_i \) of \( U_i \). Throughout, we use the following stochastic basis: 
			\begin{equation}\label{e7}
				\mathcal{S}_B = \big( \Omega, \mathcal{F}, \mathbb{P}, \{\mathcal{F}_t\}_{t \geq 0},  \{\beta_k^i(t), t \geq 0, k \in \mathbb{N}, i \in \{1,2\} \} \big).
			\end{equation}

\begin{Def}(Hilbert-Schmidt operator)
       		 Let $U$ be a separable Hilbert space and $X$ be a Hilbert space. As usual, $\mathcal{L}(U,X)$ denotes the collection of all bounded (or continuous) linear operators from $U$ to $X$. The space of Hilbert-Schmidt operators from \( U \) to \( X \) is denoted by \( \mathcal{L}_2(U, X) \), and is defined as: $	\mathcal{L}_2(U,X) := \left\{ Z \in \mathcal{L}(U,X) \mid \sum_{k=1}^{\infty} \|Z e_k\|_X^2 < \infty \right\},$  where \( \{e_k\}_{k \in \mathbb{N}} \) is a given ONB of \( U \). The space \( \mathcal{L}_2(U,X) \) is itself a Hilbert space when equipped with the inner product and hence the corresponding norm $\langle R, Z \rangle_{\mathcal{L}_2(U,X)} = \sum_{k=1}^{\infty} \langle R e_k, Z e_k \rangle_X,$ and, $	\| R \|_{\mathcal{L}_2(U,X)} = \left( \sum_{k=1}^{\infty} \| R e_k \|_X^2 \right)^{1/2}$, respectively, where \( R, Z \in \mathcal{L}_2(U,X) \).
 \end{Def}
 \begin{Rem}
          It is important to note that the definition of a Hilbert–Schmidt operator and its norm, defined as above, are independent of the choice of the ONB \(\{e_k\}\) (see \cite{PrevotRockner}).
        \end{Rem}
            Furthermore, for $\omega \in \Omega$, $t \ge 0$, $x \in  L^2_{\mathrm{div}},$ and $y \in L^2$, let $\{g_{1,k}(\omega, t, x)\}_{k \in \mathbb{N}} \subset L^2_{\mathrm{div}}$ and $\{g_{2,k}(\omega, t, y)\}_{k \in \mathbb{N}}$ $ \subset L^2$ be sequences such that 
			$\sum_{k=1}^{\infty} \| g_{1,k}(\omega, t, x) \|_{L^2_{\mathrm{div}}}^2 < \infty, \; \sum_{k=1}^{\infty} | g_{2,k}(\omega, t, y) |_{L^2}^2 < \infty.$
			Thus, the operators \( G_1 : L^2_{\mathrm{div}} \to \mathcal{L}_2(U_1,L^2_{\mathrm{div}}) \) and \( G_2 :L^2 \to \mathcal{L}_2(U_2,L^2) \) are well-defined, given by:
\begin{equation*}\label{eq:ass80}
			G_1(x) : e^1_k \mapsto g_{1,k}(x), \quad \forall x \in L^2_{\mathrm{div}}, \quad \forall k \in \mathbb{N},
\end{equation*}  			
	\begin{equation*}\label{eq:ass9}
			G_2(y) : e^2_k \mapsto g_{2,k}(y), \quad \forall y \in L^2, \quad \forall k \in \mathbb{N}.
	\end{equation*}
				Hence, the stochastic forcing terms are written as:
	\begin{equation}\label{equinoise}
		     \begin{aligned}
			        G_1(t,v) \, dW_1(t) = \sum_{k=1}^{\infty} g_{1,k}(t,v) \, d\beta^1_k(t),\ \ \ \
					G_2(t,\sigma) \, dW_2(t) = \sum_{k=1}^{\infty} g_{2,k}(t,\sigma) \, d\beta^2_k(t).
				\end{aligned}	
		\end{equation}
                 For a given Banach space $X$ (or a Hilbert space, as appropriate in context), we define, for $p \in [1,\infty)$:
        \begin{align*}
         	&L^p_{\text{loc}}([0,\infty); X) 
         	= \bigcap_{T > 0} L^p(0,T; X), \\
         	&C_w([0,\infty); X) 
         	= \left\{ f \in L^\infty_{\text{loc}}([0,\infty); X) \;|\; (f(t), x)_X \in C([0,\infty); \mathbb{R}) \;\forall x \in X \right\},\\
         	&L^p(\Omega;X):=	L^p(\Omega,\mathcal{F}, \mathbb{P};X) 
         	=  \left\{ f: \Omega \to X \;|\; f \;\text{is strongly measurable and} \int_{\Omega} \|f\|_X^p d \mathbb{P} < \infty \right\},\\
           &L^\infty(\Omega;X) := 	L^\infty(\Omega,\mathcal{F}, \mathbb{P};X) 
         	= \{\,f:\Omega\to X \text{ measurable} \;|\; 
         	\exists\, C<\infty,\; \|f(\omega)\|_X \le C \ \text{a.s.}\,\}.
         \end{align*}
      	\textbf{Stochastic Integral:} Let $U_i$ be separable Hilbert spaces with orthonormal bases $\{e^i_n\}_{n \in \mathbb{N}}$, and $X_i$ Hilbert spaces, for $i=1,2$. Suppose that $\rho_i$ is a predictable process with values in $\mathcal{L}_2(U_i, X_i)$, and satisfies $\rho_i \in L^2\big(\Omega; L^2_{\mathrm{loc}}([0,\infty); \mathcal{L}_2(U_i, X_i))\big)$ for $i = 1,2$. Then the stochastic integral of $\rho_i$ with respect to the cylindrical Wiener process $W_i$ (taking values in $U_i$) is denoted by $\int_0^t \rho_i(s) \, dW_i(s)$. This integral is a unique continuous square integrable  \( X_i \)-valued \( \mathcal{F}_t \)-martingale (see \cite{DaPrato2014}) satisfying: 
           \begin{equation*}
         ( \int_0^t \rho_i(s) \, dW_i(s), z_i )_{X_i}= \sum_{k=1}^{\infty} \int_0^t (\rho_i(s) e^i_k, z_i)_{X_i} \, d\beta_k^i(s),  \quad \text{for all} \: z_i \in X_i, \ \ i=1,2.
           \end{equation*}
         \begin{Def}(Uniformly Lipschitz Functions)\label{ULLF}
       		 Let \( (U, |\cdot|_U) \) and \( (Y, |\cdot|_Y) \) be two metric spaces. We say that a function $  l : \Omega \times [0,T] \times U \to Y$ is \emph{uniformly Lipschitz} with constant \( L_U \) if, for all \( x, y \in U \),
   \begin{equation} \label{e14}
         	|l(\omega,t,x) - l(\omega,t,y)|_Y \leq L_U |x - y|_U, \quad \forall (\omega,t) \in \Omega \times [0,T].
 \end{equation}
 			 In addition, any \(l\) (defined as above) is said to be have  \emph{linear growth condition} if, for all, \(x \in U\) there exists a positive constant \(B_U\),  such that
 \begin{equation} \label{e15}
   		|l(\omega,t,x)|_Y \leq B_U \big(1+ |x |_U \big), \quad \forall \;(\omega,t) \in \Omega \times [0,T].
   \end{equation}
   		 We denote the collection of all such functions satisfying \eqref{e14} and \eqref{e15} by \( \mathfrak{L}_{\mathrm{lin}}(U,Y) \).  
    \end{Def} 
    
  \subsection{Abstract Formulation}
 			 We take the Helmholtz-Leray orthogonal projection \(\mathcal{P}\) in  \eqref{e01} together with the notation introduced for v, \(\phi\) and \(\sigma\) to get the following abstract formulation: 
  \begin{subequations}
  			\begin{align}
 			 &dv + \nu A_0 v \, dt  + \eta\mathcal{A}_r(v) \, dt + B_0(v,v) \, dt -  R_0(\epsilon A_1 \phi,\phi) \, dt - z \, dt = G_1(v) \: dW_1 \quad \text{in}  ~\: V'+L^{\frac{r+1}{r}}_{\mathrm{div}},\label{a1}\\
  			 &d\phi +  A_1 \mu \, dt + B_1(v,\phi) \, dt - (P\sigma - A - \alpha u) h(\phi) \, dt = 0 \quad \text{in} ~\: (H^{1})^{\prime},\label{a2} \\  
  			 &d\sigma + A_1 \sigma \, dt + B_1(v,\sigma) \, dt  + c \sigma h(\phi) \, dt + b(\sigma - w) \, dt = G_2(\sigma) \, dW_2 \quad \text{in}~ \: (H^{1})^{\prime}, \label{a3}\\  
  			 &\mu = \epsilon A_1 \phi + \epsilon^{-1}\psi '(\phi), \label{a4} \\  
  			 &(v, \phi, \sigma)(0) = (v_0, \phi_0, \sigma_0)\label{a5},
  		\end{align}	 
  	\end{subequations}
 where with a slight abuse of notation, we have written \(\mathcal{P}(G_1(v) \, dW_1) = G_1(v) \, dW_1\) for simplicity.

		\section{Assumptions and Approximations}\label{Section 3}
\begin{Ass}	The following assumptions are used throughout the paper: 
	\begin{enumerate}[label=\textbf{[A\arabic*]}, ref=A\arabic*]
		       \item \label{[A1]}
		       The function $\psi \in C^4(\mathbb{R})$ is non-negative such that 
		       \begin{equation} \label{psi-decomp}
		       	\psi(s) = \psi_1(s) + \psi_2(s), \quad \forall s \in \mathbb{R},
		       \end{equation}  
		       where $\psi_1, \psi_2 \in C^4(\mathbb{R})$ and satisfies the growth conditions:  \begin{equation} \label{psi1-bounds}
		       	R_1 \big( 1 + |s|^{\rho - 2} \big) \leq \psi_1''(s) \leq R_2 \big( 1 + |s|^{\rho - 2} \big), \quad \forall s \in \mathbb{R},
		       \end{equation}  
		       \begin{equation} \label{psi2-bounds}
		       	|\psi_2''(s)| \leq R_3, \quad \forall s \in \mathbb{R},
		       \end{equation}  
		       where $R_i, \; i = 1,2,3$, are positive constants with $R_1 < R_2$ and $\rho \in [2,6]$ (see \cite{Ebenbeck2019b} or \cite{Ebenbeck2019}).
            Utilizing \eqref{psi-decomp}-\eqref{psi2-bounds}, one can further deduce that there exists a constant \(C_{\psi}>0\) such that:
	\begin{equation} \label{eq:ass1}
		\begin{cases}
				\psi(s) \geq C_{\psi} (|s|^{\rho}- 1); \quad \forall s \in \mathbb{R}, \\
				|\psi(s_1)-\psi(s_2)| \leq C_{\psi}(1+ |s_1|^{\rho-1} + |s_2|^{\rho-1})|s_1-s_2|;\quad \forall s_1, s_2 \in \mathbb{R},\\
				|\psi'(s)|  \leq C_{\psi}(1+ |s|^{\rho-1});\quad \forall s \in \mathbb{R}, \ \ \rho \in[2,6].
		\end{cases}
	\end{equation}
		    Furthermore, we make the following assumptions for various derivatives of \(\psi\):
 \begin{equation}
		\begin{aligned}\label{eq:ass2}
			\begin{cases}
			 & |\psi^{(k)}(s_1)-\psi^{(k)}(s_2)| \leq C_{\psi}(1+ |s_1|^{5-k} + |s_2|^{5-k})|s_1-s_2|;\quad \forall s_1, s_2 \in \mathbb{R}, \ k=1,2,3,\\ 
			  &|\psi^{(k)}(s)| \leq C_{\psi} (1 + |s|^{6-k}); \quad \forall s \in \mathbb{R}, \ \ k=2,3,4, 
			   \end{cases}
    	    \end{aligned}
	\end{equation}
		    where $\psi^{(k)}(\cdot)$ denotes the $k$-th derivative of $\psi.$
		   
			 \item \label{[A2]} The functions \( G_i \) for \( i \in \{1,2\} \) are defined as:
	\begin{equation*}\label{eq:ass3}
			\begin{cases}
				G_1 :& \Omega \times [0,\infty) \times L^2_{\mathrm{div}} \to \mathcal{L}_2(U_1,L^2_{\mathrm{div}}), \\
				G_2 :& \Omega \times [0,\infty) \times L^2 \to \mathcal{L}_2(U_2,L^2),
			\end{cases}
	\end{equation*}
	and moreover, we shall assume that the following hold (see Definition \ref{ULLF} for $\mathfrak{L}_{\mathrm{lin}}(\cdot,\cdot)$): 
	\begin{align*}
			 G_1 &\in \mathfrak{L}_{\mathrm{lin}}\big(L^2_{\mathrm{div}}, \mathcal{L}_2(U_1, L^2_{\mathrm{div}})\big)\, \cap\, \mathfrak{L}_{\mathrm{lin}}\big(V, \mathcal{L}_2(U_1, V)\big)\, \cap\, \mathfrak{L}_{\mathrm{lin}}\big(D(A_0), \mathcal{L}_2(U_1, D(A_0))\big),\\
			  G_2 &\in \mathfrak{L}_{\mathrm{lin}}\big(L^2, \mathcal{L}_2(U_2, L^2)\big)\, \cap\, \mathfrak{L}_{\mathrm{lin}}\big(H^1, \mathcal{L}_2(U_2, H^1)\big)\, \cap\, \mathfrak{L}_{\mathrm{lin}}\big(H^2, \mathcal{L}_2(U_2, H^2)\big).
			\end{align*}
		
        	\item \label{[A3]} The constants \(\nu, \eta, \epsilon, P, A, \alpha, b, c \) are positive, and the function \( h :  \mathbb{R} \to [0,1] \) is continuously differentiable with a bounded derivative (and hence Lipschitz-continuous with Lipschitz constant \( L_h > 0 \)).
		
		    \item \label{[A4]} \( z \) is a \( V' \)-valued predictable process, and \( u \) and \( w \) are predictable processes taking values in \( L^2 \) and \( (H^1)^{\prime} \), respectively, satisfying:
  \begin{equation*}\label{eq:ass12}
					(z,u,w)  \in L^2(\Omega; L^2_{\text{loc}}([0,\infty); \mathcal{H})). 
	       \end{equation*}
	       Further, for any \(T>0\), the function $u \in [0,1] $  a.e. in  $\Omega \times (0,T)\times \mathcal{O}$.
	        \item \label{[A5]} We assume that the initial conditions satisfy:
	\begin{equation*}\label{eq:ass13}
				(v_0, \phi_0, \sigma_0) \in L^2(\Omega; \mathcal{V}).      	
	\end{equation*}
		\end{enumerate}	
	\end{Ass}
 \begin{Rem}\label{furesti}    
		 It is straightforward to check that the typical classical double-well potential function  \( \psi(s) = \frac{1}{4}(s^2-1)^2\) fulfills the conditions \eqref{psi-decomp}-\eqref{eq:ass2}. Moreover, for the splitting (see \eqref{psi-decomp}-\eqref{psi2-bounds}), we can choose, $\psi_1(s) = \frac{1}{4}s^4$ and $\psi_2(s) = -\frac{1}{2} s^2 + \frac{1}{4}.$  
		It is important to emphasize that the constant \(C_{\psi} >0\) is generic; hence, it is allowed to change from line to line or estimate to estimate.
		\end{Rem} 
\begin{Rem}
    It is worth noting that in $d=2$, \eqref{eqequality} implies that  $\phi(\cdot \wedge \tau) \in L^2(\Omega, L^{\infty}([0,\infty); L^p))$, for all p $\in [1,\infty)$. Hence, the potential $\psi$ is allowed to have arbitrarily large polynomial growth as compared to $d=3$, where we are bound to choose a fixed polynomial growth as defined in \eqref{eq:ass1}.
\end{Rem}

	\subsection{Notion of a Solution}
        	We assume a stochastic system defined on a fixed filtered probability space \(\mathcal{S}_B\) (see \eqref{e7}) and \((v_0,\phi_0, \sigma_0)\) be \(\mathcal{F}_{0}\) measurable with values in \(\mathcal{V}\) satisfying \([\ref{[A5]}] \). Moreover, suppose we have [\ref{[A2]}]-[\ref{[A3]}] and, \([\ref{[A4]}] \) for stochastic forcing \(G_1, G_2\), function \(h\), and external sources, \(z,u,w\), respectively.
        	\begin{Def}(Local weak solutions)
			\label{def4.1} We call the pair \( \{(v,\phi, \sigma), \tau \} \) is a local weak solution of \eqref{a1}-\eqref{a5} if \( \tau \) is a strictly positive stopping time and \( (v(\cdot \wedge \tau), \phi(\cdot \wedge \tau), \sigma(\cdot \wedge \tau))\) is a predictable process in \( V'+ L^{\frac{r+1}{r}}_{\mathrm{div}} \times (H^{1})^{\prime}\times (H^{1})^{\prime} \),  $r \geq 1$ such that the following  conditions hold:
			\[
		\left\{	
	       \begin{aligned}\label{eq:sol1}
				&(v(\cdot \wedge \tau),\phi(\cdot \wedge \tau),\sigma(\cdot \wedge \tau)) \in L^2(\Omega; C_w([0,\infty); L^2_{\mathrm{div}}\times H^1 \times L^2)), \\
                & v \mathbf{1}_{t \leq \tau} \in L^2(\Omega; L^2_{\text{loc}}([0,\infty); V) \cap L^{r+1}(\Omega; L^{r+1}_{\text{loc}}([0,\infty);L^{r+1}_{\mathrm{div}})), ( \phi \mathbf{1}_{t \leq \tau}, \sigma \mathbf{1}_{t \leq \tau})\in L^2(\Omega; L^2_{\text{loc}}([0,\infty);  H^2 \times H^1 )).
		\end{aligned} 
 			\right.
 		\]
			Moreover, for any \( t > 0 \), and	for all  $(\theta_1, \theta_2, \theta_3) \in V \cap L^{r+1}_{\mathrm{div}} \times H^1\times H^1,  $ the solutions satisfy the weak formulation 
\begin{equation}\label{regu1}
	\begin{aligned}
				&\big(v(t \wedge \tau, \theta_1\big) + \int_0^{t \wedge \tau} \big\langle \nu A_0 v + \eta\mathcal{A}_r(v) + B_0(v,v)  - \epsilon R_0 (A_1 \phi, \phi), \theta_1 \big\rangle \,ds \\
				& \qquad = \big (v_0, \theta_1\big) + \int_0^{t \wedge \tau} \big\langle z, \theta_1 \big\rangle \, ds + \int_0^{t \wedge \tau} \sum_{k=1}^{\infty} \big\langle g_{1,k}(s,v),  \theta_1\big\rangle d\beta_k^1(s), \\  
				&\big(\phi(t \wedge \tau),\theta_2\big)+ \int_0^{t \wedge \tau} \big\langle A_1 \mu + B_1(v,\phi),  \theta_2\big\rangle ds = \big(\phi_0,  \theta_2\big) + \int_0^{t \wedge \tau} \big\langle \big(P\sigma - A - \alpha u \big) h(\phi), \theta_2 \big\rangle  \,ds, \\  
				&\big(\sigma(t \wedge \tau), \theta_3 \big) + \int_0^{t \wedge \tau} \big\langle A_1 \sigma + B_1(v,\sigma), \theta_3 \big\rangle  \, ds \\
				& \qquad = \big(\sigma_0, \theta_3\big) - \int_0^{t \wedge \tau} \big\langle c \sigma h(\phi) + b(\sigma - w), \theta_3 \big\rangle  ds + \int_0^{t \wedge \tau} \sum_{k=1}^{\infty} \big\langle g_{2,k}(s,\sigma), \theta_3 \big\rangle d\beta_k^2(s), \\ 
				&\mu =\epsilon A_1 \phi + \epsilon^{-1}\psi'(\phi).
			\end{aligned}	
		\end{equation}
		\end{Def}
		
		\begin{Def}(Local strong solutions)\label{def4.2} 
			The pair \( \{(v,\phi, \sigma), \tau \} \) is a local strong solution  of \eqref{a1}-\eqref{a5} if \( \tau \) is a strictly positive stopping time and  \( (v(\cdot \wedge \tau), \phi(\cdot \wedge \tau), \sigma(\cdot \wedge \tau)) \) is a predictable process in \( \mathcal{H} \) satisfying the following conditions:
			
			\[
			\left\{
			\begin{aligned}\label{eq:sol3}
				&(v(\cdot \wedge \tau),\phi(\cdot \wedge \tau),\sigma(\cdot \wedge \tau)) \in L^2(\Omega; C([0,\infty); V\times H^2 \times H^1)), \\
				&(v \mathbf{1}_{t \leq \tau},\phi \mathbf{1}_{t \leq \tau},\sigma \mathbf{1}_{t \leq \tau}) \in L^2(\Omega; L^2_{\text{loc}}([0,\infty); D(A_0) \times H^4 \times H^2)).
			\end{aligned}
			\right.
			\]
			Additionally, for any \(t>0\), the pair \( (v,\phi, \sigma) \) satisfies the following equivalent formulation of \eqref{regu1} as an equation in \( L^2_{\mathrm{div}} \times L^2\times L^2: \)
        \begin{align}
		 		 &v(t \wedge \tau) + \int_0^{t \wedge \tau} \big[ \nu A_0 v + \eta\mathcal{A}_r(v) + B_0(v,v) - \epsilon R_0 (A_1 \phi, \phi) \big] ds 
		 		 = v_0 + \int_0^{t \wedge \tau} z \, ds + \int_0^{t \wedge \tau} G_1(s,v) \, dW_1(s), \nonumber\\  
				&\phi(t \wedge \tau) + \int_0^{t \wedge \tau} \big[ A_1 \mu + B_1(v,\phi) \big] ds = \phi_0 + \int_0^{t \wedge \tau} \big( P\sigma - A - \alpha u \big) h(\phi) \,ds, \label{regu2}\\  
				&\sigma(t \wedge \tau) + \int_0^{t \wedge \tau} \big[ A_1 \sigma + B_1(v,\sigma) \big] ds
				 = \sigma_0 - \int_0^{t \wedge \tau} \big[ c \sigma h(\phi) + b(\sigma - w) \big] ds + \int_0^{t \wedge \tau} G_2(s,\sigma) \, dW_2(s),\nonumber \\ &\mu = \epsilon A_1 \phi + \epsilon^{-1}\psi'(\phi)\nonumber.
		\end{align}
\end{Def}
		\begin{Def}(Maximal strong solutions)
				\label{def4.3}
				   Suppose that \((v,\phi, \sigma)\) is a predictable process in \( V'+ L^{\frac{r+1}{r}}_{\mathrm{div}} \times (H^1)^{'}\times (H^1)^{'} \) and that \(\zeta\) is a strictly positive stopping time. The triple \(\{ (v,\phi, \sigma),\{\tau_n\}_{n \in \mathbb{N}}, \zeta\} \) is said to be a \(\mathcal{V}\)-valued maximal strong solution of the system \eqref{a1}-\eqref{a5}, provided  \(\tau_n\) is a sequence of stopping times that are \(\mathbb{P}\)-a.s. monotone increasing and convergent to \(\zeta\), whereby \(\{( v, \phi, \sigma), \tau_n\}\) is an \(\mathcal{V}\)-valued local strong solution of the system \eqref{a1}-\eqref{a5} for each $n$ so that
			 \begin{equation*}
				   \sup_{t \in [0,\zeta]} \|(v,\phi, \sigma)\|_{\mathcal{V}}^2 + \int_0^\zeta \left(\|A_0 v\|^2_{L^2_{\mathrm{div}}} + \|\phi\|_{H^4}^2 + \|\sigma\|_{H^2}^2\right)ds = \infty, 
			 \end{equation*}
				    on the set  \(\{ \zeta < \infty\}.\) A maximal strong solution \(\{ (v,\phi, \sigma),\{\tau_n\}, \zeta\} \) is said to be global if  \(\zeta = \infty\) a.s..
				    \end{Def}
			 \begin{Def}(Pathwise uniqueness)\label{pathuq}
                    Solutions of  \eqref{a1}-\eqref{a5} are said to be (pathwise) unique if, given any pair of local (weak or strong) solutions \( \{(v_1, \phi_1, \sigma_1), \tau_1\} \) and \( \{(v_2, \phi_2,\sigma_2), \tau_2\} \) defined on the same filtered probability space \(\mathcal{S}_B\) (see \eqref{e7}), which coincide at $t = 0$ on the event $\bar{\Omega} = \{\big(v_{1}, \phi_{1},\sigma_{1} \big)(0) = \big(v_{2}, \phi_{2},\sigma_{2} \big)(0)\} \subseteq \Omega$, then
            \begin{equation*}
		           \mathbb{P} \{\mathbf{1}_{\bar{\Omega}}(v_1, \phi_1, \sigma_1)(t \wedge \tau) = \mathbf{1}_{\bar{\Omega}}(v_2, \phi_2, \sigma_2)(t \wedge \tau);  \quad\forall t \ge 0 \} = 1, \ \ \text{where} \ \ \tau:= \tau_1 \wedge\tau_2.
            \end{equation*}
        \end{Def}
  
\subsection{The Galerkin Approximation}
                We proceed with the Galerkin approximation in the spectral subspaces defined earlier. Let us recall that for a fixed \(n \in \mathbb{N}\), \(\mathcal{X}_n =\) span\{\(w_1,\ldots, w_n\)\}, \(\mathcal{Y}_n =\) span\{\(\psi_1,\ldots, \psi_n\)\}, where \(\{ w_i, i \in \mathbb{N}\}\) and \( \{\psi_i, i\in \mathbb{N}\}\) are the eigenfunctions of the Stokes and Neumann-Laplace operators, respectively. For an arbitrary but fixed $T > 0,$ the Galerkin approximation consists of seeking an adapted process \( (v_n,\phi_n, \sigma_n) \in  C\left([0,T];\mathcal{X}_n\times \mathcal{Y}_n\times \mathcal{Y}_n \right)\). We say that $(v_n,\phi_n, \sigma_n)$ solves the Galerkin system of order \( n \) if it satisfies
   \begin{subequations}
	\begin{align}
				&dv_n + \nu A_0 v_n\,dt + \mathcal{P}_1^n \big[\eta \mathcal{A}_r(v_n) + B_0(v_n, v_n) - \epsilon R_0(A_1 \phi_n, \phi_n) - z(t) \big]\,dt \nonumber \\&\hspace{2in} = \sum_{k=1}^\infty \mathcal{P}_1^n g_{1,k}(t, v_n)\, d\beta^1_k(t),\label{gal21} \\
				&d\phi_n + \mathcal{P}_2^n \big[ A_1 \mu_n + B_1(v_n, \varphi_n) - (P \sigma_n - A -\alpha u(t))h(\phi_n) \big]\,dt = 0, \label{gal22}\\
				&d\sigma_n +  A_1 \sigma_n\,dt + \mathcal{P}_2^n \big[B_1(v_n, \sigma_n) +c \sigma_n h(\phi_n) + b( \sigma_n -w(t)) \big]\,dt = \sum_{k=1}^\infty \mathcal{P}_2^n g_{2,k}(t, \sigma_n)\, d\beta^2_k(t), \label{gal23}\\
				&\mu_n = \epsilon A_1 \phi_n + \epsilon^{-1} \psi'(\phi_n), \label{gal24}\\
				& \mathcal{P}_n(v_0, \phi_0, \sigma_0) := (v_{n}^{(0)}, \phi_{n}^{(0)}, \sigma_{n}^{(0)})\label{gal25}.
		\end{align}
	\end{subequations}
				Here, \(\mathcal{P}_n := (\mathcal{P}^1_n, \mathcal{P}^2_n) :  L^2_{\mathrm{div}} \times L^2  \to \mathcal{X}_n \times \mathcal{Y}_n\) is defined as the orthogonal projection. Applying the argument used in the proof of \cite[Theorem~1.2.1]{Breckner1998}, we deduce the existence and uniqueness of a solution process \((v_n, \phi_n, \sigma_n) \in L^2(\Omega;\mathcal{X}_n\times\mathcal{Y}_n\times\mathcal{Y}_n )\) to \eqref{gal21}-\eqref{gal25}, with trajectories in $C\left([0,T];\mathcal{X}_n\times \mathcal{Y}_n\times \mathcal{Y}_n \right)$ almost surely (a.s.).
               
               We now move forward to establish the main result of this section. It is worth noting that to prove the existence of local strong solutions, we are employing the pairwise comparison techniques (Lemma 5.1, \cite{Glatt-Holtz2009}) developed for the stochastic NS system. 
\begin{Pro}\label{nes_com}
		     	For $d=2,3$ and $ r \geq 1 $ in  $d=2$, $ r \in [1,3] $ in  $d=3,$   let $(v_n, \phi_n, \sigma_n), n \in \mathbb{N},$ be the sequence of solutions of  \eqref{gal21}-\eqref{gal25}. For any $\widetilde{K} > 0,$ assume that
	           $\|(v_0, \phi_0, \sigma_0)\|_{\mathcal{V}} \leq \widetilde{K}\; \text{ a.s.},$ and that Assumption [\ref{[A2]}]-[\ref{[A3]}], and [\ref{[A4]}] are satisfied  by  $G_i,i=1,2,$ $h,$ and $z,u,w$, respectively. 
			 
           For some $T > 0$ and $M > 1$, consider the collection of stopping times
	\begin{equation}\label{stop}
		\begin{cases}
			\begin{aligned}
			\mathcal{T}^{T,M}_n = \{ \tau &\leq T : \left[ \sup_{t \in [0,\tau]} \|(v_n,\phi_n, \sigma_n)\|_{\mathcal{V}}^2 + \int_0^\tau \left(\nu\|A_0 v_n\|^2_{L^2_{\mathrm{div}}} + \epsilon\|\phi_n\|_{H^4}^2 + \|\sigma_n\|_{H^2}^2\right)ds \right]^{1/2}\\
			\qquad  &\leq \|(v_{n}^{(0)},\phi_{n}^{(0)}, \sigma_{n}^{(0)})\|_{\mathcal{V}} + M \},
			\end{aligned}
			\end{cases}
	\end{equation}
	    	and define $\mathcal{T}^{T,M}_{m,n} := \mathcal{T}^{T,M}_m \cap \mathcal{T}^{T,M}_n.$ Then:
	\begin{itemize}
		    	\item[(i)] For any $T > 0$ and $M > 1$,
		\begin{equation}\label{first}
			\begin{aligned}
				&\lim_{n \to \infty} \sup_{m > n} \sup_{\tau \in \mathcal{T}^{T,M}_{m,n}} \mathbb{E}\Bigg[ \sup_{t \in [0,\tau]} \|(v_m,\phi_m, \sigma_m) - (v_n,\phi_n, \sigma_n)\|_{\mathcal{V}}^2	 \\
				&\hspace{1in} + 2 \int_0^\tau \Bigg( \nu\|A_0(v_m - v_n)\|^2_{L^2_{\mathrm{div}}} +\epsilon \|\phi_m - \phi_n\|_{H^4}^2 + \|\sigma_m - \sigma_n\|_{H^2}^2 \Bigg)ds \Bigg] = 0.
			\end{aligned}
	\end{equation}
			\item[(ii)] Moreover, if for $n \in \mathbb{N}$, $\zeta > 0$ and a stopping time $\tau$, we define
	\begin{equation}\label{secd}
		\begin{aligned}
				Z_n(\tau,\zeta) := \Bigg\{ \sup_{t \in [0, \tau \wedge \zeta]} \|(v_n,\phi_n, \sigma_n)\|_{\mathcal{V}}^2 + &\int_0^{\tau \wedge \zeta} ( \nu\|A_0 v_n\|^2_{L^2_{\mathrm{div}}} + \epsilon\|\phi_n\|_{H^4}^2 + \|\sigma_n\|_{H^2}^2 )ds  \\
				&> \|(v_{n}^{(0)},\phi_{n}^{(0)}, \sigma_{n}^{(0)})\|_{\mathcal{V}}^2 + (M-1)^2\Bigg\},
			\end{aligned}
		\end{equation}
			such that\: $\lim_{\zeta \to 0} \sup_n \sup_{\tau \in \mathcal{T}^{M,T}_n} \mathbb{P}\left(Z_n(\tau,\zeta)\right) = 0$.
        \end{itemize}
		\end{Pro}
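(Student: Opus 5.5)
We follow the pairwise comparison scheme of \cite[Lemma 5.1]{Glatt-Holtz2009}. For (i), fix $m>n$ and set $U=v_m-v_n$, $\Phi=\phi_m-\phi_n$, $\Sigma=\sigma_m-\sigma_n$. We apply It\^o's formula to $\|A_0^{1/2}U\|^2_{L^2_{\mathrm{div}}}$, to $|\Phi|^2_{L^2}+|A_1\Phi|^2_{L^2}$ (the $H^2$ level for $\phi$, equivalent to $\|\Phi\|_{H^2}^2$ by \eqref{norm1}), and to $|\Sigma|^2_{L^2}+|A_1^{1/2}\Sigma|^2_{L^2}$. This produces the dissipative terms $2\nu\|A_0U\|^2$, $2\epsilon|A_1^2\Phi|^2$ (up to lower order, giving $\epsilon\|\Phi\|_{H^4}^2$ via \eqref{norm3}) and $2|A_1\Sigma|^2$.

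The right-hand sides split into two kinds of contributions. The first kind are \emph{difference terms}, in which a nonlinearity evaluated at $m$ minus the same nonlinearity at $n$ is paired with $A_0U$, $A_1^2\Phi$ or $A_1\Sigma$. These are the terms $B_0(v_m)-B_0(v_n)$, $\mathcal{A}_r(v_m)-\mathcal{A}_r(v_n)$, $R_0(A_1\phi_m,\phi_m)-R_0(A_1\phi_n,\phi_n)$, $B_1$, $\psi'(\phi_m)-\psi'(\phi_n)$, the $h$-terms and the noise differences. The second kind are \emph{projection defect terms} of the form $(\mathcal{P}_1^m-\mathcal{P}_1^n)N(v_n,\phi_n,\ldots)$, which arise because the two systems are projected differently. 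For the difference terms we use \eqref{b_03}, \eqref{b1_2}, \eqref{b1_4}, \eqref{r_01}, \eqref{for1} and \eqref{eq:ass2}, together with Agmon and Gagliardo--Nirenberg. Each is bounded by $\delta\times$(dissipation of the difference) plus $C\,\Lambda(s)\,\|(U,\Phi,\Sigma)\|^2_{\mathcal{V}}$, where $\Lambda$ is built from $\|(v_j,\phi_j,\sigma_j)\|_{\mathcal{V}}$ and $\|(v_j,\phi_j,\sigma_j)\|_{\mathcal{Z}}^2$, $j=m,n$. On $\tau\in\mathcal{T}^{T,M}_{m,n}$ we have $\int_0^\tau\Lambda\,ds\le C(\widetilde K,M)$ deterministically. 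The Forchheimer difference is handled by \eqref{for1} and $\||v_m|+|v_n|\|_{L^\infty}^{r-1}$, or in $d=3$ by $H^2\hookrightarrow L^\infty$ with the exponent restricted so that $\int_0^\tau\|v\|^{r-1}_{L^{\infty}}$ or $\|v\|^{2(r-1)}_{L^{6(r-1)}}$ is controlled by $\mathcal{T}^{T,M}$. This restriction is exactly where $r\le3$ is needed in $d=3$ ($\|\,|v|^{r-1}\nabla U\|_{L^2}$ needs $v\in L^{\infty}$ to power $r-1\le 2$, compatible with $\int\|v\|^2_{H^2}$). The projection defect terms are bounded using Lemma~\ref{norm_esti}, for example $\|\mathcal{Q}_1^n f\|_{L^2}\le\lambda_n^{-1/2}\|f\|_{V}$ and $\|\mathcal{Q}_2^n g\|\le C\beta_n^{-1/2}\|g\|_{H^1}$. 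This yields $C(\widetilde K,M)\,\lambda_n^{-\gamma}$ for some $\gamma>0$, possibly after interpolation. The same bound holds for the initial datum differences $\|(\mathcal{P}^m-\mathcal{P}^n)(v_0,\phi_0,\sigma_0)\|_{\mathcal{V}}$, which tend to $0$ by dominated convergence.

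Taking $\sup_{t\le\tau}$, the stochastic integrals are treated with Burkholder--Davis--Gundy and [\ref{[A2]}], absorbing half of $\mathbb{E}\sup\|\cdot\|^2_{\mathcal{V}}$. Because $\Lambda$ is only pathwise integrable, a plain Gronwall is not available; we use instead the stochastic Gronwall lemma of \cite{Glatt-Holtz2009} (Lemma 5.3 there), with $\int_0^\tau\Lambda\le C$ on $\mathcal{T}^{T,M}_{m,n}$. This gives $\mathbb{E}[\cdots]\le C(\widetilde K,M,T)\big(\lambda_n^{-\gamma}+\mathbb{E}\|(\mathcal{P}^m-\mathcal{P}^n)(\text{data})\|_{\mathcal{V}}^2\big)$ uniformly in $m>n$ and $\tau$, so the limit in (i) is zero.

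For (ii), we apply It\^o to $\|(v_n,\phi_n,\sigma_n)\|^2_{\mathcal{V}}$ up to $\tau\wedge\zeta$ with $\tau\in\mathcal{T}^{T,M}_n$. On this interval all norms are bounded by $\widetilde K+M$, so every drift term is bounded by $\delta$(dissipation) $+C(\widetilde K,M)(1+\|\cdot\|_{\mathcal{Z}}^{2\theta})$ with $\theta<1$, plus forcing terms. The forcing terms, after H\"older in time, give $C(\widetilde K,M)(\zeta^{1-\theta}+\int_0^{\zeta}\|(z,u,w)\|_{\mathcal{H}}^2)$, and the last quantity tends to $0$ in $L^1(\Omega)$ as $\zeta\to0$. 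By Chebyshev and BDG for the martingale part, $\mathbb{P}(Z_n(\tau,\zeta))\le (M-1)^{-2}\,\mathbb{E}[\,\text{excess}\,]\to0$ uniformly in $n$ and $\tau$.

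The main obstacle is the $H^4$-level estimate for $\Phi$. Expanding $A_1\psi'(\phi_m)-A_1\psi'(\phi_n)$ involves terms like $\psi'''(\phi)|\nabla\phi|^2$, and the coupling $B_1(U,\phi)$, $R_0$ must be tested against $A_1^2\Phi$. These must be closed with $\rho$-growth bounds \eqref{eq:ass2} in $d=3$ using only $\phi\in L^\infty_tH^2\cap L^2_tH^4$. We plan to handle them by $H^2\hookrightarrow L^\infty$ for $\phi$ and Remark~\ref{emer} interpolation.
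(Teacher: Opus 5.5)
Your proposal is correct and follows essentially the same route as the paper: It\^o's formula on the $\mathcal{V}$-level norms of the Galerkin differences, a split into genuine difference terms and $\mathcal{Q}^n$-projection defects controlled by Lemma~\ref{norm_esti}, the stochastic Gronwall lemma using that $\int_0^\tau R\,ds$ is bounded on $\mathcal{T}^{T,M}_{m,n}$, and for (ii) It\^o's formula plus Chebyshev and a martingale inequality on $[0,\tau\wedge\zeta]$. The one noticeable difference is the Forchheimer difference: the paper bounds it by H\"older as $\|U\|_{L^s}\,\||v_m|+|v_n|\|^{r-1}_{L^{k(r-1)}}\|A_0U\|_{L^2}$ with $\frac1s+\frac1k=\frac12$ and the embedding $H^1\hookrightarrow L^p$, which is where $r\le 3$ enters in $d=3$ and which imposes no restriction in $d=2$. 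Your $L^\infty$ route with Agmon's inequality alone would also cap $r\le 3$ in $d=2$, so there you should use the $L^p$ variant you mention.
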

	\begin{proof} Let us first prove the convergence (i). 
			 Given $m > n$ and letting $(v_{m,n}, \phi_{m,n}, \sigma_{m,n}) = (v_m, \phi_m, \sigma_m) - (v_n, \phi_n, \sigma_n)$, we derive that $(v_{m,n}, \phi_{m,n}, \sigma_{m,n})$ satisfies:
\begin{subequations}\label{esic}	
		\begin{align}
			&dv_{m,n} + \Big[\nu A_0 v_{m,n} + \eta\Big(\mathcal{P}_1^m \mathcal{A}_r(v_m)- \mathcal{P}_1^n \mathcal{A}_r(v_n) \Big)+ \mathcal{P}_1^m B_0(v_m,v_m) - \mathcal{P}_1^n B_0(v_n,v_n) \nonumber\\
			&\quad \quad- \epsilon\big(\mathcal{P}_1^m R_0(A_1 \phi_m, \phi_m) - \mathcal{P}_1^n R_0(A_1 \phi_n, \phi_n)\big) - (\mathcal{P}_1^m-\mathcal{P}_1^n)z \Big]dt \nonumber\\
			& \quad \quad \quad=  \sum_{k=1}^\infty [\mathcal{P}_1^m g_{1,k}(t,v_m) - \mathcal{P}_1^n g_{1,k}(t,v_n)] d\beta^1_k(t), \label{esic1}\\[9pt]
			&d\phi_{m,n} + \Big[\mathcal{P}_2^m A_1 \mu_m - \mathcal{P}_2^n A_1 \mu_n + \mathcal{P}_2^m B_1(v_m, \phi_m)- \mathcal{P}_2^n B_1(v_n, \phi_n)\nonumber\\
			&\quad \quad - \Big(\mathcal{P}_2^m(P\sigma_m- A- \alpha u)h(\phi_m) - \mathcal{P}_2^n(P\sigma_n- A- \alpha u)h(\phi_n)\Big)\Big] dt= 0,\label{esic2}
            \end{align}
            \begin{align}
			&d\sigma_{m,n} +  \Big[A_1\sigma_{m,n} + \mathcal{P}_2^m B_1(v_m, \sigma_m)- \mathcal{P}_2^n B_1(v_n, \sigma_n) + c\Big(\mathcal{P}_2^m \sigma_m h(\phi_m)- \mathcal{P}_2^n \sigma_n h(\phi_n) \Big)\nonumber\\
			&\quad + b\sigma_{m,n}  - b(\mathcal{P}_2^m-\mathcal{P}_2^n)w\Big]dt = \sum_{k=1}^\infty [\mathcal{P}_2^m g_{2,k}(t,\sigma_m) - \mathcal{P}_2^n g_{2,k}(t,\sigma_n)] d\beta^2_k(t),\label{esic3} \\
			&\mathcal{P}_2^m A_1 \mu_m - \mathcal{P}_2^n A_1 \mu_n = \epsilon A_1^2  \phi_{m,n} + \epsilon^{-1} \Big(\mathcal{P}_2^m A_1 \psi'(\phi_m) - \mathcal{P}_2^n A_1 \psi'(\phi_n)\Big), \label{esic4}\\
			&(v_{m,n}, \phi_{m,n}, \sigma_{m,n})(0) = (\mathcal{P}^m - \mathcal{P}^n)(v_0, \phi_0, \sigma_0)\label{esic5}.
		\end{align}
	\end{subequations}
			We shall do the estimations in four steps: \\
			\textbf{Step 1 (Estimate of the velocity equation).}
			Let \( \tau \in \mathcal{T}^{T,M}_{m,n} \) be arbitrary but fixed. Let \( \tau_a \) and \( \tau_b \) be given stopping times such that \( 0 \leq \tau_a \leq  \tau_b \leq \tau \). By applying the It\^o formula to the processes \( \|A^{1/2}_0 v_{m,n}(\cdot)\|^2_{L^2_{\mathrm{div}}}\) via \eqref{esic1}, integrating over \( [\tau_a, r] \) followed by taking the supremum over \( [\tau_a, \tau_b] \),  we get
	\begin{equation}\label{esiv1}
		\begin{aligned}
			\mathbb{E} \sup_{t \in [\tau_a, \tau_b]} \|v_{m,n}(t)\|^2+ 2\nu \mathbb{E} \int_{\tau_a}^{\tau_b}  \|A_0 v_{m,n}\|^2_{ L^2_{\mathrm{div}}} ds
			\leq \mathbb{E} \|v_{m,n}(\tau_a)\|^2+\sum_{i=1}^5\mathfrak I_i, 
     \end{aligned} \end{equation}      
        where 
\begin{eqnarray*}
         \mathfrak I_1&:=&   2\eta\mathbb{E}\int_{\tau_a}^{\tau_b}|(\mathcal{P}_1^m \mathcal{A}_r(v_m) - \mathcal{P}_1^n \mathcal{A}_r(v_n), A_0 v_{m,n})| ds, \\
	       \mathfrak I_2&:=& 2\mathbb{E} \int_{\tau_a}^{\tau_b} |(\mathcal{P}_1^m B_0(v_m,v_m) - \mathcal{P}_1^n B_0(v_n,v_n), A_0 v_{m,n})| ds, 
\end{eqnarray*} 
\begin{eqnarray*}
\mathfrak I_3&:=& 2\epsilon \, \mathbb{E} \int_{\tau_a}^{\tau_b} |(\mathcal{P}_1^m R_0(A_1 \phi_m, \phi_m) - \mathcal{P}_1^n R_0(A_1 \phi_n, \phi_n), A_0 v_{m,n})| ds, \\
\mathfrak I_4&:=&2\mathbb{E} \int_{\tau_a}^{\tau_b} \big|(	(\mathcal{P}_1^m-\mathcal{P}_1^n)z, A_0 v_{m,n} )\big| ds + \, \mathbb{E} \int_{\tau_a}^{\tau_b} \|\mathcal{P}_1^m  G_1(s,v_m) - \mathcal{P}_1^n G_1(s,v_n)\|_{\mathcal{L}_2(U_1,V)}^2 ds, \\
        \mathfrak I_5&:=&2\mathbb{E} \sup_{r \in [\tau_a, \tau_b]} \left|\sum_{k=1}^\infty \int_{\tau_a}^{r} (\mathcal{P}_1^m g_{1,k}(s,v_m) - \mathcal{P}_1^n g_{1,k}(s,v_n), A_0 v_{m,n}) d\beta^1_k(s)\right|.
\end{eqnarray*} 
           We next estimate each of the terms appearing on the right-hand side of equation \eqref{esiv1}.
          One can readily derive the estimate for $\mathfrak{I}_1$ when $r=1$, and so assume that $r > 1$.
	\begin{equation}\label{dec1}
		\begin{aligned}	
			\eta(\mathcal{P}_1^m \mathcal{A}_r(v_m) - \mathcal{P}_1^n \mathcal{A}_r(v_n), A_0 v_{m,n}) &= \eta\left((\mathcal{A}_r(v_m)-\mathcal{A}_r(v_n)) +(\mathcal{P}_1^m - \mathcal{P}_1^n) \mathcal{A}_r(v_n), A_0 v_{m,n}\right)
			:= I_1 + I_2
		\end{aligned}
	\end{equation}\label{dec2}
           The properties of $\mathcal P$, estimate \eqref{for1} and generalized  H\"older’s inequality lead to the following:
	\begin{equation*}
		\begin{aligned}
	|I_1| &
             = |\eta(|v_m|^{r-1}v_m - |v_n|^{r-1}v_n,  A_0 v_{m,n})|
             \leq  C_{c_r, \eta}\int_{\mathcal{O}} |v_{m,n}|\, (~|v_m|+|v_n|~)^{r-1}| A_0 v_{m,n}| \,dx\\
			 &\leq C_{c_r, \eta} \|v_{m,n}\|_{L^{s}_{\mathrm{div}}}\, \|\,|v_m|+|v_n|\,\|_{L^{k(r-1)}_{\mathrm{div}}}^{r-1}\|A_0 v_{m,n}\|_{L^2_{\mathrm{div}}}, \quad \, s, k >2, \quad \quad  \frac{1}{s} + \frac{1}{k} + \frac{1}{2} = 1.
			\end{aligned}
		\end{equation*}
        
			For $ d=2$, since $\mathbb{H}^1 \hookrightarrow \mathbb{L}^p$ for all $p \geq 2$, and given that $\frac{1}{s} + \frac{1}{k} = \frac{1}{2}$ with $s > 2$, we have $k = \frac{2s}{s-2} > 2$. This ensures both $\mathbb{H}^1 \hookrightarrow \mathbb{L}^s$ and $\mathbb{H}^1 \hookrightarrow \mathbb{L}^{k(r-1)}$ hold for any $r \in (1, \infty)$.
			For $ d=3,$   $\mathbb{H}^1 \hookrightarrow \mathbb{L}^p$ for all $p \in [2,6]$, and given that $\frac{1}{s} + \frac{1}{k} = \frac{1}{2}$ with $s \in (2,6]$, we have $k = \frac{2s}{s-2} \in [3,\infty)$ and $\mathbb{H}^1 \hookrightarrow \mathbb{L}^s$. The embedding $\mathbb{H}^1 \hookrightarrow \mathbb{L}^{k(r-1)}$ requires $k(r-1) \in [2,6]$, which yields the admissible range $r \in (1, 3]$.\\
			Therefore, for any $ r \in (1, \infty)$ when $d= 2$ and $ r \in (1,3]$ when $d= 3,$ we infer  that 
\begin{equation}\label{eststa1}
		\begin{aligned}
			|I_1| 	& \leq C_{c_r, \eta} \|v_{m,n}\| \,\|\,|v_m|+|v_n|\,\|^{r-1}\|A_0 v_{m,n}\|_{ L^2_{\mathrm{div}}}\\
			& \leq  C_{c_r, \eta, \nu_1}(~\|v_m\|^{2(r-1)} + \|v_n\|^{2(r-1)}) \|v_{m,n}\|^2+ \nu_1\|A_0 v_{m,n}\|^2_{ L^2_{\mathrm{div}}},
		\end{aligned}
	\end{equation}
		where $\nu_1>0.$  By employing H\"older's inequality, estimate \eqref{est1} and the boundedness of $\mathcal P,$ we get	
	\begin{equation*}
		\begin{aligned}	
			|I_2| & 
			\leq \eta \|\mathcal{Q}^n_1\mathcal{A}_r(v_n)\|_{ L^2_{\mathrm{div}}} \|A_0 v_{m,n}\|_{ L^2_{\mathrm{div}}}\\
			& \leq \nu_2\|A_0 v_{m,n}\|^2_{L^2_{\mathrm{div}}} + \frac{C_{\eta ,\nu_2} }{\lambda_n}\|\mathcal{Q}^n_1\mathcal{A}_r(v_n)\|^2
			 \leq \nu_2\|A_0 v_{m,n}\|^2_{L^2_{\mathrm{div}}} + \frac{ C_{\eta,\nu_2}}{\lambda_n}\|~|v_n|^{r-1}v_n~\|^2.
		\end{aligned}
	\end{equation*}
			By invoking \eqref{for2}, we get \(\|~|v_n|^{r-1}v_n~\|^2 \leq C_r\big\|\,|v_n|^{r-1} |\nabla v_n|\,\big\|^2_{L^2_{\mathrm{div}}}\).  Consequently,  $$|I_2| \leq \nu_2\|A_0 v_{m,n}\|^2_{L^2_{\mathrm{div}}} + \frac{C_{\eta,c_r,\nu_2}}{\lambda_n}\|v_n\|_{L^{2k(r-1)}_{\mathrm{div}}}^{2(r-1)} \|\nabla v_n\|_{L^{2s}_{\mathrm{div}}}^2, \ s,k>1, \ \frac{1}{s} + \frac{1}{k} = 1.$$ 
				Using $\mathbb{H}^1 \hookrightarrow \mathbb{L}^p$  with H\"older exponents satisfying $\frac{1}{s} + \frac{1}{k} = 1$, we obtain $r \in (1,\infty)\,( \text{in}\:~  d= 2)$ and  $r \in (1, 3] \,( \text{in}\:~  d= 3)$ hence, the embeddings 
				$\mathbb{H}^1 \hookrightarrow \mathbb{L}^{2s}$ and 
				$\mathbb{H}^1 \hookrightarrow \mathbb{L}^{2k(r-1)}$	follow. Thus, we have
		\begin{equation}\label{est2'}
		      |I_2| \leq \nu_2\|A_0 v_{m,n}\|^2_{L^2_{\mathrm{div}}} + \frac{C_{\eta,c_r,\nu_2}}{\lambda_n}\|v_n\|^{2(r-1)} \|A_0 v_n\|^2_{L^2_{\mathrm{div}}}.
	\end{equation}
	           The bound for the bilinear term in $\mathfrak J_3$ and the coupling term in $\mathfrak J_4$  follow from  Proposition 3.1 of \cite{Deugoue2021}:
\begin{equation}
		\begin{aligned}
			&(\mathcal{P}_1^m B_0(v_m, v_m) - \mathcal{P}_1^n B_0(v_n, v_n), A_0 v_{m,n})\\ 
			&\leq  \nu_3 \|A_0 v_{m,n}\|^2_{L^2_{\mathrm{div}}} + c_{\nu_3} \|v_{m,n}\|^2 \|A_0 v_m\|^2_{ L^2_{\mathrm{div}}} 
			+ c_{\nu_3} \|v_n\|^4 \, \|v_{m,n}\|^2 + \frac{c_{\nu_3}}{\lambda_n^{1/2}} \|v_n\|^2\|A_0 v_n\|^2_{L^2_{\mathrm{div}}}.
		\end{aligned}
	\end{equation} and
	\begin{eqnarray}\label{dec3}
		\lefteqn{	|\epsilon(\ \mathcal{P}_1^m R_0(A_1 \phi_m, \phi_m) - \ \mathcal{P}_1^n R_0(A_1 \phi_n, \phi_n), A_0 v_{m,n})|} \\
			&&\leq {\nu_4} \|A_0 v_{m,n}\|^2_{L^2_{\mathrm{div}}} 
		   + \epsilon_0 \|\phi_{m,n}\|^2_{H^4}
	       + c_{\nu_4,\,\epsilon_0,\epsilon}\Big( |A_1 \phi_m|^4_{ L^2} 
		    +  |A_1 \phi_n|^4_{ L^2} \Big) \| \phi_{m,n}\|^2_{H^2}
	    	+  \frac{c_{\nu_4, \, \epsilon}}{\lambda_n}\left(\,|A_1 \phi_n|^4_{ L^2} + \| \phi_n\|^2_{H^2} \|\phi_n\|^2_{H^4}\,\right).\nonumber
\end{eqnarray}
            Next, using the uniform Lipschitz continuity and linear growth of \(G_1\) (see [\ref{[A2]}] and Definition \ref{ULLF}), and estimate \eqref{est1}, we have
\begin{eqnarray} \label{ceqn}
	\lefteqn{\big|(	(\mathcal{P}_1^m-\mathcal{P}_1^n)z, A_0 v_{m,n} )\big|+\|\mathcal{P}_m^1 G_1(s, v_m) - \mathcal{P}_n^1 G_1(s, v_n)\|_{\mathcal{L}_2(U_1,V)}^2}\nonumber\\
    &&\leq C_{\nu_5}\|\mathcal{Q}_1^n\,z\|^2_{L^2_{\mathrm{div}}}+ {\nu_5}\|A_0 v_{m,n}\|^2_{L^2_{\mathrm{div}}}+c_{L_{V}} \|v_{m,n}\|^2 + \frac{c_{B_{D(A_0)}}}{\lambda_n}
	\left( 1+ \|A_0 v_n\|^2_{L^2_{\mathrm{div}}} \right).
\end{eqnarray}
		     The  estimate for the stochastic integral $\mathfrak J_5$ follows from that of a similar integral in \cite{Glatt-Holtz2009}: 
	\begin{equation}\label{exis8} 
		\begin{aligned}
			\mathfrak J_5 \leq \frac{1}{2} \mathbb{E} \sup_{s \in [\tau_a, \tau_b]} \| v_{m,n}(s) \|^2 
			+ c_{L_{V},B_{D(A_0)}} \mathbb{E} \int_{\tau_a}^{\tau_b} 
			\left( \| v_{m,n}(s) \|^2+ \frac{1}{\lambda_n}  + \frac{1}{\lambda_n} \| A_0 v_n(s) \|^2_{L^2_{\mathrm{div}}} 
			\right) ds.
		\end{aligned}
	\end{equation}
        	\textbf{Step 2 (Estimate of the phase parameter equation).} Taking the  \( L^2 \) inner product of \eqref{esic2}  with \( 2(A_1^2 \phi_{m,n}+ \phi_{m,n}) \), integrating by parts, using \eqref{esic4}, and
			repeating the steps of \eqref{esiv1} yield:
	\begin{equation}\label{esiphi1}
		\begin{aligned}
			&\mathbb{E} \sup_{t \in [\tau_a, \tau_b]} \| \phi_{m,n}(t)\|^2_{H^2} + 2 \epsilon\,\mathbb{E} \int_{\tau_a}^{\tau_b} \left( |A_1\phi_{m,n}|^2_{ L^2} + |A_1^2 \phi_{m,n}|^2_{ L^2} \right) ds \\
			&\leq \mathbb{E} \| \phi_{m,n}(\tau_a)\|_{H^2}^2 + 2\epsilon^{-1}\, \mathbb{E} \int_{\tau_a}^{\tau_b}  |(\mathcal{P}_2^m A_1 \psi'(\phi_m) - \mathcal{P}_2^n A_1 \psi'(\phi_n),\, \phi_{m,n} + A_1^2 \phi_{m,n})| ds \\
			&\qquad + 2\mathbb{E} \int_{\tau_a}^{\tau_b} |(\mathcal{P}_2^m B_1(v_m, \phi_m) - \mathcal{P}_2^n B_1(v_n, \phi_n),\, \phi_{m,n} + A_1^2 \phi_{m,n})| ds \\
	    	&\qquad + 2 \mathbb{E} \int_{\tau_a}^{\tau_b}|(\mathcal{P}_2^m(P\sigma_m- A- \alpha u)h(\phi_m) - \mathcal{P}_2^n(P\sigma_n- A- \alpha u)h(\phi_n), \,\phi_{m,n}+A_1^2 \phi_{m,n})| ds. 
	    	\end{aligned}
	\end{equation}
			We shall now estimate each of the terms of the right-hand side of \eqref{esiphi1}. First, we infer that
		\begin{equation}\label{dec5}
			\begin{aligned}
				& \epsilon^{-1}(\mathcal{P}_2^m A_1 \psi'(\phi_m) - \mathcal{P}_2^n A_1 \psi'(\phi_n), \, \phi_{m,n}+ A_1^2 \phi_{m,n}) \\
				&=   \epsilon^{-1}\left((A_1 \psi'(\phi_m) - A_1 \psi'(\phi_n)) + (\mathcal{P}_2^m - \mathcal{P}_2^n) A_1 \psi'(\phi_n),\, \phi_{m,n}+A_1^2 \phi_{m,n}\right) := J_{1} + J_{2}.
			\end{aligned}
		\end{equation}
				Let us estimate $J_1$ and $J_2.$ Note that in view of \eqref{norm3}, for any $\epsilon_1>0,$ we have
		\begin{equation}\label{est1'}
				|J_{1}|\, = \epsilon^{-1}\Big|\left(A_1 \psi'(\phi_m) - A_1 \psi'(\phi_n),\phi_{m,n}+A_1^2 \phi_{m,n}\right)\Big| \leq C_{\epsilon_1, \epsilon}|A_1 \psi'(\phi_m) - A_1 \psi'(\phi_n)|^2_{ L^2} +{\epsilon_1}\|\phi_{m,n}\|^2_{H^4}.
		\end{equation}
			Next we decompose the first term in \eqref{est1'} as follows:	  			
		\begin{align}
				&A_1 \psi'(\phi_m) - A_1 \psi'(\phi_n)  = -\psi'''(\phi_m) |\nabla \phi_m|^{2} +  \psi''(\phi_m)\,A_1\phi_m +\psi'''(\phi_n) |\nabla \phi_n|^{2} - \psi''(\phi_n)A_1\phi_n \label{estst2}\\
				&= -\psi'''(\phi_m)\left(|\nabla \phi_m|^2-|\nabla \phi_n|^2\right)- |\nabla\phi_n|^2\left(\psi'''(\phi_m)-\psi'''(\phi_n) \right) + \psi''(\phi_m)A_1\phi_{m,n}+ A_1\phi_n\left(\psi''(\phi_m)-\psi''(\phi_n) \right) \nonumber.	
			\end{align}
				With the aid of H\"older’s inequality, assumption \eqref{eq:ass2} and the Sobolev embeddings of \(H^2\) and \(H^1\) in \( L^{\infty}\) and \(L^4\), respectively, we have the following:
		\begin{align}
				\Big|\psi'''(\phi_m)\left(|\nabla \phi_m|^2- |\nabla \phi_n|^2\right)\Big|^2_{ L^2}
				&\leq \|\psi'''(\phi_m)\|^2_{L^{\infty}}\|\nabla \phi_m+\nabla \phi_n\|^2_{L^4}\|\nabla \phi_{m,n}\|^2_{L^4}\nonumber \\
				&\leq C_{\psi}\big( 1+ \|\phi_m\|^6_{L^{\infty}}\big)\|\nabla \phi_m+\nabla \phi_n\|^2_{H^1}\|\nabla \phi_{m,n}\|^2_{H^1}\nonumber\\
				&\leq C_{\psi}\big(1+\|\phi_{n}\|^4_{H^2}+\| \phi_{m}\|^4_{H^2} +\|\phi_{m}\|^{12}_{H^2} \big)\|\phi_{m,n}\|^2_{H^2},\label{est4}\\
		    	\Big|\;|\nabla\phi_n|^2\left(\psi'''(\phi_m)-\psi'''(\phi_n) \right)\Big|^2_{ L^2} &\leq C_{\psi}(1+ \|   \phi_{m}\|^4_{L^{\infty}}+\|\phi_{n}\|^4_{L^{\infty}})\|\nabla \phi_m\|^4_{L^4}\|\phi_{m,n}\|^2_{L^{\infty}}\nonumber\\
				&\leq C_{\psi}(1+\| \phi_{m}\|^8_{H^2}+\|\phi_{n}\|^8_{H^2})\|\phi_{m,n}\|^2_{H^2},\label{est5} \\
		   |\psi''(\phi_m)A_1\phi_{m,n}|^2_{ L^2} &\leq
				\|\psi''(\phi_m)\|^2_{L^{\infty}}|A_1\phi_{m,n}|^2_{ L^2}
				\leq C_{\psi}(1+ \|\phi_m\|^8_{H^2})\|\phi_{m,n}\|_{H^2}^2,\label{est6}\\
		    	|A_1\phi_n\left(\psi''(\phi_m)-\psi''(\phi_n) \right)|^2_{ L^2}
				&\leq C_{\psi}(1+ \|\phi_n\|^4_{H^2} + \|\phi_m\|^{12}_{H^2}+\|\phi_n\|^{12}_{H^2})\|\phi_{m,n}\|^2_{H^2}\label{est7}.
		\end{align}
		  		 Combining \eqref{est1'} and \eqref{estst2} with the estimates \eqref{est4}--\eqref{est7}, we derive the following bound for \(J_1\) that holds for both \(d=2\) and \(d=3\): 
		\begin{equation}\label{est7'}
			   |J_{1}|\leq C_{\epsilon_1, \epsilon}\mathbb{Q}(\|\phi_n\|_{H^2},\|\phi_m\|_{H^2})\|\phi_{m,n}\|^2_{H^2}+\epsilon_1\|\phi_{m,n}\|^2_{ H^4},
		\end{equation}
		 		where \(\mathbb{Q}\) is a monotone non-decreasing function of the parameters \(\|\phi_n\|_{H^2}\) and \(\|\phi_m\|_{H^2}\). Invoking Lemma \ref{norm_esti} and Young's inequality, we obtain
	\begin{equation}\label{est8}
			\begin{aligned}	
				|J_{2}| & 
				\leq \frac{c_{\epsilon_2,\epsilon}}{\beta_n}\|\mathcal{Q}_2^n A_1 \psi'(\phi_n)\|^2_{H^1} + {\epsilon_2}\|\phi_{m,n}\|^2_{ H^4} \;\leq \frac{c_{\epsilon_2,\epsilon}}{\beta_n}\|A_1 \psi'(\phi_n)\|^2_{H^1} + {\epsilon_2}\|\phi_{m,n}\|^2_{ H^4}.
			\end{aligned}
	\end{equation}
			Since $A_1 \psi'(\phi_n) = -\psi'''(\phi_n) |\nabla \phi_n|^{2} +  \psi''(\phi_n)A_1\phi_n,$ we derive
	\begin{equation}\label{est9}
			\begin{aligned}	
			     \|A_1 \psi'(\phi_n)\|^2_{H^1} &\leq \Big|\,\psi'''(\phi_n) |\nabla \phi_n|^{2}\,\Big|^2_{ L^2} +  \Big|\psi''(\phi_n)A_1\phi_n\Big|^2_{ L^2} + \Big|\,\psi''''(\phi_n) |\nabla \phi_n|^{3}\,\Big|^2_{ L^2}\\
				& \quad +2\Big|\,\psi'''(\phi_n) \,|\nabla \phi_n|\, A_1\phi_n\,\Big|^2_{ L^2}+\Big|\psi''(\phi_n)\, \nabla \Delta\phi_n\Big|^2_{ L^2}:=\sum_{i=3}^7J_i, 
			\end{aligned}
	\end{equation}
			where we have exploited the fact that \(\psi \in C^4(\mathbb{R})\). Again, using assumption \eqref{eq:ass2}, we have the following estimates of the above decomposition, which holds for \(d=2, 3\):  
		\begin{align}	
				J_3 + J_4
				& \leq \|\psi'''(\phi_n)\|_{L^{\infty}}^2\|\nabla \phi_n\|^4_{L^4} +\|\psi''(\phi_n)\|^2_{L^{\infty}}|A_1\phi_n|^2_{L^2}\nonumber \\
				&\leq C_{\psi}(1+\|\phi_n\|^6_{H^2})\|\phi_n\|^4_{H^2}+ C_{\psi}(1+\|\phi_n\|^8_{H^2})|A_1\phi_n|^2_{ L^2},\label{est10}\\
	     		J_5+ J_6 &\leq 
				C_{\psi}(1+\|\phi_n\|^4_{H^2})\|\phi\|^6_{H^2} +C (1+\|\phi_n\|^6_{H^2})\|\phi_n\|^2_{H^2}\|\phi_n\|^2_{H^3}.\label{est11}
	\end{align}
	       Finally, from \eqref{est8}-\eqref{est11}, we have
	\begin{equation}\label{est13} 
			|J_{2}|  \leq	\frac{c_{\epsilon_2,\epsilon,C_{\psi}}}{\beta_n}\left(\|\phi_n\|^2_{H^2} +\|\phi_n\|^4_{H^2}+\|\phi_n\|^6_{H^2}+  \|\phi_n\|^{10}_{H^2} +\|\phi_n\|^2_{H^3} + \|\phi_n\|^2_{H^2}\|\phi_n\|_{H^3}^2+\|\phi_n\|^8_{H^2}\|\phi_n\|_{H^3}^2\right) + {\epsilon_2}\|\phi_{m,n}\|^2_{H^4}.
	\end{equation}
	      	The estimate for the trilinear term  can be obtained from a similar integral in \cite{Deugoue2021}: 
	  \begin{equation}\label{exis5}
	     	\begin{aligned}
	     	 &|(\mathcal{P}_2^m B_1(v_m, \phi_m) - \mathcal{P}_2^n B_1(v_n, \phi_n), \phi_{m,n}+ A_1^2 \phi_{m,n})|\\
	     	 &\leq{\epsilon_3} \| \phi_{m,n}\|^2_{H^4} + c_{\epsilon_3}  |A_1 \phi_m|^{2}_{ L^2}\|v_{m,n}\|^2  + c_{\epsilon_3}  \|v_n\|^2 \| \phi_{m,n}\|^2_{H^2} + \frac{c_{\epsilon_3}}{\beta_n}\|A_0 v_n\|^2_{L^2_{\mathrm{div}}} \| \phi_n\|^2_{H^2}.
	      \end{aligned}
	\end{equation}
	 	   	 	 We rewrite the last term of \eqref{esiphi1} as follows:
		\begin{align}
	 				&(\mathcal{P}_2^m(P\sigma_m-A- \alpha u)h(\phi_m) - \mathcal{P}_2^n(P\sigma_n-A- \alpha u)h(\phi_n), \phi_{m,n}+A_1^2 \phi_{m,n})\nonumber\\
	 				&\quad= \Big((P\sigma_m- A- \alpha u)(h(\phi_m)-h(\phi_n)) +P\sigma_{m,n}h(\phi_n) \nonumber\\
                    &\hspace{1in}+ (\mathcal{P}_2^m - \mathcal{P}_2^n)(P\sigma_n- A- \alpha u)h(\phi_n),\phi_{m,n}+A_1^2 \phi_{m,n}\Big)
	 				:= J_{8} + J_{9} + J_{10}.\label{sigmaphi}
	 		\end{align}
				Using H\"older's and Young's inequalities, assumptions on \(u\) and \(h\) (see [\ref{[A3]}], [\ref{[A4]}]), we obtain the following for \( d= 2,3\):
\begin{align}	 				
|J_{8}|  
	 				& \leq  c_{\epsilon_4, L_h}|(P\sigma_m- A- \alpha u) \phi_{m,n}|^2_{ L^2} + {\epsilon_4}\|\phi_{m,n}\|^2_{ H^4} \leq  c_{\epsilon_4, L_h}(|P\sigma_m \phi_{m,n}|^2_{ L^2} + |(A+ \alpha u) \phi_{m,n}|^2_{ L^2} )+ {\epsilon_4}\|\phi_{m,n}\|^2_{ H^4}\nonumber\\
	 				& \quad\quad \leq  c_{\epsilon_4, L_h, P}\|\sigma_m\|^2_{H^1}\|\phi_{m,n}\|^2_{H^1} + c_{\epsilon_4, L_h, A, \alpha, }|\phi_{m,n}|^2_{ L^2}+ {\epsilon_4}\|\phi_{m,n}\|^2_{ H^4},\label{est20} \\
        |J_{10}| 
	 				& \leq c_{\epsilon_6}|\mathcal{Q}_2^n(P\sigma_n- A- \alpha u)h(\phi_n)|^2_{ L^2}+ {\epsilon_6}\|\phi_{m,n}\|^2_{ H^4} \leq \frac{c_{\epsilon_6}}{\beta_n}\|\mathcal{Q}_2^n(P\sigma_n- A- \alpha u)h(\phi_n)\|^2_{H^1} +  {\epsilon_6}\|\phi_{m,n}\|^2_{ H^4} \nonumber\\
	 				& \quad \quad \leq \frac{c_{\epsilon_6}}{\beta_n}\Big(|P\sigma_n- A- \alpha u|^2_{ L^2} + |A_1^{1/2}(P\sigma_n- A- \alpha u)h(\phi_n)|^2_{ L^2}\Big)+ {\epsilon_6}\|\phi_{m,n}\|^2_{ H^4} \nonumber\\
	 				&\quad \quad \leq \frac{c_{\epsilon_6, P, L_h, A, \alpha, |\mathcal{O}|}}{\beta_n}\Big(1 + \|\sigma_n\|^2_{H^1} + |A_1^{1/2}\phi_n|^2_{ L^2} + \|u\|^2_{H^1} +\|\sigma_n\|^2_{H^1}\|\phi_n\|^2_{H^2} \Big)+ {\epsilon_6}\|\phi_{m,n}\|^2_{ H^4}. \label{est22}	
	 \end{align}
                                
\noindent\textbf{Step 3 (Estimate of the nutrient concentration equation).} Finally, when the It\^o formula is applied to the process \(\|\sigma_{m,n}(\cdot)\|_{H^1}^2\) as in the previous steps, and using $\eqref{esic3} $, we obtain
\begin{equation}\label{esisigma1}
    \begin{aligned}
		 		&\mathbb{E}\, \sup_{t \in [\tau_a, \tau_b]} \| \sigma_{m,n}(t)\|_{H^1}^2 + 2 \mathbb{E} \int_{\tau_a}^{\tau_b}\left(|A^{1/2}_1\sigma_{m,n}|^2_{ L^2} + |A_1\sigma_{m,n}|^2_{ L^2}\right) ds\\
				&\leq \mathbb{E} \,\| \sigma_{m,n}(\tau_a)\|_{H^1}^2 + 2 \mathbb{E} \int_{\tau_a}^{\tau_b}|\left(\mathcal{P}_2^m B_1(v_m, \sigma_m)- \mathcal{P}_2^n B_1(v_n, \sigma_n),\, \sigma_{m,n}+ A_1 \sigma_{m,n}\right)|ds\\
				& \quad+ 2c \mathbb{E} \int_{\tau_a}^{\tau_b}|\left(\mathcal{P}_2^m \sigma_m h(\phi_m)- \mathcal{P}_2^n \sigma_n h(\phi_n),\,\sigma_{m,n}+A_1 \sigma_{m,n}\right)|ds\\
                & \quad+ 2b \mathbb{E} \int_{\tau_a}^{\tau_b}|\left(\sigma_{m,n}, \sigma_{m,n}+ A_1 \sigma_{m,n}\right )|ds + 2b\mathbb{E} \int_{\tau_a}^{\tau_b}|((\mathcal{P}_2^m-\mathcal{P}_2^n)w,\,  \sigma_{m,n} + A_1 \sigma_{m,n})| ds\\
				&\quad+ \, \mathbb{E} \int_{\tau_a}^{\tau_b} \|\mathcal{P}_1^m  G_2(s,\sigma_m) - \mathcal{P}_2^n G_2(s,\sigma_n)\|_{\mathcal{L}_2(U_2,H^1)}^2 ds \\
				& \quad+ 2\mathbb{E} \sup_{r \in [\tau_a, \tau_b]} 	\left|\sum_{k=1}^\infty \int_{\tau_a}^{r} (\mathcal{P}_2^m g_{2,k}(s,\sigma_m) - \mathcal{P}_2^n g_{2,k}(s,\sigma_n), \sigma_{m,n} + A_1 \sigma_{m,n}) d\beta^2_k(s)\right| :=\sum_{i=1}^6 \mathfrak J_k.
		\end{aligned}
	\end{equation}
			Let us first rewrite the terms in the integrals $\mathfrak J_2$ and $\mathfrak J_3$ as follows and estimate them one by one:
\begin{equation}\label{dec6}
	\begin{aligned}
     		&(\mathcal{P}_2^m B_1(v_m, \sigma_m)- \mathcal{P}_2^n B_1(v_n, \sigma_n), \sigma_{m,n}+ A_1 \sigma_{m,n})\\
    		&= (B_1(v_{m,n}, \sigma_m) + B_1(v_n, \sigma_{m,n}) + (\mathcal{P}_2^m - \mathcal{P}_2^n) B_1(v_n, \sigma_n), \sigma_{m,n}+A_1 \sigma_{m,n}):= K_{1} + K_{2} + K_{3},
	\end{aligned}
\end{equation}	
\begin{equation}\label{dec7}
	\begin{aligned}
			&(\mathcal{P}_2^m \sigma_m h(\phi_m)- \mathcal{P}_2^n \sigma_n h(\phi_n),\sigma_{m,n}+A_1 \sigma_{m,n})\\
			&=(\sigma_m(h(\phi_m)-h(\phi_n)) + h(\phi_n)\sigma_{m,n}+ (\mathcal{P}_2^m - \mathcal{P}_2^n)\sigma_n h(\phi_n), \sigma_{m,n}+A_1 \sigma_{m,n}):= K_{4} + K_{5} + K_{6}.
	\end{aligned}
\end{equation}
    		Next, we estimate the nonlinear contributions of \(K_{1}, K_{2},\) and \(K_{3}\) for both \(d= 2,3\). Utilizing H\"older's and Young's inequalities, together with the embedding \(H^{1} \hookrightarrow L^{4}\), we derive
 \begin{equation}\label{est14}
	  		\begin{aligned}	
	  				| K_{1}|  
	  				& \leq |(B_1(v_{m,n}, \sigma_m), \sigma_{m,n})| + |(B_1(v_{m,n}, \sigma_m),A_1 \sigma_{m,n} )|\\
	  				& \leq \|v_{m,n}\|_{L^4_{\mathrm{div}}} |A_1^{1/2} \sigma_m|_{ L^2}\|\sigma_{m,n}\|_{L^4} + \|v_{m,n}\|_{L^4_{\mathrm{div}}} \|A_1^{1/2} \sigma_m\|_{L^4}|A_1\sigma_{m,n}|_{ L^2}\\
	  				& \leq c_{\gamma_1}|A^{1/2}_1\sigma_m|^2_{ L^2}\|\sigma_{m,n}\|^2_{H^1} + {\gamma_1} \|v_{m,n}\|^2 + c_{\gamma_1}\|\sigma_m\|^2_{H^2} \|v_{m,n}\|^2 + {\gamma_1}|A_1\sigma_{m,n}|^2_{L^2}.
	  	\end{aligned}
	  \end{equation}
			Note that $(B_1(v_n, \sigma_{m,n}) , \sigma_{m,n})=0$ by \eqref{b0_1}. By invoking H\"older's, Gagliardo-Nirenberg's (\ref{GNI1}), and Young's inequalities, we estimate $K_2$, as follows:
    	\begin{equation}\label{est15}
	  		\begin{aligned}	
	  	            | K_{2}| & 
	  				& \leq \|v_{n}\|\,|A^{1/2}_1 \sigma_{m,n}|_{ L^2}^{1/2}\|\sigma_{m,n}\|_{H^2}^{3/2}
	  				 \leq c_{\gamma_2}\| v_n\|^4\,\|\sigma_{m,n}\|^2_{H^1} + {\gamma_2} \|\sigma_{m,n}\|^2_{H^2}.
	  		\end{aligned}
	 	\end{equation}
				Recalling \eqref{est1} and applying the Sobolev embedding results along with H\"older’s, Gagliardo-Nirenberg and Agmon’s (see \ref{l1},\ref{l2}) inequalities, we obtain the following estimate for \(K_{3}\): 
                
	  	\begin{equation}\label{est16}
	  	\begin{aligned}	
	  	    	   | K_{3}| 
	  				&\leq |\mathcal{Q}_2^n B_1(v_n, \sigma_n)|_{ L^2} |\sigma_{m,n}+A_1 \sigma_{m,n}|_{ L^2} \leq \frac{c_{\gamma_3}}{\beta_n}\|\mathcal{Q}_2^n B_1(v_n, \sigma_n)\|_{H^1}^2 + {\gamma_3}\|\sigma_{m,n}\|^2_{ H^2} \\
	  				& \quad \ \ \leq \frac{c_{\gamma_3}}{\beta_n}\left(|A^{1/2}_1 \sigma_n|^2_{ L^2}\|A_0 v_n\|^2_{ L^2_{\mathrm{div}}} + \|A^{1/2}_0 v_n\|_{ L^2_{\mathrm{div}}}\|A_0 v_n\|_{ L^2_{\mathrm{div}}}\|\sigma_n\|^2_{H^2}\right )  + {\gamma_3}\|\sigma_{m,n}\|^2_{ H^2} \\
	  				& \quad \ \ \leq \frac{c_{\gamma_3}}{\beta_n}\left(|A^{1/2}_1 \sigma_n|^2_{ L^2}\|A_0 v_n\|^2_{L^2_{\mathrm{div}}} + \lambda_n^{1/2}\|v_n\|^2\|\sigma_n\|^2_{H^2} \right) + {\gamma_3}\|\sigma_{m,n}\|^2_{ H^2}.
	  		\end{aligned}
	\end{equation}
 To estimate  \( K_{4},  K_{5}\) and \( K_{6}\), we invoke Assumption [\ref{[A3]}] for $h,$ Lemma \ref{norm_esti}, together with H\"older’s and Young’s inequalities, leading to the following bounds (valid for both \(d=2, 3\)): 
\begin{equation}\label{est17}  		
	\begin{aligned}	
		          |K_{4}| 
				 & \leq \int_{\mathcal{O}}|\sigma_m(h(\phi_m)-h(\phi_n)) (\sigma_{m,n}+A_1 \sigma_{m,n})|\,dx\\
				 & \leq c_{L_h}\int_{\mathcal{O}}|\sigma_m|\,|\phi_{m,n}|\,|\sigma_{m,n}+A_1 \sigma_{m,n}|\,dx
				 \leq c_{L_h, \gamma_4}\|\sigma_m\|^2_{H^1}\|\phi_{m,n}\|^2_{H^1} + \gamma_4\|\sigma_{m,n}\|^2_{H^2}, 
		\end{aligned}
\end{equation}
		and 
	$|K_{6}| 
				\leq \frac{c_{\gamma_6}}{\beta_n}\left(\|\sigma_n\|_{H^1}^2 + c_{L_h}| A^{1/2}_1\phi_n|^2_{ L^2}\|\sigma_n\|^2_{H^2} \right) + {\gamma_6}\|\sigma_{m,n}\|^2_{H^2}.
	$
 A similar estimate also holds for $K_5$. 	For the term in $\mathfrak J_5,$	using Assumption [\ref{[A2]}]  for \(G_2\), together with Lemma \ref{norm_esti}, we deduce
	  \begin{eqnarray}\label{exis6}
		&\|\mathcal{P}_m^2 G_2(s, \sigma_m) - \mathcal{P}_n^2 G_2(s, \sigma_n)\|^2_{\mathcal{L}_2(U_2, H^1)} 
			\leq  \| G_2(s, \sigma_m) - G_2(s, \sigma_n)\|^2_{\mathcal{L}_2(U_2, H^1)} +  \| \mathcal{Q}^n_2G_2(s, \sigma_n)\|^2_{\mathcal{L}_2(U_2, H^1)} \nonumber\\
			&\leq c_{L_{H^1}} \|\sigma_{m,n}\|^2_{H^1} + \frac{1}{\beta_n} \|\mathcal{Q}^n_2 G_2(s, \sigma_n)\|^2_{\mathcal{L}_2(U_2, H^2)} \leq c_{L_{H^1}} \|\sigma_{m,n}\|^2_{H^1} + \frac{c_{B_{H^2}}}{\beta_n}
			\big(1 + \| \sigma_n\|^2_{H^2} \big).
	\end{eqnarray}
	We invoke the Burkholder–Davis–Gundy inequality together with \eqref{exis6} to obtain          
\begin{equation}	
	\begin{aligned}
    \mathfrak J_6	&= 2 \mathbb{E} \sup_{r \in [\tau_a, \tau_b]} \left| \sum_{k=1}^\infty 
		\int_{\tau_a}^{r} \left( \big(\mathcal{P}_2^m g_{2,k}(s, \sigma_m) - \mathcal{P}_2^n g_{2,k}(s, \sigma_n),\sigma_{m,n}(s)\big) d\beta^2_k(s)\right.\right.\\
		&\left.\left.\quad + \big( \mathcal{P}_2^m g_{2,k}(s, \sigma_m) - \mathcal{P}_2^n g_{2,k}(s, \sigma_n), A_1\sigma_{m,n}(s) \big) d\beta^2_k(s)\right)\right| \\
	& \leq C \, \mathbb{E} \left( \int_{\tau_a}^{\tau_b} 
		| \sigma_{m,n}(s) |^2_{ L^2} \, \| \mathcal{P}_2^m G_2(s,\sigma_m) - \mathcal{P}_2^n G_2(s,\sigma_n) \|^2_{\mathcal{L}_2(U_2,L^2)} \, ds \right)^{1/2} \\
		&  \quad +C \, \mathbb{E} \left( \int_{\tau_a}^{\tau_b} 
		|A_1^{1/2} \sigma_{m,n}(s) |^2_{ L^2} \, \|A_1^{1/2}\big( \mathcal{P}_2^m G_2(s,\sigma_m) - \mathcal{P}_2^n G_2(s,\sigma_n)\big) \|^2_{\mathcal{L}_2(U_2,L^2)} \, ds \right)^{1/2} \\
		& \leq\mathbb{E}\left(\frac{1}{2}\sup_{s \in [\tau_a, \tau_b]} \| \sigma_{m,n}(s) \|^2_{H^1} + C\int_{\tau_a}^{\tau_b}
		\| \mathcal{P}_2^m G_2(s,\sigma_m) - \mathcal{P}_2^n G_2(s,\sigma_n)\|^2_{\mathcal{L}_2(U_2,H^1)} \, ds \right)\\
		&\leq \frac{1}{2} \mathbb{E} \sup_{s \in [\tau_a, \tau_b]} \| \sigma_{m,n}(s) \|^2_{H^1} 
		+ c_{L_{H^1}, B_{H^2}} \mathbb{E} \int_{\tau_a}^{\tau_b} 
		\left( \| \sigma_{m,n}(s) \|^2_{H^1} + \frac{1}{\beta_n}  + \frac{1}{\beta_n} \|\sigma_n(s) \|^2_{H^2} 
		\right) ds. \label{sinoise}	
   \end{aligned}
\end{equation} 
			\textbf{Step 4.} 
            Choose the absorption parameters $\nu_i$ ($i=1,\ldots,5$), $\epsilon_i$ ($i=0,\ldots,6$), and $\gamma_i$ ($i=1,\ldots,7$) small enough to ensure $\sum_{i=1}^{5} \nu_i < \nu$, $\sum_{i=0}^{6} \epsilon_i < \epsilon$, and $\sum_{i=1}^{7} \gamma_i < 1$ such that the original coefficients on the left side retain at least half their strength.\\
            Thus, adding  \eqref{esiv1},\eqref{esiphi1}, and \eqref{esisigma1} together, incorporating the corresponding estimates for each right-hand side provided in \eqref{dec1}-\eqref{exis8}, \eqref{dec5}-\eqref{est22}, and \eqref{dec6}-\eqref{sinoise},  we arrive at the following:
	{\footnotesize	\begin{eqnarray}
		\lefteqn{\mathbb{E} \sup_{t \in [\tau_a, \tau_b]} \|(v_{m,n}, \phi_{m,n}, \sigma_{m,n})(t)\|_{\mathcal{V}}^2 +    \mathbb{E} \int_{\tau_a}^{\tau_b}\left( \nu \|A_0 v_{m,n}\|^2_{ L^2_{\mathrm{div}}}+ \epsilon \| \phi_{m,n}\|^2_{H^4} + \|\sigma_{m,n}\|^2_{H^2} \right)ds}\nonumber\\
			&&\leq \mathbb{E} \|(v_{m,n}, \phi_{m,n}, \sigma_{m,n})(\tau_a)\|_{\mathcal{V}}^2 + C_1\Bigg(\mathbb{E}\int_{\tau_a}^{\tau_b}\left[1+ \|\sigma_m\|^2_{H^1} + \| v_n\|^4 \right]\|\sigma_{m,n}\|^2_{H^1} \,ds\nonumber\\
			&&+ \mathbb{E}\int_{\tau_a}^{\tau_b}\left[ 1+\|v_m\|^{2(r-1)} + \|v_n\|^{2(r-1)}+ \|v_n\|^4 + \|A_0 v_m\|^2_{L^2_{\mathrm{div}}} +\|\sigma_m\|^2_{H^2}  +|A_1 \phi_m|^2_{ L^2} \right]\|v_{m,n}\|^2 \,ds\nonumber\\
			&&+ \mathbb{E}\int_{\tau_a}^{\tau_b}\left[1+|A_1 \phi_m|^4_{ L^2} 
			+\|v_n\|^2  +|A_1 \phi_n|^4_{ L^2} +\|\sigma_m\|^2_{H^1}+ \mathbb{Q}(\|\phi_n\|_{H^2},\|\phi_m\|_{H^2}) \right]\|\phi_{m,n}\|^2_{H^2} \,ds\nonumber\\
			&&+ \mathbb{E}\int_{\tau_a}^{\tau_b}\left[\lambda_n^{-1}+ \lambda_n^{-1/2}\|v_n\|^2 +  \lambda_n^{-1}\|v_n\|^2 +\lambda_n^{-1}\|v_n\|^{2(r-1)} + \beta_n^{-1}\|\phi_n\|^2_{H^2} + \beta_n^{-1}\|\sigma_n\|^2_{H^1}\right]\|A_0v_{n}\|^2_{L^2_{\mathrm{div}}} \,ds\nonumber\\
			&&+ \mathbb{E}\int_{\tau_a}^{\tau_b}\left[\beta_n^{-1} + \lambda_n^{1/2}\beta_n^{-1}\|v_n\|^2 +\beta_n^{-1}\|\phi_n\|^2_{H^1} +\beta_n^{-1}\|\phi_n\|^2_{H^2}\right]\|\sigma_{n}\|^2_{H^2} \,ds\nonumber\\
            &&+ \mathbb{E}\int_{\tau_a}^{\tau_b}\beta_n^{-1}\left[1+\|\phi_n\|^2_{H^2} +\|\phi_n\|^4_{H^2}+\|\phi_n\|^6_{H^2}+  \|\phi_n\|^{10}_{H^2} +\|\phi_n\|^2_{H^3} \right]\,ds \label{comb1}\\
			&& + \mathbb{E}\int_{\tau_a}^{\tau_b}\left[ \lambda_n^{-1}\|\phi_n\|^2_{H^2} + \beta_n^{-1}\|\phi_n\|^2_{H^2} + \beta_n^{-1}\|\phi_n\|^8_{H^2} \right]\|\phi_{n}\|^2_{H^4} \,ds + \mathbb{E}\int_{\tau_a}^{\tau_b}\left[ \lambda_n^{-1} + \beta_n^{-1} +\|\mathcal{Q}^n_1 z\|^2_{L^2_{\mathrm{div}}} + \beta_n^{-1} \| u\|^2_{H^1}+|\mathcal{Q}^n_2 w|^2_{ L^2} \right]\,ds\nonumber\Bigg),
	\end{eqnarray}  }
			 where the constant $C_{1}$ depends on the standard parameter set $\Sigma_0 := \{\{\nu_i\}_{i=1}^5,\{\epsilon_i\}_{i=0}^6, \{\gamma_i\}_{i=1}^7, \, \alpha, L_{D(A_0)},$ $L_{V},L_{H^1},L_{H^2},L_{h},B_{D(A_0)},B_{H^2},C_r,\eta, C_{\psi}, |\mathcal{O}|, P, A, b, c\}$ but independent of  $m, n$ and stopping times $\tau_a, \tau_b$.

	Since \( \tau_a, \tau_b \in \mathcal{T}^{T,M}_{m,n} := \mathcal{T}^{T,M}_m \cap \mathcal{T}^{T,M}_n \) (see \eqref{stop}), we conclude that \( \| (v_m, \phi_m, \sigma_m)\|_{\mathcal{V}}, \|(v_n, \phi_n,\sigma_n)\|_{\mathcal{V}} \leq \widetilde{K} + M \) over the relevant time intervals, and therefore
		\begin{align*}
				&\mathbb{E} \sup_{t \in [\tau_a, \tau_b]} \underbrace{\|(v_{m,n}, \phi_{m,n}, \sigma_{m,n})(t)\|_{\mathcal{V}}^2}_{:=X(t)}+    \mathbb{E} \int_{\tau_a}^{\tau_b}\underbrace{\left( \nu \|A_0 v_{m,n}(s)\|^2_{L^2_{\mathrm{div}}}+ \epsilon \| \phi_{m,n}(s)\|^2_{H^4} + \|\sigma_{m,n}(s)\|^2_{H^2} \right)}_{:=Y(s)}ds\\
				&\leq \mathbb{E} \|(v_{m,n}, \phi_{m,n}, \sigma_{m,n})(\tau_a)\|_{\mathcal{V}}^2+ C_2\mathbb{E}\int_{\tau_a}^{\tau_b}\|(v_{m,n}, \phi_{m,n}, \sigma_{m,n})(s)\|_{\mathcal{V}}^2\underbrace{\left(1+\|A_0 v_m\|^2_{L^2_{\mathrm{div}}} +\|\sigma_m\|^2_{H^2} \right)}_{:=R(s)}ds \\
				&+ C_2\mathbb{E}\int_{\tau_a}^{\tau_b}\underbrace{\left( \left(\lambda_n^{-1/2}+\lambda_n^{-1}  +\beta_n^{-1} + \lambda_n^{1/2}\beta_n^{-1} \right)\left( 1+\|A_0 v_n\|^2_{L^2_{\mathrm{div}}} +\|\phi_{n}\|^2_{H^4} + \|\sigma_{n}\|^2_{H^2} \right)+ S_n(s)\right)}_{:=Z(s)}\,ds,
		\end{align*}
					where $S_n(t)=\|\mathcal{Q}^n_1 z(t)\|^2_{L^2_{\mathrm{div}}} + \beta_n^{-1} \| u(t)\|^2_{H^1}+|\mathcal{Q}^n_2 w(t)|^2_{ L^2},$ the constant $C_{2}$ depends on $\Sigma_0 \cup \{\widetilde{K}, M\}$ with modified Sobolev constants but is independent of \( \tau_a \) and \( \tau_b \). Furthermore,
					$\int_0^{\tau} R(s) ds \leq C \ \text{a.s.},$
			where C depends on \(\widetilde{K}, M ,\nu\) and \(T.\)
	Now utilizing the stochastic Gronwall lemma (see Lemma \ref{l3}) with $X,Y,Z$ and $R$ above,	we get
	 \begin{eqnarray}\label{fiex}
	 \mathbb{E} \sup_{t \in [0, \tau]} X(t) &+&    \mathbb{E} \int_{0}^{\tau}Y(s)ds \leq  C_3 \mathbb{E}\|(v_{m,n}, \phi_{m,n}, \sigma_{m,n})(0)\|_{\mathcal{V}}^2  \\
	 		&&+ C_3\left(\lambda_n^{-1/2}+\lambda_n^{-1} + \beta_n^{-1} + \lambda_n^{1/2}\beta_n^{-1}+ \mathbb{E}\int_{0}^{T}\left[ \|\mathcal{Q}^n_1 z\|^2_{L^2_{\mathrm{div}}} + \beta_n^{-1} \| u\|^2_{H^1}
	 		+|\mathcal{Q}^n_2 w|^2_{ L^2}  \right]ds\right), \nonumber
 \end{eqnarray}
 
			where $C_{3}$ depends on $\Sigma_0 \cup \{\widetilde{K}, M, T\}$ but is independent of mesh parameters $m,n$ and the choice of the stopping time  $\tau \in \mathcal{T}^{T,M}_{m,n}$.
	 		Hence, \eqref{fiex} (and see Remark \ref{egv})  implies \eqref{first} upon taking the supremum over \( \mathcal{T}^{T,M}_{m,n} \), followed by the limit as \( m > n \) and \( n \to \infty \). \\
            Next, we prove the convergence (ii). For any fixed \( \tau \in \mathcal{T}^{T,M}_n \) and \( \zeta > 0 \),  as in part (i), we obtain the following estimate (see \eqref{comb1}) for $d=2,3$, where $ r \in [1, \infty)$ when $d= 2$ and $ r \in [1,3]$ when $d= 3$ :
	\begin{equation}\label{ex32}
	 	\begin{aligned}
	 			& \sup_{t \in [0, \tau \wedge \zeta]} \|(v_{n}, \phi_{n}, \sigma_{n})(t)\|_{\mathcal{V}}^2 +  \int_{0}^{\tau \wedge \zeta}\Big( \nu \|A_0 v_{n}\|^2_{L^2_{\mathrm{div}}}+ \epsilon \| \phi_{n}\|^2_{H^4} + \|\sigma_{n}\|^2_{H^2} \Big)ds\\&\qquad\leq \|(v_{n}^{(0)}, \phi_{n}^{(0)}, \sigma_{n}^{(0)})\|_{\mathcal{V}}^2 + C_1 \int_{0}^{\tau \wedge \zeta}\Big(\,\|v_n\|^2+ \|v_n\|^{2r}+ \|v_n\|^6 + \|v_n\|^2|A_1^{1/2} \sigma_n|^4_{ L^2}+ \|\phi_n \|^2_{H^2}\\&\qquad\quad+ \|\phi_n \|^4_{H^2}+  \|\phi_n \|^6_{H^2}+ \|\phi_n \|^{10}_{H^2}+  \|\sigma_n \|^2_{H^1} \Big) \,ds + C_2 \int_{0}^{\tau \wedge \zeta}\left(\,\|z\|^2_{ L^2_{\mathrm{div}}} +|u|^2_{ L^2} +|w|^2_{ L^2} \right) ds\\
	 			&\qquad\quad+ 2\sup_{r \in [0, \tau \wedge \zeta]} \left|\int_{0}^{r}\sum_{k=1}^\infty ( g_{1,k}(t,v_n), A_0 v_{n}) d\beta^1_k(t)+ \int_{0}^{r}\sum_{k=1}^\infty (g_{2,k}(t,\sigma_n) ,\, \sigma_{n}+ A_1\sigma_{n} ) d\beta^2_k(t) \,\right|,
	\end{aligned}
	 \end{equation}  
	where  \( C_1 := C_1 \Big(\nu,\epsilon,\, C_{\psi}, \eta,\alpha,b,c,P, B_{V},B_{H^1} \Big)\) and  \(C_2 := C_2\left(\nu,\epsilon,|\mathcal{O}|, A\right)\). 
	From \eqref{ex32} we can see if,
	\begin{align*}
		I_1(\Omega) &\equiv \Bigg\{ \omega \in \Omega : 
		\sup_{s \in [0, \tau \wedge \zeta]} \|(v_n, \phi_n, \sigma_n)\|_{\mathcal{V}}^2  
		+ \int_0^{\tau \wedge \zeta} \left( \nu \|A_0 v_n\|^2_{ L^2_{\mathrm{div}}} 
		+ \epsilon\|\phi_n\|^2_{H^4}+ \|\sigma_{n}\|^2_{H^2}  \right)\,ds \\
		&\quad > \|(v_{n}^{(0)}, \phi_{n}^{(0)},\sigma_{n}^{(0)})\|_{\mathcal{V}}^2 + (M - 1)^2 \Bigg\}, \\
		I_2(\Omega) &\equiv \Bigg\{ \omega \in \Omega : 
		C_1 \int_0^{\tau \wedge \zeta} \Big( 
		\|v_n\|^2 +  \|v_n\|^{2r}+ \|v_n\|^6 + \|v_n\|^2|A_1^{1/2} \sigma_n|^4_{ L^2}+ \|\phi_n \|^2_{H^2}+ \|\phi_n \|^4_{H^2} \\
		&\quad+  \|\phi_n \|^6_{H^2}+ \|\phi_n \|^{10}_{H^2}+  \|\sigma_n \|^2_{H^1} \Big) ds  + C_2 \int_0^{\tau \wedge \zeta} \left( 1+
		\|z\|^2_{L^2_{\mathrm{div}}} +|u|^2_{ L^2} +|w|^2_{ L^2}
		\right) ds > \frac{(M - 1)^2}{2} \Bigg\},\\ 
        I_3(\Omega) &\equiv \Bigg\{ \omega \in \Omega : 
		2 \sup_{r \in [0, \tau \wedge \zeta]} \left| 
    	\int_0^r \sum_{k=1}^{\infty}  ( g_{1,k}(t,v_n), A_0 v_{n}) d\beta^1_k(t)\right|\\
		&\qquad+	2 \sup_{r \in [0, \tau \wedge \zeta]} \left| 
		\int_0^r \sum_{k=1}^{\infty}(g_{2,k}(t,\sigma_n), \sigma_{n}+ A_1\sigma_{n} ) d\beta^2_k(t)\right| > \frac{(M - 1)^2}{2} \Bigg\},
    	\end{align*}
then $\mathbb{P}(I_1(\Omega)) \leq \mathbb{P}(I_2(\Omega)) + \mathbb{P}(I_3(\Omega)).$ By Chebyshev's inequality and the fact \(\tau \in \mathcal{T}^{T,M}_n \), we get
	\begin{equation}\label{ex34}
		\begin{aligned}
			\mathbb{P}(I_2(\Omega)) 
			&\leq \frac{2C_1}{(M - 1)^2} 
			\mathbb{E} \int_0^{\tau \wedge \zeta} 
			\Big(\, \|v_n\|^2 +  \|v_n\|^{2r}+ \|v_n\|^6+ \|v_n\|^2|A_1^{1/2} \sigma_n|^4_{ L^2}+ \|\phi_n \|^2_{H^2}+ \|\phi_n \|^4_{H^2}+  \|\phi_n \|^6_{H^2}\\ &+ \|\phi_n \|^{10}_{H^2}+\|\sigma_n \|^2_{H^1} 
			\Big) ds  + \frac{2C_2}{(M - 1)^2} 
			\mathbb{E} \int_0^{\tau \wedge \zeta} 
			\left(1+\|z\|^2_{L^2_{\mathrm{div}}} +|u|^2_{ L^2} +|w|^2_{ L^2} \right) ds \\
			&\leq C_3 
			\mathbb{E} \int_0^{\zeta} 
			\left( 1 +\|z\|^2_{L^2_{\mathrm{div}}} +|u|^2_{ L^2} +|w|^2_{ L^2}  \right) ds,
		\end{aligned}
		\end{equation}
			where \(C_3 := C_3(\nu, \epsilon, \, C_{\psi},\eta, |\mathcal{O}|, \alpha, c, b, P, A, B_{V},B_{H^1}, M,\widetilde{K})\). Next, utilizing Doob’s maximal inequality, we arrive at
		\begin{equation}\label{ex35}
		\begin{aligned}
			\mathbb{P}(I_3(\Omega)) 
			&\leq \frac{16}{(M - 1)^4} \, 
			\mathbb{E} \int_0^{\tau \wedge \zeta}\Big( \,
			\|v_n\|^2 \,\| \mathcal{P}_1^nG_1(t,v_n)\|_{\mathcal{L}_2(U_1,V)}^2 + \|\sigma_n\|^2_{H^1}\, \|\mathcal{P}_2^n G_2(t,\sigma_n)\|_{\mathcal{L}_2(U_2,H^1)}^2 \, \Big)ds \\
			&\leq \frac{16C_{B_{V},B_{H^1}}}{(M - 1)^4} \, 
			\mathbb{E} \int_0^{\tau \wedge \zeta}\Big(\,
			\|v_n\|^2 ( 1+ \|v_n\|^2 
			 )+ \|\sigma_n\|^2_{H^1} (\, 1+ \|\sigma_n\|^2_{H^1} 
			 ) \Big)ds \leq C_4\zeta,
		\end{aligned}
	\end{equation}
			where \(C_4 := C_4(B_{V}, B_{H^1}, M,\widetilde{K})\). Finally using the bounds from \eqref{ex34} and \eqref{ex35}, we conclude that
	  \begin{equation}\label{ex36}
	   		\mathbb{P}(I_1(\Omega)) 
	   	\leq C_3 \, 
	   	\mathbb{E} \int_0^{\zeta} 
	   	\left( 1 + \|z\|^2_{L^2_{\mathrm{div}}} +|u|^2_{ L^2} +|w|^2_{ L^2}  \right) ds.
	   \end{equation}
	  	Given assumptions on $z, u, w$, and the \(\tau\)-independence of the right-hand side of \eqref{ex36}, the convergence of \eqref{secd} follows. This completes the proof.
	\end{proof}
	  \section{Local Existence and Uniqueness of Solutions}\label{Section 4}
	
			The lemma below provides estimates concerning the weak solutions $((v, \phi, \sigma); \tau)$. Besides being crucial in the derivation of higher-order estimates for $\phi$, the associated energy inequality will also be employed to obtain the unique global strong solution for the two-dimensional case.
			For any $(v, \phi, \sigma) \in \mathcal{H}$, define  $\mathcal{E}_{\mathrm{tot}}(v, \phi, \sigma) := \epsilon|\phi|_{L^2}^2 + 2\mathcal{E}(v, \phi, \nabla \phi, \sigma)$, where  $\mathcal{E}(v, \phi, \nabla \phi, \sigma)$ is defined in \eqref{e2}.
\subsection	{A Priory Estimate }
\begin{Lem}  (Energy inequality)\label{Ener_est}
	 	Let \(d=2,3\), \(G_1 \in \mathfrak{L}_{\mathrm{lin}}\big(L^2_{\mathrm{div}}, \mathcal{L}_2(U_1, L^2_{\mathrm{div}})\big) \),  \(G_2 \in \mathfrak{L}_{\mathrm{lin}}\big(L^2, \mathcal{L}_2(U_2, L^2)\big)\) and h is a non-negative function with \(|h| \leq 1\). Moreover, we assume that for all \(p \geq 2\),
	 \begin{equation}
	 	\begin{cases}
	 		\begin{aligned}
	 		& (z,u,w) \in L^p\left(\Omega;  L^2_{\text{loc}}\big([0,\infty);V' \times L^2 \times (H^{1})^{\prime}~\big)\right),  \text{with} \, u \in [0,1] \, \text{a.e. in }\, \Omega \times (0,T) \times \mathcal{O},\\
	 		& (v_0, \phi_0, \sigma_0) \in L^p\big(\Omega; \mathcal{H}\big)\quad \text{satisfies} \quad [\mathcal{E}_{\text{tot}}(v_0, \phi_0, \sigma_0)]^{p/2} < \infty. \label{inidata}
	 \end{aligned}
 \end{cases}
\end{equation}
     		 If \(\{(v, \phi, \sigma), \tau\}\) is a local weak solution in the sense of Definition \ref{def4.1}, then for any $ T>0$,  $ r \geq 1$ and $p \geq 2$, we have 
\begin{eqnarray}
		(i)&&\!\!\!\!\!\mathbb{E} \sup_{t \in [0, \tau \wedge T]} 	\left[\mathcal{E}_{\mathrm{tot}}(v, \phi, \sigma)(t)\right]^{\frac{p}{2}}
 		 + \mathbb{E} \int_0^{\tau \wedge T} \!\!\Bigg(\|v\|^{r+1}_{L^{r+1}_{\mathrm{div}}} +\|v\|^2 + \|\mu\|^2_{H^1} + \|\sigma\|^2_{H^1} \Bigg) 
 		\left[\mathcal{E}_{\mathrm{tot}}(v, \phi,\sigma)\right]^{\frac{p-2}{2}}	dt\,< \infty, \label{eqequality} \hspace{.32in} \\	 
        (ii) && \!\!\!\!\!\mathbb{E}\left(\int_0^{\tau \wedge T}  \Bigg(\|v\|^{r+1}_{L^{r+1}_{\mathrm{div}}}+\|v\|^2 + \|\mu\|^2_{H^1}+\|\sigma\|^2_{H^1} \Bigg)dt\right)^{\frac{p}{2}} 
		< \infty.
		\label{eqequality0} 	 
\end{eqnarray}
		\end{Lem}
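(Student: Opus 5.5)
The plan is to apply Itô's formula to the total energy $\mathcal{E}_{\mathrm{tot}}(v,\phi,\sigma)=\|v\|^2_{L^2_{\mathrm{div}}}+\epsilon|\phi|^2_{L^2}+\epsilon|\nabla\phi|^2_{L^2}+2\epsilon^{-1}\int_{\mathcal{O}}\psi(\phi)\,dx+|\sigma|^2_{L^2}$. Formally we test \eqref{a1} with $2v$, \eqref{a2} with $2\mu+2\epsilon\phi$ and \eqref{a3} with $2\sigma$. Since weak solutions lack the regularity needed for a direct Itô formula for $\int\psi(\phi)$, I would derive the identity at the Galerkin level \eqref{gal21}--\eqref{gal25} and pass to the limit by lower semicontinuity. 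Alternatively, the $\phi$-equation has no noise, so a deterministic chain rule suffices there: $\phi\in L^2(H^2)$ and $\partial_t\phi\in L^2((H^1)')$ along paths, with $\mu\in L^2(H^1)$. The argument is then run up to the localizing times $\tau_R:=\tau\wedge T\wedge\inf\{t:\mathcal{E}_{\mathrm{tot}}(t)>R\}$, and we let $R\to\infty$ at the end by monotone convergence.

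Three cancellations drive the estimate. We have $b_0(v,v,v)=0$, $b_1(v,\sigma,\sigma)=0$, and by Remark \ref{R0B1} the coupling $2\epsilon(R_0(A_1\phi,\phi),v)$ cancels against $2(B_1(v,\phi),\mu)$. The latter works because the $\psi'(\phi)\nabla\phi$ part is a gradient, absorbed in the pressure as in Remark \ref{pre}, and $(B_1(v,\phi),\phi)=0$. What remains is the dissipation $2\nu\|v\|^2+2\eta\|v\|^{r+1}_{L^{r+1}_{\mathrm{div}}}+2|\nabla\mu|^2_{L^2}+2|\nabla\sigma|^2_{L^2}+2b|\sigma|^2_{L^2}+2c\int h(\phi)\sigma^2$, where the last term is nonnegative. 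It must be balanced against several right-hand terms:
\begin{itemize}
\item the forcing terms $2\langle z,v\rangle\le\nu\|v\|^2+C\|z\|^2_{V'}$ and $2b\langle w,\sigma\rangle\le\frac12\|\sigma\|_{H^1}^2+C\|w\|_{(H^1)'}^2$;
\item the source term $2((P\sigma-A-\alpha u)h(\phi),\mu+\epsilon\phi)$, bounded via $|h|\le1$ and $u\in[0,1]$ by $\delta\|\mu\|^2_{H^1}+C(1+|\sigma|^2_{L^2}+|\phi|^2_{L^2})$;
\item the cross term $2\epsilon(\nabla\mu,\nabla\phi)$, produced by testing with $\epsilon\phi$;
\item the Itô corrections $\|G_1(v)\|^2_{\mathcal{L}_2}+\|G_2(\sigma)\|^2_{\mathcal{L}_2}\le C(1+\|v\|^2_{L^2_{\mathrm{div}}}+|\sigma|^2_{L^2})$.
\end{itemize}

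The main obstacle is recovering $\|\mu\|^2_{H^1}$ from only $|\nabla\mu|^2_{L^2}$ in the dissipation, given that there is no mass conservation. I would use Poincaré--Wirtinger, $|\mu|_{L^2}\le C|\nabla\mu|_{L^2}+|\mathcal{O}|^{1/2}|\bar\mu|$, together with $|\bar\mu|\le C\int|\psi'(\phi)|\le C(1+\int\psi(\phi))$. This last bound follows from \eqref{eq:ass1}: since $\rho\ge2$, we get $|\psi'(s)|\le C(1+|s|^{\rho-1})\le C(1+\psi(s))$ up to constants, and hence $|\bar\mu|\lesssim 1+\mathcal{E}_{\mathrm{tot}}$. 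The resulting factor is only linear in the energy, so Gronwall still closes after adding $|\mu|^2_{L^2}$ at a cost of $C(1+\mathcal{E}_{\mathrm{tot}}^2)$. To avoid that quadratic cost, I would instead exploit $\bar\mu$ only through the $L^1$ bound, multiplied by the energy weight. The cross term $\epsilon(\nabla\mu,\nabla\phi)$ is handled by Young's inequality against $|\nabla\mu|^2$ and the energy. Note also that $\mathcal{E}_{\mathrm{tot}}\ge c(\|v\|^2_{L^2_{\mathrm{div}}}+\|\phi\|^2_{H^1}+|\sigma|^2_{L^2})-C$ because $\psi\ge0$. This yields $d\mathcal{E}_{\mathrm{tot}}+\mathcal{D}\,dt\le C(1+f(t)+\mathcal{E}_{\mathrm{tot}})\,dt+dM_t$, where $\mathcal{D}$ dominates the integrand in \eqref{eqequality}, $f=\|z\|^2_{V'}+|u|^2+\|w\|^2_{(H^1)'}$, and $M_t=2\int(v,G_1dW_1)+2\int(\sigma,G_2dW_2)$.

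For part (i), I apply Itô's formula to $(\mathcal{E}_{\mathrm{tot}}+1)^{p/2}$. This produces $\frac p2\mathcal{E}^{\frac{p-2}{2}}\mathcal{D}$ on the left, and on the right $C\mathcal{E}^{\frac{p-2}{2}}(1+f+\mathcal{E})$ plus the second-order term $\frac{p(p-2)}{8}\mathcal{E}^{\frac{p-4}{2}}\,|\text{quadratic variation density}|\le C\mathcal{E}^{p/2}$. The term with $f$ is split by Young's inequality as $\mathcal{E}^{\frac{p-2}{2}}f\le \mathcal{E}^{p/2}f$, and Gronwall is applied pathwise, or we use $\sup\mathcal{E}^{(p-2)/2}\int f$ together with Young and $f\in L^{p/2}(\Omega;L^1)$. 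The martingale is controlled by BDG: $\mathbb{E}\sup|\int\mathcal{E}^{\frac{p-2}{2}}dM|\le C\mathbb{E}(\int\mathcal{E}^{p-2}\mathcal{E}(1+\mathcal{E})dt)^{1/2}\le\frac12\mathbb{E}\sup\mathcal{E}^{p/2}+C\mathbb{E}\int(1+\mathcal{E}^{p/2})$. The standard Gronwall argument on $[0,\tau_R]$ then gives bounds uniform in $R$. For part (ii), I integrate the first-order inequality up to $\tau\wedge T$, raise it to the power $p/2$, and take expectations. Here $\mathbb{E}(\int_0^{\tau\wedge T}\mathcal{D})^{p/2}\le C\mathbb{E}\big[\mathcal{E}(0)^{p/2}+(\int f)^{p/2}+\sup\mathcal{E}^{p/2}+\sup|M|^{p/2}\big]$, and BDG gives $\mathbb{E}\sup|M|^{p/2}\le C\mathbb{E}(\int(1+\mathcal{E})^2)^{p/4}$, which is finite by (i).
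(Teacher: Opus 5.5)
Your outline (Itô for $\mathcal{E}_{\mathrm{tot}}$, the three cancellations, Itô for $\chi_p$, BDG, then (ii) by raising to the power $p/2$) follows the paper's. However, the linear inequality $d\mathcal{E}_{\mathrm{tot}}+\mathcal{D}\,dt\le C(1+f+\mathcal{E}_{\mathrm{tot}})\,dt+dM_t$, on which everything rests, does not follow from the steps you list, because every occurrence of $\bar\mu$ is superlinear in the energy.

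\begin{itemize}
\item To absorb $\delta\|\mu\|^2_{H^1}$, and to put $\|\mu\|^2_{H^1}$ into $\mathcal{D}$, you need $|\bar\mu|^2\lesssim(1+\int_{\mathcal{O}}\psi(\phi)\,dx)^2\sim\mathcal{E}_{\mathrm{tot}}^2$. Contrary to your remark, a quadratic right-hand side does not close Gronwall.
\item If you split $\mu=(\mu-\bar\mu)+\bar\mu$ instead, the mean part of the source is $\bar\mu\int_{\mathcal{O}}(P\sigma-A-\alpha u)h(\phi)\,dx$. This is of size $(1+\int_{\mathcal{O}}\psi(\phi)\,dx)(1+|\sigma|_{L^2})\sim\mathcal{E}_{\mathrm{tot}}^{3/2}$.
\item After multiplying by $\chi_{p-2}$ you are left with $(1+\chi_p)(1+|\sigma|_{L^2})$. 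Its multiplier $1+|\sigma|_{L^2}$ has no deterministic bound on its time integral, so Lemma~\ref{l3} does not apply. A pathwise Gronwall would need exponential moments of $\int_0^T|\sigma|_{L^2}\,dt$, which linear-growth multiplicative noise does not provide in general.
\end{itemize}

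``Using $\bar\mu$ only through the $L^1$ bound, multiplied by the energy weight'' is exactly this route and does not remove the problem. The paper meets the same term in $I_5$ and \eqref{11est}. It localizes with $\tau^k$ from \eqref{sto2}, so that $\|\psi(\phi)\|_{L^1}+|\sigma|^2_{L^2}\le k$. Be aware that the Gronwall constant then depends on $k$, so that device alone does not give a $k$-uniform bound either.

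The estimate closes with two ingredients you are missing.
\begin{itemize}
\item[(a)] The $\sigma$-equation tested with $\sigma$ is self-contained, since $b_1(v,\sigma,\sigma)=0$ and $c\int_{\mathcal{O}}h(\phi)\sigma^2\,dx\ge 0$. Hence $\mathbb{E}\sup_{t\le\tau\wedge T}|\sigma|^q_{L^2}<\infty$ for every $q$, because the data have all moments, and this holds independently of $(v,\phi)$.
\item[(b)] There is a sharper mean bound, $|\bar\mu|\le C\big(1+\|\phi\|^{\rho-1}_{L^{\rho-1}}\big)\le C\big(1+(\int_{\mathcal{O}}\psi(\phi)\,dx)^{(\rho-1)/\rho}\big)$, which follows from H\"older and \eqref{eq:ass1}.
\end{itemize}
Young's inequality with exponents $\rho/(\rho-1)$ and $\rho$ then gives $|\bar\mu|\,|P\sigma-A-\alpha u|_{L^1}\le C\big(1+\mathcal{E}_{\mathrm{tot}}+|\sigma|^\rho_{L^2}\big)$. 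This is a genuinely linear inequality, with an extra forcing $|\sigma|^\rho_{L^2}$ whose moments are controlled by (a). In the $\chi_p$-step use $\chi_{p-2}|\sigma|^\rho_{L^2}\le\chi_p+C|\sigma|^{\rho p/2}_{L^2}$.

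Run your $\chi_p$/BDG argument with only $|\nabla\mu|^2_{L^2}$ in the dissipation. Then recover the $L^2$ part of $\|\mu\|^2_{H^1}$ afterwards:
\begin{itemize}
\item for (i), $\mathbb{E}\int_0^{\tau\wedge T}|\bar\mu|^2\chi_{p-2}\,dt\le C\,T\big(1+\mathbb{E}\sup_{t\le\tau\wedge T}\chi_{p+2}\big)$;
\item for (ii), $\mathbb{E}\big(\int_0^{\tau\wedge T}|\bar\mu|^2\,dt\big)^{p/2}\le C\big(1+\mathbb{E}\sup_{t\le\tau\wedge T}\chi_{2p}\big)$.
\end{itemize}
Both are finite because the hypotheses hold for every $p$.
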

	\noindent	{\bf Proof of (i)} Applying the infinite-dimensional It\^o formula (see \cite{Rozovsky2018}, \cite{DaPrato2014}) for \(\|v(\cdot)\|_{L^2_{\mathrm{div}}}^2\), \(|\sigma(\cdot)|_{L^2}^2\) together  with Remark \ref{pre}, $\eqref{b0_1}_1$ and $\eqref{b1_1}_1,$ and taking the  \(L^2\) inner product of \(\eqref{a2}\) with \( 2 \mu \) and \(2\epsilon\phi\), using Remark \ref{R0B1}, combining these give us:
	  \begin{equation}
	 	\begin{aligned}
	 			d\mathcal{E}_{\text{tot}}(v, \phi, \sigma)(t) &= 2\, \Bigg(-\eta\|v\|^{r+1}_{L^{r+1}_{\mathrm{div}}} -  \nu  \|v\|^2 - |A_1^{1/2}\mu|^2_{L^2} - \epsilon(\nabla \mu, \nabla \phi) -|A_1^{1/2}\sigma|^2_{L^2} + \langle  z, v\rangle -c (|\sigma|^2,h(\phi))\\ 
	 			& \qquad -b|\sigma|^2_{L^2} + b\langle w, \sigma\rangle+ ( (P\sigma - A- \alpha u)h(\phi), \mu) +\epsilon( (P\sigma - A- \alpha u)h(\phi), \phi)  \\
	 			&\qquad + \frac{1}{2}\|G_1(t, v)\|^2_{\mathcal{L}_2(U_1, L^2_{\mathrm{div}})} +\frac{1}{2}\|G_2(t, \sigma)\|^2_{\mathcal{L}_2(U_2, L^2)} \Bigg) \, dt+ \mathcal{S}(t),
	 	\end{aligned}
	 	\label{Etot_diff}
	 \end{equation}
     where  \( \mathcal{S}_1(t) := 2 \sum_{j=1}^{\infty} \Big( 		g_{1,j}(t, v),v\Big) \, d\beta^1_j(t) \), \( \mathcal{S}_2(t):= 2 \sum_{j=1}^{\infty} \Big(  g_{2,j}(t, \sigma),\sigma \Big ) \, d\beta^2_j(t), \)  \(\mathcal{S}(t) := \mathcal{S}_1 (t)+ \mathcal{S}_2(t) \).
				
Assuming  \( p \geq 2 \), we proceed by applying the It\^o formula once more to the real-valued quantity  \( \chi_p := [\mathcal{E}_{\text{tot}}(v, \phi, \sigma)(\cdot)]^{p/2} \) and utilizing the identity \eqref{Etot_diff} to get
 \begin{equation}
	 	\begin{aligned}
	 			d\chi_p(t) &= \frac{p}{2}d\mathcal{E}_{\text{tot}}(v, \phi, \sigma)(t) \chi_{p-2} 
	 			+ \frac{p(p-2)}{8} \chi_{p-4} \, 
	 			\langle\!\langle \mathcal{S} \rangle\!\rangle_t \,  
	 	\end{aligned}
	 	\label{chi_p_ito}
	 \end{equation}
	 		where \(\langle\!\langle \mathcal{S} \rangle\!\rangle_t\) represent the quadratic variation of \(\mathcal{S}\).
	 		Now, for $ k>0$ define a increasing sequence of stopping times
	 		\begin{equation}\label{sto2}
	 			\tau^k = \inf_{t \geq 0}\{~\|\psi(\phi(t))\|_{L^1} + |\sigma(t)|_{L^2}^2 > k~\} \wedge \tau.
	 		\end{equation}
	 		The next step is to show that, \(	\tau^k \to \tau \) a.s., as $k \to \infty$: suppose by contradiction that $\mathbb{P}\{\lim_{k \to \infty}\tau^k <\tau\}>0$. Then, by the definition of $\tau^k$, one has	therefore, for each $k$,
		\begin{equation*}
	 		\sup_{s\in[0,\tau]}(\|\psi(\phi(s))\|_{L^1} + |\sigma(s)|_{L^2}^2) \;\geq\; \|\psi(\phi(\tau^k ))\|_{L^1} + |\sigma(\tau^k )|_{L^2}^2\;>\; k,
	 	\end{equation*}
	 		$\text{and hence,} \;\mathbb{P}\{\sup_{t \in [0, \tau]}\|\psi(\phi(t)\|_{L^1}+\sup_{t \in [0, \tau]}|\sigma(t)|_{L^2}^2=+\infty\}>0.$	This is impossible because \(\{(v, \phi, \sigma), \tau\}\) is a local weak solution, which ensures finiteness a.s.. Hence necessarily $\lim_{k \to \infty}\tau^k =\tau$ a.s..
	 	Next, for a fixed $k >0$, let  \( \tau_a \) and \( \tau_b \) be stopping times such that \( 0 \leq \tau_a \leq \tau_b \leq \tau^k \wedge T \). In view of \eqref{chi_p_ito}, we can write
 \begin{equation}
	 	\begin{aligned}
	 		\mathcal I:=&\mathbb{E} \sup_{t \in [\tau_a, \tau_b]} \chi_p(t) 
	 		+ p \mathbb{E} \int_{\tau_a}^{\tau_b} 
	 		\left(\,\eta\|v\|^{r+1}_{L^{r+1}_{\mathrm{div}}} +\nu  \|v\|^2 + |A_1^{1/2}\mu|^2_{L^2} +|A_1^{1/2}\sigma|^2_{L^2} \right) \chi_{p-2}(t) \, dt \\
	 		&\leq \mathbb{E} \left[\mathcal{E}_{\text{tot}}(v, \phi, \sigma)(\tau_a)\right]^{p/2}
	 		+ C_{p} \left[\mathbb{E} \int_{\tau_a}^{\tau_b} \epsilon\chi_{p-2}
	 		|(\nabla \mu, \nabla \phi)| dt+  \mathbb{E} \int_{\tau_a}^{\tau_b} \chi_{p-2} |\langle z,v\rangle| dt\right. \\
            & \left.\quad+ \mathbb{E} \int_{\tau_a}^{\tau_b} \chi_{p-2} \Bigg(c |(|\sigma|^2,h(\phi))|+b|\sigma|^2_{L^2} + b|\langle w, \sigma\rangle|\Bigg)dt + \mathbb{E} \int_{\tau_a}^{\tau_b} \chi_{p-2}|((P\sigma - A- \alpha u) h(\phi), \mu)| dt \right.\\ 
	 		&\left. \quad +\mathbb{E} \int_{\tau_a}^{\tau_b} \chi_{p-2}\epsilon|\left((P\sigma - A- \alpha u)h(\phi),  \phi\right)|dt + \frac{1}{2}\mathbb{E} \int_{\tau_a}^{\tau_b} \chi_{p-2}\Bigg(\|G_1(t, v)\|^2_{\mathcal{L}_2(U_1, L^2_{\mathrm{div}})}\right.\\ &\left.   \quad+\|G_2(t, \sigma)\|^2_{\mathcal{L}_2(U_2, L^2)} \Bigg) dt \right] + C_{p} \mathbb{E} \int_{\tau_a}^{\tau_b} \chi_{p-4}\,\langle\!\langle  \mathcal {S} \rangle\!\rangle_t  + C_{p} \mathbb{E} \sup_{t \in [\tau_a, \tau_b]} 
	 		\left|\int_{\tau_a}^t \chi_{p-2}(s) \,\mathcal {S} \right| =  \sum_{i=1}^9 I_i.
	 	\end{aligned}
	 	\label{energy_est}
 \end{equation}
			 Next, we estimate each term on the right-hand side of \eqref{energy_est}. Since $\epsilon|A_1^{1/2}\phi|^2_{L^2} \leq \mathcal{E}_{\mathrm{tot}}(v, \phi, \sigma),$ we have
 \begin{equation}	\label{1_est}
	 	\begin{aligned}
	 		I_2 
	 		 &\leq  \frac{p}{4}\, \mathbb{E} \int_{\tau_a}^{\tau_b} \chi_{p-2} |A_1^{1/2}\mu|^2_{L^2}\,dt+ C_{p,\epsilon} \,\mathbb{E} \int_{\tau_a}^{\tau_b} \chi_{p}(t)\,dt.
	 	     \end{aligned}
		\end{equation}
 				Using H\"older’s and Young’s inequalities, we derive		
 	\begin{equation}
 		\begin{aligned}
 			I_3 
            &\leq C_{p, \nu}\, \mathbb{E} \int_{\tau_a}^{\tau_b} \chi_{p-2} \|z\|^2_{V^'}\,dt+ \frac{\nu p}{2} \,\mathbb{E} \int_{\tau_a}^{\tau_b} \chi_{p-2}\|v\|^2\,dt\\
 			&\leq\frac{1}{8} \mathbb{E}  \sup_{t \in [\tau_a, \tau_b]} \chi_p(t)+ \frac{\nu p}{2} \,\mathbb{E} \int_{\tau_a}^{\tau_b}\|v\|^2 \chi_{p-2}\,dt + C_{p,\nu} \mathbb{E} \left( \int_{\tau_a}^{\tau_b} \|z\|_{V^{\prime}}^{2}  \, dt \right)^{p/2}.
 	\end{aligned}
 	\label{2_est}
 \end{equation}
			By invoking $|\sigma|^2_{L^2} \leq \mathcal{E}_{\mathrm{tot}}(v, \phi, \sigma)$ and boundedness of \(h\), one can get
 	\begin{align}	\label{3_est}
 			I_4&\leq\frac{1}{8} \mathbb{E}  \sup_{t \in [\tau_a, \tau_b]} \chi_p(t)+ \frac{\gamma p}{2} \mathbb{E} \int_{\tau_a}^{\tau_b} \chi_{p-2}\|\sigma\|^2_{H^1}dt + C_{p, b,\gamma} \mathbb{E} \left( \int_{\tau_a}^{\tau_b} \|w\|_{(H^{1})^{\prime}}^{2} dt \right)^{p/2}\!\!\!+ C_{p,b} \mathbb{E} \int_{\tau_a}^{\tau_b} \chi_{p}dt.
 	\end{align}
 			Since $\int_{\mathcal{O}} \Delta \phi dx=\int_{\partial \mathcal{O}}\frac{\partial\phi}{\partial{\bf n}}dS=0,$ the condition $\eqref{eq:ass1}_3$ gives
\begin{equation*}
		\left| \int_{\mathcal{O}} \mu  \, dx \right|
		= \left| \int_{\mathcal{O}} \epsilon^{-1} \psi'(\phi) \, dx \right| \leq \epsilon^{-1}C_{\psi} \int_{\mathcal{O}} \left(1 + |\phi|^{\rho - 1}\right) dx = C_{c_{\psi},\epsilon} \left(1 + \| \phi \|_{L^{\rho - 1}}^{\rho - 1} \right). 
\end{equation*}
	After utilizing the embedding $ L^{\rho}\hookrightarrow L^{\rho-1}$ for any $\rho \geq 2$ and $\eqref{eq:ass1}_1$, one obtains 
\begin{equation}\label{vphi41}
		|\bar{\mu}| 
        \leq C_{c_{\psi},\epsilon, |\mathcal{O}|} \left(1 + \| \phi \|_{L^{\rho}}^{\rho-1} \right)\leq C_{c_{\psi}, \epsilon, |\mathcal{O}|} \left(1 + \| \phi \|_{L^{\rho}}^{\rho} \right)\leq C_{c_{\psi}, \epsilon, |\mathcal{O}|} \left(1 + \|\psi(\phi)\|_{L^1} \right).
\end{equation}
		Using the boundedness of $h$, \eqref{vphi41},  and Poincar\'e–Wirtinger's  inequality, we deduce
\begin{equation*}
	\begin{aligned}
			I_5 &\leq C_{p,C_{\mathcal{O}}}\, \mathbb{E} \int_{\tau_a}^{\tau_b} \chi_{p-2}\, |P\sigma - A- \alpha u|_{L^2}\left(|\mu-\bar \mu|_{L^2} + |\bar{\mu}|_{L^2} \right)\,dt\\
			&\leq C_{p,C_{\mathcal{O}},c_{\psi},\epsilon}\,	\mathbb{E} \int_{\tau_a}^{\tau_b}\chi_{p-2}\left(1+|P\sigma - A- \alpha u|_{L^2}^2\right)\left(1+\|\psi(\phi)\|_{L^1}\right)\,dt \\
            &\quad+\frac{p}{4}	\mathbb{E} \int_{\tau_a}^{\tau_b}\chi_{p-2} |A_1^{1/2}\mu|_{L^2}^2\,dt :=I_{51}+I_{52}.
\end{aligned}
\end{equation*}
		Further, observing  $|\sigma|^2_{L^2} \leq \mathcal{E}_{\mathrm{tot}}(v, \phi, \sigma),$ and $ \|\psi(\phi)\|_{L^1}\leq \epsilon\,\mathcal{E}_{\mathrm{tot}}(v, \phi, \sigma),$ we get
\begin{align}
		I_{51} 	&\leq C_1\,\mathbb{E} \int_{\tau_a}^{\tau_b}(1+ |P\sigma - A- \alpha u|_{L^2}^2)\chi_{p}\,dt + 	C_2\,\mathbb{E}\int_{\tau_a}^{\tau_b}(1+ |u|_{L^2}^2)\chi_{p-2}\,dt \nonumber\\
			& \leq \frac{1}{8} \,\mathbb{E}  \sup_{t \in [\tau_a, \tau_b]} \chi_p(t)  + C_1\,\mathbb{E}\int_{\tau_a}^{\tau_b}(1+ |P\sigma - A- \alpha u|_{L^2}^2 )\chi_{p}\,dt  + C_2\,\mathbb{E} \left( \int_{\tau_a}^{\tau_b} |u|_{L^2}^2 \, dt \right)^{p/2}+ C_3, \label{5_est}  
	\end{align}
	       where \( C_1 := C_1(p,\epsilon, 
		C_{\mathcal{O}},C_{\psi}, P),\, C_2 := C_2(p,\epsilon, \alpha,|\mathcal{O}|,C_{\psi}, A)\), and \(C_3 := C_2 \cup \{T\}.\)
    The estimate similar to $I_5$ works for $I_6$ as well. 
    Next, owing to the Assumption [\ref{[A2]}] on \(G_1\) and \(G_2\), one can infer 
\begin{equation}
	\begin{aligned}
		I_7&\leq C_{p,B_{L^2_{\mathrm{div}}}} \mathbb{E} \int_{\tau_a}^{\tau_b}(1+   \|v\|^2_{L^2_{\mathrm{div}}})\chi_{p-2} \,dt + C_{p, B_{L^2}} \mathbb{E} \int_{\tau_a}^{\tau_b} (1+ |\sigma|^2_{L^2})\chi_{p-2} \,dt\\
		&\leq \frac{1}{8} \mathbb{E} \sup_{t \in [\tau_a, \tau_b]} \chi_p(t) +
		C_{p,B_{L^2_{\mathrm{div}}},B_{L^2},T} \,\mathbb{E}\left( 1 +  \int_{\tau_a}^{\tau_b} \chi_p \, dt \right),
	\end{aligned}
	\label{7est}
\end{equation}
where we also used $\|v\|^2_{L^2_{\mathrm{div}}} \leq \mathcal{E}_{\mathrm{tot}}(v, \phi, \sigma).$		The definition of quadratic variation of \(\mathcal{S}\) (\cite{Rozovsky2018}) leads to
	\begin{equation*}
	\begin{aligned}
       \langle\!\langle \mathcal{S}\rangle\! \rangle_t &= \langle\!\langle \mathcal{S}_1\rangle\! \rangle_t + \langle\!\langle \mathcal{S}_2 \rangle\!\rangle_t + 2\langle\!\langle \mathcal{S}_1, \mathcal{S}_2 \rangle\! \rangle_t\\
		& =   4\sum_{j=1}^{\infty} \Big(g_{1,j}(t,v), v\Big)^2 \, dt +  4\sum_{j=1}^{\infty} \Big( g_{2,j}(t,\sigma), \sigma\Big)^2 \, dt +8 \sum_{i,j=1}^{\infty} \left( g_{1,i}(t,v),v \right)\,\Big(  g_{2,j}(t,\sigma), \sigma\Big) \langle\!\langle d\beta^1_i , d\beta^2_j  \rangle\!\rangle_t.
	\end{aligned}
\end{equation*}
 			Since $\beta^1_i$ and $\beta^2_j$ are independent Brownian motions for all $i,j$, it follows from the vanishing quadratic covariation of independent Brownian motions that:
\begin{equation*}
            I_8 =  C_{p} \mathbb{E} \int_{\tau_a}^{\tau_b} \chi_{p-4} \sum_{j=1}^{\infty} \Big(  g_{1,j}(t,v),v \Big)^2 \, dt+ C_p \mathbb{E} \int_{\tau_a}^{\tau_b} \chi_{p-4} \sum_{j=1}^{\infty} \Big(   g_{2,j}(t,\sigma), \sigma \Big) ^2 \, dt. 
\end{equation*}	 
			 Again by [\ref{[A2]}] and \eqref{7est}, we have
 \begin{equation}
	 	\begin{aligned}
	 		I_8 &\leq C_p \mathbb{E} \int_{\tau_a}^{\tau_b} \chi_{p-4} \|v\|^2_{L^2_{\mathrm{div}}} \|G_1(t, v)\|^2_{\mathcal{L}_2(U_1, L^2_{\mathrm{div}})} \, dt + C_p \mathbb{E} \int_{\tau_a}^{\tau_b} \chi_{p-4} |\sigma|^2_{L^2} \|G_2(t, \sigma)\|^2_{\mathcal{L}_2(U_2, L^2)} \, dt \\
	 	    &\leq \frac{1}{8} \mathbb{E} \sup_{t \in [\tau_a, \tau_b]} \chi_p(t) +
	 	    C_{p,B_{L^2_{\mathrm{div}}},B_{L^2},T}\, \mathbb{E}\, \Big( 1 +  \int_{\tau_a}^{\tau_b} \chi_p \, dt \Big).
	 	\end{aligned}
	 	\label{8est}
 \end{equation}
			 Finally, for the noise-driven terms, applying the Burkholder-Davis-Gundy inequality and the linear growth conditions on \(G_1, G_2\), followed by Young's inequality we find:
	 \begin{equation}
	 	\begin{aligned}
	 		I_9&\leq C_{p} \mathbb{E} \left[ \left( \int_{\tau_a}^{\tau_b} \sum_{j=1}^{\infty} \chi_{2(p-2)} ( g_{1,j}(t,v), v  )^2 \, dt \right)^{1/2} \right] +  C_{p} \mathbb{E} \left[ \left( \int_{\tau_a}^{\tau_b} \sum_{j=1}^{\infty} \chi_{2(p-2)} ( g_{2,j}(t,\sigma),\sigma )^2 \, dt \right)^{1/2} \right] \\
	   	   &\leq C_{p} \mathbb{E} \left[ \left( \int_{\tau_a}^{\tau_b} \chi_{2(p-1)} \|G_1(t, v)\|^2_{\mathcal{L}_2{(U_1, L^2_{\mathrm{div}})}} \, dt \right)^{1/2} \right] +C_{p} \mathbb{E} \left[ \left( \int_{\tau_a}^{\tau_b} \chi_{2(p-1)} \|G_2(t, \sigma)\|^2_{\mathcal{L}_2(U_2, L^2)} \, dt \right)^{1/2} \right] \\
	   	    &\leq C_{p,B_{L^2_{\mathrm{div}}}} \mathbb{E} \left[ \left( \int_{\tau_a}^{\tau_b} \chi_{2(p-1)} ( 1 + \|v\|^2_{L^2_{\mathrm{div}}}) \, dt \right)^{1/2} \right] +C_{p B_{L^2}} \mathbb{E} \left[ \left( \int_{\tau_a}^{\tau_b} \chi_{2(p-1)} (1+ |\sigma|^2_{L^2}) \, dt \right)^{1/2} \right] \\
	 		&\leq \frac{1}{8} \mathbb{E} \sup_{t \in [\tau_a, \tau_b]} \chi_p(t) + C_{p,B_{L^2_{\mathrm{div}}}, B_{L^2},T} \mathbb{E} \left(1+ \int_{\tau_a}^{\tau_b} \chi_p \, dt\right).
	 	\end{aligned}
	 	\label{9est}
	 \end{equation}
			Inserting the estimates \eqref{1_est}-\eqref{9est} into \eqref{energy_est} and rearranging all terms, we derive the inequality
	 \begin{equation}
	 	\begin{aligned}
	 		\mathcal I 
	 		&\leq \bar{C}_1 \mathbb{E} \left[ 1+\left( \mathcal{E}_{\text{tot}}(v, \phi, \sigma)(\tau_a) \right)^{p/2} \right] + \bar{C}_3 \mathbb{E} \int_{\tau_a}^{\tau_b}\big( 1+ |P\sigma - A- \alpha u|_{L^2}^2\big) \chi_p \, dt \\
            &\quad+ \bar{C}_2 \mathbb{E}~ \left[\left( \int_{\tau_a}^{\tau_b} \|z\|_{V^{\prime}}^{2}\, dt \right)^{p/2} + \left( \int_{\tau_a}^{\tau_b} |u|_{L^2}^2 \, dt \right)^{p/2}+ \left( \int_{\tau_a}^{\tau_b} \|w\|_{(H^{1})^{\prime}}^{2} \, dt \right)^{p/2} \right],
	 	\end{aligned}
	 	\label{10est}
	 \end{equation}
			 where $ \bar{C}_1 := \bar{C}_1(p,A,T), \bar{C}_2 := \bar{C}_2(p, 	\nu,\epsilon, 
			 \mathcal{O},C_{\psi}, P, c, b,A, \alpha), \bar{C}_3 := \bar{C}_3 (p, \nu,c_{\epsilon},\mathcal{O},C_{\psi}, P, b, c, A, \alpha,$ $ B_{H}, B_{L^2},T). $ 
			Next, applying the Poinca\'re–Wirtinger inequality and \eqref{vphi41}, we infer 
	 \begin{equation}\label{11est}
	 	\begin{aligned}
	 		&\mathbb{E} \int_{\tau_a}^{\tau_b} |\mu|^2_{L^2}\chi_{p-2} \, dt 
            \leq \bar{C}_4\, \mathbb{E}\int_{\tau_a}^{\tau_b} \left[ |A_1^{1/2}\mu|^2_{L^2} + \big(1+\|\psi(\phi)\|_{L^1}\big)^2\right]\chi_{p-2} \, dt\\
	 		&\qquad\leq \frac{1}{2}\mathbb{E} \sup_{t \in [\tau_a, \tau_b]} \chi_p(t)+ \bar{C}_4\, \mathbb{E}\int_{\tau_a}^{\tau_b}|A_1^{1/2}\mu|^2_{L^2}\chi_{p-2} \, dt+ \bar{C}_4~\mathbb{E}\int_{\tau_a}^{\tau_b}\|\psi(\phi)\|_{L^1}\chi_{p} \, dt + \bar{C}_5,
	 	\end{aligned}
	  \end{equation}
	  where \(\bar{C}_4 := \bar{C}_4(\mathcal{O},c_{\psi}, \epsilon)\) and \(\bar{C}_5 := \bar{C}_5(\mathcal{O},c_{\psi}, \epsilon,p,T)\).
	 		     From \eqref{sto2} and the boundedness of $u$ by 1 (see [\ref{[A4]}]) on $[0,\tau^k \wedge T]$, we infer that $\|\psi(\phi)\|_{L^1} +|P\sigma - A- \alpha u|_{L^2}^2  \leq C_k $, where $C_k$ depends on $  P, A, |\mathcal{O}|,\alpha, $ and k. Finally, combining \eqref{10est} and \eqref{11est} by the stochastic Gronwall Lemma  \ref{l3}, the monotone convergence theorem, and  $\tau^k \to \tau$  a.s. as $ k \to \infty$, we conclude the 
	  		proof of  \eqref{eqequality}. \\
  		
            \noindent{\bf Proof of (ii)}
           For fixed $k>0$ and $p \in [2, \infty)$, take the same sequence of stopping times $\tau^k$ defined in \eqref{sto2}. Integrating \eqref{Etot_diff} over $[0, \tau^k \wedge T]$, raising both sides to the $\frac{p}{2}$-power, taking the supremum over $s \in [0, \tau^k \wedge T]$, and then taking expectation, we obtain
    \begin{eqnarray}
				\lefteqn{\mathcal J(0,\tau^k \wedge T):=\mathbb{E} \sup_{t \in [0, \tau^k \wedge T]}\mathcal{E}_{\text{tot}}^{p/2}(v, \phi, \sigma)(s)+ \mathbb{E}\left(\int_0^{\tau^k \wedge T}  \Bigg(\|v\|^{r+1}_{L^{r+1}}+\|v\|^2 + |A_1^{1/2}\mu|^2_{L^2}+\|\sigma\|^2_{H^1} \Bigg)dt\right)^{\frac{p}{2}} }\nonumber \\& \leq \tilde{C}\left[ \mathbb{E}\,\mathcal{E}_{\text{tot}}^{p/2}(v, \phi, \sigma)(0)+ \,\mathbb{E}\,\Bigg|\int_{0}^{\tau^k \wedge T} \epsilon(\nabla \mu, \nabla \phi) +\langle z, v\rangle +c (|\sigma|^2,h(\phi))  +(1+b)|\sigma|^2_{L^2}\right. \nonumber \\
                &\ \ \ \ + b\langle w, \sigma\rangle+ ((P\sigma - A- \alpha u)h(\phi), \mu)+\epsilon( (P\sigma - A- \alpha u)h(\phi), \phi)\,dt\Bigg|^{p/2}\label{Etot_diff1}\\
				&\left. \ \  \ \quad +\mathbb{E} \,\Bigg|\int_{0}^{\tau^k \wedge T}\left(\,\|G_1(t, v)\|^2_{\mathcal{L}_2(U_1, L^2_{\mathrm{div}})}+\|G_2(t, \sigma)\|^2_{\mathcal{L}_2(U_2, L^2)}\right)\,dt \Bigg|^{p/2}+ \mathbb{E} \,\sup_{s \in [0, \tau^k \wedge T]}\left| \int_{0}^{s}\mathcal {S}(t)\,\right|^{p/2} \right] \nonumber,
	\end{eqnarray}
	    where $\tilde{C}$ depends on $p, \nu$ and $\eta$. 
        We estimate the stochastic integral $\mathcal K :=\mathbb{E} \sup_{s \in [0, \tau^k \wedge T]}\left| \int_{0}^{s}\mathcal {S}(t)\right|^{p/2}$ and the other terms on the right-hand side can be estimated in analogy to the proof of (i). 
		Applying the Burkholder-Davis-Gundy and Young inequalities, we obtain
\begin{align}
        \mathcal K &  \leq C\,\mathbb{E} \Bigg[\,\Bigg(\int_{0}^{\tau^k \wedge T}\|v\|^2_{L^2_{\mathrm{div}}}\|G_1(t, v)\|^2_{\mathcal{L}_2(U_1, L^2_{\mathrm{div}})} \,dt\Bigg)^{p/4}+ \Bigg(\int_{0}^{\tau^k \wedge T}|\sigma|^2_{L^2}\|G_2(t, \sigma)\|^2_{\mathcal{L}_2(U_2, L^2)}\,dt\Bigg)^{p/4}  \Bigg]\nonumber\\
        & \leq C\,\mathbb{E}\,\Bigg[\Bigg(\sup_{s \in [0, \tau^k \wedge T]}\|v(s)\|^2_{L^2_{\mathrm{div}}}\int_{0}^{\tau^k \wedge T} B_{L^2_{\mathrm{div}}}(1+   \|v\|^2_{L^2_{\mathrm{div}}})\,dt\Bigg)^{p/4} \nonumber \\
        &\qquad+ \Bigg(\sup_{s \in [0, \tau^k \wedge T]}|\sigma(s)|^2_{L^2}\int_{0}^{\tau^k \wedge T} B_{L^2} (1+ |\sigma|^2_{L^2})\,dt\Bigg)^{p/4}  \Bigg] \label{Etot_diff5}\\
        & \leq \frac{1}{2}\,\mathbb{E}\sup_{s \in [0, \tau^k \wedge T]}\mathcal{E}_{\text{tot}}^{p/2}(v, \phi, \sigma)(s)+ C_{p,B_{L^2}B_{L^2_{\mathrm{div}}}} \mathbb{E}\int_{0}^{\tau^k\wedge T}\Bigg(1+ \mathcal{E}_{\mathrm{tot}}^{p/2}(v, \phi, \sigma)\Bigg)\;dt+ C_{B_{L^2},B_{L^2_{\mathrm{div}}}, T}\nonumber.
\end{align}
         Thus, making use of \eqref{Etot_diff5}, we obtain the following from \eqref{Etot_diff1}:             
	\begin{align}
                \mathcal J(0,\tau^k \wedge T) &\leq \tilde{C}\left[ \mathbb{E}\,\mathcal{E}_{\text{tot}}^{p/2}(v, \phi, \sigma)(0) + \tilde{C}_1+	 \tilde{C}_2 \mathbb{E}\int_{0}^{\tau^k\wedge T} \mathcal{E}_{\mathrm{tot}}^{p/2}(v, \phi, \sigma)\;dt  \right. \nonumber\\
                &\left. \quad +\mathbb{E} \left(\int_{0}^{ T}\|z\|_{V^{\prime}}^{2} \,ds\right)^{p/2} + \tilde{C}_3\mathbb{E} \left(\int_{0}^{ T} \|w\|_{(H^{1})^{\prime}}^{2}\,ds\right)^{p/2} +\tilde{C}_4\mathbb{E} \left(\int_{0}^{ T}|u|^2_{L^2}\,ds\right)^{p/2}  \right], \label{Etot_diff6}
	\end{align}
                 where  $\tilde{C}_1 := \tilde{C}_1(p,B_{L^2}B_{L^2_{\mathrm{div}}},A, |\mathcal{O}|,T), \tilde{C}_2 := \tilde{C}_2(p,b,c,B_{L^2}B_{L^2_{\mathrm{div}}},P,c_{\psi},c_{\mathcal{O}}, \epsilon,|\mathcal{O}|,\alpha,k),\tilde{C}_3 := \tilde{C}_3(p,b)$, and $\tilde{C}_4 := \tilde{C}_4(p,\epsilon,\alpha)$.
		           Hence, from the (deterministic) Gronwall lemma, assumptions on, $ z, u,w$ and the fact that $\tau^k \to \tau$ as $k \to \infty$, we obtain $\mathcal J(0,\tau \wedge T) <\infty.$
	
                    Applying the Poincar\'e–Wirtinger inequality together with estimate \eqref{vphi41} and $\mathcal J(0,\tau \wedge T) <\infty,$ one can obtain
	                $\mathbb{E}\, \left(\int_{0}^{\tau \wedge T}|\mu|^2_{L^2}\,ds\right)^{p/2}< \infty,$ which completes the proof.
\hfill{$\Box$}
	\subsection	{Regularity Estimates for \(\phi\).}
		
			Assume that the energy inequality \eqref{eqequality} (for $p = 4$) holds.
	       Taking the inner product of \eqref{a4} with \(A_1\phi\), we derive: $\epsilon |A_1 \phi|^2_{ L^2} +  \epsilon^{-1} (\psi''(\phi), |\nabla \phi|^2) 
	  		= (\nabla\mu, \nabla \phi).$
	       Using assumptions on \( \psi_1 \), \( \psi_2 \) (see \eqref{psi-decomp}-\eqref{psi2-bounds} in [\ref{[A1]}]), we get
	  		$\epsilon |A_1 \phi|^2_{ L^2}\leq \epsilon^{-1} R_3 |A_1 ^{1/2} \phi|^2_{ L^2} + |A_1 ^{1/2}\mu |_{ L^2} |A_1 ^{1/2}\phi|_{ L^2}.$
            It leads to the following estimate:
	  \begin{align*}\label{hr4}
	  		\epsilon^2~ \mathbb{E}\int_0^{\tau\wedge T} |A_1 \phi|^4_{ L^2} \, dt &\leq ~\mathbb{E} \int_0^{\tau\wedge T} \left( C(\epsilon, R_3)\,|A_1 ^{1/2} \phi|^4_{ L^2} + |A_1 ^{1/2} \phi|^2_{ L^2} |A_1 ^{1/2}\mu|^2_{ L^2} \right) dt \\
	  		&\leq \,  C(\epsilon, R_3)\,\mathbb{E} \,\sup_{ t \in [0,\tau\wedge T]} |A_1 ^{1/2}\phi(t)|^4_{ L^2} +  \mathbb{E} \,\int_0^{\tau\wedge T}|A_1 ^{1/2}\phi|^2_{ L^2}\,|A_1 ^{1/2}\mu |^2_{ L^2} \,dt .
	  \end{align*}
	 		The energy inequality \eqref{eqequality} for the case $p = 4$ gives
\begin{equation}\label{hr1}
		\mathbb{E}\int_{0}^{\tau\wedge T}\|\phi\|_{H^2}^4\,dt  \, < \infty.
	\end{equation}
            Further, assume that the energy inequality \eqref{eqequality0} (for p = 2,8,12) holds.
            Again, the inner product of \eqref{a4}  with $\Delta^2 \phi$,  integration over $[0,  \tau\wedge T]$, and then expectation leads to the following: 
  \begin{equation}\label{phiDelta2}
		\epsilon \mathbb{E} \int_0^{\tau\wedge T}|A_1 ^{3/2}\phi|^2_{ L^2} = -\mathbb{E} \int_0^{\tau\wedge T} \int_{\mathcal{O}} \nabla\mu  \cdot \nabla \Delta \phi \, dx dt 
		+\mathbb{E} \int_0^{\tau\wedge T} \int_{\mathcal{O}} \epsilon^{-1} \psi''(\phi) \nabla \phi \cdot \nabla \Delta \phi \, dx dt:=I_1+I_2. 
	\end{equation}
	      Note that
		$I_1 \leq C_{\epsilon} \mathbb{E} \int_0^{\tau\wedge T}|A_1 ^{1/2}\mu|^2_{ L^2} \,dt + \frac{\epsilon}{8} \mathbb{E}\int_0^{\tau\wedge T}|A_1 ^{3/2}\phi|^2_{ L^2}\,dt, $
and assumption \eqref{eq:ass1}-\eqref{eq:ass2} and Agmon's inequality yield
	\begin{eqnarray}\label{phiDeltaEst2}
			I_2&\leq& C \mathbb{E}\int_0^{\tau\wedge T} (1 + \|\phi\|^4_{L^\infty}) \|A_1 ^{1/2} \phi|_{ L^2} |A_1 ^{3/2}\phi|_{ L^2} \, dt 
			\leq C\mathbb{E} \int_0^{\tau\wedge T} \left(1 + \|\phi\|^{2}_{H^2}\|\phi\|^{2}_{H^1} \right) |A_1 ^{1/2}\phi|_{ L^2}|A_1 ^{3/2} \phi|_{ L^2} \, dt \nonumber\\
			&\leq&\!\! C_{\epsilon} \mathbb{E}\sup_{ t \in [0,{\tau\wedge T}]}|A_1 ^{1/2}\phi(t)|^2_{ L^2}  + C_{\epsilon} \mathbb{E}\sup_{ t \in [0,{\tau\wedge T}]}\|\phi(t)\|^{12}_{H^1}+ \mathbb{E}\left(\int_0^{\tau\wedge T} \!\!\!\|\phi\|^{4}_{H^2}  \,dt\right)^2 + \frac{\epsilon}{8}\mathbb{E} \int_0^{\tau\wedge T} |A_1 ^{3/2} \phi|^2_{ L^2}dt. 		
\end{eqnarray}
	         The bound of $|A_1 \phi|_{ L^2}$ given above leads to the estimate 
\begin{equation*}
		C_{\epsilon} \mathbb{E}\left( \int_0^{\tau\wedge T}|A_1 \phi|^4_{ L^2}\,dt\right)^2\leq C_{\epsilon, R_3} \mathbb{E}\sup_{ t \in [0,{\tau\wedge T}]}|A_1 ^{1/2}\phi(t)|^8_{ L^2}+ \mathbb{E}\left(\int_0^{\tau\wedge T}|A_1 ^{1/2}\mu |^2_{L^2}dt\right)^4,
\end{equation*}
	       and hence the bounds \eqref{eqequality}-\eqref{eqequality0} show that
		 $\mathbb{E}\left(\int_0^{\tau\wedge T} \|\phi\|^{4}_{H^2}  \,dt\right)^2< \infty.$
            Further, collecting the estimates \eqref{phiDelta2}–\eqref{phiDeltaEst2}, invoking the preceding bound, and  \eqref{eqequality},  we also obtain that
$		\mathbb{E} \int_0^{\tau\wedge T} \| \phi\|^2_{H^3} \,dt< \infty. $
\subsection {Uniqueness of Solutions}
                In this section, we first demonstrate that the strong solution is unique in the larger class of weak solutions. We then establish uniqueness within the class of weak solutions itself.
\begin{Pro}(Weak-strong uniqueness)\label{weak-strong}  
		        Let \( \tau_1, \tau_2 > 0 \) be stopping times. Set $\tau := \tau_1 \wedge \tau_2$. Suppose that the assumptions of Lemma \ref{Ener_est} are satisfied and \(h\) satisfies Assumption [\ref{[A3]}]. We assume that \( \{(v_1, \phi_1, \sigma_1), \tau_1\} \) and \( \{(v_2, \phi_2,\sigma_2), \tau_2\} \) are, respectively, local strong and weak solutions to the stochastic CH-CBF reaction diffusion system for \( d = 2, 3 \) corresponding to the sources \((z_1, u_1, w_1)\) and  \((z_2, u_2, w_2)\) and initial data \( \Big(v_{1}, \phi_{1},\sigma_{1}\Big)(0)\) and \( (v_{2}, \phi_{2},\sigma_{2})(0) \) with $\bar{\Omega} = \{\big(v_{1}, \phi_{1},\sigma_{1} \big)(0) = \big(v_{2}, \phi_{2},\sigma_{2} \big)(0)\} \subseteq \Omega$, then the pathwise uniqueness (see Definition \ref{pathuq}) holds.   
	 \end{Pro}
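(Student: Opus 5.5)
We prove weak--strong uniqueness by an energy estimate for the difference at the level of the weak solution. Set $(v,\phi,\sigma):=(v_1-v_2,\phi_1-\phi_2,\sigma_1-\sigma_2)$, $\mu:=\mu_1-\mu_2$, and work on $\bar\Omega$ with equal sources. The difference satisfies, in $V'+L^{\frac{r+1}{r}}_{\mathrm{div}}\times (H^1)'\times (H^1)'$, the system obtained by subtracting \eqref{regu1} for the two solutions. We apply It\^o's formula to
\[
\mathcal{Y}(t):=\|v(t)\|^2_{L^2_{\mathrm{div}}}+|\phi(t)|^2_{L^2}+|A_1^{1/2}\phi(t)|^2_{L^2}+|\sigma(t)|^2_{L^2}.
\]
For the $\phi$-part we test the $\phi$-difference equation with $\phi$ and with $\epsilon A_1\phi$. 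This produces the dissipation $\nu\|v\|^2$, $\epsilon^2|A_1^{3/2}\phi|^2$ (up to lower order terms), $|A_1^{1/2}\sigma|^2$, and the monotone contribution of the Forchheimer term, $\eta(|v_1|^{r-1}v_1-|v_2|^{r-1}v_2,v)\ge 0$. This is an $H^1$-level energy for $\phi$, consistent with the weak class $\phi_2\in L^\infty(H^1)\cap L^2(H^2)$, in fact $L^4(H^2)\cap L^2(H^3)$ by the regularity estimates after Lemma \ref{Ener_est}.

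Next we bound each difference term, always placing the high norms on the strong solution $(v_1,\phi_1,\sigma_1)$, which lies in $L^2(D(A_0)\times H^4\times H^2)\cap L^\infty(\mathcal V)$.
\begin{itemize}
\item Convection: write $b_0(v_1,v_1,v)-b_0(v_2,v_2,v)=b_0(v,v_1,v)$ by \eqref{b0_1}, and bound it by \eqref{b0_4} as $\|v\|^{3/2}\|v\|^{1/2}_{L^2}\|v_1\|$, which after Young gives a factor $\|v_1\|^4$.
\item Advection terms: likewise $b_1(v,\sigma_1,\sigma)$ and $b_1(v,\phi_1,\cdot)$ are controlled by \eqref{b1_5} and \eqref{b1_2} with coefficients $\|\sigma_1\|^2_{H^2}$ and $\|\phi_1\|^2_{H^2}+\|A_0v_1\|$-type integrable weights.
\item Capillary terms: we combine the coupling $R_0(A_1\phi_1,\phi_1)-R_0(A_1\phi_2,\phi_2)$ with the advection term $B_1(v_1,\phi_1)-B_1(v_2,\phi_2)$ tested by $\epsilon A_1\phi$, and use Remark \ref{R0B1}. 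The part $(B_1(v,\phi_2),A_1\phi)$ partially cancels against $r_0(A_1\phi,\phi_2,v)$, leaving terms such as $r_0(A_1\phi_1,\phi,v)$ and $b_1(v_2,\phi,A_1\phi)$. We bound these by Agmon/Gagliardo--Nirenberg interpolation between $|A_1^{1/2}\phi|$ and $|A_1^{3/2}\phi|$.
\item Potential term: the contribution $(A_1(\psi'(\phi_1)-\psi'(\phi_2)),A_1\phi)$ is handled via the decomposition \eqref{estst2}, with the growth bounds \eqref{eq:ass2} and $H^2\hookrightarrow L^\infty$. This yields coefficients polynomial in $\|\phi_1\|_{H^2},\|\phi_2\|_{H^2}$ times $\|\phi\|_{H^2}^2$, and $\|\phi\|^2_{H^2}$ is interpolated between the $H^1$ norm and $|A_1^{3/2}\phi|$.
\item Source terms: those involving $h$ and $\sigma h(\phi)$ are Lipschitz and use $\|\sigma_1\|_{H^1}\|\phi\|_{H^1}$.
\item Noise: the It\^o correction and the martingale terms are treated by the Lipschitz property [\ref{[A2]}] and Burkholder--Davis--Gundy, as in \eqref{sinoise}.
\end{itemize}

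Collecting these estimates, we aim for
\[
d\mathcal Y+\tfrac12\bigl(\nu\|v\|^2+\epsilon^2|A_1^{3/2}\phi|^2+|A_1^{1/2}\sigma|^2\bigr)dt\le R(t)\,\mathcal Y\,dt+dM_t,
\]
where $R=C(1+\|v_1\|^4+\|A_0v_1\|^2+\|\sigma_1\|_{H^2}^2+\|\phi_1\|^8_{H^2}+\|\phi_2\|^4_{H^2}+\dots)$. Since $R$ is a.s.\ integrable only up to stopping times, we localize with
\[
\tau^K:=\inf\Bigl\{t:\int_0^t R\,ds>K\Bigr\}\wedge\tau,
\]
apply the stochastic Gronwall Lemma \ref{l3} to get $\mathbb E\sup_{[0,\tau^K]}\mathbf 1_{\bar\Omega}\mathcal Y=0$, and then let $K\to\infty$ using $\tau^K\to\tau$ a.s.

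The main obstacle is the potential term and the $r_0/b_1$ terms involving the weak solution $\phi_2$. These need $\phi_2\in L^4(\Omega;L^4(0,T;H^2))$, which is exactly the input from \eqref{hr1} and the following bound; this integrability is what makes $R$ a.s.\ integrable. In $d=3$, the Forchheimer difference is simply dropped by monotonicity, so the restriction $r\le3$ enters only through the existence of the strong solution.
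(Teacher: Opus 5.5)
Your overall architecture matches the paper's. You use It\^o's formula for the $\mathcal H$-energy of the difference (the paper tests the $\phi$-equation with $\phi+A_1\phi$, you with $\phi$ and $\epsilon A_1\phi$), monotonicity of the Forchheimer term as in \eqref{uq3}, the cancellation from Remark \ref{R0B1}, and localization plus Gronwall. The gap is in the step you yourself call the main obstacle, the potential term.

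Suppose you expand $A_1(\psi'(\phi_1)-\psi'(\phi_2))$ by \eqref{estst2} and control $\phi_2$ through $H^2\hookrightarrow L^\infty$, as in \eqref{est4}--\eqref{est7}. You then get $|(A_1(\psi'(\phi_1)-\psi'(\phi_2)),A_1\phi)|\le P\,\|\phi\|_{H^2}^2$, with $P$ of high degree in $\|\phi_2\|_{H^2}$. The piece $A_1\phi_2(\psi''(\phi_1)-\psi''(\phi_2))$ alone puts $\|\phi_2\|_{H^2}^4$ into $P$. Your energy for $\phi$ is only at the $H^1$ level, so you must interpolate $\|\phi\|_{H^2}^2\lesssim\|\phi\|_{H^1}\|\phi\|_{H^3}$ and absorb the $H^3$ part. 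The resulting Gronwall weight is $P^2\gtrsim\|\phi_2\|_{H^2}^8$. A weak solution only gives $\phi_2\in L^\infty(H^1)\cap L^4(H^2)\cap L^2(H^3)$, so this weight need not be a.s.\ integrable and your $\tau^K$ need not increase to $\tau$. In particular, the entry $\|\phi_2\|^4_{H^2}$ in your $R$ does not follow from the estimate you describe.

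The paper avoids this by spending one derivative fewer. Since $\partial_{\mathbf n}f=0$ for $f=\psi'(\phi_1)-\psi'(\phi_2)$, one has $(A_1f,A_1\phi)=(\nabla f,\nabla A_1\phi)$. The factor $\nabla A_1\phi$ is absorbed into the $|A_1^{3/2}\phi|^2$ dissipation, so only $|\nabla f|_{L^2}$ is needed, where $\nabla f=\psi''(\phi_1)\nabla\phi+\nabla\phi_2(\psi''(\phi_1)-\psi''(\phi_2))$. Bounding $\|\phi_2\|_{L^{18}}$ by Gagliardo--Nirenberg between $L^6$ and $H^2$ (not by $H^2\hookrightarrow L^\infty$) gives the weight $\|\phi_2\|^8_{H^1}\|\phi_2\|^4_{H^2}$ of \eqref{uq103}. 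This is exactly where $L^4(H^2)$ enters. Your second-order expansion can also be rescued, but only by interpolating every norm of $\phi_2$ between $H^1$ and $H^2$ (e.g.\ Agmon's inequality) and balancing the Young exponents. The terms containing $|\nabla\phi_2|^2$ and $A_1\phi_2$ then sit exactly at the borderline $\|\phi_2\|^4_{H^2}$.

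A smaller point: the exact cancellation needs matched splittings. If you write $R_0(A_1\phi_1,\phi)+R_0(A_1\phi,\phi_2)$, you must pair it with $B_1(v_1,\phi)+B_1(v,\phi_2)$. The leftovers are then $r_0(A_1\phi_1,\phi,v)$, $b_1(v_1,\phi,A_1\phi)$ and $b_1(v,\phi_2,\phi)$. The combination you list instead differs from the true one by the cubic term $b_1(v,\phi,A_1\phi)$. Done consistently, your splitting is a legitimate variant of the paper's, which cancels against $\phi_1$ and pays with the weights $\|\mu_2\|^2_{H^1}$ and $\|v_2\|^2$. Yours has the advantage of loading all capillary weights onto the strong solution.
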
  
	  \begin{proof}
	   		Let \(\mu := \mu_1 - \mu_2, z:= z_1-z_2, u:= u_1-u_2 \)  and \(w:= w_1-w_2\).  Consider the difference \( ( v, \phi, \sigma) := (v_1, \phi_1, \sigma_1) - (v_2, \phi_2, \sigma_2) \). 
	  		 Then, \( ( v, \phi,\sigma, \mu) \) satisfies the system:
	 \begin{subequations}	\label{uqe1}
	   \begin{align}
	   	&d v + \nu A_0  v \, dt + \eta\left[\mathcal{A}_r(v_1) - \mathcal{A}_r(v_2)\right] dt + \left[ B_0(v_2,  v) + B_0(v, v_1) \right] dt \nonumber\\
	   	&\qquad= \  \left[ R_0(\mu_2, \phi) + R_0(\mu, \phi_1) \right] dt + z dt  + \sum_{k=1}^\infty \left[ g_{1,k}(t, v_1) - g_{1,k}(t, v_2) \right] d\beta^1_k(t), \label{uqe11}\\
	   	&\frac{d \phi}{dt} + A_1 \mu+ B_1( v_2, \phi) + B_1(v,  \phi_1)  = (P\sigma_1-A- \alpha u_1)h(\phi_1)- (P\sigma_2-A- \alpha u_2)h(\phi_2), \label{uqe12}\\
	   	&d \sigma +  A_1  \sigma \, dt + \left[ B_1(v_2,  \sigma) + B_1(v, \sigma_1) \right] dt +c\left[\sigma_1h(\phi_1)-\sigma_2h(\phi_2)\right] \,dt \nonumber\\
	   	&\qquad= b(w-\sigma) \,dt + \sum_{k=1}^\infty \left[ g_{2,k}(t, \sigma_1) - g_{2,k}(t, \sigma_2) \right] d\beta^2_k(t),\label{uqe13} \\
	   	& \mu =  \epsilon A_1  \phi + \epsilon^{-1} \left[ \psi'(\phi_1) - \psi'(\phi_2) \right] \label{uqe14}.
	  \end{align}
	  \end{subequations}
	  \textbf{Step 1.}
		 	 Applying the It\^o formula to $\|v(\cdot)\|_{L^2_{\mathrm{div}}}^2$, and using $\eqref{b0_1}_1$, we proceed as follows. For any stopping time $\tilde{\tau}\leq \tau$, we integrate the resulting equality over the interval $[0,\tilde{\tau}]$, 
		  	multiply by $\mathbf{1}_{\bar{\Omega}}$, and then take the expectation to get
	\begin{eqnarray}\label{uq01}
			\lefteqn{\mathbb{E} \mathbf{1}_{\bar{\Omega}} \left\| v(\tilde{\tau})\right\|_{L^2_{\mathrm{div}}}^2 
			+ 2 \nu \mathbb{E} \mathbf{1}_{\bar{\Omega}} \int_0^{\tilde{\tau}}	\|v\|^2\, ds +2 \eta \mathbb{E} \mathbf{1}_{\bar{\Omega}} \int_0^{\tilde{\tau}} 
			\left\langle\mathcal{A}_r(v_1) - \mathcal{A}_r(v_2), v\right\rangle \, ds}\nonumber\\
            &&= -2 \mathbb{E} 	\mathbf{1}_{\bar{\Omega}} \int_{0}^{\tilde{\tau}} b_0( v, v_1, v)\, ds+ 2  \mathbb{E} \mathbf{1}_{\bar{\Omega}} \int_0^{\tilde{\tau}} 
			\langle z,  v \rangle ds + 2 \,\mathbb{E} \mathbf{1}_{\bar{\Omega}} \int_0^{\tilde{\tau}} 
			\left[\left(R_0(\mu_2, \phi), v\right) + \left(R_0(\mu, \phi_1), v\right) 	\right] ds \nonumber\\
			&&\quad+  \mathbb{E} \mathbf{1}_{\bar{\Omega}} \int_0^{\tilde{\tau}} 
			\|G_1(s, v_1) - G_1(s, v_2)\|_{\mathcal{L}_2(U_1, L^2_{\mathrm{div}})}^2  ds:=\sum_{k=1}^4 \mathfrak U_k.
	\end{eqnarray}
			Observe that the third term on the left-hand side is non-negative, that is, for \(r \geq 1\), we have
		\begin{eqnarray}
		\left\langle\mathcal{A}_r(v_1) - \mathcal{A}_r(v_2), v\right\rangle &=& 
				\left(\,|v_1|^{r-1}v_1 -|v_2|^{r-1}v_2, v\right)\nonumber\\
				&=& (\,|v_1|^{r-1},\, |v_1|^2) + (|v_2|^{r-1},\, |v_2|^2) -(v_1\cdot v_2 ,\,|v_1|^{r-1}+ |v_2|^{r-1})\nonumber\\
				&=& \frac{1}{2}\Big\|(\,|v_1|^{\frac{r-1}{2}})v\,\Big\|^2_{ L^2_{\mathrm{div}}} + \frac{1}{2}\Big\|(\,|v_2|^{\frac{r-1}{2}})v\,\Big\|^2_{ L^2_{\mathrm{div}}} + \frac{1}{2} \Big((|v_1|^2-|v_2|^2),\,(|v_1|^{r-1}-|v_2|^{r-1})\Big) \nonumber\\
				&\geq& \frac{1}{2}\Big\|(\,|v_1|^{\frac{r-1}{2}})v\Big\|^2_{ L^2_{\mathrm{div}}} + \frac{1}{2}\Big\|(\,|v_2|^{\frac{r-1}{2}})v\Big\|^2_{ L^2_{\mathrm{div}}} \geq 0.
				\label{uq3}    
		\end{eqnarray}

				    For $d = 2, 3$, we use \eqref{b0_4} and \eqref{r_04}, to obtain the following:
				$|b_0( v, v_1, v)| 
				\leq  C_{\nu} \|v_1\|^4 \| v\|^2_{ L^2_{\mathrm{div}}}+\frac{\nu}{10} \| v\|^2, \label{uq15}$ and 
				$|\left(R_0(\mu_2, \phi), v\right) |\leq  C_{\nu}\|\mu_2\|^{2}_{H^1}\|\phi\|^2_{H^1} + \frac{\nu}{10} \| v\|^2 $.
		Moreover, we have
	\begin{equation} \label{uq10}
		|\langle z,  v \rangle | + \|G_1(s, v_1) - G_1(s, v_2)\|_{\mathcal{L}_2(U_1, L^2_{\mathrm{div}})}^2 \leq C_{\nu}\|z\|^2_{V'} + \frac{\nu}{10}\|v\|^2 +C_{L_{L^2_{\mathrm{div}}}} \|v\|^2_{ L^2_{\mathrm{div}}}.
	\end{equation}
\noindent\textbf{Step 2.}
			Taking the duality of \eqref{uqe12} with \( ( \phi + A_1\phi )\) and using $\eqref{b1_1}_1$, we obtain
	\begin{equation}\label{uq02}
			\begin{aligned}
				& \mathbb{E} \mathbf{1}_{\bar{\Omega}} \left\|\phi(\tilde{\tau}) \right\|_{H^1}^2 
				+ 2 \mathbb{E} \mathbf{1}_{\bar{\Omega}} \int_0^{\tilde{\tau}}
				\left( \epsilon |A_1  \phi|^2_{ L^2} + \epsilon |A_1^{3/2}  \phi|^2_{ L^2} \right) ds \\
				& = - 2 \mathbb{E} \mathbf{1}_{\bar{\Omega}} \int_0^{\tilde{\tau}} b_1( v, \phi_1, \phi)\, ds - 2 \mathbb{E} \mathbf{1}_{\bar{\Omega}} \int_0^{\tilde{\tau}} 
				\left[ b_1( v, \phi_1, A_1 \phi) 
				+ b_1(v_2, \phi, A_1 \phi) \right] ds \\
				& \quad 
				+ 2\mathbb{E} \mathbf{1}_{\bar{\Omega}} \int_0^{\tilde{\tau}} 
				\left[((P\sigma_1-A- \alpha u_1)h(\phi_1)- (P\sigma_2-A- \alpha u_2)h(\phi_2), \phi + A_1\phi) \right] ds\\
				 & \qquad 
				+ 2 \epsilon^{-1} \mathbb{E} \mathbf{1}_{\bar{\Omega}} \int_0^{\tilde{\tau}} 
				 \left( \nabla(\psi'(\phi_1) - \psi'(\phi_2)),  \nabla \Delta \phi-\nabla \phi\right)  ds.
		\end{aligned}
	\end{equation}
			With the help of \eqref{b1_5} and Young's inequality, we derive  
			$|b_1( v, \phi_1, \phi)| 
				 \leq C_{\nu}|A_1^{1/2} \phi_1|^2_{ L^2}\|\phi\|^2_{H^1} + \frac{\nu}{10}\|v\|^2.$			
Recalling the embedding of $H^1 \hookrightarrow L^6$, the Gagliardo-Nirenberg inequality (see Lemma \ref{l1}) together with Remark \ref{emer}, we further deduce
		\begin{equation} 
			\begin{aligned}
				|b_1(v_2, \phi, A_1 \phi)| & \leq \|v_2\|_{L^6_{\mathrm{div}}}\|\nabla \phi\|_{L^3}|A_1\phi|_{ L^2}\leq \|v_2\|\,|A_1^{1/2} \phi|^{1/2}_{ L^2}\|\nabla \phi\|^{1/2}_{H^1}|A_1^{1/2}\phi|^{1/2}_{ L^2} |A_1^{3/2} \phi|^{1/2}_{ L^2}\\
				& \qquad\leq \|v_2\|\,|A_1^{1/2} \phi|_{ L^2}\|\phi\|^{1/2}_{H^2}|A_1^{3/2} \phi|^{1/2}_{ L^2}\leq C_{\epsilon} \|v_2\|^2\|\phi\|^2_{H^1} + \frac{\epsilon}{6} \Big(\,\|\phi\|^{2}_{H^2}+|A_1^{3/2}\phi|^2_{ L^2}  \Big).
			\end{aligned}
			\label{uq12}
		\end{equation}
				The above two estimates hold for \(d=2,3.\) 
				The assumptions on \(u_1\) and \(h\) ([\ref{[A3]}] and [\ref{[A4]}])  lead to
		\begin{equation} 
			\begin{aligned}
				&|\left((P\sigma_1-A- \alpha u_1)h(\phi_1)- (P\sigma_2-A- \alpha u_2)h(\phi_2),\, \phi + A_1\phi\right)|\\
				&  \qquad\leq |\left((P\sigma_1-A- \alpha u_1)(h(\phi_1)-h(\phi_2)) + h(\phi_2)(P\sigma-\alpha u),\, \phi + A_1\phi\right)|\\
				&\qquad \leq C_{\epsilon}|(P\sigma_1-A- \alpha u_1)(h(\phi_1)-h(\phi_2))|^2_{ L^2} + C_{\epsilon, P} |\sigma|^2_{ L^2}+C_{\epsilon, \alpha}|u|^2_{ L^2} + \frac{\epsilon}{6}\|\phi\|^2_{ H^2}\\
				&\qquad\leq C_{\epsilon, L_h,P,A, \alpha}\Big(\,\|\sigma_1\|^2_{H^1} \|\phi\|^2_{H^1} +|\phi|^2_{ L^2}+ |\sigma|^2_{ L^2} + |u|^2_{ L^2}\Big)+ \frac{\epsilon}{6}\|\phi\|^2_{ H^2}.
				\label{uq5}
			\end{aligned}
		\end{equation}
				Next, we have
		\begin{equation} 
			|( \nabla(\psi'(\phi_1) - \psi'(\phi_2)), \nabla \Delta \phi- \nabla \phi)|  
			\leq C_{\epsilon}|\nabla(\psi'(\phi_1) - \psi'(\phi_2))|^2_{ L^2} + \frac{\epsilon}{6}|\nabla \Delta \phi-\nabla \phi |^2_{ L^2}.
			\label{uq6}
		\end{equation}
			Since
			$\nabla(\psi'(\phi_1) - \psi'(\phi_2))  
            = \psi''(\phi_1)\nabla\phi + \nabla\phi_2(\psi''(\phi_1)-\psi''(\phi_2)),$ Assumption [\ref{[A1]}] gives:	
		\begin{equation} 
				|\psi''(\phi_1)\nabla\phi|^2_{L^2} \leq C_{\psi}
				(1+\|\phi_1\|^4_{L^{\infty}})^2|\nabla\phi|^2_{L^2} \leq C_{\psi}(1+\|\phi_1\|^8_{H^2})\|\phi\|_{H^1}^2.
				\label{uq102}
		\end{equation}
			Applying Assumption [\ref{[A1]}] (see, \eqref{eq:ass2}) and Gagliardo-Nirenberg's inequalities, we arrive at 
		\begin{equation} 
			\begin{aligned}
				|\nabla\phi_2(\psi''(\phi_1)-\psi''(\phi_2))|^2_{L^2} &\leq
				C_{\psi}
				\int_{\mathcal{O}}(1+|\phi_1|^3+|\phi_2|^3)^2|\phi|^2|\nabla\phi_2|^2 \,dx\\
				&\leq C_{\psi}  (1+\|\phi_1\|^6_{L^{18}}+\|\phi_2\|^6_{L^{18}})\|\nabla\phi_2\|^2_{L^6}\|\phi\|^2_{L^6}\\
				&\leq C_{\psi}  (1+\|\phi_1\|^{4}_{L^6}\|\phi_1\|^{2}_{H^2}+\|\phi_2\|^{4}_{L^6}\|\phi_2\|^{2}_{H^2} )\|\phi_2\|^2_{H^2}\|\phi\|^2_{H^1}\\
				&\leq C_{\psi}  (1+\|\phi_1\|^{8}_{H^1}\|\phi_1\|^{4}_{H^2}+\|\phi_2\|^{8}_{H^1}\|\phi_2\|^{4}_{H^2} +\|\phi_2\|^{4}_{H^2}  )\|\phi\|^2_{H^1},
				\label{uq103}
			\end{aligned}
		\end{equation}
				which is true for \( d=3\). Similarly for \( d=2\), we derive
	 \begin{equation} 
			\begin{aligned}
				|\nabla\phi_2(\psi''(\phi_1)-\psi''(\phi_2))|^2_{L^2}
				&\leq C_{\psi}  (1+\|\phi_1\|^{12}_{H^1}+\|\phi_2\|^{12}_{H^1} +\|\phi_2\|^{4}_{H^2}  )\|\phi\|^2_{H^1}.
				\label{uq103'}
			\end{aligned}
	\end{equation}		 
			\textbf{Step 3.}
			Applying the It\^o formula to \(|\sigma(\cdot)|^2_{ L^2}\) and using $\eqref{b1_1}_1$, we obtain
\begin{eqnarray}\label{uq03}
	 	&&\mathbb{E} \mathbf{1}_{\bar{\Omega}} \left| \sigma(\tilde{\tau}) \right|_{L^2}^2 
	 		+ 2 \mathbb{E} \mathbf{1}_{\bar{\Omega}} \int_0^{\tilde{\tau}}
	 		 |A_1^{1/2}\sigma|^2_{ L^2} ds =  - 2 \mathbb{E} \mathbf{1}_{\bar{\Omega}} \int_0^{\tilde{\tau}} b_1( v, \sigma_1, \sigma)\, ds - 2b \mathbb{E} \mathbf{1}_{\bar{\Omega}} \int_0^{\tilde{\tau}}(|\sigma|^2_{L^2} - \langle w, \sigma\rangle) ds  \nonumber\\ 
	 		&&\quad - 2 c\mathbb{E} \mathbf{1}_{\bar{\Omega}} 	\int_0^{\tilde{\tau}}( \sigma_1h(\phi_1)-\sigma_2h(\phi_2) ,\sigma)  ds+ \mathbb{E} \mathbf{1}_{\bar{\Omega}} \int_0^{\tilde{\tau}} 
	 		\|G_2(s, \sigma_1) - G_2(s, \sigma_2)\|_{\mathcal{L}_2(U_2, L^2)}^2 \, ds.
\end{eqnarray}
					The trilinear term \(b_1(\cdot, \cdot, \cdot)\), using \eqref{b1_5}, is estimated  as
					$|b_1( v, \sigma_1, \sigma)|   \leq \frac{\nu}{10}	\|v\|^2 + \frac{1}{6}\|\sigma\|^{2}_{H^1} + C_{\nu} \|\sigma_1\|^4_{H^1}|\sigma|^2_{ L^2}$
			and Assumption [\ref{[A3]}] on \(h\) implies 
		\begin{equation} 
				\begin{aligned}
					|( \sigma_1h(\phi_1)-\sigma_2h(\phi_2) ,\sigma) | =
					|\left(\sigma_1(h(\phi_1)- h(\phi_2)) +\sigma h(\phi_2),\sigma\right)|
                    \leq C_{ L_h}\|\sigma_1\|^2_{H^1}\|\phi\|^2_{H^1}+ 2|\sigma|^2_{ L^2}.
				\end{aligned}
				\label{uq8}
			\end{equation}
				For the source term \(w\) and stochastic forcing \(G_2\), we have the following estimate:
\begin{equation} \label{uq11}
			|\langle w,  \sigma \rangle| + \|G_2(s, \sigma_1) - G_2(s, \sigma_2)\|_{\mathcal{L}_2(U_2, L^2)}^2   \leq C\|w\|^2_{(H^1)^'} + 	\frac{1}{6}\|\sigma\|^2_{H^1}+ C_{L_{L^2}} |\sigma|^2_{ L^2}.
\end{equation}
       The above three estimates are valid for $d=2,3$.
        
\medskip
\noindent\textbf{Step 4 (Compiling Steps 1-3).}				          For a fixed \( k\geq 0\), we define the following               stopping times:
\begin{equation*}
	\begin{aligned}
		\gamma_k^1 &:= \inf \{t \in [0,T] : \|(v_1, \phi_1,\sigma_1)(t\wedge \tau_1)\|^2_{\mathcal{V}} \, >k \}, \\
		\gamma_k^2 &:= \inf \{t\in [0,T]  : \| (v_2, \phi_2,\sigma_2)(t\wedge \tau_2)\|^2_{\mathcal{H}} + \|{\mathbf{1}}_{t \leq \tau_2}v_2\|^2_{L^2(0,t;V)} + \|{\mathbf{1}}_{t \leq \tau_2}\phi_2\|^4_{L^4(0,t;H^2)} + \|{\mathbf{1}}_{t \leq \tau_2}\mu_2\|^2_{L^2(0,t;H^1)} \, > k\},\\
		\gamma_k &:= \gamma_k^1  \wedge \gamma_k^2 \wedge \tau.
	\end{aligned}
\end{equation*}
		
        Since $\{(v_1, \phi_1,\sigma_1), \tau_1\}$ and  $\{(v_2, \phi_2,\sigma_2), \tau_2\}$ are, respectively, local strong and weak solutions, by means of convergence of sequences $\gamma_k^1 \to \tau_1$ and $\gamma_k^2 \to \tau_2$, we have $\gamma_k \to \tau$ as $ k \to \infty$ a.s..
        
        By adding  the identities \eqref{uq01},\eqref{uq02} and \eqref{uq03}, we notice that $\left(R_0(\mu, \phi_1), v\right) $ and $-b_1( v, \phi_1, A_1 \phi)$ cancel each other (see Remark \ref{R0B1}).  Consequently, invoking the estimates obtained in Steps 1 to 3 with the stopping time $\gamma_k \wedge t$, we infer at
 \begin{eqnarray}
	 		\mathfrak U &:=&\mathbb{E}\mathbf{1}_{\bar{\Omega}} \left\|(v(\gamma_k \wedge t), \phi(\gamma_k \wedge t), \sigma(\gamma_k \wedge t)) \right\|_{\mathcal{H}}^2 
	 		+ \mathbb{E}\mathbf{1}_{\bar{\Omega}} \int_0^{\gamma_k \wedge t} \left( \nu \|  v \|^2 +  \epsilon|A_1 \phi|^2_{ L^2}+  \epsilon|A_1^{3/2} \phi|^2_{ L^2} + \|\sigma\|^2_{H^1} \right)ds \nonumber\\
	 		& \leq& C_1\, \mathbb{E}\mathbf{1}_{\bar{\Omega}} \int_0^{\gamma_k \wedge t} \left(1 +  \| v_2 \|^2  +\|\phi_2\|^4_{H^2}+ \|\mu_2\|^2_{H^1} \right)\left\| ( v(s),  \phi(s) , \sigma(s)) \right\|_{\mathcal{H}}^2 ds \nonumber\\
	 		&& + C_2 \,\mathbb{E}\mathbf{1}_{\bar{\Omega}} \int_0^{\gamma_k \wedge t} \big(~\|z\|^2_{V'} + |u|^2_{ L^2}+ \|w\|^2_{(H^1)^'}\big) \,ds,  \label{uq16}
	 \end{eqnarray}
	 		where $ C_1 := C_1(\nu,\epsilon,P,A, L_h,L_{H},L_{L^2}, \alpha,  C_{\psi}, k)$ and $ C_2 := C_2(\nu,\epsilon, \alpha).$ 
			 We also note that for \(v_2\), \(\phi_2\) (cf. \eqref{hr1}) and  \(\mu_2\), we have
	 		$\int_{0}^{\gamma_k \wedge t}(1+ \| v_2 \|^2  +\|\phi_2\|^4_{H^2} + \|\mu_2\|^2_{H^1}) ds \leq \tilde{C} \ \text{a.s.},$
			 for some suitable constant \(\tilde{C} > 0\) depending on $T$ and $k$.
		   	Moreover, for uniqueness, we set \(z,u\) and \(w = 0\) a.s.. Hence, it follows from Gronwall's lemma (see Lemma \ref{l3}) that
			 $\mathbb{E}\mathbf{1}_{\bar{\Omega}} \left\| (v(\gamma_k \wedge t),  \phi(\gamma_k \wedge t),\sigma(\gamma_k \wedge t)) \right\|_{\mathcal{H}}^2 = 0,$ which concludes the proof as \(k \to \infty\).
	 \end{proof}
\begin{Rem}
         Though, we have formally established the energy estimate (Lemma \ref{Ener_est}) for the local weak solution of the stochastic CH-CBF reaction diffusion system (for $r \geq 1$) defined on a bounded domain $ \mathcal{O} \subset \mathbb{R}^d,d=2,3$ that can be rigorously justified, and the existence of a weak solution of CH-CBF system can be proven by following the stochastic Galerkin approximations and compactness arguments (see for example, \cite{Deugoue2022,Orrieri2020}). 
\end{Rem}
     
Next, we prove the \emph{uniqueness of weak solutions} of the CH-CBF system under a suitable condition only on $\sigma$ of \eqref{a3} in $d=3.$ 
\begin{Pro}(Uniqueness of weak solutions)\label{weakuniq}  
		Let \( \tau_1, \tau_2 > 0 \) be stopping times. Set $\tau := \tau_1 \wedge \tau_2$. Suppose that the assumptions of Proposition \ref{weak-strong} are satisfied. We assume that \( \{(v_1, \phi_1, \sigma_1), \tau_1\} \) and \( \{(v_2, \phi_2,\sigma_2), \tau_2\} \) are weak solutions to the system \eqref{a1}-\eqref{a5} with the associated sources \((z_1, u_1, w_1)\) and  \((z_2, u_2, w_2)\) and initial data \( \Big(v_{1}, \phi_{1},\sigma_{1}\Big)(0)\) and \( (v_{2}, \phi_{2},\sigma_{2})(0) \) such that $\bar{\Omega} = \{\big(v_{1}, \phi_{1},\sigma_{1} \big)(0) = \big(v_{2}, \phi_{2},\sigma_{2} \big)(0)\} \subseteq \Omega$, then the pathwise uniqueness (see Definition\ref{pathuq}) holds in $d = 2$ for all $\eta > 0$, $\nu > 0$, and $r \geq 1$, while in $d = 3,$ it holds for $r \geq 3$ under the assumption that $\sigma \in L^{4}(\Omega; L^{4}([0,T];H^1))$ with $\eta, \nu > 0$ when $r > 3$, and $\eta\nu \geq 1$ when $r = 3.$ 
			\end{Pro}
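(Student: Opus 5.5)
We follow the scheme of Proposition \ref{weak-strong}. We write the difference system \eqref{uqe11}--\eqref{uqe14} for $(v,\phi,\sigma)=(v_1,\phi_1,\sigma_1)-(v_2,\phi_2,\sigma_2)$. We apply It\^o's formula to $\|v\|^2_{L^2_{\mathrm{div}}}$ and $|\sigma|^2_{L^2}$, and test \eqref{uqe12} with $\phi+A_1\phi$. As before, the coupling terms $(R_0(\mu,\phi_1),v)$ and $-b_1(v,\phi_1,A_1\phi)$ cancel by Remark \ref{R0B1}. The aim is again the $\mathcal H$-energy inequality \eqref{uq16}, now with a Gronwall weight that uses only weak-solution regularity of \emph{both} solutions.

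Several terms need no new work. The estimates of $b_1(v,\phi_1,\phi)$, $b_1(v_2,\phi,A_1\phi)$, \eqref{uq5}, \eqref{uq102}--\eqref{uq103'}, \eqref{uq8}, \eqref{uq10} and \eqref{uq11} only use $\|\phi_i\|_{H^1}$ in $L^\infty_t$, $\|\phi_i\|_{H^2}$ in $L^4_t$ (available from \eqref{hr1} for any weak solution), $\|v_2\|$ and $\|\mu_2\|_{H^1}$ in $L^2_t$, and $\|\sigma_1\|_{H^1}$ in $L^2_t$. In particular, the estimate of $(R_0(\mu_2,\phi),v)$ via \eqref{r_04} requires only $\mu_2\in L^2(H^1)$.

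What must be redone are the two terms where Proposition \ref{weak-strong} used $\sup_t\|v_1\|$ and $\sup_t\|\sigma_1\|_{H^1}$. These are $b_0(v,v_1,v)$ and $b_1(v,\sigma_1,\sigma)$.

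\emph{Case $d=2$.} Ladyzhenskaya's inequality gives
$$|b_0(v,v_1,v)|\le C\|v\|_{L^4}^2\|v_1\|\le \tfrac{\nu}{10}\|v\|^2+C\|v_1\|^2\|v\|^2_{L^2_{\mathrm{div}}},$$
and
$$|b_1(v,\sigma_1,\sigma)|\le C\|v\|_{L^4}\|\sigma_1\|_{H^1}\|\sigma\|_{L^4}\le \tfrac{\nu}{10}\|v\|^2+\tfrac16\|\sigma\|^2_{H^1}+C\|\sigma_1\|^2_{H^1}\bigl(\|v\|^2_{L^2_{\mathrm{div}}}+|\sigma|^2_{L^2}\bigr).$$
Both weights are integrable in time for weak solutions. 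This works for every $r\ge1$ and $\eta,\nu>0$, since the damping term is nonnegative by \eqref{uq3}.

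\emph{Case $d=3$.} For the convective term we write $b_0(v,v_1,v)=-b_0(v,v,v_1)$ and bound it by $\|v\|\,\||v_1|v\|_{L^2}\le\frac{\nu}{2}\|v\|^2+\frac{1}{2\nu}\||v_1|v\|^2_{L^2}$. We then absorb the last term using the damping lower bound \eqref{uq3}, $2\eta\langle\mathcal A_r(v_1)-\mathcal A_r(v_2),v\rangle\ge\eta\||v_1|^{\frac{r-1}{2}}v\|^2_{L^2}$.
\begin{itemize}
\item For $r>3$, H\"older and Young give
$$\int|v_1|^2|v|^2\le\Bigl(\int|v_1|^{r-1}|v|^2\Bigr)^{\frac{2}{r-1}}|v|_{L^2}^{\frac{2(r-3)}{r-1}},$$
so that
$$\tfrac{1}{2\nu}\||v_1|v\|^2_{L^2}\le\tfrac{\eta}{2}\||v_1|^{\frac{r-1}{2}}v\|^2_{L^2}+C_{\eta,\nu,r}\|v\|^2_{L^2_{\mathrm{div}}}.$$
\item For $r=3$ we need $\frac{1}{2\nu}\le\eta$ up to the constants in \eqref{uq3}. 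This is where $\eta\nu\ge1$ enters, and we will track the constants so that the absorption closes exactly.
\end{itemize}
For $b_1(v,\sigma_1,\sigma)$ we use \eqref{b1_5}:
$$|b_1(v,\sigma_1,\sigma)|\le\tfrac{\nu}{10}\|v\|^2+\tfrac16\|\sigma\|^2_{H^1}+C\|\sigma_1\|^4_{H^1}|\sigma|^2_{L^2}.$$
The weight $\|\sigma_1\|^4_{H^1}$ is integrable precisely under the hypothesis $\sigma\in L^4(\Omega;L^4([0,T];H^1))$.

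Finally, we localize as in Step 4 of Proposition \ref{weak-strong}. The stopping times $\gamma_k^i$ are now defined symmetrically for both weak solutions, with the $\mathcal H$-norm, $\|v_i\|_{L^2(V)}$, $\|v_i\|_{L^{r+1}(L^{r+1})}$, $\|\phi_i\|_{L^4(H^2)}$ and $\|\mu_i\|_{L^2(H^1)}$, plus $\|\sigma_1\|_{L^4(H^1)}$ in $d=3$. This makes the total Gronwall weight bounded a.s. on $[0,\gamma_k]$. With $z=u=w=0$, the stochastic Gronwall Lemma \ref{l3} gives $\mathbb E\mathbf 1_{\bar\Omega}\|(v,\phi,\sigma)(\gamma_k\wedge t)\|^2_{\mathcal H}=0$, and we let $k\to\infty$.

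The main obstacle is the critical case $r=3$ in $d=3$. There the convective term must be absorbed exactly by the monotonicity of the damping, so the constants in \eqref{uq3} and the Young splitting have to be chosen sharply to reach the threshold $\eta\nu\ge1$.
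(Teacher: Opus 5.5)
There is a gap in your list of estimates that ``need no new work''. The skeleton is the paper's: difference system, the $R_0/B_1$ cancellation, symmetric localization, stochastic Gronwall. Your 3D treatment also coincides with the paper's. You absorb $b_0(v,v_1,v)=-b_0(v,v,v_1)$ into the monotonicity bound \eqref{uq3}, which gives exactly $\eta\ge 1/\nu$ at $r=3$ once the factor $2$ from It\^o's formula is tracked. You handle $b_1(v,\sigma_1,\sigma)$ through the extra $L^4_tH^1$ hypothesis.

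\textbf{The gap: \eqref{uq102}.} This estimate reads $|\psi''(\phi_1)\nabla\phi|^2_{L^2}\le C_\psi(1+\|\phi_1\|^8_{H^2})\|\phi\|^2_{H^1}$, so its Gronwall weight $\|\phi_1\|^8_{H^2}$ requires $\phi_1\in L^8_tH^2$.
\begin{itemize}
\item In Proposition \ref{weak-strong} this was harmless because $\phi_1$ was the strong solution.
\item A weak solution only gives $\phi_1\in L^4_tH^2$; this is \eqref{hr1}, which comes from $\mu_1\in L^2_tH^1$.
\item Your stopping times localize only $\|\phi_i\|_{L^4(0,t;H^2)}$, so this weight is not a.s. bounded on $[0,\gamma_k]$, and Lemma \ref{l3} cannot be applied as you set it up.
\end{itemize}
This is precisely the third term, besides $b_0(v,v_1,v)$ and $b_1(v,\sigma_1,\sigma)$, that the paper explicitly redoes.

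\textbf{The repair.} Interpolate $\|\phi_1\|_{L^\infty}$ instead of using $H^2\hookrightarrow L^\infty$.
\begin{itemize}
\item In $d=2$, Agmon gives $\|\phi_1\|^8_{L^\infty}\le C|\phi_1|^4_{L^2}\|\phi_1\|^4_{H^2}$; this is \eqref{weakuni7}.
\item In $d=3$, you can use $\|\phi_1\|^8_{L^\infty}\le C\|\phi_1\|^4_{H^1}\|\phi_1\|^4_{H^2}$. Alternatively, as in \eqref{weakuni10}, use $\|\phi_1\|^8_{L^\infty}\le C\|\phi_1\|^6_{H^1}\|\phi_1\|^2_{H^3}$ together with $\phi_1\in L^2_tH^3$ from the regularity subsection, and add that norm to the stopping time.
\end{itemize}
With this change the weight is integrable under your localization, and the rest of your argument closes.

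\textbf{Where you differ from the paper: $d=2$.} The paper splits into two cases. For $r>3$ it absorbs $b_0(v,v,v_1)$ by the damping, as in 3D. For $r\in[1,3]$ it uses the weight $\|v_1\|^4_{L^4_{\mathrm{div}}}$, which is integrable by Ladyzhenskaya. Your bound $|b_0(v,v_1,v)|\le C\|v\|_{L^2_{\mathrm{div}}}\|v\|\,\|v_1\|$ gives the weight $\|v_1\|^2$ uniformly for all $r\ge 1$ and never uses the Forchheimer term. This is the classical 2D Navier--Stokes argument, simpler than the paper's case split, and correct. Likewise, your weight $\|\sigma_1\|^2_{H^1}$ for $b_1(v,\sigma_1,\sigma)$ works as well as the paper's $|\sigma_1|^2_{L^2}\|\sigma_1\|^2_{H^1}$ in \eqref{weakuni8}.
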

			\begin{proof} 
            We shall directly make use of the identities \eqref{uq01},\eqref{uq02} and \eqref{uq03} in Proposition \ref{weak-strong} to prove this result. We derive only the estimates that differ from the previous proposition. \medskip\\
		\noindent {\bf 2D Case.} For $r > 3,$	let us look at the terms in  \eqref{uq01}.
				We estimate the trilinear term as follows:
				\begin{equation}\label{weakuni3}
					\begin{aligned}
						|b_0( v, v_1, v)|= |b_0( v, v, v_1)|\leq \frac{\nu}{2}\|v\|^2+\frac{1}{2\nu}\|v_1v\|^2_{ L^2_{\mathrm{div}}},
					\end{aligned}
				\end{equation}
				and for $r>3,$ H\"older's and Young's 
                inequalities yield (see \cite{Hajduk2017}) 
\begin{align*}\label{weakuni4}
		      \|v_1v\|^2_{ L^2_{\mathrm{div}}} &= \int_{\mathcal{O}} |v_1|^2|v|^{\frac{4}{r-1}} |v|^{\frac{2(r-3)}{r-1}}\,dx \leq \Big\|(\,|v_1|^{\frac{r-1}{2}})v\Big\|^{\frac{4}{r-1}}_{ L^2_{\mathrm{div}}} \|v\|^{\frac{2(r-3)}{r-1}}_{ L^2_{\mathrm{div}}}\\
		       &\leq \eta \nu \Big\|(\,|v_1|^{\frac{r-1}{2}})v\Big\|^2_{ L^2_{\mathrm{div}}} + \left(\frac{2}{\eta\nu(r-1)}\right)^{\frac{2}{r-3}}\frac{r-3}{r-1}\, \|v\|^2_{ L^2_{\mathrm{div}}}.
	\end{align*}
             Consequently, we have		 
	\begin{equation}\label{weakuni5}
					|b_0( v, v_1, v)| \leq \frac{\nu}{2}\|v\|^2+ \frac{\eta}{2} \left\|(\,|v_1|^{\frac{r-1}{2}})v\right\|^2_{ L^2_{\mathrm{div}}}+ C_{r, \eta, \nu}\|v\|^2_{ L^2_{\mathrm{div}}}, \ \ C_{r, \eta, \nu} = \left(\frac{2}{\eta\nu(r-1)}\right)^{\frac{2}{r-3}}\frac{r-3}{2\nu(r-1)}.
    \end{equation}
				Next, for the Forchheimer term, from \eqref{uq3}, we have
		\begin{equation} \label{weakuni6}
					\left\langle\mathcal{A}_r(v_1) - \mathcal{A}_r(v_2), v\right\rangle  \geq \frac{1}{2}\left\|(\,|v_1|^{\frac{r-1}{2}})v\right\|^2_{ L^2_{\mathrm{div}}} + \frac{1}{2}\left\|(\,|v_2|^{\frac{r-1}{2}})v\right\|^2_{ L^2_{\mathrm{div}}} \geq \frac{1}{2}\left\|(\,|v_1|^{\frac{r-1}{2}})v\right\|^2_{ L^2_{\mathrm{div}}}.
				\end{equation}
				The remaining terms of the right-hand side of \eqref{uq01} follow the estimates of Step 1 of the previous proof.\\
				Next, the right-hand side terms of \eqref{uq02} (except \eqref{uq102}) can be controlled identically by the arguments of Step 2 in Proposition \ref{weak-strong}. Using Agmon's inequality (Lemma \ref{l2}), we obtain a modified estimate for \eqref{uq102}:
				\begin{equation}\label{weakuni7}
					|\psi''(\phi_1)\nabla\phi|^2_{L^2} 	\leq C_{\psi}
						(1+\|\phi_1\|^8_{L^{\infty}})|\nabla\phi|^2_{L^2}\leq 	C_{\psi}(1+|\phi_1|^4_{L^2}\|\phi_1\|^4_{H^2})\|\phi\|_{H^1}^2.
				\end{equation}
                Finally, for the evaluation of \eqref{uq03}, the trilinear estimate, in view of \eqref{b1_1}, is modified as follows:
\begin{equation}\label{weakuni8}
					\begin{aligned}
						|b_1( v, \sigma_1, \sigma)|   \leq
						\| v\|_{L^4_{\mathrm{div}}}|\nabla \sigma|_{L^2} \|\sigma_1\|_{L^4} 
						\leq\frac{\nu}{10}	\|v\|^2 + \frac{1}{6}\|\sigma\|^{2}_{H^1} + C_{\nu}|\sigma_1|^2_{ L^2} \|\sigma_1\|^2_{H^1}\| v\|_{L^2_{\mathrm{div}}}^2.
					\end{aligned}
				\end{equation}
            For a fixed \( k\geq 0\), we define the following stopping times:
				\begin{equation*}
					\begin{aligned}
						\gamma_k^{21} &:= \inf \{t \in [0,T] : \|(v_1, \phi_1,\sigma_1)(t\wedge \tau_1)\|^2_{\mathcal{H}} + \|{\mathbf{1}}_{t \leq \tau_1}\phi_1\|^4_{L^4(0,t;H^2)}+ \|{\mathbf{1}}_{t \leq \tau_1}\sigma_1\|^2_{L^2(0,t;H^1)} \, >k \}, \\
						\gamma_k^{22} &:= \inf \{t\in [0,T]  : \| (v_2, \phi_2,\sigma_2)(t\wedge \tau_2)\|^2_{\mathcal{H}} + \|{\mathbf{1}}_{t \leq \tau_2}v_2\|^2_{L^2(0,t;V)} + \|{\mathbf{1}}_{t \leq \tau_2}\phi_2\|^4_{L^4(0,t;H^2)} + \|{\mathbf{1}}_{t \leq \tau_2}\mu_2\|^2_{L^2(0,t;H^1)} \, > k\},\\
						\gamma_k^{2} &:= \gamma_k^{21}  \wedge \gamma_k^{22} \wedge \tau.
					\end{aligned}
				\end{equation*}
				Since $\{(v_1, \phi_1,\sigma_1), \tau_1\}$ and  $\{(v_2, \phi_2,\sigma_2), \tau_2\}$ are local weak solutions, via the convergence of sequences $\gamma_k^{21} \to \tau_1$ and $\gamma_k^{22} \to \tau_2$, we have $\gamma_k^{2} \to \tau$ as $ k \to \infty$ a.s..\\
                In the proof of Proposition \ref{weak-strong}, we change the following: The estimates of $b_0(v,v_1,v),$ \eqref{uq3}, \eqref{uq102} and $b_1(v,\sigma_1,\sigma)$ are replaced by \eqref{weakuni5},  \eqref{weakuni6}, \eqref{weakuni7} and \eqref{weakuni8}, respectively.   
                Thus, the estimate \eqref{uq16} with the new stopping time $\gamma_k^{2} \wedge t$ becomes as follows:
				\begin{equation}				
                \begin{aligned}
						\mathfrak U & \leq C_1\, \mathbb{E}\mathbf{1}_{\bar{\Omega}} \int_0^{\gamma_k^{2} \wedge t} \left(1 +\|\phi_1\|^4_{H^2}+  \| \sigma_1 \|^2_{H^1}  +  \| v_2 \|^2  +\|\phi_2\|^4_{H^2}+ \|\mu_2\|^2_{H^1} \right)\left\| ( v(s),  \phi(s) , \sigma(s)) \right\|_{\mathcal{H}}^2 ds\\
						&\quad\qquad + C_2 \,\mathbb{E}\mathbf{1}_{\bar{\Omega}} \int_0^{\gamma_k^{2} \wedge t} \big(~\|z\|^2_{V'} + |u|^2_{ L^2}+ \|w\|^2_{(H^1)^{\prime}}\big) \,ds, \label{uq16a} 
					\end{aligned}
				\end{equation}
				where $ C_1 := C_1(\nu,\eta, r,\epsilon,P,A, L_h,L_{H},L_{L^2}, \alpha,  C_{\psi}, k)$ and $ C_2 := C_2(\nu,\epsilon, \alpha).$ 
				Fom the definition of stopping times defined above, we infer that
				$	\int_{0}^{\gamma_k^{2} \wedge t}(1+\|\phi_1\|^4_{H^2}+  \| \sigma_1 \|^2_{H^1}+ \| v_2 \|^2  +\|\phi_2\|^4_{H^2} + \|\mu_2\|^2_{H^1}) ds \leq \tilde{C} \ \text{a.s.}.$
				Hence, by stochastic Gronwall's inequality, we can conclude the uniqueness for $r>3.$

			For the case of $r \in [1,3],$ we use the fact that $ v_1 \in L^{r+1}(\Omega; L^{r+1}([0,T];L^{r+1}_{\mathrm{div}}))$ as a weak solution (cf.\eqref{eqequality}) and non negativity of the term $\left\langle\mathcal{A}_r(v_1) - \mathcal{A}_r(v_2), v\right\rangle$ (cf. \eqref{uq3}). For the trilinear operator \(b_0( v, v_1, v)\), we use the same estimate obtained in Step 1 of Proposition \ref{weak-strong}.
				Thus, the estimate \eqref{uq16a} holds true with an additional term $\|v_1\|^4_{ L^4_{\mathrm{div}}}$ on the first right-hand side integral. By redefining the stopping time $	\gamma_k^{21},$ and then invoking $\int_{0}^{\gamma_k^{2} \wedge t}\|v_1\|^4_{ L^4_{\mathrm{div}}}\,ds < C$ a.s., we can conclude the uniqueness. \\
                
				\noindent {\bf 3D Case.} For $r > 3,$ we use the same bounds obtained in \eqref{weakuni5} and \eqref{weakuni6} along with other estimates in Step 1 of Proposition \ref{weak-strong} to bound the right-hand side of \eqref{uq01}. We treat the terms of \eqref{uq02}, except \eqref{uq102}, as in Step 2. By applying Agmon's inequality  $\|y\|_{L^{\infty}} \leq \|y\|^{3/4}_{H^1}\|y\|^{1/4}_{H^3}$, we derive the modified estimate for \eqref{uq102} as follows: 
				\begin{equation}\label{weakuni10}
					|\psi''(\phi_1)\nabla\phi|^2_{L^2} 	\leq C_{\psi}
						(1+\|\phi_1\|^8_{L^{\infty}})|\nabla\phi|^2_{L^2}
						\leq	C_{\psi}(1+\|\phi_1\|^6_{H^1}\|\phi_1\|^2_{H^3})\|\phi\|^2_{H^1}.
				\end{equation}
                Finally, we come to the identity \eqref{uq03}, where we use the same estimates derived in Step 3, and for the trilinear term $b_1(v,\sigma_1,\sigma)$ bound, we invoke the assumption that $\sigma \in L^{4}(\Omega; L^{4}([0,T];H^1)).$  
                
				Thus, in view of the above modification, the first integral on the right-hand side of \eqref{uq16a} is modified by including $\|\phi_1\|^2_{H^3}$ and $\|\sigma_1\|^4_{H^1}.$   Again, by redefining the stopping time, and then using $\int_{0}^{\gamma_k^{2} \wedge t}\Big(\|\phi_1\|^2_{H^3}+\|\sigma_1\|^2_{H^1}+\|\sigma_1\|^4_{H^1}+  \| v_2 \|^2  +\|\phi_2\|^4_{H^2}+ \|\mu_2\|^2_{H^1} \Big)\,ds < C$ a.s., we obtain the proof of this case.
				
				Finally, for the critical case $r = 3,$ we use the trilinear term estimate \eqref{weakuni3}, while for the Forchheimer term, we set 
				$r=3$ in \eqref{weakuni6} to obtain
				$		\left\langle\mathcal{A}_r(v_1) - \mathcal{A}_r(v_2), v\right\rangle   \geq \frac{1}{2}\left\|\,v_1v\right\|^2_{ L^2_{\mathrm{div}}}. $
				Therefore, from \eqref{uq01} for any $\tilde{\tau} \leq \tau$ provided $\nu\eta\geq 1$, we derive
				\begin{equation}\label{weakuni12}
					\begin{aligned}	 
						& \mathbb{E} \mathbf{1}_{\bar{\Omega}} \left\| v(\tilde{\tau})\right\|_{L^2_{\mathrm{div}}}^2 +  \mathbb{E} \mathbf{1}_{\bar{\Omega}} \int_0^{\tilde{\tau}}	\nu \|v\|^2\, ds + 	\left(\eta- \frac{1}{\nu}\right)\,\mathbb{E} \mathbf{1}_{\bar{\Omega}} \int_0^{\tilde{\tau}} 
						\left\|\,v_1v\right\|^2_{ L^2_{\mathrm{div}}} \, ds \leq \mathfrak U_2+ \mathfrak U_3+\mathfrak U_4. 
			    \end{aligned}
		\end{equation}
                Thus, one can finish the proof by combining the estimates for \eqref{uq02} and \eqref{uq03} from the same line of arguments described above for $r>3$ in the 3D case with the estimates of the right-hand side terms of \eqref{weakuni12}.
\end{proof}
            \begin{Rem}	One can readily discern that in the scenario where $\sigma = 0$, the stochastic CH-CBF system exhibits a unique global weak solution, in contrast to the stochastic CH-NS system  (\cite{Deugoue2021}), where a local weak-strong uniqueness is proven in the absence of the Forchheimer term.
\end{Rem}
            
\subsection{Existence of a local strong solution}
	\begin{Pro}(Existence of a local strong solution)\label{exismain_bdd}
	 		    	For $d=2,3$ and $ r \geq 1 $ in $d=2$, $ r \in [1,3] $ in $d=3,$ suppose initial data \((v_0, \phi_0, \sigma_0) \in L^2(\Omega; \mathcal{V})\) and that the sources $(z,u,w) \in L^2(\Omega; L^2(0,T; \mathcal{H}))$ with $u \in [0,1] $ a.e. in $\Omega \times (0,T)\times \mathcal{O}$ for some \(T >0\). Further, the noise coefficients, $G_i, i= 1,2$ and function h satisfy [\ref{[A2]}] and [\ref{[A3]}], respectively. Then there is a local strong solution $\{(v, \phi, \sigma); \tau\}$ of \eqref{a1}-\eqref{a5} as defined in Definition \ref{def4.2}.
	           \end{Pro}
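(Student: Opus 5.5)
The proof follows the Glatt-Holtz--Ziane route, with Proposition \ref{nes_com} supplying the key Cauchy estimates. We first reduce to bounded initial data. For $K\in\mathbb{N}$ let $\Omega_K:=\{K-1\le \|(v_0,\phi_0,\sigma_0)\|_{\mathcal V}<K\}\in\mathcal F_0$; these sets partition $\Omega$. On each $\Omega_K$ we run the construction below with data $(v_0,\phi_0,\sigma_0)\mathbf 1_{\Omega_K}$, which is bounded by $\widetilde K=K$ a.s. We then glue the resulting solutions and stopping times by setting $(v,\phi,\sigma)=\sum_K \mathbf 1_{\Omega_K}(v^K,\phi^K,\sigma^K)$ and $\tau=\sum_K\mathbf 1_{\Omega_K}\tau^K$. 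Since $\Omega_K\in\mathcal F_0$ and the equations are local in $\omega$, the glued object is a local strong solution. The required $L^2(\Omega)$ integrability of the glued solution is obtained by choosing $M$ (or a cap $\tau^K\le T$) so that $\sum_K\mathbb P(\Omega_K)(K+M)^2<\infty$, which follows from $(v_0,\phi_0,\sigma_0)\in L^2(\Omega;\mathcal V)$.

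Fix now $\widetilde K$ and $M>1$, and define $\tau_n^{M,T}$ as the first time the left-hand quantity in \eqref{stop} reaches $\|(v_n^{(0)},\phi_n^{(0)},\sigma_n^{(0)})\|_{\mathcal V}+M$, capped by $T$. Then $\tau_n\in\mathcal T_n^{T,M}$ and $\tau_n\wedge\tau_m\in\mathcal T^{T,M}_{m,n}$. By Proposition \ref{nes_com}(i) and a diagonal argument we extract a subsequence $n_j$ with
\begin{equation*}
\mathbb{E}\Big[\sup_{[0,\tau_{n_j}\wedge\tau_{n_{j+1}}]}\|U_{n_{j+1}}-U_{n_j}\|_{\mathcal V}^2+\int_0^{\tau_{n_j}\wedge\tau_{n_{j+1}}}\|U_{n_{j+1}}-U_{n_j}\|_{\mathcal Z}^2\,ds\Big]\le 4^{-j},
\end{equation*}
where $U_n=(v_n,\phi_n,\sigma_n)$. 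Following Lemma 5.1 of \cite{Glatt-Holtz2009}, we set $\tau:=\lim_{j}\inf_{l\ge j}\tau_{n_l}$. The Borel--Cantelli lemma applied to the bound above shows that $U_{n_j}$ converges a.s. in $C([0,\tau];\mathcal V)\cap L^2(0,\tau;\mathcal Z)$ to a limit $U=(v,\phi,\sigma)$. Uniform boundedness by $\widetilde K+M$ together with Fatou's lemma gives the integrability required in Definition \ref{def4.2}. Continuity in time holds because each $U_n$ is continuous and the convergence is uniform in time.

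The main obstacle is showing that $\tau>0$ a.s. This is exactly where Proposition \ref{nes_com}(ii) enters. The difficulty is that the stopping times $\tau_{n_j}$ may degenerate along the sequence, so one must show they cannot collapse to $0$ with positive probability. On the event $\{\tau_{n}<\zeta\}$ the trajectory must reach the threshold before time $\zeta$. Since $\|U_n^{(0)}\|_{\mathcal V}\le \widetilde K$, reaching the threshold forces the event $Z_n(\tau_n,\zeta)$ of \eqref{secd}, because $(a+M)^2\ge a^2+(M-1)^2$. Hence $\sup_n\mathbb P(\tau_n<\zeta)\le\sup_n\mathbb P(Z_n(\tau_n,\zeta))\to0$ as $\zeta\to0$. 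We then need to convert this uniform-in-$n$ control into control of $\liminf_j\tau_{n_j}$. For this we use Fatou's lemma on the events $\{\tau_{n_j}<\zeta\}$, combined with the a.s. Cauchy property obtained above, which prevents the limit from being smaller than the individual times on a large set. This yields $\mathbb P(\tau<\zeta)\to0$ as $\zeta\to0$, and therefore $\tau>0$ a.s.

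Finally, we pass to the limit in the Galerkin equations \eqref{gal21}--\eqref{gal25} stopped at $t\wedge\tau$.
\begin{itemize}
\item The linear terms and the projected sources $\mathcal P^n z$, $\mathcal P^n w$ converge directly.
\item The nonlinearities $B_0$, $B_1$, $R_0(A_1\phi_n,\phi_n)$, $\mathcal A_r(v_n)$, $A_1\psi'(\phi_n)$ and $(P\sigma_n-A-\alpha u)h(\phi_n)$ converge in $L^1(0,t\wedge\tau;L^2)$. This follows from the difference estimates already established in the proof of Proposition \ref{nes_com}, namely \eqref{eststa1}, \eqref{est7'}, \eqref{exis5} and \eqref{est14}--\eqref{est17}. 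The restriction $r\in[1,3]$ in $d=3$ enters only through \eqref{eststa1}.
\item The stochastic integrals converge in probability, by the Lipschitz assumption [\ref{[A2]}] and the Burkholder--Davis--Gundy inequality.
\end{itemize}
Thus $U$ satisfies \eqref{regu2} on $[0,\tau]$, and $\{U,\tau\}$ is a local strong solution.
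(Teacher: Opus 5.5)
Your overall architecture is the paper's: bounded data first, Proposition~\ref{nes_com}(i)--(ii) fed into the pairwise comparison lemma of \cite{Glatt-Holtz2009}, identification of the limit, and gluing over $\{i\le\|(v_0,\phi_0,\sigma_0)\|_{\mathcal V}<i+1\}$. Your pathwise identification of the limit is a legitimate and somewhat lighter alternative to the paper's route. You use strong convergence of the nonlinear terms on $[0,\tau]$ and convergence in probability of the stochastic integrals, whereas the paper goes through the sets $\Omega_{n_k}$, the moment bounds \eqref{4.23}, and weak limits in $L^2(\Omega\times[0,T])$ tested against $\chi_K$. The gluing and the bound $\sum_K\mathbb P(\Omega_K)(K+M)^2<\infty$ are also fine.

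The gap is in the one step you chose to prove rather than cite: positivity of $\tau$. Fatou's lemma for events gives $\mathbb P(\tau_{n_j}<\zeta\text{ for all large }j)\le\liminf_j\mathbb P(\tau_{n_j}<\zeta)$. But $\{\liminf_j\tau_{n_j}<\zeta\}$ is only contained in $\{\tau_{n_j}<\zeta\text{ infinitely often}\}$, and uniform smallness of $\mathbb P(\tau_n<\zeta)$ does not control that event. The Cauchy property does not rescue it as you describe either. Two uniformly close trajectories can cross the \emph{same} level at very different times, so there is no reason for $\liminf_l\tau_{n_l}\ge\tau_{n_K}$ on a large set.

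The missing idea is to compare with a \emph{lower} level; this is exactly why \eqref{secd} uses $(M-1)^2$.
\begin{itemize}
\item Set $N_t(X):=\bigl[\sup_{[0,t]}\|X\|_{\mathcal V}^2+\int_0^t\|X\|_{\mathcal Z}^2\,ds\bigr]^{1/2}$ with the weights of \eqref{stop}; this satisfies the triangle inequality. Let $a_n:=\|U_n^{(0)}\|_{\mathcal V}$.
\item Let $\sigma_n\le\tau_n$ be the first time $N_t(U_n)^2$ exceeds $a_n^2+(M-1)^2$, capped at $T$. By continuity and $M>1$, $\sigma_n>0$ a.s., and $\{\sigma_n<\zeta\}\subseteq Z_n(\tau_n,\zeta)$.
\item Let $G_K$ be the Borel--Cantelli event that $N_{\tau_{n_j}\wedge\tau_{n_{j+1}}}(U_{n_{j+1}}-U_{n_j})\le 2^{-j/2}$ for all $j\ge K$. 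Take $K$ with $\Sigma:=\sum_{j\ge K}2^{-j/2}<1/2$.
\end{itemize}
On $G_K$, induction gives $\tau_{n_l}\ge\sigma_{n_K}$ for all $l\ge K$. Indeed, suppose $s:=\tau_{n_{L+1}}<\sigma_{n_K}$ while $\tau_{n_j}\ge\sigma_{n_K}$ for $K\le j\le L$. Then all consecutive bounds hold on $[0,s]$, so $a_{n_{L+1}}+M=N_s(U_{n_{L+1}})\le N_s(U_{n_K})+\Sigma\le a_{n_K}+M-1+\Sigma$. The time-zero part of the same bounds gives $a_{n_{L+1}}\ge a_{n_K}-\Sigma$, which together contradict $2\Sigma<1$.

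Hence $\mathbb P(\tau<\zeta)\le\mathbb P(G_K^c)+\sup_n\mathbb P(Z_n(\tau_n,\zeta))$, which is the bound you need. The same argument gives $\inf_l\tau_{n_l}\ge\min\{\tau_{n_1},\dots,\tau_{n_{K-1}},\sigma_{n_K}\}>0$ on $G_K$, so it is better to set $\tau:=\inf_l\tau_{n_l}$. Then all consecutive Cauchy bounds hold on the whole of $[0,\tau]$. With your $\tau=\liminf_j\tau_{n_j}$, Borel--Cantelli only yields uniform convergence on each $[0,\inf_{l\ge K}\tau_{n_l}]$, not on $[0,\tau]$ as you claim. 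Alternatively, simply invoke \cite[Lemma 5.1]{Glatt-Holtz2009} as the paper does.
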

	  \begin{proof} 
                We divide the proof into two steps:\\
                \textbf{Step 1.} First we assume that \(\|(v_{0}, \phi_{0}, \sigma_{0})\|_{\mathcal{V}} \leq \tilde{K}\) a.s. for some \(\tilde{K} \geq 0\), i.e., \( (v_0, \phi_0, \sigma_0) \in L^{\infty}(\Omega; \mathcal{V})\).
                We fix \(M>1\) and \(T > 0\) as in Proposition \ref{nes_com}. Let \(\{ (v_n, \phi_n, \sigma_n)\}_{n \in \mathbb{N}}\) be the associated sequence of Galerkin solutions of the problem \eqref{gal21}-\eqref{gal25}. As this sequence adheres to both  \eqref{first} and \eqref{secd}, we can invoke the pairwise comparison theorem in \cite[Lemma 5.1, part (i)]{Glatt-Holtz2009}, for the spaces \( B_1=\mathcal{V}\) and \( B_2=\mathcal{Z}\) and the sequence \(\{X_n\} = \{(v_n, \phi_n, \sigma_n)\}\). Hence, one can extract a subsequence \(\{(v_{n_k}, \phi_{n_k}, \sigma_{n_k} )\}\), a strictly positive stopping time, \(\tau \leq T\) and a process \((v(\cdot), \phi(\cdot), \sigma(\cdot))\), continuous in \(\mathcal{V}\), such that \((v(\cdot), \phi(\cdot), \sigma(\cdot)) = (v(\cdot \wedge \tau), \phi(\cdot \wedge \tau), \sigma(\cdot \wedge \tau))\). In addition, we have 
	\begin{equation}
	  	\begin{aligned}
	  		&\sup_{t \in [0,\tau]} \left\| (v_{n_k}(t), \phi_{n_k}(t),\sigma_{n_k}(t)) - (v(t), \phi(t), \sigma(t)) \right\|^2_{\mathcal{V}} \ \to 0  \quad \text{a.s.,}\\ 
	  		& \int_0^{\tau} \Big( \nu \| A_0(v_{n_k} - v) \|^2_{L^2_{\mathrm{div}}} 
	  		+ \epsilon \| \phi_{n_k} - \phi \|^2_{H^4} + \| \sigma_{n_k} - \sigma \|^2_{H^2} \Big) ds \ \to 0  \quad \text{a.s.}.
	  		\label{4.21}\end{aligned}
	  \end{equation}
	 		  Finally, we also observe that the, \((v_{n}^{(0)}, \phi_{n}^{(0)}, \sigma_{n}^{(0)})\) satisfies the condition (ii) of Lemma 5.1 in \cite{Glatt-Holtz2009} for any \(p \in [1, \infty)\). Hence, for any such \(p\), we have \((v(\cdot \wedge \tau), \phi(\cdot \wedge \tau), \sigma(\cdot \wedge \tau)) \in L^p(\Omega; C([0, T]; \mathcal{V}))\). Moreover, the triplet of truncated processes (\(\mathbf{1}_{\{t \leq \tau\}} v\), \(\mathbf{1}_{\{t \leq \tau\}} \phi\), \(\mathbf{1}_{\{t \leq \tau\}} \sigma\)) belongs to \(L^p(\Omega; L^2(0, T; \mathcal{Z}))\). Again, from the  comparison theorem (see \cite[Lemma 5.1, part (ii)]{Glatt-Holtz2009}), we also obtain a collection of measurable sets \(\Omega_{n_k} \in \mathcal{F}\) with \(\Omega_{n_k} \uparrow \Omega\) such that
 \begin{eqnarray}
	 		&&\sup_{n_k} \mathbb{E} \mathbf{1}_{\Omega_{n_k}} 
	  		\left( \sup_{t \in [0,T]} \| (v_{n_k}(t \wedge \tau), \phi_{n_k}(t \wedge \tau), \sigma_{n_k}(t \wedge \tau)) \|^2_{\mathcal{V}} \right. \nonumber\\
	  		&&\left.\ \ \ \qquad + \int_0^{\tau} \Big( \nu \|A_0 v_{n_k}\|^2_{L^2_{\mathrm{div}}}+ \epsilon \| \phi_{n_k}\|^2_{H^4} + \| \sigma_{n_k}\|^2_{H^2} \Big) ds \right)^{p/2}< \infty. \label{4.23}
	  \end{eqnarray}
	  		Next, by virtue of \eqref{4.21}, \eqref{4.23}, and Lemma 5.2 in \cite{Glatt-Holtz2009}, we deduce that
\begin{eqnarray}\label{conver}	 	
	 	   &&\hspace{-.2in}\mathbf{1}_{\Omega_{n_k}} \mathbf{1}_{\{t \leq \tau\}}\,\left( v_{n_k},\phi_{n_k}, \sigma_{n_k}) \to \mathbf{1}_{\{t \leq \tau\}} \Big(v,\phi,\sigma\right) \, \text{ weakly in}\; L^p\Big(\Omega; L^2(0, T; \mathcal{Z})\Big),\\
			&&\hspace{-.2in}\mathbf{1}_{\Omega_{n_k}} \, \Big(v_{n_k}(\cdot \wedge \tau),\phi_{n_k}(\cdot \wedge \tau),\sigma_{n_k}(\cdot \wedge \tau)\Big) \to (v,\phi,\sigma) \; \text{weakly-star in}\; L^p\left(\Omega; L^\infty(0, T; \mathcal{V})\right). \nonumber
\end{eqnarray}	 

	 		 After all these arguments, we are now only left with showing the identity \eqref{regu2} in \( L^2_{\mathrm{div}} \times L^2\times L^2 \). From \cite[Remark 4.1, part (ii)]{Glatt-Holtz2009} we observe that for any \((\theta_1,\theta_2,\theta_3 ) \in (L^2_{\mathrm{div}} \times L^2\times L^2) \) and any measurable set \( K \subset \Omega \times [0,T]\), it is equivalent to demonstrate that 
	   \begin{equation} \label{finaidene1}
	  	\begin{aligned}
	  		&\mathbb{E}\, \int_{0}^{T} \chi_{K} (	v(t \wedge \tau), \theta_1 ) \, dt = \;\mathbb{E}\, \int_{0}^{T} \chi_{K} \Bigg[\big (v_0, \theta_1\big)- \int_0^{t \wedge \tau} \Bigg\langle A_0 v +\eta \mathcal{A}_r(v) + B_0(v,v)  \\ 
	  		& \qquad - \epsilon R_0 (A_1 \phi, \phi) - z, \theta_1 \Bigg\rangle \, ds\Bigg] \,dt+ \mathbb{E}\, \int_{0}^{T} \chi_{K}\Bigg[\int_0^{t \wedge \tau} \sum_{k=1}^{\infty} \big\langle g_{1,k}(s,v) , \theta_1 \big\rangle d\beta_k^1(s) \Bigg] \,dt, 
	  	\end{aligned}
	  \end{equation}
	  \begin{equation}\label{finaidene2}
	  	\begin{aligned}	 
	  		&\mathbb{E}\, \int_{0}^{T} \chi_{K}	\big(\phi(t \wedge \tau),\theta_2 \big) \,dt =\; \mathbb{E}\, \int_{0}^{T} \chi_{K} \left[\big(\phi_0, \theta_2 \big)- \int_0^{t \wedge \tau} \Bigg\langle \epsilon A_1^2 \phi + \epsilon^{-1}A_1 \psi'(\phi) + B_1(v,\phi), \theta_2\Bigg\rangle  \,ds \right] \,dt\\ &\qquad +\mathbb{E}\, \int_{0}^{T} \chi_{K} \Bigg[\int_0^{t \wedge \tau} \big\langle \big(P\sigma - A - \alpha u \big) h(\phi), \theta_2\big\rangle\,ds\Bigg]\,dt, \;\ \ \ \mu = \epsilon A_1 \phi + \epsilon^{-1} \psi'(\phi),  
	  	\end{aligned}
	  \end{equation}
	  \begin{equation}\label{finaidene3}
	  	\begin{aligned}
	  		&\mathbb{E}\, \int_{0}^{T} \chi_{K}	\big(\sigma(t \wedge \tau), \theta_3 \big) \,dt=\; \mathbb{E}\, \int_{0}^{T} \chi_{K} \Bigg[\big(\sigma_0, \theta_3\big) -  \int_0^{t \wedge \tau} \Bigg\langle A_1 \sigma + B_1(v,\sigma)+ c \sigma h(\phi) \\
	  		 & \qquad+ b(\sigma - w), \theta_3 \Bigg\rangle\,  ds \Bigg]\,dt +\mathbb{E}\, \int_{0}^{T} \chi_{K} \Bigg[ \int_0^{t \wedge \tau} \sum_{k=1}^{\infty} \big\langle g_{2,k}(s,\sigma) , \theta_3 \big\rangle d\beta_k^2(s) \Bigg] \,dt.
	  	\end{aligned}	
	  \end{equation}
        \noindent\textbf{Substep 1.}	First, consider the approximate equation below, and then justify passing the limit. For any fixed \(\theta_1 \in V\),  from \eqref{gal21} and \eqref{conver}, we obtain 
 \begin{equation}\label{exvest} 
	  	\begin{aligned}
	  	&\mathbb{E}\, \int_{0}^{T} \chi_{K} (	v(t \wedge \tau), \theta_1 ) \, dt = \lim_{n_k \to \infty}
	 	 \mathbb{E}\, \int_{0}^{T} (\mathbf{1}_{\Omega_{n_k}}	v_{n_k}(t \wedge \tau),\; \chi_{K} \theta_1 ) \, dt \\
         & \ \ = \lim_{n_k \to \infty}\left( \mathbb{E}\, \int_{0}^{T} \chi_{K}\mathbf{1}_{\Omega_{n_k}} \left[\big (\mathcal{P}_1^{n_k}v_0, \theta_1\big)- \int_0^{t \wedge \tau} \langle \nu A_0 v_{n_k} + \eta \mathcal{P}_1^{n_k}\mathcal{A}_r(v_{n_k}) +  \mathcal{P}_1^{n_k}B_0(v_{n_k},v_{n_k}), \theta_1 \rangle \, ds \right.\right.\\&\qquad \left.\left.+ \epsilon\int_0^{t \wedge \tau}\langle \mathcal{P}_1^{n_k}R_0( A_1 \phi_{n_k}, \phi_{n_k}) + \mathcal{P}_1^{n_k}z, \theta_1\rangle \,ds+ \int_0^{t \wedge \tau}\sum_{k=1}^{\infty} \big\langle \mathcal{P}_1^{n_k}g_{1,k}(s,v_{n_k}) e_k^1, \theta_1 \big\rangle d\beta_k^1(s) \right]dt \right). 
	  \end{aligned}
	\end{equation}
		Using \eqref{for1}, generalized H\"older's inequality, and Lemma \ref{norm_esti}, we derive		
		\begin{equation}
			\begin{aligned}
				&|\langle \mathcal{P}_1^{n_k} \mathcal{A}_r(v_{n_k}) - \mathcal{A}_r(v), \theta_1\rangle|\leq |\langle\mathcal{A}_r(v_{n_k})-\mathcal{A}_r(v), \mathcal{P}_1^{n_k} \theta_1\rangle| + |\langle\mathcal{A}_r(v), \mathcal{Q}_1^{n_k} \theta_1 \rangle|\\
				&\qquad\leq C_r \|v_{n_k}-v\|_{L^e_{\mathrm{div}}}\,\|~|v_{n_k}| +|v|~\|^{(r-1)}_{L^{l(r-1)}_{\mathrm{div}}}~\|\mathcal{P}_1^{n_k} \theta_1\|_{L^2_{\mathrm{div}}} +\|v\|^{r}_{L^{2r}_{\mathrm{div}}} \|\mathcal{Q}_1^{n_k} \theta_1\|_{L^2_{\mathrm{div}}}\\
				&\qquad\leq C_r \|v_{n_k}-v\|_{L^e_{\mathrm{div}}}~\left(\|v_{n_k}\|^{(r-1)}_{L^{l(r-1)}_{\mathrm{div}}} 	+\|v\|^{(r-1)}_{L^{l(r-1)}_{\mathrm{div}}}\right)~\|\theta_1\|_{L^2_{\mathrm{div}}} + \frac{1}{\lambda_{n_k}^{1/2}}\|v\|^{r}_{L^{2r}_{\mathrm{div}}}\|\theta_1\|\\
                &\qquad\leq C_r \|v_{n_k}-v\|\left(~\|v_{n_k}\|^{(r-1)} +\|v\|^{(r-1)}~\right)\|\theta_1\|_{L^2_{\mathrm{div}}} + \frac{1}{\lambda_{n_k}^{1/2}}\|v\|^{r}\|\theta_1\|. \label{eslim2}
			\end{aligned}
		\end{equation}
	where $e, l >2$ such that $\frac{1}{e} + \frac{1}{l} + \frac{1}{2} = 1.$	Further, for the  last inequality, as in Proposition~\ref{nes_com} (cf. \eqref{eststa1}), we used the embedding $\mathbb{H}^1 \hookrightarrow \mathbb{L}^p$ to get $r \in (1, \infty)$ (in $d = 2$) and $r \in (1,3]$ (in $d = 3$) for any H\"older exponents $e, l$, satisfying $\frac{1}{e} + \frac{1}{l} = \frac{1}{2}$. 
    
	By invoking \eqref{4.21} and \eqref{eslim2}, we obtain for all \(\theta_1 \in V\) that
		\begin{equation}\label{eslim21}
				\mathbf{1}_{\{t \leq \tau\}}\,\langle \mathcal{P}_1^{n_k} 	\mathcal{A}_r(v_{n_k}), \theta_1\rangle \to 	\mathbf{1}_{\{t \leq \tau\}}\,\langle  \mathcal{A}_r(v), \theta_1\rangle  \; \text{as}\: n_k \to \infty,
		\end{equation} 
				for almost every \((\omega, t) \in \Omega \times [0,T]\) and for \(d =2,3\). Additionally for $r \in (1, \infty)$ (in $d = 2$) and $r \in (1,3]$ (in $d = 3$), using the embedding of \(\mathbb{H}^1 \hookrightarrow \mathbb{L}^{2r}\) and  \eqref{4.23}, with \(p= 2r\), we deduce
		\begin{eqnarray}\label{eslim22}
				\sup_{n_k}\, \mathbb{E}\Big[ \mathbf{1}_{\Omega_{n_k}}\int^{\tau}_{0}\|\mathcal{P}_1^{n_k} \mathcal{A}_r(v_{n_k})\|^2_{L^2_{\mathrm{div}}} \,ds\Big] &\leq& C\sup_{n_k}\, \mathbb{E}\Big[\mathbf{1}_{\Omega_{n_k}}\int^{\tau}_{0} \|v_{n_k}\|^{2r}_{L^{2r}_{\mathrm{div}}} \,ds\Big]
				\leq C\sup_{n_k}\, \mathbb{E}\,\mathbf{1}_{\Omega_{n_k}} \sup_{s\in [0, \tau]}\|v_{n_k}(s)\|^{2r}_{L^{2r}_{\mathrm{div}}}\nonumber\\ 
                &\leq& C\sup_{n_k}\, \mathbb{E}\,\mathbf{1}_{\Omega_{n_k}} \sup_{t\in [0, \tau]}\|(v_{n_k}, \phi_{n_k},\sigma_{n_k})(t)\|^{2r}_{\mathcal{V}}\: <\infty.      
		\end{eqnarray}
					From \eqref{eslim21}, \eqref{eslim22}, and \cite[Lemma 5.2]{Glatt-Holtz2009}, we get
		\begin{equation}\label{key1}
					\mathbf{1}_{\{\Omega_{n_k},t \leq \tau\}}\, \mathcal{P}_1^{n_k} 	\mathcal{A}_r(v_{n_k}) \rightharpoonup	\mathbf{1}_{\{t \leq \tau\}}\,  \mathcal{A}_r(v) \; \text{in}\; L^2(\Omega; L^2(0,T; L^2_{\mathrm{div}})).
		\end{equation}
               The case $r=1$ yields the convergence \eqref{key1} straightforwardly.
				Now as in \cite[Proposition 4.2]{Deugoue2021}, we can get 
		\begin{equation}\label{key3}
					\mathbf{1}_{\{\Omega_{n_k},t \leq \tau\}}\,\mathcal{P}_1^{n_k} B_0(v_{n_k}, v_{n_k})\rightharpoonup \mathbf{1}_{\{t \leq \tau\}}\,B_0(v, v)\; \text{in}\; L^2(\Omega; L^2(0,T; L^2_{\mathrm{div}})). 
			\end{equation}
   			\begin{equation}\label{key5}
					\mathbf{1}_{\{\Omega_{n_k},t \leq \tau\}}\,\mathcal{P}_1^{n_k} 	R_0(\epsilon A_1 \phi_{n_k}, \phi_{n_k}) \rightharpoonup \mathbf{1}_{\{t \leq \tau\}}\,R_0(\epsilon A_1 \phi, \phi)\; \text{in}\; L^2(\Omega; L^2(0,T; L^2_{\mathrm{div}})). 
		  \end{equation}
		 		 		For the stochastic term, we use \eqref{equinoise}, Assumption [\ref{[A2]}] and Lemma \ref{norm_esti} to obtain
		\begin{equation}\label{eslim15}
		 		\begin{aligned}
		 		 		\hspace{-.12in}\|\mathcal{P}_2^{n_k} G_1(t, v_{n_k})- G_1(t, v) \|_{\mathcal{L}_2(U_2, L^2_{\mathrm{div}})}^2 & \leq C \Big( \|G_1(t, v_{n_k})-G_1(t, v)\|^2_{\mathcal{L}_2(U_1, L^2_{\mathrm{div}})} + \|\mathcal{Q}_1^{n_k}G_1(t, v)\|^2_{\mathcal{L}_2(U_1, L^2_{\mathrm{div}})}\Big)	\\
		 		 		& \leq C_{L_{L^2_{\mathrm{div}}}}\|v_{n_k}-v\|^2_{L^2_{\mathrm{div}}} + \frac{C_{B_{V}}}{\beta_{n_k}}\left(1+ \|v\|^2 \right).
		 		 	\end{aligned}
		\end{equation}
		 		 		With this, together with \eqref{4.21}, for almost every \((\omega, t) \in \Omega \times [0,T]\), we get 
		 		 			$\mathbf{1}_{\{t \leq \tau\}}\mathcal{P}_1^{n_k} G_1(t, v_{n_k}) \to 	\mathbf{1}_{\{t \leq \tau\}}G_1(t, v) \ \text{in } \  \mathcal{L}_2(U_1,L^2_{\mathrm{div}}).$
		 		 		  On the other hand,
		 		 \begin{equation}\label{eslim152}
		 		 		\begin{aligned}
		 		 				\sup_{n_k}\, \mathbb{E}\Big[ \mathbf{1}_{\Omega_{n_k}}\int^{\tau}_{0}\|\mathcal{P}_1^{n_k}G_1(t, v_{n_k}) \|_{\mathcal{L}_2(U_1,L^2_{\mathrm{div}})}^2 \,ds\Big] 
		 		 				&\leq C_{B_{L^2_{\mathrm{div}}}}\sup_{n_k}\, \mathbb{E}\left[\mathbf{1}_{\Omega_{n_k}}\int^{\tau}_{0}  (1+\|v_{n_k}\|^2_{L^2_{\mathrm{div}}})\,ds\right]<\infty.
		 		 		\end{aligned}
		 		\end{equation}
		 		 		Using the above bound and convergence, we deduce the following:
		 		\begin{equation}\label{key6}
		 		 			\mathbf{1}_{\{\Omega_{n_k},t \leq \tau\}}\,  \mathcal{P}_1^{n_k} G_1(t, v_{n_k})\, \rightharpoonup \mathbf{1}_{\{t \leq \tau\}}\, G_1(t, v)\; \text{in}\; L^2(\Omega; L^2(0,T; \mathcal{L}_2(U_1,L^2_{\mathrm{div}}))).
		 		\end{equation}
		Next, we use the fact that the weak convergence is preserved under bounded linear operators, which ensures the identification of weak limits. More precisely, from \eqref{conver} and \eqref{key1}-\eqref{key6}, we derive for any fixed \(\theta_1 \in L^2_{\mathrm{div}}\)  that
			\begin{align}
				\mathbf{1}_{\Omega_{n_k}} \int_0^{t \wedge \tau} (\mathcal{P}_1^{n_k} \mathcal{A}_r(v_{n_k}), \theta_1)\,ds &\rightharpoonup \int_0^{t \wedge \tau} ( \mathcal{A}_r(v), \theta_1)\,ds,\nonumber\\
				\mathbf{1}_{\Omega_{n_k}}\int_0^{t \wedge \tau} (\mathcal{P}_1^{n_k} B_0(v_{n_k}, v_{n_k}), \theta_1)\,ds &\rightharpoonup \int_0^{t \wedge \tau} (B_0(v, v), \theta_1)\,ds, \label{remark4.1}\\
				\mathbf{1}_{\Omega_{n_k}} \int_0^{t \wedge \tau} (\mathcal{P}_1^{n_k} 	R_0(\epsilon A_1 \phi_{n_k}, \phi_{n_k}), \theta_1)\,ds &\rightharpoonup \int_0^{t \wedge \tau} (R_0(\epsilon A_1 \phi, \phi), \theta_1)\,ds, \nonumber\\
				\mathbf{1}_{\Omega_{n_k}} \sum_{j=1}^\infty \int_0^{t \wedge \tau} (\mathcal{P}_1^{n_k} g_{1,j}(s, v_{n_k}), \theta_1)\,d\beta^1_j(s) 	&\rightharpoonup \sum_{j=1}^\infty \int_0^{t \wedge \tau} (g_{1,j}(s, v) , \theta_1)\,d\beta^1_j(s),\nonumber
			\end{align}
				weakly in \(L^2(\Omega \times [0,T])\). By weak convergence in \eqref{remark4.1}, we mean the convergence when tested against any function in $L^2(\Omega \times [0,T])$. In particular, this allows us to select the specific test function \(\chi_{K}\) for any measurable set \( K \subset \Omega \times [0,T]\). Using this argument, we can now pass to the limit in \eqref{exvest} and thus obtain the desired identity \eqref{finaidene1}. \medskip 
                
\noindent\textbf{Substep 2.} For any fixed \(\theta_2 \in H^1\), using \eqref{conver} and \eqref{gal22}, we obtain 
		\begin{eqnarray}
		  \label{exvest1} 
					&& \hspace{-.2in}\mathbb{E}\, \int_{0}^{T} \chi_{K} (	\phi(t \wedge \tau), \theta_2 ) \, dt = \lim_{n_k \to \infty}
					\mathbb{E}\, \int_{0}^{T} (\mathbf{1}_{\Omega_{n_k}}	\phi_{n_k}(t \wedge \tau),\; \chi_{K} \theta_2 ) \, dt \\
                    && = \lim_{n_k \to \infty}\left( \mathbb{E}\, \int_{0}^{T} \chi_{K}\mathbf{1}_{\Omega_{n_k}} \left[\big (\mathcal{P}_1^{n_k}\phi_0, \theta_2\big)- \int_0^{t \wedge \tau} \big\langle \epsilon A_1^2 \phi_{n_k} + \epsilon^{-1}\mathcal{P}_2^{n_k}A_1 \psi'(\phi_{n_k}) + \mathcal{P}_2^{n_k}B_1(v_{n_k},\phi_{n_k}), \theta_2\big \rangle \, ds\right.\right. \nonumber\\
                    &&\qquad\left.\left.+ \int_0^{t \wedge \tau}\langle \mathcal{P}_2^{n_k}\big(P\sigma_{n_k} - A - \alpha u \big) h(\phi_{n_k}), \theta_2\rangle \,ds \right]dt \right).	\nonumber		\end{eqnarray}
				Utilizing \eqref{eq:ass2}, \eqref{estst2} and H\"older's inequality, we have
		\begin{equation}\label{eslim9}
			\begin{aligned}
				&|\langle \mathcal{P}_2^{n_k}  A_1 \psi'(\phi_{n_k})-  A_1 \psi'(\phi) ,\theta_2 \rangle| \leq |\langle  A_1 \psi'(\phi_{n_k}) - A_1 \psi'(\phi), \mathcal{P}_1^{n_k} \theta_2\rangle|+|\langle A_1 \psi'(\phi), \mathcal{Q}_1^{n_k} \theta_2 \rangle|\\
				& \leq \left(\, \Big|\psi'''(\phi_{n_k})\big( ~|\nabla \phi_{n_k} |^2- |\nabla \phi |^2\big)~\Big|_{L^2}+\Big|\,|\nabla \phi |^2 \big(\psi'''(\phi_{n_k}) - \psi'''(\phi)\big)\Big|_{L^2}+\Big|\psi''(\phi_{n_k})A_1(\phi_{n_k}- \phi) \Big|_{L^2}\right.\\
				& \qquad \ \ \left.+ \Big|A_1 \phi\big(\psi''(\phi_{n_k})-\psi''(\phi)\big)\Big|_{L^2} \right)|\mathcal{P}_1^{n_k}\theta_2|_{L^2}+ \left(\, |\psi'''(\phi) |\nabla \phi|^2|_{L^2}+|\psi''(\phi) A_1\phi|_{L^2} \right)|\mathcal{Q}_1^{n_k}\theta_2|_{L^2}\\
				&\leq C_{\psi} \mathbb{Q}_1(\|\phi_{n_k}\|_{H^2},\|\phi\|_{H^2})\|\phi_{n_k}-\phi\|_{H^2}|\theta_2|_{L^2} +  \frac{ C_{\psi}}{\beta_{n_k}^{1/2}}\mathbb{Q}_2(\|\phi\|_{H^2})\|\theta_2\|_{H^1},
			\end{aligned}
		\end{equation}
			for some monotonically increasing functions \(\mathbb{Q}_1\) and \(\mathbb{Q}_2\). Now using \eqref{4.21} and \eqref{eslim9}, we have for almost  every \((\omega, t) \in \Omega \times [0,T]\) and for any \(\theta_2 \in H^1\),
		\begin{equation}\label{eslim91}
			\mathbf{1}_{\{t \leq \tau\}}\,\langle \mathcal{P}_2^{n_k}A_1 \psi'(\phi_{n_k}) , \theta_2\rangle \to 	\mathbf{1}_{\{t \leq \tau\}}\, \langle  A_1 \psi'(\phi), \theta_2\rangle  \quad \text{as} \: n_k \to \infty.
		\end{equation}
			In addition, after applying \eqref{eq:ass2} and then using the uniform bound \eqref{4.23} for \(p=2\), \(p=4\), and \(p=10\), we obtain
		\begin{equation}\label{eslim92}
			\begin{aligned}
				&\sup_{n_k}\, \mathbb{E}\Big[ \mathbf{1}_{\Omega_{n_k}}\int^{\tau}_{0}|\mathcal{P}_2^{n_k} A_1 \psi'(\phi_{n_k})|^2_{L^2} \,ds\Big]\leq C\sup_{n_k}\, \mathbb{E}\Big[\mathbf{1}_{\Omega_{n_k}}\int^{\tau}_{0} |\psi'''(\phi_{n_k}) |\nabla \phi_{n_k}|^2 +\psi''(\phi_{n_k}) A_1\phi_{n_k}|^2_{L^2} \,ds\Big]\\
				&\leq C_{\psi}\sup_{n_k}\, \mathbb{E}\Big[\mathbf{1}_{\Omega_{n_k}}\int^{\tau}_{0} \Big(\big(1+ \|\phi_{n_k}\|^6_{L^{\infty}}\big) \|\nabla \phi_{n_k}\|^4_{L^4} + \big(1+ \|\phi_{n_k}\|^8_{L^{\infty}}\big)|A_1\phi_{n_k}|^2_{L^2}\Big) \,ds\Big]\\
				&\leq C_{\psi}\sup_{n_k}\, \mathbb{E}\left[\mathbf{1}_{\Omega_{n_k}}\int^{\tau}_{0} \Big(\|\phi_{n_k}\|^2_{H^2}+ \|\phi_{n_k}\|^4_{H^2}+\|\phi_{n_k}\|^{10}_{H^2}\Big) \,ds\right]< \infty.
			\end{aligned}
		\end{equation}
					From \eqref{eslim91} and \eqref{eslim92}, we get
					$\mathbf{1}_{\{\Omega_{n_k},t \leq \tau\}}\, \mathcal{P}_2^{n_k} 	A_1 \psi'(\phi_{n_k}) \rightharpoonup	\mathbf{1}_{\{t \leq \tau\}}\,  A_1 \psi'(\phi) \; \text{in}\; L^2(\Omega; L^2(0,T; L^2)).$
					Now for nonlinear term $B_1(\cdot, \cdot),$ we follow \cite[Proposition 4.2]{Deugoue2021} and obtain: 
		\begin{equation}\label{key9}
					\mathbf{1}_{\{\Omega_{n_k},t \leq \tau\}}\,\mathcal{P}_2^{n_k} B_1(v_{n_k}, \phi_{n_k})\rightharpoonup \mathbf{1}_{\{t \leq \tau\}}\,B_1(v, \phi)\; \text{in}\; L^2(\Omega; L^2(0,T; L^2)). 
		\end{equation}
					Next, by making use of Assumption~[\ref{[A3]}], the boundedness of $u$ (see [\ref{[A4]}]), H\"older's inequality, and Lemma \ref{norm_esti}, we obtain, for any $\theta_2 \in H^1$, that
	\begin{eqnarray}\label{eslim10}
					\lefteqn{|\langle(\mathcal{P}_2^{n_k}(P \sigma_{n_k} - A -\alpha u)h(\phi_{n_k})- (P\sigma -A- \alpha u)h(\phi), \theta_2\rangle|}\nonumber\\
					&&\leq |\langle P(\sigma_{n_k}- \sigma) ,\mathcal{P}_2^{n_k}\theta_2\rangle| + |\langle(P\sigma -A- \alpha u)(h(\phi_{n_k})-h(\phi)) ,\mathcal{P}_2^{n_k}\theta_2\rangle|+ |\langle (P\sigma -A- \alpha u)h(\phi),\mathcal{Q}_2^{n_k}\theta_2\rangle|\nonumber\\
					&&\leq P|\sigma_{n_k}- \sigma|_{L^2}|\theta_2|_{L^2}+ L_{h}|P\sigma -A- \alpha u|_{L^2}\|\phi_{n_k}- \phi\|_{H^1}\|\theta_2\|_{H^1}+ \frac{1}{\beta_{n_k}^{1/2}}|P\sigma -A- \alpha u|_{L^2} \|\theta_2\|_{H^1}.
		\end{eqnarray}
					Using \eqref{4.21} and \eqref{eslim10}, we obtain for almost every \((\omega, t) \in \Omega \times [0,T]\) and  \(\theta_2 \in H^1\) that
		\begin{equation}\label{eslim101}
					\mathbf{1}_{\{t \leq \tau\}}\,\langle \mathcal{P}_2^{n_k}(P \sigma_{n_k} - 	A -\alpha u)h(\phi_{n_k}) , \theta_2\rangle \to 	\mathbf{1}_{\{t \leq \tau\}}\, \langle  (P\sigma -A- \alpha u)h(\phi), \theta_2\rangle  ~\text{as}~ n_k \to \infty.
		\end{equation}
					Moreover, from  \eqref{4.23} for \(p=2\), together with the assumptions on \(u\) and \(h\), we further get
		\begin{equation}\label{eslim102}
			\begin{aligned}
					\sup_{n_k}\, \mathbb{E}\Big[ \mathbf{1}_{\Omega_{n_k}}\int^{\tau}_{0}|\mathcal{P}_2^{n_k}(P \sigma_{n_k} - A -\alpha u)h(\phi_{n_k}) |^2_{L^2} \,ds\Big] 
                    < \infty.
			\end{aligned}
		\end{equation}	
					From \eqref{eslim101} and \eqref{eslim102}, we deduce again that
					$\mathbf{1}_{\{\Omega_{n_k},t \leq \tau\}}\, \mathcal{P}_2^{n_k}(P \sigma_{n_k} - A -\alpha u)h(\phi_{n_k}) \rightharpoonup	\mathbf{1}_{\{t \leq \tau\}}\,  (P\sigma -A- \alpha u)h(\phi) \; \text{in}\; L^2(\Omega; L^2(0,T; L^2)).$
					Thus, from the preceding convergences,  for any fixed \(\theta_2 \in L^2\),  one can obtain the weak convergences, as in \eqref{remark4.1}, for the terms on the right-hand side of \eqref{exvest1} in \(L^2(\Omega \times [0,T])\).
				 Hence, passing to the limit in \eqref{exvest1} yields \eqref{finaidene2}. \medskip
                 
\noindent\textbf{Substep 3.} For any fixed \(\theta_3 \in H^1\), in view of  \eqref{conver} and \eqref{gal23}, we infer that 
	   \begin{eqnarray}\label{exvest2} 
	   			\lefteqn{\mathbb{E}\, \int_{0}^{T} \chi_{K} (	\sigma(t \wedge \tau), \theta_3 ) \, dt = \lim_{n_k \to \infty}
	   			\mathbb{E}\, \int_{0}^{T}  (\mathbf{1}_{\Omega_{n_k}}	\sigma_{n_k}(t 	\wedge \tau),\; \chi_{K} \theta_3 ) \, dt \nonumber}
                \\&&= \lim_{n_k \to \infty}\left( \mathbb{E}\, \int_{0}^{T} \chi_{K}\mathbf{1}_{\Omega_{n_k}} \left[\big (\mathcal{P}_1^{n_k}\sigma_0, \theta_3\big)- \int_0^{t \wedge \tau} \big\langle  A_1 \sigma_{n_k} +\mathcal{P}_2^{n_k} B_1(v_{n_k},\sigma_{n_k})+ c\mathcal{P}_2^{n_k} \sigma_{n_k} h(\phi_{n_k}), \theta_3\big \rangle \, ds \right.\right. \nonumber\\
                &&\qquad \left.\left.+ \int_0^{t \wedge \tau}\langle b\mathcal{P}_2^{n_k}(\sigma_{n_k} - w),\theta_3 \big\rangle ds+ \sum_{k=1}^{\infty} \big\langle \mathcal{P}_2^{n_k}G_2(s,\sigma_{n_k}) e_k^2, \theta_3 \big\rangle d\beta_k^2(s) \, \right]dt \right).
	   \end{eqnarray}
			 	  For the nonlinear term \(B_1(\cdot, \cdot)\), using H\"older's and Gagliardo-Nirenberg's inequalities (see \eqref{b1_5}) along with Lemma \ref{norm_esti}, one can get 
	\begin{equation}\label{eslim12}
			\begin{aligned}
					&|\langle\mathcal{P}_2^{n_k}B_1(v_{n_k},\sigma_{n_k})-B_1(v, \sigma),\theta_3\rangle|\\&\leq
					 |\langle B_1(v_{n_k} -v,\sigma_{n_k}),\mathcal{P}_2^{n_k}\theta_3\rangle|+ |\langle B_1(v,\sigma_{n_k}- \sigma),\mathcal{P}_2^{n_k}\theta_3\rangle|+|\langle B_1(v,\sigma), \mathcal{Q}_2^{n_k}\theta_3\rangle|\\
					&\leq \left(~\|v_{n_k} -v\|\,|A_1^{1/2}\sigma_{n_k}|_{L^2}+\|v\|\,|A_1^{1/2}(\sigma_{n_k}- \sigma)|_{L^2} \right)\|\theta_3\|_{H^1}+\|v\|\,|A_1^{1/2}\sigma|_{L^2}|\mathcal{Q}_2^{n_k}\theta_3|^{1/2}_{L^2}\|\theta_3\|_{H^1}^{1/2}\\
					&\leq \left(~\|v_{n_k} -v\|\,\|\sigma_{n_k}\|_{H^1}+\|v\|\,\|\sigma_{n_k}- \sigma\|_{H^1} \right)\|\theta_3\|_{H^1} + \frac{1}{\beta_{n_k}^{1/4}}\|v\|\,\|\sigma\|_{H^1}\|\theta_3\|_{H^1}.
			\end{aligned}
		\end{equation}
					From \eqref{4.21} and \eqref{eslim12}, we get for all \(\theta_3 \in H^1\) and for almost every \((\omega, t) \in \Omega \times [0,T]\) that
		\begin{equation}\label{eslim121}
					\mathbf{1}_{\{t \leq \tau\}}\,\langle 	\mathcal{P}_2^{n_k}B_1(v_{n_k},\sigma_{n_k}) , \theta_3\rangle \to 	\mathbf{1}_{\{t \leq \tau\}}\, \langle B_1(v, \sigma), \theta_3\rangle  \quad \text{as}\ \  n_k  \to \infty.
		\end{equation}
					Moreover, after utilizing \eqref{4.23} (for \(p=4\)), together with H\"older's inequality, we estimate
			\begin{equation}\label{eslim122}
			\begin{aligned}
			   		&\sup_{n_k}\, \mathbb{E}\Big[ 	\mathbf{1}_{\Omega_{n_k}}\int^{\tau}_{0}|\mathcal{P}_1^{n_k}B_1(v_{n_k},\sigma_{n_k}) |^2_{L^2} \,ds\Big] 
					\leq C\sup_{n_k}\, \mathbb{E}\Big[\mathbf{1}_{\Omega_{n_k}}\int^{\tau}_{0}\|v_{n_k}\|^2_{L^{\infty}_{\mathrm{div}}} |A_1^{1/2} \sigma_{n_k}|^2_{L^2}\,ds\Big]\\
					&\qquad\leq C\sup_{n_k}\, \mathbb{E}\Big[\mathbf{1}_{\Omega_{n_k}} \sup_{s \in [0, \tau]}|A_1^{1/2} \sigma_{n_k}(s)|^2_{L^2}\int^{\tau}_{0}\|A_0v_{n_k}\|^2_{L^2_{\mathrm{div}}} \,ds\Big]\\
					&\qquad\leq C\sup_{n_k}\, \mathbb{E}\,\mathbf{1}_{\Omega_{n_k}}\left[ \sup_{s \in [0, \tau]}\| \sigma_{n_k}(s)\|^4_{H^1} + \left(\int^{\tau}_{0}\|A_0v_{n_k}\|^2_{L^2_{\mathrm{div}}} \,ds\right)^2 \,\right] < \infty.
				\end{aligned}
		\end{equation}
				Using \eqref{eslim121},\eqref{eslim122} and  \cite[Lemma 5.2]{Glatt-Holtz2009}, we derive
			\begin{equation}\label{key11}
						\mathbf{1}_{\{\Omega_{n_k},t \leq \tau\}}\,\mathcal{P}_2^{n_k} B_1(v_{n_k}, \sigma_{n_k})\rightharpoonup \mathbf{1}_{\{t \leq \tau\}}\,B_1(v, \sigma)\; \text{in}\; L^2(\Omega; L^2(0,T; L^2)). 
			\end{equation}
                    By the Assumption [\ref{[A3]}] on $h$, we get $\mathbf{1}_{\{\Omega_{n_k},t \leq \tau\}}\,\mathcal{P}_2^{n_k}\sigma_{n_k}h(\phi_{n_k}) \rightharpoonup \mathbf{1}_{\{t \leq \tau\}}\, \sigma h(\phi)\; \text{in}\; L^2(\Omega; L^2(0,T; L^2))$. 
			             For the stochastic term, we follow the same argument as in \eqref{key6} to obtain:
				\begin{equation}\label{key13}
					\mathbf{1}_{\{\Omega_{n_k},t \leq \tau\}}\,  \mathcal{P}_2^{n_k} G_2(t, \sigma_{n_k})\, \rightharpoonup \mathbf{1}_{\{t \leq \tau\}}\, G_2(t, \sigma)\; \text{in}\; L^2(\Omega; L^2(0,T; \mathcal{L}_2(U_2,L^2))).
				\end{equation}
              Consequently, by invoking the above convergences and following the same line of arguments as in Substep 1,  we can pass the limit in \eqref{exvest2} to obtain: \eqref{finaidene3}.
        
             Hence, the verification of equations \eqref{finaidene1},\eqref{finaidene2} and \eqref{finaidene3} in each of the three substeps confirms that \(\{(v, \phi, \sigma), \tau\}\) collectively satisfy the full coupled system \eqref{regu2} in \( L^2_{\mathrm{div}} \times L^2\times L^2 \).\medskip\\
               \textbf{Step 2.} Next, we consider a more general case of \( \mathbb{E}\,\|(v_{0}, \phi_{0}, \sigma_{0})\|_{\mathcal{V}}^2 < \infty\), i.e., \((v_0, \phi_0, \sigma_0) \in L^2(\Omega; \mathcal{V})\). 
              For some fixed \( i \in \mathbb{N} \cup \{0\} \), let \(\{(v_i, \phi_i, \sigma_i); \tau_i\}\) be a local strong solution corresponding to the initial data \((v_0, \phi_0, \sigma_0)\mathbf{1}_{i \leq \|(v_0, \phi_0, \sigma_0)\|_{\mathcal{V}} < i+1}\). We further define \(\{(v, \phi, \sigma); \tau\}\) for each \(t \in [0, T] \) and \(\omega \in \Omega\) as:
    \begin{equation*}
    	\begin{aligned}
   				  &v(t,\omega) = \sum_{i=0}^{\infty}v_i(t,\omega)\mathbf{1}_{i \leq  \|v_0(\omega)\| < i+1}, \,\phi(t,\omega) = \sum_{i=0}^{\infty}\phi_i(t,\omega)\mathbf{1}_{i \leq  \|\phi_0(\omega)\|_{H^2} < i+1}, \\ 
   				   &\sigma(t,\omega) = \sum_{i=0}^{\infty}\sigma_i(t,\omega)\mathbf{1}_{i \leq  \|\sigma_0(\omega)\|_{H^1} < i+1},	\,	\tau(t,\omega) = \sum_{i=0}^{\infty}\tau_i(\omega)\mathbf{1}_{i \leq \|(v_0, \phi_0, \sigma_0)(\omega)\|_{\mathcal{V}} < i+1}.
       \end{aligned}
       	 \end{equation*}
  				 We proceed to show that \(\{(v, \phi, \sigma); \tau\}\) is a local strong solution corresponding to the initial data \((v_0, \phi_0, \sigma_0)\). Since for each fixed \(\omega\), the infinite
  				 sum is simply a single element, we can write: \( \{(v(\omega), \phi(\omega), \sigma(\omega)); \tau(\omega)\}  = \{(v_i(\omega), \phi_i(\omega), \sigma_i(\omega)); \tau_i(\omega)\}  \) for some \(i\). As a consequence of \((v_i(\cdot \wedge \tau_i), \phi_i(\cdot \wedge \tau_i),\sigma_i(\cdot \wedge \tau_i)) \in C([0, T]; \mathcal{V})\) and \((v_i \mathbf{1}_{\cdot\leq \tau_i}, \phi_i\mathbf{1}_{\cdot\leq \tau_i}, \sigma_i\mathbf{1}_{\cdot\leq \tau_i}) \in L^2(0,T; \mathcal{Z})\) a.s., we have \((v(\cdot \wedge \tau), \phi(\cdot \wedge \tau),\sigma(\cdot \wedge \tau)) \in C([0, T]; \mathcal{V})\) and \((v \mathbf{1}_{\cdot\leq \tau}, \phi\mathbf{1}_{\cdot\leq \tau}, \sigma\mathbf{1}_{\cdot\leq \tau}) \in L^2(0,T; \mathcal{Z})\) a.s. for all \(T>0\). Arguing analogously to \cite[Proposition 4.2]{Glatt-Holtz2009}, we conclude that, for \(\{(v, \phi, \sigma); \tau\}\) defined as above, $\mathbb{E}\,\sup_{r \in [0, \tau]} \| (v, \phi, \sigma)(r) \|_{\mathcal{V}}^2 
  				 + \mathbb{E}\int_{0}^{\tau} (\, \| A_0 v \|^2_{L^2_{\mathrm{div}}} +  \|\phi\|^2_{H^4}   
  				 + \|\sigma\|^2_{H^2})ds < \infty $ and identities in \eqref{regu2} hold true. Hence, from Definition \ref{def4.2}, the conclusion follows.
   	 \end{proof}
  				
  \subsection{Existence of maximal solution}  In the last section, we have shown the existence of a local strong solution \(\{(v, \phi, \sigma); \tau\}\) of the system \eqref{a1}-\eqref{a5} in both \(d = 2,3\) cases, and this local strong solution \(\{(v, \phi, \sigma); \tau\}\) is unique in a larger class of weak solutions. Next, we extend the pair \(\{(v, \phi, \sigma); \tau\}\) corresponding to the initial data $(v_0, \phi_0, \sigma_0) \in L^2(\Omega; \mathcal{V})$ to a solution defined on a time interval \([0, \xi)\), where \(\xi\) is a maximal time of existence. The argument follows in a manner analogous to that of  \cite{Jacod1979,Mikulevicius2004,Glatt-Holtz2009, Breit2018}.
  \begin{Thm}\label{maxsol}
        Under the hypotheses of Proposition \ref{exismain_bdd}, there exists a unique maximal solution \(\{(v, \phi, \sigma),\{t_R\}_{R \in \mathbb{N}}, \xi\}\). Moreover, the pair  \(\{(v, \phi, \sigma); \xi\}\) is a weak solution. 
  \end{Thm}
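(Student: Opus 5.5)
The plan is the standard Zorn/supremum construction used for maximal solutions of the stochastic Navier--Stokes equations in \cite{Glatt-Holtz2009, Jacod1979, Mikulevicius2004}. It rests on two ingredients already proved: local existence (Proposition \ref{exismain_bdd}) and pathwise uniqueness of local strong solutions (Proposition \ref{weak-strong}, since a strong solution is in particular weak).

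\textbf{Step 1: the set of local solutions.} Let $\mathcal{S}$ be the set of all stopping times $\tau$ for which a local strong solution $\{(v,\phi,\sigma),\tau\}$ with the given data exists; by Proposition \ref{exismain_bdd} it is nonempty. By Proposition \ref{weak-strong}, two solutions $\{(v_1,\phi_1,\sigma_1),\tau_1\}$ and $\{(v_2,\phi_2,\sigma_2),\tau_2\}$ coincide on $[0,\tau_1\wedge\tau_2]$ a.s. From this:
\begin{itemize}
\item $\mathcal{S}$ is closed under finite maxima: glue the two solutions on $\{\tau_1\ge\tau_2\}$ and $\{\tau_1<\tau_2\}$, and check predictability and the regularity of Definition \ref{def4.2}.
\item $\mathcal{S}$ is closed under countable increasing limits, provided the $\mathcal{V}$-norm stays bounded.
\end{itemize}
Set $\xi:=\operatorname{ess\,sup}\mathcal{S}$ and choose $\tau_n\in\mathcal{S}$ increasing with $\tau_n\uparrow\xi$ a.s. The consistent family of solutions then defines a single process $(v,\phi,\sigma)$ on $[0,\xi)$.

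\textbf{Step 2: the stopping times $t_R$.} Define
\[t_R:=\xi\wedge\inf\Big\{t\ge0:\ \sup_{s\le t}\|(v,\phi,\sigma)(s)\|_{\mathcal{V}}^2+\int_0^t\big(\|A_0v\|^2_{L^2_{\mathrm{div}}}+\|\phi\|^2_{H^4}+\|\sigma\|^2_{H^2}\big)ds\ge R\Big\}.\]
Each $\{(v,\phi,\sigma),t_R\}$ is a local strong solution, and $t_R\uparrow\xi$.

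\textbf{Step 3: maximality (the main obstacle).} We must show that the quantity above blows up on $\{\xi<\infty\}$. Suppose not: then there are $R$ and a set $E\subset\{\xi<\infty\}$ of positive probability on which $t_R=\xi$. On $E$ the process extends continuously in $\mathcal{V}$ to $\xi$, so $(v,\phi,\sigma)(\xi)\in\mathcal{V}$ is an $\mathcal{F}_{\xi}$-measurable datum. We then restart the system at time $\xi$ with this datum, using the shifted filtration and Brownian motions $\beta^i_k(\xi+\cdot)-\beta^i_k(\xi)$ and the shifted sources. Proposition \ref{exismain_bdd} (Step 2 there handles $L^2(\Omega;\mathcal{V})$ data after truncation by $\mathbf{1}_E$) gives a local solution on $[\xi,\xi+\tau']$ with $\tau'>0$. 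Concatenating produces an element of $\mathcal{S}$ strictly larger than $\xi$ on $E$, contradicting the definition of the essential supremum. The delicate points are:
\begin{itemize}
\item checking that the concatenated process is predictable and satisfies \eqref{regu2};
\item the fact that $\xi$ itself need not be accessible; this is handled by working with the $t_R$, which are announced by $t_R-1/n$ type sequences, as in \cite{Glatt-Holtz2009}.
\end{itemize}

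\textbf{Step 4: uniqueness and the weak-solution claim.} Uniqueness of the maximal solution follows from Proposition \ref{weak-strong}. If two maximal solutions had $\xi_1<\xi_2$ on a set of positive probability, they would agree up to $\xi_1$, and the second would have finite norm at $\xi_1$, contradicting the blow-up of the first. Hence $\xi_1=\xi_2$ and the processes agree. Finally, $\{(v,\phi,\sigma);\xi\}$ is a weak solution in the sense of Definition \ref{def4.1}:
\begin{itemize}
\item the weak formulation \eqref{regu1} holds on each $[0,t_R]$ and passes to $t\wedge\xi$ by letting $R\to\infty$;
\item the integrability requirements in Definition \ref{def4.1} follow from Lemma \ref{Ener_est} applied on $[0,t_R]$ together with monotone convergence, giving bounds uniform in $R$ in $L^2(\Omega;L^\infty(0,T;\mathcal{H}))$ and the dissipative norms;
\item weak continuity in $\mathcal{H}$ up to $\xi$ follows from the uniform energy bound and the weak formulation.
\end{itemize}
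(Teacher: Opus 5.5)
Your route is the paper's: you take $\xi$ as the essential supremum of the set of existence times (the paper's $\mathcal{K}$, your $\mathcal{S}$), glue the solutions along $\tau_k\uparrow\xi$ via weak--strong uniqueness, and introduce norm-truncation times. You then get blow-up on $\{\xi<\infty\}$ by restarting and contradicting maximality of $\xi$, and the weak-solution claim via Lemma \ref{Ener_est} on the truncated intervals.

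\textbf{The gap.} One step fails as written: your $t_R$ in Step 2 is not a.s. strictly positive. The data are only in $L^2(\Omega;\mathcal{V})$, so in general $\mathbb{P}(\|(v_0,\phi_0,\sigma_0)\|_{\mathcal{V}}^2\geq R)>0$ for every $R$, and on that event your $t_R=0$. Consequently $\{(v,\phi,\sigma),t_R\}$ is not a local strong solution in the sense of Definition \ref{def4.2}, which demands a strictly positive stopping time. The triple $\{(v,\phi,\sigma),\{t_R\},\xi\}$ therefore does not satisfy Definition \ref{def4.3}.

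\textbf{How the paper handles it.} The paper flags exactly this point. It keeps your hitting time as $\rho_R$ in \eqref{rho_R}, but sets $t_R:=\rho_R\vee\tau_R$, with $\tau_R\in\mathcal{K}$ the approximating sequence. Then $t_R>0$ a.s. and $t_R\uparrow\xi$. The restart argument shows $\mathbb{P}(t_R=\xi<\infty)=0$, hence $\rho_R<\xi$ on $\{\xi<\infty\}$ for every $R$, which is the blow-up.

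\textbf{Alternative repair.} You could instead use the threshold $\|(v_0,\phi_0,\sigma_0)\|_{\mathcal{V}}^2+R$ in place of $R$, as in the stopping times of Proposition \ref{nes_com}. Continuity in $\mathcal{V}$ then gives $t_R>0$ a.s., and your Step 3 is unaffected.

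\textbf{Minor point.} The accessibility concern in Step 3 is unnecessary, and $t_R-1/n$ is not a stopping time in general. Restarting at a stopping time $S$ only uses two facts: $\beta^i_k(S+\cdot)-\beta^i_k(S)$ are Brownian motions for the filtration $\{\mathcal{F}_{S+t}\}_{t\geq 0}$ (on $\{S<\infty\}$), and the restart datum is $\mathcal{F}_S$-measurable. This is how the paper, following \cite[Lemma 4.1]{Glatt-Holtz2009}, restarts from $(v,\phi,\sigma)(t_R)$.
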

  \begin{proof}
		 Let \((v_0, \phi_0, \sigma_0) \in L^2(\Omega; \mathcal{V})\) be fixed. Take \(\mathcal{K}\) as the collection of stopping times \(\tau\) such that there exists a local strong solution \(\{(v, \phi, \sigma); \tau\}\) of the system \eqref{a1}-\eqref{a5} with respect to the initial data \((v_0, \phi_0, \sigma_0)\). It is closed with respect to the finite minimum and finite maximum operations. Let \( \xi \) be the essential upper bound of the set \(\mathcal{K}\) (see \cite[Chapter 5, Section 18]{Doob1994}). So, there
		 is an increasing sequence \(\tau_k\), which converges to \(\xi\). Suppose \(\{(v_k, \phi_k, \sigma_k); \tau_k\} \) be a local strong solution corresponding to \(\tau_k\). Next by taking a sequence of truncated processes \((v_k(t \wedge \tau_k) \mathbf{1}_{t< \xi}\,, \phi_k(t \wedge \tau_k)\mathbf{1}_{t< \xi}\,, \sigma_k(t \wedge \tau_k)\mathbf{1}_{t<\xi})\), one can notice from the local uniqueness of solutions that, for every \(t > 0\), this sequence converges in \(\mathcal{V}\) a.s., i.e.,
 \begin{equation}\label{pro1}
  			(v(t), \phi(t), \sigma(t)) = \lim_{k \to \infty} \big((v_k(t \wedge \tau_k) 	\mathbf{1}_{t< \xi}\,, \phi_k(t \wedge \tau_k)\mathbf{1}_{t< \xi}\,, \sigma_k(t \wedge \tau_k)\mathbf{1}_{t<\xi})\big) \quad \text{a.s. in}\; V.
  \end{equation}
					Moreover, by Lemma \ref{Ener_est} and \eqref{pro1}, it follows that \(\{(v, \phi, \sigma); \xi\}\) is a local weak solution (for more details on a similar argument, see \cite{Glatt-Holtz2009}). For $R \in \mathbb{N}$, we now define 
	\begin{align}\label{rho_R}
				\rho_R := \inf_{t \geq 0}\{ \sup_{s \in [0, t]} \| (v, \phi, \sigma)(s) \|_{\mathcal{V}}^2 
				+ \int_{0}^{t} ( \| A_0 v \|^2_{L^2_{\mathrm{div}}} +  \|\phi\|^2_{H^4}   
				+ \|\sigma\|^2_{H^2})ds  >  R\} \wedge \xi, 
		\end{align}
					as a stopping time. For each \(R \in \mathbb{N}\), \(\{(v, \phi, \sigma), \rho_R\}\) is a local strong solution. Note that the stopping time $\rho_R$ is not a.s. strictly positive unless $\|(v_0, \phi_0, \sigma_0)\|_{\mathcal{V}}^2 \leq R$. 
					Since $(v_0, \phi_0, \sigma_0)\in L^2(\Omega; \mathcal{V})$, it follows that for almost every $\omega$, there exists $R = R(\omega)$ sufficiently large such that $\|(v_0, \phi_0, \sigma_0)(\omega)\|_{\mathcal{V}}^2 \leq R(\omega)$ and hence $\rho_{R(\omega)}(\omega) > 0,$ while this $R$ depends on $\omega$. To ensure uniform strict positivity, define $t_R = \rho_R \vee \tau_R.$ We see that \(\{t_R \}_{R\in \mathbb{N}}\) is monotonically increasing to \(\xi\) and for each $R \in \mathbb{N}$,  $\{(v, \phi, \sigma), t_R\}$, forms a local strong solution with an a.s. strictly positive stopping time $t_R$. Furthermore, through an iterative extension of the local solution, any solution on $[0, t_R]$ can be extended to a solution on $[0, t_R + \bar{\tau}]$ for some a.s. strictly positive stopping time $\bar{\tau}$. Indeed by the method of \cite[Lemma 4.1]{Glatt-Holtz2009}, we can restart the solution from  $(v(t_R), \phi(t_R), \sigma(t_R))$ and combine the two parts, where uniqueness guarantees their consistency. To prove that $t_R < \xi$ for every $R\in \mathbb{N}$ on $\{\xi < \infty\}$, assume, towards a contradiction, that 
					$\mathbb{P} ( t_R = \xi < \infty ) >0$. 
					In this case, we have $t_R + \bar{\tau} \in \mathcal{K}$, for some a.s. strictly positive stopping time $\bar{\tau}$, implying that 
					$\mathbb{P}(\xi < t_R + \bar{\tau}) > 0$. 
					This however, contradicts the maximality of $\xi$.
					Hence on the set \(\{ \xi < \infty\}\), it follows that $\rho_R < \xi$; using \eqref{rho_R} we then deduce
		\begin{equation*}
				\sup_{r \in [0, \xi)} \| (v, \phi, \sigma)(r) \|_{\mathcal{V}}^2 
				+ \int_{0}^{\xi} ( \| A_0 v \|^2_{L^2_{\mathrm{div}}} +  \|\phi\|^2_{H^4}   
				+ \|\sigma\|^2_{H^2})ds  >  R,
		\end{equation*}
				 for arbitrary \(R \in \mathbb{N}\). Since all criteria (see Definition \ref{def4.3})
                 for a maximal solution are satisfied by $\left\{\, (v, \phi,\sigma), \{t_R\}_{R \in \mathbb{N}}, \xi \,\right\}$ therefore it qualifies as the desired solution. 
		 \end{proof}
	\section{Global existence in dimension two}\label{Section 5}
	\begin{Thm}(Global existence of strong solution in 2D)\label{global2d} 
			Suppose \(d=2\). In addition to the hypotheses of Proposition  \ref{exismain_bdd}, we further assume that
		   $ r \in [1,3] $,	$(v_0, \phi_0, \sigma_0) \in L^p(\Omega; \mathcal{H})$ and 
			$(z,u,w) \in$ $L^p(\Omega; L^2_{\text{loc}}(\,[0, \infty); V' \times L^2 \times (H^1)^{'}\,))$	for some \(p \geq 4\).
			Then the maximal solution $\left\{\, (v, \phi,\sigma), \{t_n\}_{n \in \mathbb{N}}, \varrho \,\right\}$
			 is global in the sense that \( \varrho = \infty\) a.s..
\end{Thm}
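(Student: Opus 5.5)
The plan follows the standard route for stochastic NS/NS--CH systems in $2$D (cf. \cite{Glatt-Holtz2009, Deugoue2021}). We show that for every $T>0$,
\[
\lim_{K\to\infty}\mathbb{P}\Big(\sup_{t\in[0,\varrho\wedge T)}\|(v,\phi,\sigma)(t)\|_{\mathcal V}^2+\int_0^{\varrho\wedge T}\big(\|A_0v\|^2_{L^2_{\mathrm{div}}}+\|\phi\|^2_{H^4}+\|\sigma\|^2_{H^2}\big)ds>K\Big)=0 .
\]
By Definition \ref{def4.3} and Theorem \ref{maxsol}, this forces $\mathbb{P}(\varrho\le T)=0$ for each $T$, hence $\varrho=\infty$ a.s.

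To get it, we introduce stopping times $\varsigma_K$ that control the lower-order quantities. These are the energy $\mathcal{E}_{\mathrm{tot}}$ together with the time integrals $\int\|v\|^2$, $\int\|\mu\|_{H^1}^2$, $\int\|\sigma\|_{H^1}^2$, $\int\|\phi\|_{H^2}^4$ and $\int\|v\|^{r+1}_{L^{r+1}}$. Since $\{(v,\phi,\sigma);\varrho\}$ is a weak solution (Theorem \ref{maxsol}), Lemma \ref{Ener_est} with $p\ge4$ applies, and so do the regularity estimates for $\phi$ in Section \ref{Section 4}: the bound \eqref{hr1}, $\int\|\phi\|_{H^3}^2<\infty$, and $\mathbb{E}(\int\|\phi\|_{H^2}^4)^2<\infty$. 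Chebyshev's inequality then gives $\mathbb{P}(\varsigma_K<\varrho\wedge T)\to0$ as $K\to\infty$. It therefore suffices to bound the $\mathcal V$/$\mathcal Z$ norms on $[0,\varrho\wedge\varsigma_K\wedge T)$ in expectation, localising further by the $t_n$ so that everything is finite.

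The core step is an $\mathcal V$-level It\^o estimate on the solution itself, carried out exactly as in Steps 1--3 of Proposition \ref{nes_com} but without projections. We apply It\^o's formula to $\|v\|^2$ via \eqref{a1}, test \eqref{a2} with $\phi+A_1^2\phi$, and apply It\^o's formula to $\|\sigma\|^2_{H^1}$. In $2$D the nonlinear terms are closed with Ladyzhenskaya/Agmon-type bounds, which gain a Gronwall coefficient that is integrable given the energy:
\begin{itemize}
\item $(B_0(v,v),A_0v)\le \tfrac{\nu}{8}\|A_0v\|^2+C\|v\|^2_{L^2}\|v\|^2\cdot\|v\|^2$.
\item The coupling term is bounded using \eqref{r_01} for $d=2$.
\item The Forchheimer term $(\mathcal A_r(v),A_0v)$ is bounded by $C\|v\|^{2(r-1)}\|v\|^2+\tfrac{\nu}{8}\|A_0v\|^2$, as in \eqref{eststa1}. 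This is where $r\le3$ enters: the coefficient $\|v\|^{2(r-1)}\le\|v\|^4$ reduces to a multiple of $\|v\|^2$ times a quantity already controlled, through $\|v\|^2$ integrable and the $2$D interpolation $\|v\|^{2(r-1)}\le C(1+\|v\|^2_{L^2}\|v\|^2\cdots)$.
\item The advection terms $b_1(v,\phi,A_1^2\phi)$ and $b_1(v,\sigma,A_1\sigma)$ use \eqref{b1_4} in $d=2$. They give coefficients like $\|v\|^2_{L^2}\|v\|^2$, multiplying $\|\sigma\|^2_{H^1}$ or $\|\phi\|^2_{H^2}$.
\item The potential terms $A_1\psi'(\phi)$ are handled as in \eqref{est4}--\eqref{est7}, but using $2$D Agmon so that the powers become $\|\phi\|_{H^1}^{a}\|\phi\|_{H^2}^{b}$ times $\|\phi\|_{H^3}$. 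These close after absorbing $\|\phi\|_{H^4}$.
\end{itemize}

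The result is
\[
\mathbb{E}\sup X+\mathbb{E}\int Y\le \mathbb{E}X(0)+C\,\mathbb{E}\int R\,X+C,
\]
where $X=\|(v,\phi,\sigma)\|_{\mathcal V}^2$, $Y$ is the $\mathcal Z$-dissipation, and
\[
R=1+\|v\|^2_{L^2}\|v\|^2+\|v\|^2+\|\sigma\|_{H^1}^2+\|\phi\|^4_{H^2}+\|\phi\|^2_{H^3}+\|\mu\|_{H^1}^2+\cdots .
\]
By the definition of $\varsigma_K$, $\int R\le C_K$ a.s. The martingale terms are handled by BDG as in \eqref{exis8} and \eqref{sinoise}, and the stochastic Gronwall Lemma \ref{l3} then gives $\mathbb{E}[\sup X+\int Y]\le C(K,T)$. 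Combined with $\mathbb{P}(\varsigma_K<\varrho\wedge T)\to0$, Chebyshev yields the displayed limit.

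The main obstacle is the nutrient advection term $b_1(v,\sigma,A_1\sigma)$ together with the Forchheimer term. Both must produce coefficients that involve only the energy-level integrals, and not the $\mathcal V$-norm of $v$ to a power higher than $2$ times an integrable factor. If the direct bound $\|v\|^{2(r-1)}$ for $r\in(2,3]$ is not integrable from the energy alone, the first thing to try is the identity $\big(|v|^{r-1}v,A_0v\big)$ modulo the pressure commutator. This means writing $-\mathcal P\Delta=-\Delta+\nabla p$ and bounding the commutator using $2$D $L^4$-interpolation, so that the term is controlled by $\|v\|^{r+1}_{L^{r+1}}$-weighted quantities supplied by Lemma \ref{Ener_est}.
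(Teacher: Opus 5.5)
Your architecture matches the paper's proof. You use $\mathcal V$-level It\^o estimates for $\|v\|^2$, test the $\phi$-equation with $\phi+A_1^2\phi$, and apply It\^o to $\|\sigma\|^2_{H^1}$. You then localise with a stopping time controlling energy-level integrals, apply the stochastic Gronwall lemma, and conclude with the blow-up criterion of Definition \ref{def4.3}. Your treatment of $B_0$, $b_1(v,\sigma,A_1\sigma)$, $b_1(v,\phi,A_1^2\phi)$, the coupling term and the potential terms is sound. There you put $\|\phi\|^4_{H^2}$ and $\|\phi\|^2_{H^3}$ into the Gronwall weight, whereas the paper reduces these to additive powers of $\|\phi\|_{H^1}$.

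The step that fails is the Forchheimer bullet, which is exactly where $r\le 3$ has to enter. Bounding $(\mathcal A_r(v),A_0v)$ as in \eqref{eststa1}, via $H^1\hookrightarrow L^{2r}$, gives $C\|v\|^{2r}=C\|v\|^{2(r-1)}\|v\|^2$. For $r\in(2,3]$ the weight $\|v\|^{2(r-1)}$ is not integrable from the energy, which only controls $\sup\|v\|^2_{L^2_{\mathrm{div}}}$ and $\int\|v\|^2$. The interpolation you invoke to rescue it is false: for $r=3$, $\|v\|^4\le C(1+\|v\|^2_{L^2_{\mathrm{div}}}\|v\|^2)$ fails whenever $\|v\|_{L^2_{\mathrm{div}}}\|v\|$ stays bounded while $\|v\|\to\infty$. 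Your fallback of writing $-\mathcal P\Delta=-\Delta+\nabla p$ and controlling the commutator does not work on a bounded domain with Dirichlet data. There $\mathcal P$ and $\Delta$ do not commute, and the paper's introduction names exactly this as the obstruction.

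The repair is to keep the $L^2$ factor by using the two-dimensional Gagliardo--Nirenberg (Ladyzhenskaya) inequality directly:
\[
|\eta(\mathcal A_r(v),A_0v)|\le \tfrac{\nu}{8}\|A_0v\|^2_{L^2_{\mathrm{div}}}+C\|v\|^{2r}_{L^{2r}_{\mathrm{div}}}\le \tfrac{\nu}{8}\|A_0v\|^2_{L^2_{\mathrm{div}}}+C\|v\|^2_{L^2_{\mathrm{div}}}\|v\|^{2(r-1)} .
\]
Since $2(r-1)\le 4$ when $r\le 3$, we get $\|v\|^2_{L^2_{\mathrm{div}}}\|v\|^{2(r-1)}\le C\|v\|^2_{L^2_{\mathrm{div}}}+C\big(\|v\|^2_{L^2_{\mathrm{div}}}\|v\|^2\big)\|v\|^2$. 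Its weight $\|v\|^2_{L^2_{\mathrm{div}}}\|v\|^2$ is the same energy-integrable one that $B_0$ already produces. This is \eqref{glob8} in the paper, and with it your Gronwall argument closes.
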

\begin{proof}
			Let \(\{(v, \phi, \sigma),\{t_n\}_{n \in \mathbb{N}}, \, \varrho \}\) be a maximal solution and fix $T< \infty$. Since \(t_n\) is increasing to $\varrho $, the events $A_N := \bigcap_{n=1}^{N} \{ t_n \le T \} = \{t_N \leq T\}$ monotonically decrease as $N$ increases i.e., $A_1 \supseteq A_2 \supseteq \cdots, \;\bigcap_{n=1}^{\infty} \{t_n \le T\} = \lim_{N \to \infty} A_N$. Therefore we have 
				$\mathbb{P} \left( \bigcap_{n=1}^{\infty} \{t_n \leq T\} \right) 
				= \lim_{N \to \infty} \mathbb{P} \left( A_N \right) 
				= \lim_{N \to \infty} \mathbb{P}(\{t_N \leq T\})$,
			and
			$\{\varrho < \infty\} = \bigcup_{T=1}^{\infty} \{\varrho\leq T\} = \bigcup_{T=1}^{\infty} \bigcap_{n=1}^{\infty} \{t_n \leq T\}$.
            Hence, to demonstrate the required result, it is sufficient to prove that
	$	\lim_{N \to \infty} \mathbb{P}(t_N \leq T) = 0.$\\
			For some \(M>0\), we define the following stopping time:
\begin{equation} \label{glob4}
	\begin{aligned}
		\zeta_M := \inf_{t \geq 0} \{\int_{0}^{t \wedge \varrho}\big(\,\|v\|^2_{L^2_{\mathrm{div}}}\|v\|^2 + \|v\|^2\|\phi\|_{H^1}^6 +\|\phi\|_{H^1}^6 + \|\phi\|_{H^1}^{10}+ \|\phi\|_{H^1}^{18}+ \|\phi\|_{H^1}^{34} \big)\,ds >M \} \wedge 2T.
		\end{aligned}
\end{equation}
		For any $N \in \mathbb{N}$, from the definition of $\rho_N$ (cf. \eqref{rho_R}), one can infer the following: 
\begin{eqnarray}\label{glob5}
		\lefteqn{\mathbb{P}(t_N \leq T)
		\leq \mathbb{P}(\rho_N \leq T) \leq \mathbb{P} \left( \left\{ \sup_{s \in [0,\ \rho_N \wedge T]} \|(v, \phi, \sigma)(s)\|_{\mathcal{V}}^2\right.\right.}\\
		&& \left.\left.+ \int_0^{\rho_N \wedge T} \!\!\!\left( \|A_0 v\|^2_{L^2_{\mathrm{div}}} + \| 	\phi\|^2_{H^4}+ \|\sigma\|_{H^2}^2\right) \, ds \geq N \right\} 
		\cap \{\zeta_M > T\} \right)+\mathbb{P}(\zeta_M \leq T)  \nonumber\\
		&\leq& \mathbb{P} \left( \left\{ \sup_{s \in [0,\  \rho_N \wedge \zeta_M]} \|(v, \phi, \sigma)(s)\|_{\mathcal{V}}^2 
		+ \int_0^{\rho_N \wedge \zeta_M} \left( \|A_0 v\|^2_{L^2_{\mathrm{div}}} + \| 	\phi\|^2_{H^4}+ \|\sigma\|_{H^2}^2 \right) \, ds \geq N \right\} \right) + \mathbb{P}(\zeta_M \leq T). \nonumber
\end{eqnarray}
        \textbf{Step 1.} Using It\^o formula for the processes \(\|A_0^{1/2}v(\cdot)\|^2_{L^2_{\mathrm{div}}}\) via $\eqref{a1}$ for fixed \( N, T, M \) and a pair of stopping times \( \tau_a \leq \tau_b \leq \rho_N 	\wedge \zeta_M \) and integrating over the interval [\( \tau_a, \tau_b \)], we arrive at
\begin{align}\label{limiv0}
				&\mathbb{E} \sup_{t \in [\tau_a, \tau_b]} \| v(t) \|^2 
				+ 2 \mathbb{E} \int_{\tau_a}^{\tau_b}  \nu \| A_0 v \|^2_{L^2_{\mathrm{div}}}ds \nonumber\\ &	\leq \mathbb{E}\, \|v(\tau_a)\|^2 + 2 \mathbb{E} \int_{\tau_a}^{\tau_b} \left(~ |(\eta\mathcal{A}_r(v), A_0 v)|+ |(B_0(v, v), A_0 v)| + | (R_0(\epsilon A_1 \phi, \phi), A_0 v) |~
				\right.\nonumber\\
				& \quad\left.+ |(z, A_0 v)| + \frac{1}{2} \| G_1(s, v) \|_{\mathcal{L}_2(U_1, V)}^2 \,\right) \, ds + 2 \mathbb{E} \sup_{t \in [\tau_a, \tau_b]} 
				\left| \sum_{j=1}^\infty \int_{\tau_a}^t (g_{1,j}(s,v), A_0 v) \, d\beta^1_j(s) \right|.
		\end{align}
				We shall estimate each term on the right-hand side of \eqref{limiv0} for \(d=2\) only. Thanks to H\"older's and Gagliardo-Nirenberg's (see Lemma \ref{l1}) inequalities, we obtain the following estimate:  
		\begin{equation}\label{glob8}
			\begin{aligned}
				|\eta(\mathcal{A}_r(v), A_0 v)| 
				\leq C_{\eta,\nu}\|v\|_{L^{2r}_{\mathrm{div}}}^{2r} + \frac{\nu}{10}\|A_0 v\|^2_{L^2_{\mathrm{div}}}
				\leq C_{\eta,\nu}\|v\|_{L^2_{\mathrm{div}}}^2\|v\|^{2(r-1)}+\frac{\nu}{10}\|A_0 v\|^2_{L^2_{\mathrm{div}}},
			\end{aligned}
		\end{equation}
			which can be further simplified for the various values of \(r\) as follows:
		\begin{equation*}\label{glob81}
			\begin{aligned}
				\|v\|_{L^2_{\mathrm{div}}}^2\|v\|^{2(r-1)} \, \leq	\begin{cases}
					\bar{C}\, (\|v\|_{L^2_{\mathrm{div}}}^2 + \|v\|_{L^2_{\mathrm{div}}}^2\|v\|^ 4) &: \ \ r \in [1,3)\\
					\bar{C} \, \|v\|_{L^2_{\mathrm{div}}}^2\|v\|^4 &:\ \ r= 3.
				\end{cases}
			\end{aligned}
		\end{equation*}
			Consequently,  \(|\eta(\mathcal{A}_r(v), A_0 v)| \leq C_{\eta,\nu ,\bar{C}}\,(\|v\|_{L^2_{\mathrm{div}}}^2 + \|v\|_{L^2_{\mathrm{div}}}^2\|v\|^ 4) + \frac{\nu}{10}\|A_0 v\|^2_{L^2_{\mathrm{div}}}\).
   			Invoking \eqref{b_03} and Young's inequality, one can get
	\begin{equation}
   		    |( B_0(v, v), A_0 v)| \leq  C_\nu  \|v\|^2_{L^2_{\mathrm{div}}}\| v \|^4 + \frac{\nu}{10} \|A_0 v\|^2_{L^2_{\mathrm{div}}}.\label{glob17}
   	\end{equation}
  	 With the help of Lemma \ref{turiq}, Remark \ref{emer} and Young's inequality, we obtain
   \begin{equation}
   		\begin{aligned}
   		|( R_0(\epsilon A_1 \phi, \phi), A_0 v)| &\leq \epsilon\|A_1 \phi\|_{L^4}\| \nabla\phi\|_{L^4}\|A_0 v\|_{L^2_{\mathrm{div}}}\\
   		&\leq C_{\epsilon} | A_1^{1/2}\phi|_{L^2}^{1/2}|A_1 \phi|_{L^2} |A_1^{3/2} \phi|_{L^2}^{1/2}\|A_0 v\|_{L^2_{\mathrm{div}}}\\
   		&\leq C_{\epsilon} | A_1^{1/2}\phi|_{L^2} |A_1^{3/2} \phi|_{L^2}\|A_0 v\|_{L^2_{\mathrm{div}}}
   		\leq c_{\nu, \epsilon} \|\phi \|^{10}_{H^1} +\frac{\nu}{10}  \|A_0 v\|^2_{L^2_{\mathrm{div}}}+ \frac{\epsilon}{8} \|\phi \|^2_{H^4}. 
   		\end{aligned}
   	\end{equation}\label{glob18} 
   			The linear growth condition on $G_1$ (see [\ref{[A2]}]) and the  Burkholder-Davis-Gundy inequality give
	\begin{align}
   			\| G_1(t,v)\|_{\mathcal{L}_2(U_1,V)}^2 &\leq  C_{B_{V}}(1+\|v\|^2 ),\label{glob20}\\
   			\mathbb{E} \left( \sup_{t \in [\tau_a, \tau_b]} \left| 2 \sum_{j=1}^{\infty} \int_{\tau_a}^{t} \left( g_{1,j}(t,v), A_0v \right) \, d\beta^1_j \right| \right) &\leq \mathbb{E} \left( \frac{1}{2} \sup_{t \in [\tau_a, \tau_b]} \|v(t)\|^2 + C_{B_{V},\nu} \int_{\tau_a}^{\tau_b} (1 + \|v\|^2) \, dt \right)\label{glob21}.
   	\end{align}		
  			\textbf{Step 2.} Taking \(L^2\) inner product of  $\eqref{a2}$ with \(2( \phi+A_1^2 \phi )\), we obtain
\begin{equation}\label{limiphi}
	\begin{aligned}
			&\mathbb{E} \sup_{t \in [\tau_a, \tau_b]} \|\phi(t) \|_{H^1}^2 
			+ 2\epsilon\, \mathbb{E} \int_{\tau_a}^{\tau_b} \left( |A_1\phi|^2_{ L^2} + |A^2_1\phi|^2_{ L^2}\right)ds  \\
			&\leq \mathbb{E} \| \phi(\tau_a) \|_{H^1}^2  + 2 \mathbb{E} \int_{\tau_a}^{\tau_b} \left( 
			\epsilon^{-1} | (A_1 \psi'(\phi),\, \phi + A_1^2 \phi) | 
			+ | (B_1(v, \phi),\, \phi + A_1^2 \phi) | 
			\right) \, ds \\
			&\quad + 2 \mathbb{E} \int_{\tau_a}^{\tau_b} \left|\left((P\sigma- A- \alpha u)h(\phi),\, \phi + A_1^2 \phi\right)\right| \, ds.
	\end{aligned}
\end{equation}
			From Assumption [\ref{[A1]}] (see \eqref{eq:ass2}), we deduce that
		\begin{align}
				&|\epsilon^{-1}( A_1 \psi'(\phi),\, \phi+A_1^2 \phi)| \,=
				|\epsilon^{-1}( -\psi'''(\phi) |\nabla \phi|^{2} +  \psi''(\phi)A_1(\phi) ,\, \phi + A_1^2 \phi)|\nonumber\\
				&\qquad\leq C_{\epsilon,c_{\psi}}\int_{\mathcal{O}}(1+|\phi|^3)|\nabla\phi|^2\,| \phi + A_1^2 \phi| \,dx +C_{\epsilon,C_{\psi}}\int_{\mathcal{O}}(1+|\phi|^4) |A_1\phi|\,| \phi + A_1^2 \phi|\,dx := I_1 + I_2. \label{glob10}
			\end{align}
		With the Gagliardo–Nirenberg inequality (see Lemma \ref{l1}), Lemma \ref{turiq}, Remark \ref{emer}, and $H^1\hookrightarrow L^p, p\geq 2$, we obtain
      \begin{equation}
			\begin{aligned}
				I_1 & \leq C_{\psi, \epsilon}\int_{\mathcal{O}}\left(\: |\nabla\phi|^2 |\phi| + |\nabla\phi|^2|A_1^2\phi|+ |\phi|^4|\nabla\phi|^2+ |\phi|^3|\nabla\phi|^2|A_1^2\phi|\:\right) \,dx\\
				& \leq C_{\psi, \epsilon}\left(\: \|\nabla \phi\|_{L^4}|\nabla \phi|_{L^2}\|\phi\|_{L^4} + \|\nabla \phi\|_{L^4}^2|A_1^2\phi|_{L^2}+\|\phi\|^4_{L^{16}}\|\nabla \phi\|_{L^4}|\nabla \phi|_{L^2} + \|\phi\|^3_{L^{12}}\|\nabla \phi\|_{L^8}^2|A_1^2 \phi|_{L^2} \:\right)\\
				& \leq C_{\psi, \epsilon}\left(\:|\nabla \phi|_{L^2}^{1/2}\|\nabla \phi\|_{H^1}^{1/2}|A_1\phi|_{L^2}\|\phi\|_{H^1}+ |\nabla \phi|_{L^2}\|\nabla \phi\|_{H^1}|A_1^2\phi|_{L^2}+ \|\phi\|^4_{H^1}|\nabla \phi|_{L^2}^{1/2}\|\nabla \phi\|_{H^1}^{1/2}|A_1 \phi|_{L^2}\right.\\
				& \qquad  \left.+ \|\phi\|^3_{H^1}|\nabla \phi|_{L^2}^{1/2} |A_1 \phi|_{L^2}^{3/2}|A_1^2 \phi|_{L^2} \:\right)\\
                & \leq C_{\psi, \epsilon}\left(\:\|\phi\|_{H^1}^{3/2}|A_1\phi|_{L^2}^{3/2}+ |\nabla \phi|_{L^2}^{3/2}\|\phi\|_{H^4}^{3/2}+ \|\phi\|_{H^1}^{9/2}|A_1\phi|_{L^2}^{3/2} + \|\phi\|_{H^1}^{17/4}|A_1\phi|_{L^2}^{3/8}|A_1^2 \phi|_{L^2}^{11/8}\:\right)\\
				&\leq C_{\psi, \epsilon}\Big(\, \|\phi\|^6_{H^1}+ \|\phi\|^{18}_{H^1} +\|\phi\|^{34}_{H^1} +  |A_1 \phi|^2_{L^2} \Big) + \frac{\epsilon}{16}\|\phi\|^2_{H^4}. 
			\end{aligned}
		\end{equation}
			Similarly, we have
				\begin{equation}
				\begin{aligned}
						I_2 & \leq C_{\psi, \epsilon}\int_{\mathcal{O}}\left(\:|A_1\phi||\phi|+ |A_1\phi||A_1^2\phi| + |\phi|^5|A_1\phi|+  |\phi|^4|A_1\phi|\,|A_1^2\phi| \:\right) \,dx\\
						& \leq C_{\psi, \epsilon}\left(\: |A_1\phi|_{L^2}|\phi|_{L^2} + |A_1\phi|_{L^2}|A_1^2\phi|_{L^2}+ \|\phi\|^5_{L^{10}}|A_1\phi|_{L^2}+  \|\phi\|^4_{L^{16}}\|A_1\phi\|_{L^4}|A_1^2 \phi|_{L^2} \:\right)\\
						& \leq C_{\psi, \epsilon}\left(\: |A_1\phi|_{L^2}^2 +  \|\phi\|^{10}_{H^1} +  \|\phi\|^{17/4}_{H^1}|A_1^{3/2}\phi|_{L^2}^{3/4}|A_1^2 \phi|_{L^2}\:\right)+\frac{\epsilon}{32}\|\phi\|^2_{H^4}\\
						&\leq C_{\psi, \epsilon}\Big(\,  \|\phi\|^{10}_{H^1} +\|\phi\|^{34}_{H^1} +  |A_1\phi|_{L^2}^2 \Big) + \frac{\epsilon}{16}\|\phi\|^2_{H^4}.
				\end{aligned}
			\end{equation}
					For the trilinear term, by invoking $\eqref{b1_1}_1$, \eqref{b1_4}, Lemma \ref{turiq} and Remark \ref{emer} together with Young's inequality, one can derive
					\begin{equation}\label{glob19}
						\begin{aligned}
							|(B_1(v, \phi), \phi + A_1^2 \phi)|&\leq C\|v\|^{1/2}\|A_0v\|^{1/2}_{L^2_{\mathrm{div}}}|A_1^{1/2} \phi|_{L^2}^{3/4}|A_1^{3/2}\phi|_{L^2}^{1/4}|A_1^{2}\phi|_{L^2}\\
							&\leq  C_{\nu, \epsilon} \|v \|^4\|\phi \|^6_{H^1}+\frac{\nu}{10}  \| A_0 v \|^2_{L^2_{\mathrm{div}}} + \frac{\epsilon}{8} \|\phi \|^2_{H^4}.
						\end{aligned}
					\end{equation}
				The boundedness assumption for $h$ leads to
				\begin{align}
				|((P\sigma- A- \alpha u)h(\phi), \phi+A_1^2 \phi)| 
				&\leq C_{\epsilon,A, \alpha, |\mathcal{O}|} \big(1+ |\sigma|^2_{ L^2} + |u|^2_{ L^2}\big)+\frac{\epsilon}{8}\|\phi\|^2_{H^4}.\label{glob11}
			\end{align}
				\textbf{Step 3.} Using the It\^o formula for the processes \(\|\sigma(\cdot)\|^2_{H^1}\), given \eqref{a3}, we derive 
	\begin{align}
		&\mathbb{E} \sup_{t \in [\tau_a, \tau_b]} \|\sigma(t) \|_{H^1}^2 
		+ 2 \mathbb{E} \int_{\tau_a}^{\tau_b} (|A^{1/2}_1\sigma|^2_{ L^2} + |A_1\sigma|^2_{ L^2})ds \leq \mathbb{E} \| \sigma(\tau_a) \|_{H^1}^2  \nonumber \\
		&
		+ 2\mathbb{E} \int_{\tau_a}^{\tau_b}\Bigg(\,|(B_1(v, \sigma),\, \sigma + A_1 \sigma)| + c|(\sigma h(\phi),\,\sigma +A_1 \sigma)|+   b|(\sigma-w,\, \sigma+ A_1 \sigma )|\,  \label{glob7} \\
		& \quad + \frac{1}{2} \| G_2(s, \sigma) \|_{\mathcal{L}_2(U_2, H^1)}^2\Bigg) \, ds  + 2 \mathbb{E} \sup_{t \in [\tau_a, \tau_b]} 
		\left| \sum_{j=1}^\infty \int_{\tau_a}^t (g_{2,j}(s,\sigma), \sigma + A_1\sigma(s)) \, d\beta^2_j(s) \right|\nonumber.
	\end{align}
			Using $\eqref{b1_1}_1$ and \eqref{b1_4} along with Young's inequality, we derive
	\begin{equation}\label{glob12}
		\begin{aligned}
			|(B_1(v, \sigma), \sigma+ A_1 \sigma) |   \leq C\|v\|^{1/2}_{L^2_{\mathrm{div}}}\|v\|^{1/2}|A_1^{1/2} \sigma|^{1/2}_{L^2}\|\sigma\|_{H^2}^{3/2}
			\leq
			C\|v\|^2_{L^2_{\mathrm{div}}}\|v\|^2  |A_1^{1/2}\sigma|^2_{ L^2} + \frac{1}{6}\|\sigma\|^2_{H^2}.
		\end{aligned}
	\end{equation}
			Owing to the boundedness of \(h\) (see [\ref{[A3]}]) and Young's inequality, we find 
	\begin{equation}
			|(\sigma h(\phi),\sigma+A_1 \sigma)| +b|(\sigma-w, \sigma+ A_1 \sigma )| \leq
			C_b( |w|^2_{ L^2} + |\sigma|^2_{ L^2})+ \frac{1}{3}|A_1\sigma|^2_{ L^2}.\label{glob14}
		\end{equation}
			For the noise-driven term, applying the Burkholder–Davis–Gundy inequality and the estimate $\| G_2(t,\sigma)\|_{\mathcal{L}_2(U_2,H^1)}^2 \leq  C_{B_{H^1}}(1+\|\sigma\|^2_{H^1} )$ together with the analysis as in \eqref{sinoise} to get
	\begin{equation}\label{glob15}
		\begin{aligned}
			&	\mathbb{E} \left( \sup_{t \in [\tau_a, \tau_b]} \left| 2 \sum_{j=1}^{\infty} \int_{\tau_a}^{t} \left( g_{2,j}(t,\sigma),  \sigma+ A_1\sigma \right) \, d\beta^2_j \right| \,\right)\\
			& \leq	\mathbb{E} \left( \sup_{t \in [\tau_a, \tau_b]} \left| 2 \sum_{j=1}^{\infty} \int_{\tau_a}^{t} \left( g_{2,j}(t,\sigma), \sigma \right) \, d\beta^2_j \right| \right) +	\mathbb{E} \left( \sup_{t \in [\tau_a, \tau_b]} \left| 2 \sum_{j=1}^{\infty} \int_{\tau_a}^{t} \left(\nabla g_{2,j}(t,\sigma), \nabla \sigma \right) \, d\beta^2_j \right| \right)\\
			&\leq \frac{1}{2} \mathbb{E} \sup_{s \in [\tau_a, \tau_b]} \| \sigma(s) \|^2_{H^1} 
			+ C_{B_{H^1}}\, \mathbb{E} \int_{\tau_a}^{\tau_b} 
			\left( 1+ \| \sigma\|^2_{H^1} \right) ds.
		\end{aligned}
	\end{equation}
		\textbf{Step 4.} By combining \eqref{limiv0}, \eqref{limiphi}, and \eqref{glob7} with the estimates \eqref{glob8}-\eqref{glob21}, \eqref{glob10}-\eqref{glob11} and \eqref{glob12}-\eqref{glob15} for their right-hand side terms, we deduce that
	\begin{eqnarray}\label{glob22}
			\lefteqn{\frac{1}{2}	\mathbb{E} \sup_{t \in [\tau_a, \tau_b]} \| (v(t), \phi(t), \sigma(t)) \|_{\mathcal{V}}^2 
				+  \mathbb{E} \int_{\tau_a}^{\tau_b} \left( \nu \| A_0 v \|^2_{L^2_{\mathrm{div}}} +  \epsilon \left(\,|A_1\phi|^2_{ L^2} + |A^2_1\phi|^2_{ L^2}\right)   
				+  \left(\, |A^{1/2}_1\sigma|^2_{ L^2} + |A_1\sigma|^2_{ L^2}\right) \right)ds}  \nonumber\\
				&\leq& \mathbb{E} \| (v(\tau_a), \phi(\tau_a), \sigma(\tau_a)) \|_{\mathcal{V}}^2 +  C_1\,\mathbb{E} \int_{\tau_a}^{\tau_b} \|v\|^2\big( 1+ \|v\|^2_{L^2_{\mathrm{div}}}\| v \|^2 + \|v \|^2\|\phi \|^6_{H^1}  \big) \,dt\nonumber\\ &&+ C_1\,\mathbb{E} \int_{\tau_a}^{\tau_b} \|\phi\|^2_{H^2}\,dt + C_1\,\mathbb{E} \int_{\tau_a}^{\tau_b} \|\sigma\|^2_{H^1}\Big( 1+  \|v\|^2_{L^2_{\mathrm{div}}}\| v \|^2 \Big) \,dt \nonumber\\
				&&+ C_1\,\mathbb{E} \int_{\tau_a}^{\tau_b} \left(1+ \|z\|^2_{L^2_{\mathrm{div}}} + |u|^2_{L^2} + |w|^2_{L^2} \right)\,dt+ C_1\,\mathbb{E} \int_{\tau_a}^{\tau_b}\Big(\,\|\phi\|^6_{H^1} + \|\phi\|^{10}_{H^1}+ \|\phi\|^{18}_{H^1} +\|\phi\|^{34}_{H^1} \Big) dt\nonumber\\
				&\leq& \mathbb{E} \| (v(\tau_a), \phi(\tau_a), \sigma(\tau_a)) \|_{\mathcal{V}}^2 +  C_1\,\mathbb{E} \int_{\tau_a}^{\tau_b} \|(v, \phi, \sigma)\|^2_{\mathcal{V}}\left( 1+ \|v\|^2_{L^2_{\mathrm{div}}}\| v \|^2 + \|v \|^2\|\phi \|^6_{H^1}\right) dt\\
				&&+ C_1\,\mathbb{E} \int_{\tau_a}^{\tau_b}\left(\,\|\phi\|^6_{H^1} + \|\phi\|^{10}_{H^1}+ \|\phi\|^{18}_{H^1} +\|\phi\|^{34}_{H^1} \right) dt +C_1\,\mathbb{E} \int_{\tau_a}^{\tau_b} \left(1+ \|z\|^2_{L^2_{\mathrm{div}}} + |u|^2_{L^2} + |w|^2_{L^2}\right )dt,\nonumber
	\end{eqnarray} 
			where  \(C_1:= C_1( \nu,\epsilon, |\mathcal{O}|, \,C_{\psi},\eta, A, b, c, \alpha, P, L_{h} ,B_{V}, B_{H^1})\). 
			Define,  
	\begin{equation}\label{glob23}
		\begin{aligned}
			X(t) := C_1\left( \|v\|^2_{L^2_{\mathrm{div}}}\| v \|^2 + \|v \|^2\|\phi \|^6_{H^1}+ \|\phi\|^6_{H^1} + \|\phi\|^{10}_{H^1}+ \|\phi\|^{18}_{H^1} +\|\phi\|^{34}_{H^1}\right).
			\end{aligned}
			\end{equation}
			Since the definition of \(\zeta_M\) implies that
				$\int_{0}^{\zeta_M} X(s)\,ds \, \leq M, \ a.s.,$
                the stochastic Gronwall lemma together with the assumptions on \(z, u, w\) leads to the following:
			\begin{equation}\label{glob25}
				\begin{aligned}
					\mathbb{E} &\sup_{t \in [0, \rho_N \wedge \zeta_M]} \| (v(t), \phi(t), \sigma(t)) \|_{\mathcal{V}}^2 
					+  \mathbb{E} \int_{0}^{\rho_N \wedge \zeta_M} \left(  \| A_0 v\|^2_{L^2_{\mathrm{div}}} +   \|\phi\|^2_{H^4}   
					+ \|\sigma\|^2_{H^2} \right)ds  \\
					&\leq C_2\,\mathbb{E} \| (v_0, \phi_0, \sigma_0) \|_{\mathcal{V}}^2 +  C_2\,\mathbb{E} \int_{0}^{2T}\Big(1+ \|z\|^2_{L^2_{\mathrm{div}}} + |u|^2_{L^2} + |w|^2_{L^2} \Big) \,dt,
				\end{aligned}
			\end{equation}
		where  \( C_2:= C_2(C_1, T, M)\) is independent of \(N\).
		From \eqref{glob5} and \eqref{glob25}, and Markov's inequality, we obtain 
	\begin{equation}\label{glob26}
		\begin{aligned}
				\mathbb{P}(t_N \leq T) 
				&\leq \frac{C_2}{N}\, \mathbb{E}\left( \| (v_0, \phi_0, \sigma_0) \|_{\mathcal{V}}^2 +  \int_{0}^{2T}\big(1+ \|z\|^2_{L^2_{\mathrm{div}}} + |u|^2_{L^2} + |w|^2_{L^2} \big) \,dt \right)+ \mathbb{P}(\zeta_M \leq T).
		\end{aligned}
		\end{equation}
		Therefore, for any fixed \(M\), we have	
	\begin{equation}\label{glob27}
		\lim_{N \to \infty}\mathbb{P}(t_N \leq T) \leq \mathbb{P}(\zeta_M \leq T).
		\end{equation}
		Next, taking into account the energy inequality \eqref{eqequality} for $ p=4,6,10,18,$ and 34, we deduce that
		\begin{equation}\label{glob28} 
			\mathbb{E}\,\int_{0}^{T \wedge \varrho} X(s) ds < \infty.
			\end{equation}
			Finally, using Markov's inequality together with \eqref{glob4} and \eqref{glob28}, we obtain
			\begin{equation*}\label{glob29}
			\mathbb{P}(\zeta_M \leq T) 
			\leq \mathbb{P}\left\{ \int_0^{T \wedge \varrho} X(s)\, ds \geq M \right\}
			\leq \frac{1}{M}\, \mathbb{E} \int_0^{T \wedge \varrho} X(s)\, ds 
			\to 0, \:\text{as}\: M \to \infty.
					\end{equation*}
					Consequently, using \eqref{glob27}, we deduce that	$	\lim_{N \to \infty} \mathbb{P}(t_N \leq T) = 0.$ Hence the proof.
		\end{proof}			
	\section{Appendix}\label{Section 6}
		      We have invoked the following results multiple times in the main results:
        \begin{Lem}(Gagliardo-Nirenberg Inequality )\label{l1}  Let \( \mathcal{O} \subset 		\mathbb{R}^d \), \( d \in \mathbb{N} \), be a bounded domain with Lipschitz boundary, and let \( y \in W^{s,r} \cap L^q \), with \( 1 \leq q, r \leq \infty \). For any integer \( j \), \( 0 \leq j < s \), suppose there exists \( \alpha \in \mathbb{R} \) such that:
		\(
		j - \frac{d}{p} = \alpha \left( s - \frac{d}{r} \right) + (1 - \alpha) \left( 	\frac{-d}{q} 	\right), \ \text{with } j/s \leq \alpha \leq 1.
		\)
		Then, there exists a positive constant \( C \), depending only on \( \mathcal{O}, d, s, j, q, r \), and \( 	\alpha \), such that:
		\[
		\| D^j y \|_{L^p} \leq C \| y \|_{L^q}^{1 - \alpha}\| y \|_{W^{s,r}}^{\alpha}.
		\]
		In particular, we use the following inequalities for \( d = 2, 3 \):
\begin{equation}
 \begin{aligned}
		\|y\|_{L^4} &\leq C \|y\|_{L^2}^{(4-d)/4} 	\|y\|_{H^1}^{d/4},   & \text{if } \ \ d = 2,3, \\
		\|y\|_{L^3} &\leq C \|y\|_{L^2}^{1/2} 	 \|y\|_{H^1}^{1/2},  & \text{if } \ \ d = 3. \label{GNI1}
	\end{aligned}   
\end{equation}
\end{Lem}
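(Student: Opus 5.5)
The plan is to reduce to the whole-space inequality of Nirenberg by an extension argument, and then read off the two special cases in \eqref{GNI1} either from the general statement or by a direct H\"older--Sobolev interpolation. Since $\mathcal{O}$ is bounded with Lipschitz boundary, I would invoke Stein's universal extension operator $E$. It is a single linear operator that is simultaneously bounded $L^q(\mathcal{O})\to L^q(\mathbb{R}^d)$ and $W^{s,r}(\mathcal{O})\to W^{s,r}(\mathbb{R}^d)$ for all $1\le q,r\le\infty$ and all integers $s\ge 0$. For $y\in W^{s,r}\cap L^q$ I would set $\tilde y=Ey$, so that
$$\|D^j y\|_{L^p(\mathcal{O})}\le \|D^j\tilde y\|_{L^p(\mathbb{R}^d)}.$$

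The core step is the classical whole-space estimate
$$\|D^j\tilde y\|_{L^p(\mathbb{R}^d)}\le C\|\tilde y\|_{L^q}^{1-\alpha}\|D^s\tilde y\|_{L^r}^{\alpha}$$
under the scaling relation in the statement with $j/s\le\alpha\le 1$, excluding the well-known exceptional case $r>1$, $s-j-d/r\in\mathbb{N}$, $\alpha=1$. I would take this from Nirenberg's original paper rather than reprove it. Its proof runs by induction on $s$, starting from the one-dimensional interpolation $\|f'\|\lesssim\|f\|^{1/2}\|f''\|^{1/2}$ and the Sobolev inequality. Composing with $E$ then gives
$$\|D^j y\|_{L^p(\mathcal{O})}\le C\|Ey\|_{L^q}^{1-\alpha}\|Ey\|_{W^{s,r}}^{\alpha}\le C\|y\|_{L^q(\mathcal{O})}^{1-\alpha}\|y\|_{W^{s,r}(\mathcal{O})}^{\alpha}.$$
On a bounded domain we need the full $W^{s,r}$ norm, not just the seminorm, because of low modes such as constants. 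This is harmless here, since the extension is bounded only in full norms anyway.

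For the particular cases I would give a self-contained check, since these are what the paper actually uses.
\begin{itemize}
\item In $d=2$: $\|y\|_{L^4}\le C\|y\|_{L^2}^{1/2}\|y\|_{H^1}^{1/2}$ is the Ladyzhenskaya inequality. It corresponds to $j=0$, $s=1$, $r=q=2$, $p=4$, $\alpha=1/2$.
\item In $d=3$: H\"older gives $\|y\|_{L^4}\le\|y\|_{L^2}^{1/4}\|y\|_{L^6}^{3/4}$. Combined with the embedding $H^1\hookrightarrow L^6$ on Lipschitz domains, this yields the exponent $3/4=d/4$.
\item In $d=3$: similarly, $\|y\|_{L^3}\le\|y\|_{L^2}^{1/2}\|y\|_{L^6}^{1/2}$ gives the $L^3$ bound.
\end{itemize}
One can also verify that each choice satisfies $0-d/p=\alpha(1-d/2)-(1-\alpha)d/2$, so these cases are consistent with the general statement.

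The only delicate point is the admissibility of the parameters. The endpoint $\alpha=1$ with $s-j-d/r$ a nonnegative integer and $r>1$ must be excluded, as in Nirenberg's theorem; the statement as written implicitly assumes we stay away from it, and I would add that caveat explicitly. Also, the constant depends on $\mathcal{O}$ only through the extension operator. Otherwise the proof is a citation plus the elementary interpolation above.
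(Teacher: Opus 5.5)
Your argument is correct and is essentially what the paper relies on. The paper gives no proof and simply quotes this as the classical Gagliardo--Nirenberg inequality, and your reduction via Stein's extension operator (bounded simultaneously on $L^q$ and $W^{s,r}$) to Nirenberg's whole-space theorem, together with the direct H\"older--Sobolev checks of \eqref{GNI1}, is the standard justification.

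Your added caveat is genuinely necessary. As written, the lemma fails at the endpoint $\alpha=1$, $1<r<\infty$, $s-j=d/r$, where $p=\infty$: for example, $d=2$, $s=1$, $r=2$, $j=0$ would assert $H^1\hookrightarrow L^\infty$ in two dimensions. The cases in \eqref{GNI1} have $\alpha\in\{1/2,3/4\}$ and are unaffected.
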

		\begin{Lem}(Agmon’s Inequality) \label{l2} Let \( \mathcal{O} \subset \mathbb{R}^d \)  be a bounded Lipschitz domain, and let \( 0 \leq s_1 < d/2 < s_2 \), with \( 0 < \alpha < 1 \) such that
		$
		\frac{d}{2} = \alpha s_1 + (1 - \alpha) s_2.
		$
		Then, there exists a positive constant \(C\) depending only on the measures of the sets and the parameters such that the following  holds:
\begin{equation}
		\|y\|_{L^\infty} \leq C \|y\|_{H^{s_1}}^\alpha 	\|y\|_{H^{s_2}}^{1-\alpha}.
\end{equation}
		In particular:
\begin{align*}
	\|y\|_{L^\infty} &\leq C \|y\|_{L^2}^{1/2} \|y\|_{H^2}^{1/2}, \quad \text{if } \ \ d = 2, \\
	\|y\|_{L^\infty} &\leq C \|y\|_{H^1}^{1/2} \|y\|_{H^2}^{1/2}, \quad\text{if } \  \ d = 3.
\end{align*}
\end{Lem}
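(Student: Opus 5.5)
We plan to reduce to the whole space and then estimate the Fourier transform by splitting it into low and high frequencies. \textbf{Extension step.} Since $\mathcal{O}$ is a bounded Lipschitz domain, we invoke Stein's universal extension operator $E$. It is a single linear operator that is bounded from $H^{s}(\mathcal{O})$ to $H^{s}(\mathbb{R}^d)$ for every $s\ge 0$ simultaneously (for fractional $s$ we use interpolation). Set $Y:=Ey$. Then
\begin{equation*}
\|y\|_{L^\infty(\mathcal{O})}\le \|Y\|_{L^\infty(\mathbb{R}^d)}, \qquad \|Y\|_{H^{s_i}(\mathbb{R}^d)}\le C\|y\|_{H^{s_i}}, \quad i=1,2,
\end{equation*}
so it suffices to prove the inequality on $\mathbb{R}^d$. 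There we use the norm $\|Y\|_{H^s}^2=\int(1+|\xi|^2)^s|\hat Y(\xi)|^2\,d\xi$.

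\textbf{Frequency splitting.} By Fourier inversion, $\|Y\|_{L^\infty}\le C\int_{\mathbb{R}^d}|\hat Y(\xi)|\,d\xi$. Fix $R\ge1$ and split the integral over $\{|\xi|<R\}$ and $\{|\xi|\ge R\}$. On each piece we apply Cauchy--Schwarz with the weights $(1+|\xi|^2)^{s_1}$ and $(1+|\xi|^2)^{s_2}$ respectively. This gives
\begin{equation*}
\int_{|\xi|<R}|\hat Y|\le \Big(\int_{|\xi|<R}(1+|\xi|^2)^{-s_1}d\xi\Big)^{1/2}\|Y\|_{H^{s_1}}\le C R^{\frac d2-s_1}\|Y\|_{H^{s_1}},
\end{equation*}
where the polar-coordinate bound uses $s_1<d/2$ and $R\ge 1$. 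Similarly,
\begin{equation*}
\int_{|\xi|\ge R}|\hat Y|\le C R^{\frac d2-s_2}\|Y\|_{H^{s_2}},
\end{equation*}
where convergence of $\int_{|\xi|\ge R}|\xi|^{-2s_2}d\xi$ uses $s_2>d/2$.

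\textbf{Optimization.} Write $a=\|Y\|_{H^{s_1}}$ and $b=\|Y\|_{H^{s_2}}$, and assume $Y\neq0$ (otherwise there is nothing to prove). Since $s_1<s_2$ we have $a\le b$, so the choice $R=(b/a)^{1/(s_2-s_1)}$ satisfies $R\ge1$ and is admissible. With this $R$ both terms equal
\begin{equation*}
a^{\frac{s_2-d/2}{s_2-s_1}}\,b^{\frac{d/2-s_1}{s_2-s_1}}.
\end{equation*}
The relation $d/2=\alpha s_1+(1-\alpha)s_2$ gives exactly $\alpha=\frac{s_2-d/2}{s_2-s_1}$ and $1-\alpha=\frac{d/2-s_1}{s_2-s_1}$, which is the claim. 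The two particular cases follow by choosing $(s_1,s_2)=(0,2)$ for $d=2$ and $(s_1,s_2)=(1,2)$ for $d=3$; in both cases $\alpha=1/2$.

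\textbf{Main obstacle.} The only delicate point is the extension step. We need one operator that is bounded on $H^{s_1}$ and $H^{s_2}$ at the same time, with constants depending only on $\mathcal{O}$ and $s_1,s_2$. For merely Lipschitz boundaries this requires Stein's construction rather than a reflection argument, which only works up to a fixed order. The Fourier estimates themselves are routine.
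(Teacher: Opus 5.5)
Your proof is correct. The paper gives no proof of this lemma: it is listed in the appendix as a standard auxiliary inequality and used as a black box, so there is no competing argument to compare against. Your route is the standard self-contained one. You apply Stein's universal extension, split $\|\hat Y\|_{L^1}$ into low and high frequencies with Cauchy--Schwarz against the $H^{s_1}$ and $H^{s_2}$ weights, and optimize in $R$. This yields exactly the exponent $\alpha=(s_2-d/2)/(s_2-s_1)$ forced by $d/2=\alpha s_1+(1-\alpha)s_2$.

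A few small remarks.
\begin{itemize}
\item Both frequency bounds hold for every $R>0$, since $\int_{|\xi|<R}|\xi|^{-2s_1}\,d\xi$ converges for all $R$ once $s_1<d/2$. So the restriction $R\ge 1$ and the observation $a\le b$ are not needed.
\item The step $\|Y\|_{L^\infty}\le C\|\hat Y\|_{L^1}$ is legitimate because your two bounds already show $\hat Y\in L^1$ when $s_2>d/2$.
\item For fractional $s_i$, the interpolation step tacitly uses that on a Lipschitz domain $H^{s}(\mathcal O)$ agrees, with equivalent norms, with the interpolation space between integer orders; this is itself a consequence of the universal extension operator. Every application in the paper uses integer orders only, e.g.\ $(s_1,s_2)=(0,2)$, $(1,2)$, $(1,3)$, so this point never arises there.
\item Your constant depends on $\mathcal O$ through the norm of $E$, i.e.\ on its Lipschitz geometry, and not merely on $|\mathcal O|$. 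That is the correct dependence. Testing $y(x)=\chi(x_1)$, with $\chi\in C_c^\infty(0,1)$, on the unit-area rectangles $(0,L)\times(0,1/L)$ shows the $d=2$ constant must grow like $\sqrt{L}$. So the phrase ``depending only on the measures of the sets'' in the statement cannot be taken literally, and your proof establishes the statement in its correct form.
\end{itemize}
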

		\begin{Lem}(Gronwall Lemma for Stochastic Processes, \cite{Glatt-Holtz2009})\label{l3} Fix \( T > 0 \). Assume that \( X, Y, Z, R: [0, T) \times \Omega \to \mathbb{R} \) are real-valued, non-negative stochastic processes. Let \( \tau < T \) be a stopping time such that
$		\mathbb{E} \int_0^{\tau} (R(s)X(s) + Z(s))\, ds < \infty.$ 
Further, assume that   $\int_0^\tau R(s) 	\, ds <  \, k \ \text{a.s.} $ for some fixed constant $k>0.$ 
		Suppose that for all stopping times \( 0 \leq \tau_a < \tau_b \leq \tau \),
\begin{equation*}
		\mathbb{E} \left[ \sup_{t \in [\tau_a,\tau_b]} X(t) + \int_{\tau_a}^{\tau_b} Y(s)\, ds 	\right] \leq C_0 \mathbb{E} \left[ X(\tau_a) + \int_{\tau_a}^{\tau_b} (R(s)X(s) + Z(s))\, ds \right],
\end{equation*}
		where \(C_0 \) is a constant independent of the choice of \(\tau_a,\tau_b\). Then the following inequality holds:  
\begin{equation*}
		\mathbb{E} \left[ \sup_{t \in [0, \tau]} X(t) + \int_0^{\tau} Y(s)\, ds \right] 
		\leq C \mathbb{E} \left[ X(0) + \int_0^{\tau} Z(s)\, ds \right] 
\end{equation*}
		where  \( C = C(C_0, T,k)\).
\end{Lem}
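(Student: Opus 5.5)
The plan is to cut $[0,\tau]$ into finitely many random subintervals on which $\int R\,ds$ is small. On each piece the term $\int R X$ can then be absorbed into the left-hand side. Iterating over the pieces gives the result, and the number of pieces is controlled deterministically by the a.s. bound $\int_0^\tau R\,ds<k$.

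\textbf{Step 1 (partition).} Set $\delta := 1/(2C_0)$ and define stopping times recursively by $\tau_0:=0$ and
\begin{equation*}
\tau_j := \inf\Big\{ t\geq \tau_{j-1} : \int_{\tau_{j-1}}^{t} R(s)\,ds > \delta \Big\}\wedge \tau, \qquad j\geq 1 .
\end{equation*}
Each full piece consumes at least $\delta$ of $\int R$, and $\int_0^\tau R\,ds<k$ a.s. Hence $\tau_j=\tau$ a.s. for all $j\geq N:=\lfloor 2C_0 k\rfloor+1$, and $N$ is deterministic.

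\textbf{Step 2 (one piece).} Apply the hypothesis with $\tau_a=\tau_{j-1}$ and $\tau_b=\tau_j$, and bound $\int_{\tau_{j-1}}^{\tau_j} R X\,ds \le \delta \sup_{[\tau_{j-1},\tau_j]}X$. This gives
\begin{equation*}
\mathbb{E}\Big[\sup_{[\tau_{j-1},\tau_j]} X + \int_{\tau_{j-1}}^{\tau_j} Y\,ds\Big] \le C_0\,\mathbb{E} X(\tau_{j-1}) + \tfrac12\, \mathbb{E}\sup_{[\tau_{j-1},\tau_j]} X + C_0\,\mathbb{E}\int_{\tau_{j-1}}^{\tau_j} Z\,ds .
\end{equation*}
Absorbing the middle term yields
\begin{equation*}
\mathbb{E}\Big[\sup_{[\tau_{j-1},\tau_j]} X + \int_{\tau_{j-1}}^{\tau_j} Y\,ds\Big] \le 2C_0\,\mathbb{E}\Big[X(\tau_{j-1})+ \int_{\tau_{j-1}}^{\tau_j} Z\,ds\Big].
\end{equation*}

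\textbf{Step 3 (iteration).} Since $\mathbb{E}X(\tau_j)\le \mathbb{E}\sup_{[\tau_{j-1},\tau_j]}X$, induction gives
\begin{equation*}
\mathbb{E}X(\tau_j)\le (2C_0)^j\,\mathbb{E}\Big[X(0)+\int_0^\tau Z\,ds\Big]\cdot \max(1,j).
\end{equation*}
Next use $\sup_{[0,\tau]}X\le \sum_{j=1}^N \sup_{[\tau_{j-1},\tau_j]}X$ and $\int_0^\tau Y=\sum_j \int_{\tau_{j-1}}^{\tau_j} Y$. Summing the bounds of Step 2 over $j\le N$ gives the conclusion with a constant $C=C(C_0,k)$, which is in particular of the stated form $C(C_0,T,k)$.

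\textbf{Main obstacle.} The absorption in Step 2 is only legitimate if $\mathbb{E}\sup_{[\tau_{j-1},\tau_j]}X<\infty$, and the hypotheses give only $\mathbb{E}\int_0^\tau (RX+Z)\,ds<\infty$. I would first localize with $\sigma_n:=\inf\{t: X(t)>n\}\wedge\tau$. The assumed inequality holds for every pair of stopping times, so it holds on $[\tau_{j-1}\wedge\sigma_n,\tau_j\wedge\sigma_n]$. On these intervals $\sup X\le n$, assuming $X$ has continuous paths, as in all applications in the paper; otherwise one bounds $X(\sigma_n)$ using right-continuity. With finiteness in hand, Steps 2--3 give bounds independent of $n$. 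Letting $n\to\infty$ by monotone convergence, with $\sigma_n\uparrow\tau$ because $X$ is a.s. bounded on $[0,\tau]$ when its paths are continuous, completes the proof.
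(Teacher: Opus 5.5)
The paper gives no proof of this lemma; it is quoted from Glatt-Holtz--Ziane. Your Steps 1--3 are the standard argument from that source: cut $[0,\tau]$ into $N=N(C_0,k)$ stopping-time intervals with $\int R\,ds\le 1/(2C_0)$, absorb $C_0\,\mathbb{E}\int RX\,ds$ into the left side, and propagate $\mathbb{E}X(\tau_j)$ by induction. Those steps are correct, including the deterministic bound on the number of pieces (take $C_0\ge 1$ without loss of generality so that your bound $(2C_0)^j\max(1,j)$ holds).

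The weak point is your treatment of the ``main obstacle.'' The lemma assumes nothing about the paths of $X$, yet your localization uses continuity twice. You need it to get $\sup_{[0,\sigma_n]}X\le n$, and again to get $\sigma_n=\tau$ for large $n$. Neither is justified under the stated hypotheses. The right-continuity fallback does not repair this, since a jump at $\sigma_n$ leaves $X(\sigma_n)$ uncontrolled. So, as written, the absorption is justified only for continuous $X$.

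The missing observation is that the hypothesis $\mathbb{E}\int_0^\tau(RX+Z)\,ds<\infty$, which you quote but do not use, is there exactly to remove this obstacle. If $\mathbb{E}X(0)=\infty$ there is nothing to prove. Otherwise, the assumed inequality on $[\tau_{j-1},\tau_j]$ gives
\[
\mathbb{E}\sup_{[\tau_{j-1},\tau_j]}X\le C_0\Big(\mathbb{E}X(\tau_{j-1})+\mathbb{E}\int_0^\tau (RX+Z)\,ds\Big).
\]
Together with $\mathbb{E}X(\tau_j)\le \mathbb{E}\sup_{[\tau_{j-1},\tau_j]}X$, this shows by induction that all these quantities are finite. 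Every term you absorb is therefore finite, and you need no localization, no monotone convergence, and no path regularity.

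A smaller point: once $\tau_{j-1}=\tau$, you apply the hypothesis to pairs with $\tau_{j-1}=\tau_j$ on a non-null set. The strict condition $\tau_a<\tau_b$ in the statement does not literally cover this. It is harmless, because the paper's applications derive the inequality for all $\tau_a\le\tau_b$, but you should remark on it.
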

 \begin{Lem}(see \cite{wehrheim2004uhlenbeck})\label{turiq0} 
		Let $\mathcal{O}$ be a bounded smooth domain in $\mathbb{R}^d$ and $s \in \mathbb{N}$. Then there exists a constant $C_{s,d} > 0$ such that for all $y \in H^{s+2}$ with $\partial_{\textbf{n}} y = 0$ on $\partial \mathcal{O}$, it holds that
		\[
		\|y\|_{H^{s+2}} \leq C_{s,d} \left( |y|_{ L^2} + \|A_1 	y\|_{H^s} \right).
		\]
	\end{Lem}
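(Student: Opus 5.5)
The plan is to reduce the estimate to the classical $H^{s+2}$-regularity theory for the Neumann problem of the uniformly elliptic, coercive operator $I-\Delta$, and then remove the zeroth-order term using only $|y|_{L^2}$. Given $y\in H^{s+2}$ with $\partial_{\mathbf n}y=0$ on $\partial\mathcal O$, set $f:=y-\Delta y=y+A_1y$. Then $f\in H^s$ and $y$ is the (unique, by Lax--Milgram in $H^1$, since the form $\int_{\mathcal O}(\nabla y\cdot\nabla\theta+y\theta)\,dx$ is coercive) weak solution of
\begin{equation*}
y-\Delta y=f \ \text{ in }\mathcal O,\qquad \partial_{\mathbf n}y=0 \ \text{ on }\partial\mathcal O .
\end{equation*}
The core claim is the a priori bound $\|y\|_{H^{s+2}}\le C_{s,d}\|f\|_{H^s}$. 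Combined with $\|f\|_{H^s}\le \|y\|_{H^s}+\|A_1y\|_{H^s}$, it reduces the lemma to controlling $\|y\|_{H^s}$ by $|y|_{L^2}+\|A_1y\|_{H^s}$.

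For the core bound I will argue by induction on $s$. The base step $s=0$ starts from the energy identity, which gives $\|y\|_{H^1}\le\|f\|_{L^2}$. I then use a partition of unity and flatten the smooth boundary locally. In the interior, Nirenberg difference quotients in all directions give $H^2_{\rm loc}$ control. Near the boundary, tangential difference quotients are admissible test functions because the Neumann condition is natural, so they preserve the weak formulation. They control all second derivatives except the purely normal one, and that one is recovered algebraically from the equation itself. This yields $\|y\|_{H^2}\le C(\|f\|_{L^2}+\|y\|_{H^1})\le C\|f\|_{L^2}$.

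For the inductive step $s\to s+1$, I apply the step-$s$ estimate to tangential derivatives of $y$ in the flattened charts. These still satisfy a Neumann-type problem, with lower-order commutator terms coming from the variable coefficients of the transformed operator and boundary condition. The remaining normal derivatives are again read off from the equation. Alternatively, one may simply invoke the Agmon--Douglis--Nirenberg estimates, since the Neumann condition satisfies the complementing (Lopatinskii--Shapiro) condition for $-\Delta$.

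It remains to absorb $\|y\|_{H^s}$. For $s=0$ this is trivial. For $s\ge1$ I use an interpolation (Ehrling) inequality
\begin{equation*}
\|y\|_{H^s}\le \delta\|y\|_{H^{s+2}}+C_\delta|y|_{L^2},
\end{equation*}
valid on bounded smooth domains by compactness of $H^{s+2}\hookrightarrow H^s\hookrightarrow L^2$, and then choose $\delta=1/(2C_{s,d})$. This absorbs the term into the left-hand side and gives $\|y\|_{H^{s+2}}\le C(|y|_{L^2}+\|A_1y\|_{H^s})$. Equivalently, one can induct directly, since $\|y\|_{H^s}$ is already controlled by the lower-order case of the lemma via $\|A_1y\|_{H^{s-2}}\le\|A_1y\|_{H^s}$.

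The main obstacle is the boundary regularity in the inductive step. After flattening, the tangential derivatives $\partial_\tau y$ no longer satisfy a homogeneous Neumann condition exactly, because the conormal derivative picks up terms involving derivatives of the chart. One must therefore carry the estimate for inhomogeneous Neumann data $g\in H^{s+1/2}(\partial\mathcal O)$, with bound $C(\|f\|_{H^s}+\|g\|_{H^{s+1/2}})$. I would set this up with inhomogeneous data from the start, or cite the ADN theory at this point rather than redo the commutator bookkeeping.
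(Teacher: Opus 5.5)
Your argument is correct. Note that the paper gives no proof of this lemma: it imports the estimate from the elliptic regularity theory for the Neumann problem in Wehrheim's book. So the comparison is between a citation and your reconstruction of what stands behind it.

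What your route makes explicit is the reduction:
\begin{itemize}
\item Shifting to the coercive operator $I-\Delta$ removes the kernel of constants of the Neumann Laplacian, which is exactly why the term $|y|_{L^2}$ cannot be dropped. The energy identity then gives $\|y\|_{H^1}\le |f|_{L^2}$.
\item The shifted problem satisfies $\|y\|_{H^{s+2}}\le C\|y-\Delta y\|_{H^s}$.
\item Ehrling's inequality trades $\|y\|_{H^s}$ for $|y|_{L^2}$. The absorption is legitimate because $\|y\|_{H^{s+2}}<\infty$ by hypothesis.
\end{itemize}

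The only genuinely hard ingredient, higher-order regularity up to the boundary, is isolated in your inductive step. There you end up leaning on the literature (Agmon--Douglis--Nirenberg), just as the paper does.

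If you want that step self-contained, the cleanest way to fix the induction hypothesis is a global one. Prove the estimate for $(I-\Delta)u=f$ in $\mathcal O$, $\partial_{\mathbf n}u=g$ on $\partial\mathcal O$, with $\|g\|_{H^{s+1/2}(\partial\mathcal O)}$ added on the right. This follows from the homogeneous case by lifting $g$ to some $G\in H^{s+2}$ with $\partial_{\mathbf n}G=g$. Then apply it to $Ty$ for smooth vector fields $T$ tangent to $\partial\mathcal O$. In that case $(I-\Delta)(Ty)=Tf-[\Delta,T]y$ in $\mathcal O$, and $\partial_{\mathbf n}(Ty)=[N,T]y$ on $\partial\mathcal O$, where $N$ is a smooth extension of $\mathbf n$. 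Both commutators are controlled by $\|y\|_{H^{s+2}}$, so no chart-by-chart bookkeeping of variable coefficients is needed. The purely normal derivatives are then read off from the equation, as you say.

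A minor slip: in your alternative ``induct directly'' remark, use the case $s-1$ instead of $s-2$, i.e.\ $\|y\|_{H^s}\le\|y\|_{H^{s+1}}\le C(|y|_{L^2}+\|A_1y\|_{H^{s-1}})$. For $s=1$ there is no case $s-2$. The Ehrling argument you give first is unaffected.
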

	\begin{Lem}(see \cite{patnaik2024optimal,PatnaikSakthivel2025}) \label{turiq} Let \(\mathcal{O}\) be a regular bounded subset of \(\mathbb{R}^2\) or \(\mathbb{R}^3\). There exists a constant \(C > 0\), depending on \(\mathcal{O}\), such that for all \(y \in H^2\) with \(\partial_{\textbf{n}}y = 0 \) on \( \partial \mathcal{O}\) , we have
\begin{equation*}
			\|y\|_{L^\infty} \leq C \left( |y|^2_{ L^2} + |A_1 y|^2 \right)^{1/2}, \ \ \ \
			\|A^{1/2}_1 y\|_{L^s} \leq C |A_1 y|_{ L^2} \ \ \forall s \in [2,6], \ \ \ \
			|D^2 y|_{ L^2} \leq C |A_1 y|_{ L^2}.
\end{equation*}
			For any \(y \in H^3\) with  $\partial_{\textbf{n}} y = 0$ on $\partial \mathcal{O}$, the following hold:  
\begin{equation*}
			|A_1 y|_{ L^2} \leq C |A^{3/2}_1 y|_{ L^2}, \hspace{.51in} |D^3 y|_{ L^2} \leq C | A^{3/2}_1y|_{ L^2}, \hspace{.51in}
			\|D^2 y\|_{L^3} \leq C |A^{1/2}_1 y|^{1/2}_{ L^2} |A^{3/2}_1 y|^{1/2}_{ L^2}.
\end{equation*}
\end{Lem}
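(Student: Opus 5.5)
The plan is to reduce every inequality in Lemma \ref{turiq} to the elliptic estimate of Lemma \ref{turiq0}, combined with the spectral structure of $A_1$ and standard Sobolev embeddings. Constants are removed throughout via the mean $\bar y$. All quantities on the left except $\|y\|_{L^\infty}$ involve only derivatives of $y$, so they are unchanged under $y\mapsto y-\bar y$. For $y\in D(A_1)$ the spectral expansion gives
\[
|A_1^{1/2}y|_{L^2}=|\nabla y|_{L^2},\qquad |y-\bar y|_{L^2}\le \beta_2^{-1/2}|A_1^{1/2}y|_{L^2},\qquad |A_1^{1/2}y|^2_{L^2}\le |A_1 y|_{L^2}\,|y-\bar y|_{L^2}.
\]
Together these yield $|y-\bar y|_{L^2}\le \beta_2^{-1}|A_1y|_{L^2}$ and $|\nabla y|_{L^2}\le \beta_2^{-1/2}|A_1y|_{L^2}$. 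Here $\beta_2>0$ because $\mathcal O$ is connected.

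\textbf{$H^2$ statements.} Apply Lemma \ref{turiq0} with $s=0$ to $y-\bar y$, which still satisfies $\partial_{\mathbf n}(y-\bar y)=0$. This gives
\[
\|y-\bar y\|_{H^2}\le C\bigl(|y-\bar y|_{L^2}+|A_1y|_{L^2}\bigr)\le C|A_1y|_{L^2}.
\]
The bound $|D^2y|_{L^2}\le C|A_1y|_{L^2}$ follows at once. Since $H^1\hookrightarrow L^s$ for $s\in[2,6]$ when $d\le 3$, we also get $\|\nabla y\|_{L^s}\le C\|\nabla(y-\bar y)\|_{H^1}\le C|A_1 y|_{L^2}$; this is how $\|A_1^{1/2}y\|_{L^s}$ is interpreted. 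For the $L^\infty$ bound, use $H^2\hookrightarrow L^\infty$ for $d\le3$ together with \eqref{norm1} (or directly $\|y\|_{H^2}\le \|y-\bar y\|_{H^2}+|\bar y|\,|\mathcal O|^{1/2}$ and $|\bar y|\le C|y|_{L^2}$).

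\textbf{$H^3$ statements.} For $y\in H^3$ with $\partial_{\mathbf n}y=0$, observe that $A_1y=-\Delta y\in H^1$ has zero mean, since $\int_{\mathcal O}\Delta y\,dx=\int_{\partial\mathcal O}\partial_{\mathbf n}y\,dS=0$. The form domain of $A_1$ is all of $H^1$, with no boundary condition, so $A_1y\in D(A_1^{1/2})$. Hence
\[
|A_1^{3/2}y|_{L^2}=|\nabla A_1 y|_{L^2}\quad\text{and}\quad |A_1y|_{L^2}\le\beta_2^{-1/2}|A_1^{3/2}y|_{L^2},
\]
which is the first inequality. Next apply Lemma \ref{turiq0} with $s=1$ to $y-\bar y$:
\[
\|y-\bar y\|_{H^3}\le C\bigl(|y-\bar y|_{L^2}+\|A_1y\|_{H^1}\bigr)\le C|A_1^{3/2}y|_{L^2},
\]
which gives $|D^3y|_{L^2}\le C|A_1^{3/2}y|_{L^2}$. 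For the $L^3$ bound, use Lemma \ref{l1}. In $d=3$ it gives $\|D^2y\|_{L^3}\le C|D^2y|_{L^2}^{1/2}\|D^2y\|_{H^1}^{1/2}$. In $d=2$ it gives the exponents $2/3,1/3$, which can be weakened to $1/2,1/2$ because $|D^2y|_{L^2}\le\|D^2y\|_{H^1}$. Now insert $|D^2y|_{L^2}\le C|A_1y|_{L^2}\le C|A_1^{1/2}y|^{1/2}_{L^2}|A_1^{3/2}y|^{1/2}_{L^2}$ (spectral interpolation, Remark \ref{emer}) and $\|D^2y\|_{H^1}\le C|A_1^{3/2}y|_{L^2}$. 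The resulting power of $|A_1^{3/2}y|_{L^2}$ comes out slightly larger than $1/2$. The plan is to correct this by replacing the first factor with the sharper bound $|D^2 y|_{L^2}\le C|A_1 y|_{L^2}$ and then interpolating $|A_1y|_{L^2}$ between the $A_1^{1/2}$ and $A_1^{3/2}$ norms with the exact exponents.

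The main obstacle is the $H^3$ step. One must justify that $|A_1^{3/2}y|_{L^2}$, defined spectrally, coincides with $|\nabla\Delta y|_{L^2}$ without any boundary condition on $\Delta y$. One must also check that the exponent bookkeeping in the $L^3$ interpolation really closes with powers $\tfrac12,\tfrac12$. If the exponents fail to match, the fallback is to state the bound with $|A_1y|^{1/2}_{L^2}$ in place of $|A_1^{1/2}y|^{1/2}_{L^2}$, which is what the applications in the paper actually use.
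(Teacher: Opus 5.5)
Your arguments for the first five inequalities are correct:
\begin{itemize}
\item applying Lemma~\ref{turiq0} to $y-\bar y$;
\item bounding $|y-\bar y|_{L^2}$ spectrally via $\beta_2>0$;
\item using $D(A_1^{1/2})=H^1$, so that $|A_1^{3/2}y|_{L^2}=|\nabla\Delta y|_{L^2}$ for every $y\in H^3$ with $\partial_{\mathbf n}y=0$.
\end{itemize}
No condition on $\partial_{\mathbf n}\Delta y$ is needed, so the first half of your ``main obstacle'' is already settled by your own remark. Reading $\|A_1^{1/2}y\|_{L^s}$ literally, as the $L^s$ norm of the scalar $A_1^{1/2}y$, also works: $A_1^{1/2}y\in D(A_1^{1/2})=H^1$ with norm at most $C|A_1y|_{L^2}$. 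The paper gives no proof of this lemma; it only quotes it from the references.

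The genuine gap is the last inequality. Your proposed repair is the same computation you had just done. Combining $|D^2y|_{L^2}\le C|A_1y|_{L^2}$, then $|A_1y|_{L^2}\le|A_1^{1/2}y|^{1/2}_{L^2}|A_1^{3/2}y|^{1/2}_{L^2}$, and $\|D^2y\|_{H^1}\le C|A_1^{3/2}y|_{L^2}$ again gives $|A_1^{1/2}y|_{L^2}^{1/4}|A_1^{3/2}y|_{L^2}^{3/4}$.

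No exponent bookkeeping can close this, because the inequality is false as stated. Take $\varphi\in C_c^\infty(B_1)$, $x_0\in\mathcal O$, and $y_\lambda(x)=\varphi(\lambda(x-x_0))$ for $\lambda$ large. Then $y_\lambda\in H^3$, $\partial_{\mathbf n}y_\lambda=0$, and
\[
\|D^2y_\lambda\|_{L^3}=\lambda^{2-d/3}\|D^2\varphi\|_{L^3},\qquad |A_1^{1/2}y_\lambda|^{1/2}_{L^2}|A_1^{3/2}y_\lambda|^{1/2}_{L^2}=\lambda^{2-d/2}|\nabla\varphi|^{1/2}_{L^2}|\nabla\Delta\varphi|^{1/2}_{L^2}.
\]
The ratio therefore grows like $\lambda^{d/6}$, in both $d=2$ and $d=3$.

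What is true is your fallback, $\|D^2y\|_{L^3}\le C|A_1y|^{1/2}_{L^2}|A_1^{3/2}y|^{1/2}_{L^2}$. Your Gagliardo--Nirenberg step, together with $|D^2y|_{L^2}\le C|A_1y|_{L^2}$ and $\|D^2y\|_{H^1}\le C|A_1^{3/2}y|_{L^2}$, proves it directly. It is scale-invariant in $d=3$, and valid in $d=2$ after weakening the exponents $2/3,1/3$ to $1/2,1/2$ as you describe.

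You should present this as the corrected statement, together with the counterexample, rather than as a contingency. The lemma as printed presumably has $A_1^{1/2}$ where $A_1$ was intended.
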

    \bibliographystyle{abbrv}
 \bibliography{references} 
	
     \end{document}